\newcommand{\TeXmacs}{T\kern-.1667em\lower.5ex\hbox{E}\kern-.125emX\kern-.1em\lower.5ex\hbox{\textsc{m\kern-.05ema\kern-.125emc\kern-.05ems}}}
\newcommand{\withTeXmacstext}{This document has been produced using \TeXmacs (see \url{http://www.texmacs.org})}
\newcommand{\tmem}[1]{{\em #1\/}}
\newcommand{\tmmathbf}[1]{\ensuremath{\boldsymbol{#1}}}
\newcommand{\tmop}[1]{\ensuremath{\operatorname{#1}}}
\newcommand{\tmtextit}[1]{{\itshape{#1}}}
\newcommand{\tmtextup}[1]{{\upshape{#1}}}
\newenvironment{enumerateroman}{\begin{enumerate}[i.] }{\end{enumerate}}
\renewenvironment{proof}{\noindent\textbf{Proof\ }}{\hspace*{\fill}$\Box$\medskip}
\theoremstyle{plain}
\newtheorem{theorem}{Theorem}[section]
\newtheorem{proposition}[theorem]{Proposition}
\newtheorem{corollary}[theorem]{Corollary}
\newtheorem{lemma}[theorem]{Lemma}
\theoremstyle{remark}
\newtheorem{definition}[theorem]{Definition}
\def\mathbbm{\mathbb}
\numberwithin{equation}{section}
\title{Surface tension in the dilute Ising model. The Wulff
construction.}
\author{Marc Wouts}
\address{Modal'X, Universit\'e Paris Ouest - Nanterre La D\'efense. B\^at. G. \\
200 avenue de la R\'epublique, 92001 Nanterre Cedex.}
\email{marc.wouts@u-paris10.fr}
\date{April 14 \textsuperscript{th}, 2008}
\subjclass[2000]{Primary: 82B44; Secondary: 60K35}
\keywords{Ising model, random media, surface tension, phase coexistence, large deviations, maximal flows}
\thanks{\withTeXmacstext}
\begin{document}
\hyphenpenalty=5000
\tolerance=1000

\maketitle

\begin{abstract}
  We study the surface tension and the phenomenon of phase coexistence for the
  Ising model on $\mathbbm{Z}^d$ ($d \geqslant 2$) with ferromagnetic but
  random couplings. We prove the convergence in probability (with respect to
  random couplings) of surface tension and analyze its large deviations :
  upper deviations occur at volume order while lower deviations occur at
  surface order. We study the asymptotics of surface tension at low
  temperatures and relate the quenched value $\tau^q$ of surface tension to
  maximal flows (first passage times if $d = 2$). For a broad class of
  distributions of the couplings we show that the inequality $\tau^a \leqslant
  \tau^q$ -- where $\tau^a$ is the surface tension under the averaged Gibbs
  measure -- is strict at low temperatures. We also describe the phenomenon of
  phase coexistence in the dilute Ising model and discuss some of the
  consequences of the media randomness. All of our results hold as well for
  the dilute Potts and random cluster models.\\

\end{abstract}

{\tableofcontents}

\label{sec-intro}

A considerable amount of work permitted to understand on a rigorous
basis the phenomenon of phase coexistence in models of statistical mechanics
like the Ising model. Phase coexistence in the Ising model was first described
in the pioneer work {\cite{N303}}, in the two dimensional case and at low
temperatures. The construction was then simplified {\cite{N306}} and extended
up to the critical temperature {\cite{N308,N309,N305}}, still in the two
dimensional case. The generalization to higher dimensions was achieved later
thanks to the $L^1$-approach~{\cite{N06,N11,N12}}. The interested reader will
find pedagogical presentations of the problem and the methods in the
course~{\cite{N70}} and the review~{\cite{N07}}.

The present work is concerned with the phenomenon of phase coexistence for the
dilute model with random (ferromagnetic) couplings. The random couplings model
either {\tmem{rare defects}} in the media, either {\tmem{intrinsic
randomness}}. As an example, quenched alloys made of magnetic materials have
intrinsic randomness since the strength of the interaction between two spins
depends on the nature of the two corresponding atoms.

In order to describe rigorously the phenomenon of phase coexistence in
presence of phase coexistence, we followed the same plan as in the above
mentioned works. In a first step we established a {\tmem{coarse graining}} for
the model {\cite{M01}}. In a second step -- the present one -- we study
{\tmem{surface tension}}. The combination of these tools allow us describe the
phenomenon of phase coexistence in the presence of random media.

Before we turn to the presentation of the model and of our results, we would
like to stress two consequences of the media randomness on the phenomenon of
phase coexistence: first, it is the case that the shape of crystals are
smoother than in presence of uniform couplings. Second, we give an insight to
the expected {\tmem{localization}} phenomenon of the crystal which is
determined by the realization of the media under averaged Gibbs measure.

The organization of the paper is as follows. In Section \ref{sec-results}
below we introduce the model and give a complete summary of our results on
surface tension, its low temperature asymptotics (maximal flows) and phase
coexistence. Proofs and intermediate results are given in the three
corresponding Sections \ref{sec-tau}, \ref{sec-lowt} and \ref{sec-phco}.

\section{The model and our main results}

\label{sec-results}

\subsection{The dilute Ising model}

The canonical vectors of $\mathbbm{R}^d$ are denoted $(\tmmathbf{e}_i)_{i = 1
\ldots d}$ and for any $x = \sum_{i = 1}^n x_i \tmmathbf{e}_i = (x_1, \ldots,
x_d) \in \mathbbm{R}^d$ we consider the following norms on $\mathbbm{R}^d$:
\begin{equation}
  \|x\|_1 = \sum_{i = 1}^d |x_i | \text{, \ \ \ } \|x\|_2 = \left( \sum_{i =
  1}^d x_i^2 \right)^{1 / 2} \text{ \ \ and \ \ } \|x\|_{\infty} = \max_{i =
  1}^d |x_i |. \label{eq-norms}
\end{equation}
Given $x, y \in \mathbbm{Z}^d$ we say that $x, y$ are nearest neighbors (which
we denote $x \sim y$) if they are at Euclidean distance $1$, i.e. if $\|x -
y\|_2 = 1$. To any domain $\Lambda \subset \mathbbm{Z}^d$ we associate the
edge sets
\begin{eqnarray}
  E (\Lambda) & = & \left\{ \left\{ x, y \right\} : x, y \in \Lambda \text{ \
  and \ } x \sim y \right\} \\
  \text{and \ \ } E^w (\Lambda) & = & \left\{ \left\{ x, y \right\} : x \in
  \Lambda, y \in \mathbbm{Z}^d \text{ \ and \ } x \sim y \right\} . 
\end{eqnarray}

We consider in this paper the dilute Ising model on $\mathbbm{Z}^d$ for $d
\geqslant 2$. It is defined in two steps : first, the couplings between
adjacent spins are represented by a random sequence $J = (J_e)_{e \in E
(\mathbbm{Z}^d)}$ of law $\mathbbm{P}$, such that the $(J_e)_{e \in E
(\mathbbm{Z}^d)}$ are independent, identically distributed in $[0, 1]$ under
$\mathbbm{P}$. Then, given $\Lambda \subset \mathbbm{Z}^d$ a finite domain and
a spin configuration $\sigma \in \Sigma^+_{\Lambda}$, where
\[ \Sigma^+_{\Lambda} = \left\{ \sigma : \mathbbm{Z}^d \rightarrow \{\pm 1\}:
   \sigma_z = 1, \forall z \notin \Lambda \right\}, \]
we let
\begin{equation}
  H_{\Lambda}^{J, +} (\sigma) = - \sum_{e =\{x, y\} \in E^w (\Lambda)} J_e
  \sigma_x \sigma_y
\end{equation}
the Hamiltonian with plus boundary condition on $\Lambda$. The dilute Ising
model on $\Lambda$ with plus boundary condition, given a realization $J$ of
the couplings, is the probability measure $\mu^{J, +}_{\Lambda}$ on
$\Sigma^+_{\Lambda}$ that satisfies
\begin{equation}
  \mu^{J, +}_{\Lambda} (\{\sigma\}) = \frac{1}{Z^{J, +}_{\Lambda, \beta}} \exp
  \left( - \frac{\beta}{2} H_{\Lambda}^{J, +} (\sigma) \right) \text{, \ \ }
  \forall \sigma \in \Sigma^+_{\Lambda}
\end{equation}
where $\beta \geqslant 0$ is the inverse temperature and $Z^{J, +}_{\Lambda,
\beta}$ is the partition function
\begin{equation}
  Z_{\Lambda, \beta}^{J, +} = \sum_{\sigma \in \Sigma^+_{\Lambda}} \exp \left(
  - \frac{\beta}{2} H_{\Lambda}^{J, +} (\sigma) \right) .
\end{equation}

Consider
\begin{equation}
  m_{\beta} = \lim_{N \rightarrow \infty} \mathbbm{E} \mu^{J,
  +}_{\hat{\Lambda}_N, \beta}  \left( \sigma_0 \right)
\end{equation}
the magnetization in the thermodynamic limit, where $\hat{\Lambda}_N$ is the
symmetric box $\hat{\Lambda}_N =\{- N, \ldots, N\}^d$ and $\mathbbm{E}$ the
expectation associated with $\mathbbm{P}$. When $m_{\beta} > 0$ the boundary
condition has an influence on the spins at an arbitrary distance. In the
region $m_{\beta} > 0$ we say that the Ising model has two phases because the
structure of the spins under $\mathbbm{E} \mu^{J, +}_{\Lambda}$ (the plus
phase) is not the same as the structure of the spins under $\mathbbm{E}
\mu^{J, -}_{\Lambda}$ (the minus phase), where $\mu^{J, -}_{\Lambda}$
corresponds to the minus boundary condition.

It is shown in {\cite{N21}} that the dilute Ising model undergoes a phase
transition at low temperature when the random interactions percolate. In our
settings, this means that the critical inverse temperature
\begin{equation}
  \beta_c = \inf \left\{ \beta \geqslant 0 : m_{\beta} > 0 \right\},
\end{equation}
which is never smaller than $\beta_c^{\tmop{pure}}$ -- the critical inverse
temperature for the pure Ising model ($J \equiv 1$) -- is finite if and only
if $\mathbbm{P}(J_e > 0) > p_c (d)$ where $p_c (d)$ is the threshold for bond
percolation on $\mathbbm{Z}^d$.

The aim of the paper is to understand the mechanism of phase coexistence in
the dilute Ising model, hence we will consider in the following a distribution
$\mathbbm{P}$ of the couplings such that $\mathbbm{P}(J_e > 0) > p_c (d)$ and
an inverse temperature $\beta > \beta_c$. However, some of our results hold on
a possibly stronger assumption $\beta > \hat{\beta}_c \geqslant \beta_c$ where
$\hat{\beta}_c$ is the critical inverse temperature for slab percolation --
see (\ref{eq-betasp}) below -- as this assumption allows us to use the
renormalization framework of {\cite{M01}}.

\subsection{The Fortuin-Kasteleyn representation}

The study of surface tension for the dilute Ising model will be led under the
random-cluster model that corresponds to the measure $\mu^{J, +}_{\Lambda,
\beta}$. We call
\[ \Omega = \left\{ \omega : E (\mathbbm{Z}^d) \rightarrow \{0, 1\} \right\}
\]
the set of cluster configurations on $E (\mathbbm{Z}^d)$, and for any $\omega
\in \Omega$ and $E \subset E (\mathbbm{Z}^d)$ we call $\omega_{|E}$ the
restriction of $\omega$ to $E$, defined by
\[ (\omega_{|E})_e = \left\{ \begin{array}{ll}
     \omega_e & \text{if } e \in E\\
     0 & \text{else.}
   \end{array} \right. \]
The set of cluster configurations on $E$ is $\Omega_E =\{\omega_{|E}, \omega
\in \Omega\}$. Given a parameter $q \geqslant 1$ and an inverse temperature
$\beta \geqslant 0$, a realization of the random couplings $J : E
(\mathbbm{Z}^d) \rightarrow [0, 1]$, a finite edge set $E \subset E
(\mathbbm{Z}^d)$ and a boundary condition $\pi \in \Omega_{E^c}$ we consider
the random cluster model $\Phi_{E, \beta}^{J, \pi, q}$ on $\Omega_E$ defined
by
\begin{equation}
  \Phi_{E, \beta}^{J, \pi, q} \left( \{\omega\} \right) = \frac{1}{Z_{E,
  \beta}^{J, \pi, q}} \prod_{e \in E} p_e^{\omega_e} (1 - p_e)^{1 - \omega_e}
  \times q^{C^{\pi}_E (\omega)} \text{, \ \ \ } \forall \omega \in \Omega_E
  \label{eq-def-FK}
\end{equation}
where $p_e = 1 - \exp (- \beta J_e)$, $C^{\pi}_E (\omega)$ is the number of
clusters of the set of vertices in $\mathbbm{Z}^d$ attained by $E$ under the
wiring $\omega \vee \pi$ such that $(\omega \vee \pi)_e = \max (\omega_e,
\pi_e)$, and $Z_{E, \beta}^{J, \pi, q}$ is the renormalization constant making
$\Phi_{E, \beta}^{J, \pi, q}$ a probability measure.

For convenience we use the same notation for the probability measure
$\Phi_{E, \beta}^{J, \pi, q}$ and for its expectation. Most often we will take
either $\pi = f$, where $f$ is the free boundary condition : $f_e = 0, \forall
e \in E^c$, or $\pi = w$ where $w$ is the wired boundary condition : $w_e = 1,
\forall e \in E^c$. When the parameters $q$ and $\beta$ are clear from the
context we omit them. Given $\mathcal{R}$ a compact subset of $\mathbbm{R}^d$
(usually a rectangular parallelepiped) we denote by $\Phi_{\mathcal{R}}^{J,
\pi}$ the measure $\Phi_{E ( \dot{\mathcal{R}} \cap \mathbbm{Z}^d)}^{J, \pi}$
on the cluster configurations on $E ( \dot{\mathcal{R}} \cap \mathbbm{Z}^d)$,
where $\dot{\mathcal{R}}$ stands for the interior of $\mathcal{R}$. In
particular, for any $g, h : \Omega \rightarrow \mathbbm{R}$ the quantities
$\Phi_{\mathcal{R}_1}^{J, \pi} (g)$ and $\Phi_{\mathcal{R}_2}^{J, \pi} (h)$
are independent under $\mathbbm{P}$.

The connection between the dilute Ising model $\mu^{J, +}_{\Lambda, \beta}$
and the random-cluster model was made explicit in {\cite{N214}}. Consider the
joint probability measure
\[ \Psi_{\Lambda, \beta}^{J, +} \left( \left\{ (\sigma, \omega) \right\}
   \right) = \frac{\tmmathbf{1}_{\left\{ \sigma \prec \omega
   \right\}}}{\tilde{Z}^{J, +}_{\Lambda, \beta}} \prod_{e \in E^w (\Lambda)}
   \left( p_e \right)^{\omega_e}  \left( 1 - p_e \right)^{1 - \omega_e},
   \text{ \ } \forall (\sigma, \omega) \in \Sigma^+_{\Lambda} \times \Omega_{E
   (\Lambda)} \]
where $p_e = 1 - \exp (- \beta J_e)$, $\sigma \prec \omega$ is the event that
$\sigma$ and $\omega$ are compatible, namely that $\omega_e = 1 \Rightarrow
\sigma_x = \sigma_y, \forall e =\{x, y\} \in E^w (\Lambda)$, and
$\tilde{Z}^{J, +}_{\Lambda, \beta}$ is the corresponding normalizing factor.
Then,
\begin{enumerateroman}
  \item The marginal of $\Psi_{\Lambda, \beta}^{J, +}$ on the variable
  $\sigma$ is the Ising model $\mu^{J, +}_{\Lambda}$,
  
  \item Its marginal on the variable $\omega$ is the random-cluster model
  $\Phi_{E (\Lambda), \beta}^{J, w, 2}$ with wired boundary condition $w$ and
  parameter $q = 2$.
  
  \item Conditionally on $\omega$, the spin $\sigma$ of each connected
  component of $\Lambda$ for $\omega$ (now {\tmem{cluster}}) is constant, and
  equal to $+ 1$ if the cluster is connected to $\Lambda^c$. The spin of all
  clusters not touching $\Lambda^c$ are independent and equal to $+ 1$ with a
  probability $1 / 2$.
  
  \item Conditionally on $\sigma$, the edges are open (i.e. $\omega_e = 1$ for
  $e =\{x, y\}$) independently, with respective probabilities $p_e
  \delta_{\sigma_x, \sigma_y}$.
\end{enumerateroman}

According to point {\tmem{ii}} and {\tmem{iii}} we can study surface tension
for the Ising model under the Fortuin-Kasteleyn representation. This
representation allows to study at the same time the surface tension and the
phenomenon of phase coexistence for the dilute Ising model ($q = 2$), but also
for dilute percolation ($q = 1$) and for the dilute Potts model ($q \in \{3,
4, \ldots\}$).

An important benefit of the representation is that it makes possible the use
of the comparison inequalities for the random cluster model. We say that a
function $f : \Omega_E \rightarrow \mathbbm{R}^+$ is increasing if, for all
$\omega, \omega' \in \Omega_E$ one has $\omega \leqslant_{\Omega} \omega'
\Rightarrow f (\omega) \leqslant f (\omega')$ where $\leqslant_{\Omega}$
stands for the product order on $\Omega_E$. It was shown in {\cite{N210}}
that:
\begin{enumerateroman}
  \item For any $h : \Omega_E \rightarrow \mathbbm{R}^+$ increasing, $\Phi_{E,
  \beta}^{J, \pi, q} (h)$ is a non-decreasing function of $J$, $\beta$ and
  $\pi$.
  
  \item (FKG inequality) For any $g, h : \Omega_E \rightarrow \mathbbm{R}^+$
  increasing,
  \begin{equation}
    \Phi_{E, \beta}^{J, \pi, q} (g h) \geqslant \Phi_{E, \beta}^{J, \pi, q}
    (g) \Phi_{E, \beta}^{J, \pi, q} (h) .
  \end{equation}
  \item (DLR Equation) For any $E' \subsetneq E$ and $\omega' \in
  \Omega_{E'}$,
  \begin{equation}
    \Phi_{E, \beta}^{J, \pi, q} \left( . | \omega_{|E'} = \omega' \right) =
    \Phi_{E \setminus E', \beta}^{J, \pi \vee \omega', q} .
  \end{equation}
\end{enumerateroman}

Finally, let us recall the assumption of slab percolation, that is the basis
for a renormalization framework in the dilute Ising model {\cite{M01}}. When
$d \geqslant 3$, we say that slab percolation occurs under $\mathbb{E}
\Phi^{J, f, q}_{\beta}$ if, for large enough $H$,
\begin{equation}
  \inf_{L \in \mathbb{N}^{\star}} \inf_{x, y \in S_{L, H}} \mathbb{E} \Phi^{J,
  f}_{S_{L, H}} \left( x \overset{\omega}{\leftrightarrow} y \right) > 0
\end{equation}
where $S_{L, H}$ is the slab $S_{L, H} =\{1, \ldots, L\}^{d - 1} \times \{1,
\ldots H\}$. When $d = 2$, we say that slab percolation occurs when there
exists $\kappa : \mathbb{N}^{\star} \mapsto \mathbb{N}^{\star}$ with $\lim_{N
\rightarrow \infty} \kappa (N) / N = 0$ such that
\begin{equation}
  \lim_{N \rightarrow \infty} \mathbb{E} \Phi^{J, f}_{S_{N, \kappa (N)}}
  \left( \text{there is an horizontal crossing for $\omega$} \right) > 0.
\end{equation}
The critical inverse temperature for slab percolation is
\begin{equation}
  \hat{\beta}_c = \inf \left\{ \beta \geqslant 0 : \text{slab percolation
  occurs under } \text{$\mathbb{E} \Phi^{J, f, q}_{\beta}$} \right\},
  \label{eq-betasp}
\end{equation}
it satisfies $\hat{\beta}_c \geqslant \beta_c$ where $\beta_c$ is the critical
inverse temperature for phase transition in the dilute Ising (resp. Potts)
model. We believe that $\hat{\beta}_c$ and $\beta_c$ do coincide. Upper bounds
on $\hat{\beta}_c$ are derived in {\cite{M01}} from the argument of
{\cite{N21}}. The technical assumption $\beta > \hat{\beta}_c$ allows us to
use a coarse graining, which is a fundamental tool at the moment of defining
the local phase of the dilute Ising model (see Theorem 5.7 in {\cite{M01}}, or
Section \ref{sec-phco} below).

\subsection{Surface tension}

One of the main issue we address in this paper is the behavior of surface
tension and the influence of the random couplings. We consider the surface
tension in large rectangular parallelepiped oriented along some direction
$\tmmathbf{n} \in S^{d - 1}$, where $S^{d - 1}$ is the set of unit vectors of
$\mathbbm{R}^d$. The other axes of the parallelepiped are represented by
$\mathcal{S} \in \mathbbm{S}_{\tmmathbf{n}}$, where
\[ \mathbbm{S}_{\tmmathbf{n}} = \left\{ \sum_{k = 1}^{d - 1} [\pm 1 /
   2]\tmmathbf{u}_k ; (\tmmathbf{u}_1, \ldots, \tmmathbf{u}_{k - 1},
   \tmmathbf{n}) \text{ is an orthonormal basis of } \mathbbm{R}^d \right\} \]
is the set of $d - 1$ dimensional hypercubes of side-length $1$, centered at
$0$, orthogonal to $\tmmathbf{n} \in S^{d - 1}$. Finally, we call $x \in
\mathbbm{R}^d$ the center of the rectangular parallelepiped and $L, H$ its
side-lengths, and denote finally by
\begin{equation}
  \mathcal{R}_{x, L, H} (\mathcal{S}, \tmmathbf{n}) = x + L\mathcal{S}+ [- H,
  H]\tmmathbf{n} \label{eq-def-R}
\end{equation}
the rectangular parallelepiped centered at $x$, with basis $x + L\mathcal{S}$
and extension $2 H$ in the direction $\tmmathbf{n}$ (see Figure
\ref{fig-rect-LH}). The discrete version of $\mathcal{R}$ is
$\hat{\mathcal{R}} = \dot{\mathcal{R}} \cap \mathbbm{Z}^d$ and the inner
discrete boundary of $\mathcal{R}$ is
\[ \partial \hat{\mathcal{R}} = \left\{ y \in \hat{\mathcal{R}} : \exists z
   \in \mathbbm{Z}^d \setminus \hat{\mathcal{R}}, z \sim y \right\} . \]
For any $\mathcal{R}$ as in (\ref{eq-def-R}) we decompose $\partial
\hat{\mathcal{R}}$ into its \tmtextit{upper} and \tmtextit{lower} parts
$\partial^+  \hat{\mathcal{R}} =\{y \in \partial \hat{\mathcal{R}} : (y - x)
\cdot \tmmathbf{n} \geqslant 0\}$ and $\partial^-  \hat{\mathcal{R}} =\{y \in
\partial \hat{\mathcal{R}} : (y - x) \cdot \tmmathbf{n}< 0\}$.

\begin{figure}[h!]
  \begin{center}
  \includegraphics[width=8cm]{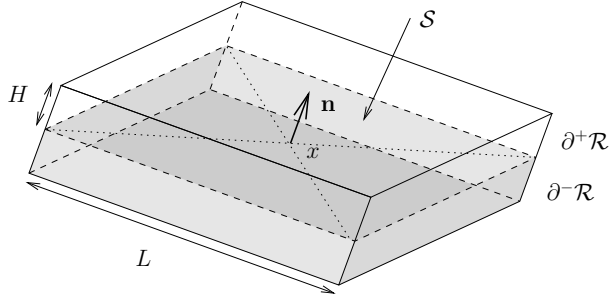}
  \end{center}
  \caption{\label{fig-rect-LH}The rectangular parallelepiped $\mathcal{R}_{x,
  L, H} (\mathcal{S}, \tmmathbf{n})$.}
\end{figure}

In the context of statistical physics, the surface tension is the excess free
energy per surface unit due to the presence of an interface. The surface
tension in $\mathcal{R}$ thus quantifies the probability of observing the plus
phase in the upper part of $\mathcal{R}$ and the minus phase in the opposite
part under the measure $\mu_{\mathcal{R}}^J$ with free boundary condition. It
is more convenient to formulate the definition under the random cluster model,
where we translate the event of {\tmem{phase coexistence}} into an event of
{\tmem{disconnection}}.

\begin{definition}
  \label{def-tauJ}Let $\mathcal{R}$ be a rectangular parallelepiped as in
  (\ref{eq-def-R}). The event of disconnection between the upper and lower
  parts of $\partial \hat{\mathcal{R}}$ is
  \begin{equation}
    \mathcal{D}_{\mathcal{R}} = \left\{ \text{$\omega \in \Omega$} :
    \partial^+  \hat{\mathcal{R}}  \overset{\omega}{\nleftrightarrow}
    \partial^-  \hat{\mathcal{R}} \right\} \label{eq-def-DR}
  \end{equation}
  and the surface tension in $\mathcal{R}$ is
  \begin{equation}
    \tau^J_{\mathcal{R}} = - \frac{1}{L^{d - 1}} \log \Phi^{J,
    w}_{\mathcal{R}} \left( \mathcal{D}_{\mathcal{R}} \right)
    \label{eq-def-tauJ} .
  \end{equation}
\end{definition}

We denote by $J^{\min}$ and $J^{\max}$ the lowest and largest values of the
couplings according to the support of $\mathbbm{P}$, that is to say :
\begin{eqnarray*}
  J^{\min} & = & \inf \{\lambda \geqslant 0 : \mathbbm{P}(J_e < \lambda) >
  0\}\\
  \text{and \ } J^{\max} & = & \sup \{\lambda \geqslant 0 : \mathbbm{P}(J_e >
  \lambda) > 0\}.
\end{eqnarray*}
We also denote by $\tau^{\min}_{\mathcal{R}}$ (resp.
$\tau^{\max}_{\mathcal{R}}$) the value of the surface tension in $\mathcal{R}$
corresponding to the constant couplings $J \equiv J^{\min}$ (resp. $J \equiv
J^{\max}$). We have:

\begin{proposition}
  \label{prop-tauJ-mon}Let $\mathcal{R}$ be a rectangular parallelepiped as in
  (\ref{eq-def-R}), with $L, H \geqslant 2 \sqrt{d}$. The surface tension
  $\tau^J_{\mathcal{R}}$ is a non-decreasing function of $J$ and $\beta$. It
  is a non-increasing function of $H$. With probability one under
  $\mathbbm{P}$,
  \[ 0 \leqslant \tau^{\min}_{\mathcal{R}} (\tmmathbf{n}) \leqslant
     \tau^J_{\mathcal{R}} (\tmmathbf{n}) \leqslant \tau^{\max}_{\mathcal{R}}
     (\tmmathbf{n}) \leqslant c_d \beta J^{\max} \]
  where $c_d < \infty$ depends on $d$ only.
\end{proposition}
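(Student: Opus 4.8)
The plan is to deduce each assertion from the three structural properties of the random-cluster model quoted just above (monotonicity in $J,\beta,\pi$; the FKG inequality; and the DLR equation), together with the definition $\tau^J_{\mathcal R}=-L^{-(d-1)}\log\Phi^{J,w}_{\mathcal R}(\mathcal D_{\mathcal R})$. The key observation is that the disconnection event $\mathcal D_{\mathcal R}$ is a \emph{decreasing} event: adding edges can only create connections, never destroy them, so $\mathbf 1_{\mathcal D_{\mathcal R}}$ is a decreasing function of $\omega$. Hence $\Phi^{J,w}_{\mathcal R}(\mathcal D_{\mathcal R})$ is a non-increasing function of $J$, $\beta$, and of the boundary condition, by property i of the comparison inequalities; taking $-L^{-(d-1)}\log(\cdot)$ reverses the monotonicity, giving that $\tau^J_{\mathcal R}$ is non-decreasing in $J$ and in $\beta$. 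Monotonicity in $J$ immediately yields the sandwich $\tau^{\min}_{\mathcal R}(\mathbf n)\le\tau^J_{\mathcal R}(\mathbf n)\le\tau^{\max}_{\mathcal R}(\mathbf n)$ with probability one, since $J^{\min}\le J_e\le J^{\max}$ for every edge $\mathbb P$-almost surely (this is exactly the definition of $J^{\min},J^{\max}$ as the edges of the support). Non-negativity of $\tau^{\min}_{\mathcal R}$ is trivial because $\Phi^{J,w}_{\mathcal R}(\mathcal D_{\mathcal R})\le 1$.

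For monotonicity in $H$, I would use the DLR equation. Enlarging $H$ to $H'>H$ enlarges the edge set $E(\hat{\mathcal R})$; conditioning $\Phi^{J,w}_{\mathcal R_{x,L,H'}}$ on the configuration outside $E(\hat{\mathcal R}_{x,L,H})$ reproduces, by property iii, a measure of the form $\Phi^{J,w\vee\omega'}_{\mathcal R_{x,L,H}}$ on the inner edges. One must be slightly careful that the relevant disconnection event for the larger box is implied by (or at least comparable to) the disconnection event for the smaller box — the natural route is that $\mathcal D_{\mathcal R_{x,L,H'}}$ contains the sub-event ``$\partial^+\hat{\mathcal R}_{x,L,H'}$ is disconnected from $\partial^-\hat{\mathcal R}_{x,L,H'}$ inside the larger box'', and by restricting attention to the slab one can relate this to disconnection in the inner box conditioned on the outer configuration, then use monotonicity in the boundary condition (the conditioned boundary condition $w\vee\omega'$ dominates $w$, but one wants a bound in the other direction, so instead one integrates: $\Phi^{J,w}_{\mathcal R_{x,L,H'}}(\mathcal D_{\mathcal R_{x,L,H'}})\ge \inf_{\omega'}\Phi^{J,w\vee\omega'}_{\mathcal R_{x,L,H}}(\text{disconnection in inner box})$, and one checks the inner-box disconnection event, which is also decreasing, has probability at least $\Phi^{J,w}_{\mathcal R_{x,L,H}}(\mathcal D_{\mathcal R_{x,L,H}})$ only after reversing... ). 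This is where I expect the one genuine subtlety to lie, and the clean way around it is probably to realise the larger box's disconnection via a specific decreasing event supported on the inner box plus a ``shield'' of closed edges, or simply to cite the standard fact (already used in the non-random setting) that $L^{-(d-1)}$-normalised surface tension decreases in the transverse extension; the condition $L,H\ge 2\sqrt d$ is there precisely to guarantee $\hat{\mathcal R}$ is non-degenerate and $\partial^+\hat{\mathcal R},\partial^-\hat{\mathcal R}$ are non-empty and disjoint.

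For the final upper bound $\tau^{\max}_{\mathcal R}(\mathbf n)\le c_d\beta J^{\max}$, I would produce an explicit lower bound on $\Phi^{J,w}_{\mathcal R}(\mathcal D_{\mathcal R})$ for $J\equiv J^{\max}$. The idea is to exhibit a ``wall'' of closed edges separating $\partial^+\hat{\mathcal R}$ from $\partial^-\hat{\mathcal R}$: there is a set $W\subset E(\hat{\mathcal R})$ of cardinality at most $c_d L^{d-1}$ (a single layer of edges straddling the hyperplane through $x$ orthogonal to $\mathbf n$, whose removal disconnects the box — here one uses an isoperimetry/min-cut bound, and $c_d L^{d-1}$ is the right order since the cross-section has area of order $L^{d-1}$) such that $\{\omega_e=0\ \forall e\in W\}\subseteq\mathcal D_{\mathcal R}$. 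Then by the FKG inequality applied to the decreasing events $\{\omega_e=0\}$ (equivalently, by the comparison that bounds the RC measure of a decreasing cylinder below by a product of single-edge closed probabilities — this is a standard consequence of FKG for $q\ge 1$, since $q^{C(\omega)}\ge q^{C(\omega')}$ when $\omega\le\omega'$ makes closing edges favourable, or one simply uses finite-energy), $\Phi^{J,w}_{\mathcal R}(\mathcal D_{\mathcal R})\ge\prod_{e\in W}(1-p_e)\ge(1-p_{\max})^{|W|}=e^{-\beta J^{\max}|W|}\ge e^{-c_d\beta J^{\max}L^{d-1}}$, where $p_{\max}=1-e^{-\beta J^{\max}}$ and $1-p_{\max}=e^{-\beta J^{\max}}$. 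Taking $-L^{-(d-1)}\log$ gives $\tau^{\max}_{\mathcal R}(\mathbf n)\le c_d\beta J^{\max}$. The only points requiring care are the combinatorial min-cut estimate (finding $W$ with $|W|\le c_dL^{d-1}$) and the justification that a decreasing cylinder event has RC-probability at least the product of its marginals — both are classical, the latter following from the FKG inequality for the random-cluster measure with $q\ge1$.
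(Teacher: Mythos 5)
Your argument for monotonicity in $J$ and $\beta$, and for the a.s.\ sandwich $\tau^{\min}_{\mathcal R}\le\tau^J_{\mathcal R}\le\tau^{\max}_{\mathcal R}$, matches the paper. The upper bound on $\tau^{\max}_{\mathcal R}$ is correct but takes a slightly different route: the paper first uses monotonicity in $H$ to shrink to a slab of thickness $2\sqrt d$ and then closes \emph{all} of its $O(L^{d-1})$ edges via iterated DLR conditioning, whereas you keep $\mathcal R$ fixed and close a cross-sectional wall of $O(L^{d-1})$ edges, invoking positive association for decreasing events. Both work; the paper's version needs no min-cut combinatorics, while yours avoids appealing to the $H$-monotonicity just proved.

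Where there is a genuine gap is the $H$-monotonicity itself. You announce this is the subtle step, then get the direction of stochastic domination backwards (``the conditioned boundary condition $w\vee\omega'$ dominates $w$''), notice the mismatch, trail off with an ellipsis, and finally propose to ``simply cite the standard fact.'' In fact there is no difficulty and the domination goes the way you want. Writing $\mathcal R=\mathcal R_{x,L,H}$ and $\mathcal R'=\mathcal R_{x,L,H'}$ with $H'\ge H$, the boundary condition $w$ for the \emph{small} box sets every edge in $E(\hat{\mathcal R})^c$ open; the DLR-conditioned boundary condition $w\vee\omega'$ keeps the edges outside $\hat{\mathcal R}'$ open but lets the edges in $E(\hat{\mathcal R}')\setminus E(\hat{\mathcal R})$ be whatever $\omega'$ says, possibly closed. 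Hence $w\vee\omega'\le w$ on $E(\hat{\mathcal R})^c$, so $\Phi^{J,w\vee\omega'}_{\mathcal R}\preceq\Phi^{J,w}_{\mathcal R}$, and \emph{because $\mathcal D_{\mathcal R}$ is a decreasing event} the smaller measure gives it larger probability: $\Phi^{J,w\vee\omega'}_{\mathcal R}(\mathcal D_{\mathcal R})\ge\Phi^{J,w}_{\mathcal R}(\mathcal D_{\mathcal R})$ for every $\omega'$. Averaging over $\omega'$ gives $\Phi^{J,w}_{\mathcal R'}(\mathcal D_{\mathcal R})\ge\Phi^{J,w}_{\mathcal R}(\mathcal D_{\mathcal R})$. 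The second ingredient you only gesture at is the elementary set inclusion $\mathcal D_{\mathcal R}\subset\mathcal D_{\mathcal R'}$ (with the same base and orientation, any open path in $\hat{\mathcal R}'$ joining $\partial^+\hat{\mathcal R}'$ to $\partial^-\hat{\mathcal R}'$ must traverse the middle slab $\hat{\mathcal R}$ and so connects $\partial^+\hat{\mathcal R}$ to $\partial^-\hat{\mathcal R}$). Chaining the two inequalities gives $\Phi^{J,w}_{\mathcal R}(\mathcal D_{\mathcal R})\le\Phi^{J,w}_{\mathcal R'}(\mathcal D_{\mathcal R})\le\Phi^{J,w}_{\mathcal R'}(\mathcal D_{\mathcal R'})$, which is exactly the paper's two-line proof. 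Your confusion about which measure dominates which is the only real error; once corrected, your sketch closes.
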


As in the uniform case {\cite{N08}}, surface tension is sub-additive (see
Theorem \ref{thm-add-tauJ}), and this implies convergence in probability of
$\tau^J_{\mathcal{R}} (\tmmathbf{n})$.

\begin{theorem}
  \label{thm-conv-tauq}There exists $\tau^q_{\beta} (\tmmathbf{n}) \geqslant
  0$, the {\tmem{quenched}} surface tension, such that, for all $\beta
  \geqslant 0$ and $\tmmathbf{n} \in S^{d - 1}$,
  \[ \lim_{N \rightarrow \infty} \tau^J_{\mathcal{R}_{0, N, \delta N}
     (\mathcal{S}, \tmmathbf{n})} = \tau^q_{\beta} (\tmmathbf{n}) \text{ \ \
     in } \mathbbm{P} \text{-probability} \]
  whatever is $\mathcal{S} \in \mathbbm{S}_{\tmmathbf{n}}$ and $\delta > 0$.
\end{theorem}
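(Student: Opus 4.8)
The plan is to prove the convergence in probability by combining a subadditivity argument with a concentration estimate, exploiting the product structure of $\mathbb{P}$ under decomposition of the parallelepiped into independent sub-blocks. First I would reduce to the case $\delta$ fixed and $\mathcal{S}$ fixed: since $\tau^{\min}_{\mathcal{R}} \leqslant \tau^J_{\mathcal{R}} \leqslant \tau^{\max}_{\mathcal{R}} \leqslant c_d \beta J^{\max}$ by Proposition \ref{prop-tauJ-mon}, the random variables $L^{d-1} \tau^J_{\mathcal{R}_{0,L,\delta L}}$ are of order $L^{d-1}$ and uniformly bounded after rescaling, so all the relevant expectations are finite and we may freely take limits. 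I would set $f(L) = \mathbb{E}\bigl[ L^{d-1} \tau^J_{\mathcal{R}_{0,L,\delta L}(\mathcal{S},\mathbf{n})} \bigr] = -\mathbb{E} \log \Phi^{J,w}_{\mathcal{R}_{0,L,\delta L}}(\mathcal{D}_{\mathcal{R}})$, and the first key step is to establish approximate subadditivity of $f$: tiling the base $L\mathcal{S}$ by roughly $(L/\ell)^{d-1}$ translated copies of $\ell\mathcal{S}$ (up to a boundary correction of lower order) and using the FKG inequality together with the fact that disconnection in the big box is implied by simultaneous disconnection in the sub-boxes placed at the appropriate heights, one gets $\Phi^{J,w}_{\mathcal{R}_{0,L,\delta L}}(\mathcal{D}_{\mathcal{R}}) \geqslant \prod_i \Phi^{J,w}_{\mathcal{R}_i}(\mathcal{D}_{\mathcal{R}_i})$ for the sub-boxes $\mathcal{R}_i$, after passing to the wired boundary condition on each sub-box by monotonicity. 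Taking $-\log$, then $\mathbb{E}$, and using that the sub-boxes are translates so that each contributes $f(\ell)$ in expectation (translation invariance of $\mathbb{P}$), yields $f(L) \leqslant (L/\ell)^{d-1} f(\ell) + o(L^{d-1})$; the reverse-order comparison via $E^w$ versus $E$ and the glueing of interfaces across sub-box interfaces is exactly the subadditivity recorded in Theorem \ref{thm-add-tauJ}, which I am entitled to assume. By the subadditive limit theorem (Fekete), $f(L)/L^{d-1}$ converges to some $\tau^q_\beta(\mathbf{n}) := \inf_L f(L)/L^{d-1} \geqslant 0$.

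The second key step is concentration: I must show $\tau^J_{\mathcal{R}_{0,N,\delta N}} - \mathbb{E}\tau^J_{\mathcal{R}_{0,N,\delta N}} \to 0$ in probability. The idea is that $N^{d-1}\tau^J_{\mathcal{R}}$ is, up to lower-order boundary terms, a sum over the $\sim (N/\ell)^{d-1}$ sub-blocks of order-$\ell^{d-1}$ contributions that become independent once we separate the blocks by a corridor of fixed width (so that their edge sets are disjoint, whence independence under $\mathbb{P}$ by the remark following \eqref{eq-def-FK}). More precisely, using the lower bound from FKG-tiling as above and a matching upper bound on $\Phi^{J,w}_{\mathcal{R}}(\mathcal{D}_{\mathcal{R}})$ — obtained by restricting attention to a slab near the mid-height and noting that any disconnecting configuration in $\mathcal{R}$ restricts to a disconnecting configuration in each of the well-separated sub-slabs, together with the DLR equation to pass to $\Phi^{J,w}$ on each sub-slab — one sandwiches $N^{d-1}\tau^J_{\mathcal{R}}$ between an average of i.i.d. bounded terms and $o(N^{d-1})$. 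A weak law of large numbers for the independent (over disjoint edge sets), uniformly bounded summands $\ell^{d-1}\tau^{J}_{\mathcal{R}_i}$ — which hold by the same a priori bounds of Proposition \ref{prop-tauJ-mon} — then gives that $\tau^J_{\mathcal{R}_{0,N,\delta N}}$ concentrates, for $\ell$ large but fixed, around $f(\ell)/\ell^{d-1} + \varepsilon(\ell)$ with $\varepsilon(\ell) \to 0$; letting $N \to \infty$ then $\ell \to \infty$ and using the Fekete limit identifies the limit as $\tau^q_\beta(\mathbf{n})$.

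Finally, the independence of the limit from $\mathcal{S}$ and $\delta$ follows from the a priori bounds plus a comparison of nested parallelepipeds: $\tau^J_{\mathcal{R}}$ is non-increasing in $H$ by Proposition \ref{prop-tauJ-mon}, so the $\delta$-dependence is monotone, and a standard interpolation (any $\mathcal{R}_{0,N,\delta N}(\mathcal{S},\mathbf{n})$ contains and is contained in translates of $\mathcal{R}_{0,N',\delta' N'}(\mathcal{S}',\mathbf{n})$ for suitable $N' \asymp N$) together with the already-established convergence for one choice of $(\mathcal{S},\delta)$ forces all limits to agree. The main obstacle I expect is the concentration step: FKG gives only a one-sided inequality, so obtaining the \emph{matching upper bound} on $\Phi^{J,w}_{\mathcal{R}}(\mathcal{D}_{\mathcal{R}})$ in terms of products over well-separated sub-blocks — i.e. controlling the interaction between adjacent interfaces and absorbing the boundary/corridor corrections into genuinely $o(N^{d-1})$ error — is the delicate part, and is precisely where the subadditivity of Theorem \ref{thm-add-tauJ} and careful geometric bookkeeping of the corridors enter; everything else is a routine application of the a priori estimates and the weak law of large numbers for independent bounded variables.
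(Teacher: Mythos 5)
Your first half (subadditivity via Theorem~\ref{thm-add-tauJ}, FKG-tiling, taking expectations, Fekete to define $\tau^q_\beta(\mathbf{n})$, and the SLLN for the i.i.d.\ sub-block surface tensions giving the a.s.\ upper bound $\limsup_N \tau^J_{\mathcal{R}^N} \leqslant \tau^q(\mathbf{n})$) matches the paper. The problem is your ``concentration'' step, where you try to produce a \emph{matching upper bound} on $\Phi^{J,w}_{\mathcal{R}}(\mathcal{D}_{\mathcal{R}})$ as a product over well-separated sub-slabs; the mechanism you propose does not work. You claim that ``any disconnecting configuration in $\mathcal{R}$ restricts to a disconnecting configuration in each of the well-separated sub-slabs.'' This is false: a disconnecting interface for the large box need not enter every sub-box at mid-height --- it can wander, cross the slab in the corridors between sub-boxes, or miss several sub-boxes entirely, so $\mathcal{D}_{\mathcal{R}}$ is not contained in $\bigcap_i \mathcal{D}_{\mathcal{R}_i}$. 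Disconnection only gives a one-sided inequality, which is exactly why subadditivity, not supermultiplicativity, is what one has. You flag this as ``the delicate part,'' but it is not merely delicate: as formulated the step is wrong, and no amount of bookkeeping of the corridors repairs a containment that fails.

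The paper never needs such a lower tail estimate for $\tau^J_{\mathcal{R}^N}$. It instead applies the subadditivity inequality (\ref{eq-tauJ-sa}) a second time with $\mathcal{R}^N = \mathcal{R}_{0,N,\delta N}$ playing the role of the \emph{small} box inside a larger $\mathcal{R}_{0,L,\delta N + \sqrt{d}/2}$. Taking expectations gives $\mathbb{E}\,\tau^J_{\mathcal{R}_{0,L,\delta N+\sqrt{d}/2}} \leqslant \mathbb{E}\,\tau^J_{\mathcal{R}^N} + \beta c_d(N/L + 1/N)$, whence $\limsup_L$ then $\liminf_N$ yields $\tau^q(\mathbf{n}) \leqslant \liminf_N \mathbb{E}\,\tau^J_{\mathcal{R}^N}$. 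Combined with the a.s.\ upper bound from the SLLN and the uniform boundedness of $\tau^J_{\mathcal{R}^N}$ (Proposition~\ref{prop-tauJ-mon}), a Fatou-type argument gives $L^1$ convergence, hence convergence in $\mathbb{P}$-probability, without ever lower-bounding $\tau^J_{\mathcal{R}^N}$ pathwise. You should replace your matching-upper-bound step by this expectation-level argument; the rest of your proposal (independence of the limit from $\mathcal{S}$ and $\delta$) then follows as in the paper from the double-limit formula (\ref{eq-def-tauq-LH}).
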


Similarly, the surface tension for the constant couplings $J^{\min}$ and
$J^{\max}$ also converge and we denote by $\tau^{\min} (\tmmathbf{n})$ and
$\tau^{\max} (\tmmathbf{n})$ their respective limits.

The sub-additivity is of much help for controlling the order of deviations
from the quenched value of surface tension $\tau_{\beta}^q (\tmmathbf{n})$.
Upper large deviations happen at a volume order hence they have no influence
on the phenomena we study here:

\begin{theorem}
  \label{thm-updev-tauJ}For any $\varepsilon > 0$ and $\delta > 0$,
  \[ \limsup_N \frac{1}{N^d} \log \mathbbm{P} \left( \tau^J_{\mathcal{R}_{0,
     N, \delta N} (\mathcal{S}, \tmmathbf{n})} \geqslant \tau^q (\tmmathbf{n})
     + \varepsilon \right) < 0. \]
\end{theorem}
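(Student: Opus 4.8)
The key is that $\tau^J_{\mathcal{R}}$ is sub-additive (Theorem \ref{thm-add-tauJ}) and bounded, so I want to run a standard concentration / sub-additivity argument in the random environment, but to get the \emph{volume}-order exponential bound for \emph{upper} deviations I need to exploit that the environment is i.i.d. and that large rectangular slabs split into many independent sub-cells. The plan is to tile the big slab $\mathcal{R}_{0,N,\delta N}(\mathcal{S},\tmmathbf{n})$ by roughly $(N/\ell)^{d-1}$ congruent sub-parallelepipeds of base side-length $\ell$ (with $\ell$ large but fixed), stacked so that disconnection in each sub-cell forces disconnection in the whole cell; by sub-additivity of surface tension (together with the comparison that passing from wired to the boundary condition induced by the sub-cells only helps disconnection) one gets
\[
  \tau^J_{\mathcal{R}_{0,N,\delta N}} \;\leqslant\; \frac{1}{(\#\text{cells})}\sum_{i} \tau^{J}_{\mathcal{R}^{(i)}} + o(1),
\]
where the $\tau^J_{\mathcal{R}^{(i)}}$ depend on disjoint edge sets and are therefore i.i.d.\ under $\mathbbm{P}$, each bounded by $c_d\beta J^{\max}$ (Proposition \ref{prop-tauJ-mon}).

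**Main estimate.** Since by Theorem \ref{thm-conv-tauq} the common value $\mathbbm{E}\,\tau^J_{\mathcal{R}^{(i)}}$ is within $\varepsilon/2$ of $\tau^q(\tmmathbf{n})$ once $\ell$ is large enough, the event $\{\tau^J_{\mathcal{R}_{0,N,\delta N}}\geqslant \tau^q(\tmmathbf{n})+\varepsilon\}$ forces the average of the $m\sim (N/\ell)^{d-1}$ i.i.d.\ bounded variables $\tau^J_{\mathcal{R}^{(i)}}$ to exceed its mean by at least $\varepsilon/2$ (for $N$ large, absorbing the $o(1)$ term). Hoeffding's inequality then gives
\[
  \mathbbm{P}\!\left(\tau^J_{\mathcal{R}_{0,N,\delta N}}\geqslant \tau^q(\tmmathbf{n})+\varepsilon\right)\;\leqslant\; \exp\!\left(-\,c\,\frac{\varepsilon^2}{(\beta J^{\max})^2}\,m\right),
\]
and since $m\sim (N/\ell)^{d-1}$ grows only like $N^{d-1}$, this is \emph{not} yet of volume order $N^d$. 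To upgrade $N^{d-1}$ to $N^d$ one uses the extension: the slab has height $\delta N$ in the direction $\tmmathbf{n}$, and $\tau^J_{\mathcal{R}}$ is non-increasing in $H$, so I can instead stack the sub-cells \emph{also} along $\tmmathbf{n}$ — subdividing the height $\delta N$ into $\sim N$ layers of fixed height $h$ — and take in each horizontal position the \emph{minimum} over the $\sim N$ vertical layers of the corresponding disconnection events is still sub-additive in the right direction; more simply, since $\mathcal{D}_{\mathcal{R}}$ for the tall slab is implied by disconnection across \emph{any} single horizontal sub-slab of fixed height, one gets $\Phi^{J,w}_{\mathcal{R}_{0,N,\delta N}}(\mathcal{D}_{\mathcal{R}})\geqslant \prod$ over the $\sim N$ layers of the corresponding probabilities (by FKG, the disconnection events being decreasing), hence $\tau^J_{\mathcal{R}_{0,N,\delta N}}\leqslant \frac{1}{L^{d-1}}\sum_{\text{layers}}(-\log\Phi^{J,w}_{\text{layer}}(\mathcal{D}))$, an average of $\sim N\cdot (N/\ell)^{d-1}\sim N^d/\ell^{d-1}$ i.i.d.\ bounded terms. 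Hoeffding now yields the bound $\exp(-cN^d)$, i.e.\ $\limsup_N N^{-d}\log\mathbbm{P}(\cdots)<0$.

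**Anticipated obstacles.** The main subtlety is bookkeeping with boundary conditions so that the sub-additive decomposition is legitimate with the \emph{wired} boundary condition on the outside and with disconnection events that genuinely factorize over disjoint edge sets: one must check that imposing wired boundary condition on each sub-cell (which is the worst case for disconnection, by the comparison inequality (i) for $\pi$, since disconnection is a decreasing event) only decreases $\Phi^{J,w}_{\text{sub-cell}}(\mathcal{D})$, so that $\tau^J$ of the union is bounded above by an average of sub-cell $\tau^J$'s computed with wired boundary — and that these sub-cell quantities depend on disjoint edge sets, giving genuine independence under $\mathbbm{P}$. A secondary point is that the FKG product lower bound over vertical layers requires the layers to be edge-disjoint and the events $\mathcal{D}$ to be decreasing on the respective edge sets, both of which hold, but one must be slightly careful that disconnection across one fixed-height horizontal sub-slab does imply disconnection of $\partial^+\hat{\mathcal{R}}$ from $\partial^-\hat{\mathcal{R}}$ in the full tall slab — this is where the geometry (the sub-slab separates top from bottom) is used. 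Finally, the $o(1)$ correction terms coming from the non-matching of discretized boundaries and from the sub-additivity remainder must be shown to be negligible compared with $\varepsilon$ uniformly in $N$ once $\ell,h$ are fixed large, which is routine given the $c_d\beta J^{\max}$ bound of Proposition \ref{prop-tauJ-mon}.
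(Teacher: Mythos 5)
Your first step — tiling a fixed-height slab horizontally via the sub-additivity of Theorem~\ref{thm-add-tauJ}, and then applying a large-deviations bound (Hoeffding/Cram\'er) to the i.i.d.\ average of sub-cell tensions to get a surface-order bound for upper deviations in fixed-height slabs — is exactly the paper's first step, giving~(\ref{eq-cost-inc-tauJLH}).

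Your second step, however, contains a concrete logical error. You correctly observe that $\mathcal{D}_{\mathcal{R}}$ for the tall slab is implied by disconnection in \emph{any single} horizontal layer, i.e.\ $\mathcal{D}_{\mathcal{R}} \supset \bigcup_i \mathcal{D}_{\text{layer }i}$. But from this the correct consequence is $\Phi^{J,w}_{\mathcal{R}}(\mathcal{D}_{\mathcal{R}}) \geqslant \max_i \Phi(\mathcal{D}_{\text{layer }i})$, hence (after closing the lateral edges and invoking DLR, which costs a $c_d\delta\beta$ error) $\tau^J_{\mathcal{R}} \leqslant c_d\delta\beta + \min_i \tau^J_{\mathcal{R}_i}$ — a $\min$, not a sum. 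What you write instead, $\Phi(\mathcal{D}_{\mathcal{R}}) \geqslant \prod_i \Phi(\mathcal{D}_{\text{layer }i})$ and hence $\tau^J_{\mathcal{R}} \leqslant \tfrac{1}{L^{d-1}}\sum_{\text{layers}}(-\log\Phi_{\text{layer}}(\mathcal{D}))$, is a vastly weaker (though not false) lower bound on $\Phi(\mathcal{D}_{\mathcal{R}})$ and translates into the useless statement $\tau^J_{\mathcal{R}} \lesssim N\,\tau^q(\tmmathbf{n})$. There is also no legitimate way to turn this into ``an average of $N^d/\ell^{d-1}$ i.i.d.\ bounded terms'': the vertical decomposition produces a $\min$, not an average, and Hoeffding is not the right tool for the vertical direction.

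The correct finish (which is what the paper does, and which your own earlier sentence about ``taking the minimum over the vertical layers'' was gesturing at before the ``more simply'' pivot) is: with $\tau^J_{\mathcal{R}} \leqslant c_d\delta\beta + \min_i \tau^J_{\mathcal{R}_i}$ and $\delta$ small so that $c_d\delta\beta < \varepsilon$, the event $\{\tau^J_{\mathcal{R}} \geqslant \tau^q(\tmmathbf{n}) + 2\varepsilon\}$ forces \emph{every} layer to satisfy $\{\tau^J_{\mathcal{R}_i} \geqslant \tau^q(\tmmathbf{n}) + \varepsilon\}$. These $\sim N$ events are independent under $\mathbbm{P}$ (disjoint edge sets), and each has probability at most $\exp(-cN^{d-1})$ by the first step, so the intersection has probability at most $\exp(-cN^d)$. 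One then removes the restriction to small $\delta$ by the monotonicity of $\tau^J_{\mathcal{R}}$ in $H$ (Proposition~\ref{prop-tauJ-mon}). Please replace the product/sum reasoning in your second step with this $\min$-and-independence argument.
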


We will be more concerned with lower large deviations. These are possible when
$\tau^{\min} (\tmmathbf{n}) < \tau^q (\tmmathbf{n})$. The inequality is known
to be strict only in two specific cases: when $J^{\min} = 0$ and $\beta >
\hat{\beta}_c$, it is the case that $\tau^{\min} (\tmmathbf{n}) < \tau^q
(\tmmathbf{n})$ because $\tau^{\min} (\tmmathbf{n}) = 0$, while the coarse
graining {\cite{M01}} implies:

\begin{proposition}
  \label{prop-tauq-pos}Assume $\beta > \hat{\beta}_c$. For any $\tmmathbf{n}
  \in S^{d - 1}$, $\tau^q (\tmmathbf{n}) > 0$.
\end{proposition}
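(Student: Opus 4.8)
The plan is to reduce the statement to an exponential bound on the disconnection probability and then feed that bound with the coarse graining of \cite{M01}. Fix $\tmmathbf{n} \in S^{d-1}$, a square $\mathcal{S} \in \mathbbm{S}_{\tmmathbf{n}}$ and $\delta > 0$, and write $\mathcal{R} = \mathcal{R}_{0, N, \delta N}(\mathcal{S}, \tmmathbf{n})$. By Proposition \ref{prop-tauJ-mon} the variables $\tau^J_\mathcal{R}$ are bounded by $c_d \beta J^{\max}$, so the convergence in probability of Theorem \ref{thm-conv-tauq} together with bounded convergence gives $\tau^q(\tmmathbf{n}) = \lim_N \mathbbm{E}\, \tau^J_\mathcal{R}$; moreover $\Phi^{J, w}_\mathcal{R}(\mathcal{D}_\mathcal{R})$ is positive (again by Proposition \ref{prop-tauJ-mon}), so Jensen's inequality applied to the convex function $-\log$ yields
\[
\mathbbm{E}\, \tau^J_\mathcal{R} = \frac{1}{N^{d-1}}\, \mathbbm{E}\bigl[ -\log \Phi^{J, w}_\mathcal{R}(\mathcal{D}_\mathcal{R}) \bigr] \geq -\frac{1}{N^{d-1}} \log \mathbbm{E}\bigl[ \Phi^{J, w}_\mathcal{R}(\mathcal{D}_\mathcal{R}) \bigr].
\]
Hence it is enough to produce $c > 0$ with $\mathbbm{E}\bigl[ \Phi^{J, w}_\mathcal{R}(\mathcal{D}_\mathcal{R}) \bigr] \leq e^{-c N^{d-1}}$ for $N$ large.

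The input is the renormalization of \cite{M01} (Theorem 5.7 there): since $\beta > \hat{\beta}_c$, for every $\rho > 0$ there is a block scale $K$ and a local ``good block'' event — depending on $(J_e, \omega_e)$ only for $e$ in a bounded neighbourhood $N(B)$ of the $K$-block $B$ — such that (a) $\mathbbm{E}\bigl[ \sup_\pi \Phi^{J, \pi}_{N(B)}(B \text{ not good}) \bigr] \leq \rho$, the supremum running over boundary conditions, and (b) the clusters carried by two nearest-neighbour good blocks are joined in $\omega$, so that a connected chain of good blocks supports a single $\omega$-cluster joining all of them and reaching the outer faces of its extreme blocks. Tile $\hat{\mathcal{R}}$ by $K$-blocks. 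If $\mathcal{D}_\mathcal{R}$ occurs there is no $\omega$-path from $\partial^+ \hat{\mathcal{R}}$ to $\partial^- \hat{\mathcal{R}}$, hence by (b) no connected chain of good blocks joining a block that meets the lower face to one that meets the upper face; the standard duality argument in $\mathbbm{Z}^d$ then produces a $*$-connected ``surface'' $\Sigma$ of not-good blocks separating these two families. Since $\mathcal{R}$ has extension $2 \delta N$ along $\tmmathbf{n}$ and cross-section $N \mathcal{S}$ orthogonal to $\tmmathbf{n}$, every line parallel to $\tmmathbf{n}$ crossing $\mathcal{R}$ runs from $\partial^- \hat{\mathcal{R}}$ to $\partial^+ \hat{\mathcal{R}}$ and therefore meets $\Sigma$; so $\Sigma$ projects onto the whole cross-section and $|\Sigma| \geq c_d (N/K)^{d-1}$.

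The remaining step is a Peierls bound. Summing over separating $*$-surfaces, $\Phi^{J, w}_\mathcal{R}(\mathcal{D}_\mathcal{R}) \leq \sum_\Sigma \Phi^{J, w}_\mathcal{R}(\text{all blocks of } \Sigma \text{ not good})$. For a fixed $\Sigma$, extract $S \subseteq \Sigma$ with pairwise $\ell^\infty$-distance at least $3K$ and $|S| \geq |\Sigma| / c'_d$, so that the neighbourhoods $\{ N(B) : B \in S \}$ are pairwise disjoint; conditioning on $\omega$ off their union and using the DLR equation, the events $\{ B \text{ not good} \}$, $B \in S$, become independent with conditional probability at most $\eta_B(J) := \sup_\pi \Phi^{J, \pi}_{N(B)}(B \text{ not good})$, a function of $J$ on $N(B)$ only. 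Hence $\Phi^{J, w}_\mathcal{R}(\text{all of } \Sigma \text{ not good}) \leq \prod_{B \in S} \eta_B(J)$, and since the couplings on the disjoint sets $N(B)$ are independent, $\mathbbm{E} \prod_{B \in S} \eta_B(J) = \prod_{B \in S} \mathbbm{E}\, \eta_B(J) \leq \rho^{|S|}$ by (a). As the number of $*$-connected separating surfaces made of $m$ blocks is at most $(\delta N / K)\, C_d^m$ (choice of one block on a fixed line through $\mathcal{R}$, then of the shape), we get
\[
\mathbbm{E}\bigl[ \Phi^{J, w}_\mathcal{R}(\mathcal{D}_\mathcal{R}) \bigr] \leq \frac{\delta N}{K} \sum_{m \geq c_d (N/K)^{d-1}} C_d^m\, \rho^{m / c'_d}.
\]
Choosing $K$, hence $\rho = \rho(K)$, so small that $C_d\, \rho^{1/c'_d} \leq 1/2$, the right-hand side is at most $(\delta N / K)\, 2^{1 - c_d (N/K)^{d-1}} \leq e^{-c N^{d-1}}$ for $N$ large, with $c = c(d, K, \delta) > 0$; combined with the first paragraph this gives $\tau^q(\tmmathbf{n}) \geq c > 0$.

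The delicate point is the interface with \cite{M01}. One has to extract property (b) in the precise form used here — in particular that the $\omega$-cluster carried by a chain of good blocks really reaches $\partial^+ \hat{\mathcal{R}}$ and $\partial^- \hat{\mathcal{R}}$, which requires some care about the blocks abutting $\partial \hat{\mathcal{R}}$ (e.g. working in a slightly enlarged box) — and one needs the good-block estimate in the uniform-over-boundary-conditions form (a), so that the DLR conditioning yields a genuine product over $S$ that survives the average in $J$. Everything else (the duality ``no good chain $\Rightarrow$ a $*$-connected dual cut'', the counting of cuts, the extraction of a $3K$-separated subfamily) is routine and parallels the Peierls estimates of the pure model.
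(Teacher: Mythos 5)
Your proof is correct and follows the same route as the paper's (admittedly terse) sketch: both reduce to the positivity of the annealed surface tension $\tau^a=\tau^{\lambda=1}$ via Jensen's inequality $\mathbbm{E}[-\log X]\geqslant -\log\mathbbm{E}[X]$, and both obtain $\tau^a>0$ from the renormalization of \cite{M01}, which dominates a high-density block percolation and whose disconnection probability is then controlled by a Peierls-type argument. The paper simply cites the positivity of surface tension for high-density site percolation as known, where you write out the Peierls estimate and the independence mechanism (the $3K$-separated extraction and iterated DLR conditioning) by hand; the content is the same.

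One phrasing worth tightening: conditioning on $\omega$ outside $\bigcup_{B\in S}N(B)$ does \emph{not} factorize $\Phi^{J,w}_\mathcal{R}$ over the disjoint $N(B)$, since the cluster count in (\ref{eq-def-FK}) is nonlocal. The correct mechanism is the one you clearly have in mind: condition the blocks of $S$ one at a time, bounding each conditional probability by $\eta_B(J)=\sup_\pi\Phi^{J,\pi}_{N(B)}(B\text{ not good})$ via DLR and monotonicity in the boundary condition. The iterated product bound, not literal conditional independence, is what delivers $\Phi^{J,w}_\mathcal{R}(\text{all of }S\text{ not good})\leqslant\prod_{B\in S}\eta_B(J)$; after that the independence of the $J$'s on disjoint $N(B)$'s legitimately gives $\mathbbm{E}\prod_B\eta_B(J)=\prod_B\mathbbm{E}\eta_B(J)\leqslant\rho^{|S|}$. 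With this wording fixed the argument is sound.
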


When $\mathbbm{P}(J_e > J^{\min}) > p_c (d)$ and $\beta$ is large enough, the
strict inequality $\tau^{\min} (\tmmathbf{n}) < \tau^q (\tmmathbf{n})$ also
holds, cf. Corollary \ref{cor-taulq-lowt} below.

When lower large deviations occur, they have at most surface order. It is
another consequence of sub-additivity that:

\begin{theorem}
  \label{thm-rate-I}For every $\tmmathbf{n} \in S^{d - 1}$ and $\beta
  \geqslant 0$, $\tau > \tau^{\min} (\tmmathbf{n})$, the limit
  \begin{equation}
    I_{\tmmathbf{n}} (\tau) = \lim_N - \frac{1}{N^{d - 1}} \log \mathbbm{P}
    \left( \tau^J_{\mathcal{R}_{0, N, \delta N} (\mathcal{S}, \tmmathbf{n})}
    \leqslant \tau \right) \label{eq-conv-Itaud}
  \end{equation}
  exists in $[0, + \infty)$ and does not depend on $\delta > 0$, nor on
  $\mathcal{S} \in \mathbbm{S}_{\tmmathbf{n}}$. $I_{\tmmathbf{n}}$ is
  continuous, convex non-increasing, and $I_{\tmmathbf{n}} (\tau) = 0$ for
  $\tau \geqslant \tau^q (\tmmathbf{n})$.
\end{theorem}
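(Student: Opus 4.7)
My approach combines the subadditivity of surface tension (Theorem~\ref{thm-add-tauJ}) with the $\mathbbm{P}$-independence of $\tau^J_{\mathcal{R}}$ on edge-disjoint rectangles, inherited from the i.i.d.\ structure of the couplings. Write $\mathcal{R}_N := \mathcal{R}_{0, N, \delta N}(\mathcal{S}, \tmmathbf{n})$, $a_N(\tau) := -\log \mathbbm{P}(\tau^J_{\mathcal{R}_N} \leq \tau)$ and $I_N(\tau) := a_N(\tau)/N^{d-1}$. Given $M < N$, write $N = kM + r$ with $r < M$ and tile $N\mathcal{S}$ by $k^{d-1}$ translates of $M\mathcal{S}$, leaving a boundary strip of $(d-1)$-volume $O(MN^{d-2})$. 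The tiles $\mathcal{R}^{(i)} = \mathcal{R}_{x_i, M, \delta N}(\mathcal{S}, \tmmathbf{n})$ share the height of $\mathcal{R}_N$, so by the $H$-monotonicity of Proposition~\ref{prop-tauJ-mon}, $\tau^J_{\mathcal{R}^{(i)}} \leq \tau^J_{\mathcal{R}_{x_i, M, \delta M}}$. Applying Theorem~\ref{thm-add-tauJ}, which controls $\tau^J_{\mathcal{R}_N}$ by the area-weighted average of the tile surface tensions, and absorbing the boundary strip (uniformly bounded by $c_d \beta J^{\max}$) into an error $\varepsilon(M,N) \to 0$ as $N \to \infty$ with $M$ fixed, one obtains
\[
\tau^J_{\mathcal{R}_N} \;\leq\; \frac{1}{k^{d-1}} \sum_i \tau^J_{\mathcal{R}_{x_i, M, \delta M}} \;+\; \varepsilon(M,N).
\]
The tiles have disjoint edge sets, so the $\tau^J_{\mathcal{R}_{x_i, M, \delta M}}$ are i.i.d.\ copies of $\tau^J_{\mathcal{R}_M}$ under $\mathbbm{P}$, and thus
\[
a_N\bigl(\tau + \varepsilon(M,N)\bigr) \;\leq\; k^{d-1}\, a_M(\tau).
\]

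Dividing by $N^{d-1} = (kM)^{d-1}$ and letting $N = kM \to \infty$ with $M$ fixed gives $\limsup_N I_N(\tau + \varepsilon) \leq \inf_M I_M(\tau)$ for every $\varepsilon > 0$. Combined with the trivial $\inf_M I_M(\tau) \leq \liminf_N I_N(\tau)$ and the right-continuity of the non-increasing map $\tau \mapsto \limsup_N I_N(\tau)$ (established from convexity in the next step), this yields $\limsup_N I_N(\tau) = \liminf_N I_N(\tau) =: I_{\tmmathbf{n}}(\tau)$ for every $\tau > \tau^{\min}(\tmmathbf{n})$, which is the existence claim in \eqref{eq-conv-Itaud}.

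A variant of the same tiling construction, using two thresholds $\tau_1, \tau_2$ in fractions $\lambda$ and $1 - \lambda$ of the tiles (first for rational $\lambda$ with $\lambda k^{d-1} \in \mathbbm{N}$, then extended to $\lambda \in [0,1]$ by approximation), yields
\[
I_N\bigl( \lambda \tau_1 + (1-\lambda) \tau_2 + \varepsilon(M,N) \bigr) \;\leq\; \lambda\, I_M(\tau_1) + (1-\lambda)\, I_M(\tau_2),
\]
so $I_{\tmmathbf{n}}$ is convex on $(\tau^{\min}(\tmmathbf{n}), \infty)$, hence continuous there, which supplies the right-continuity used above. Monotonicity of $I_{\tmmathbf{n}}$ in $\tau$ is immediate. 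The $\delta$-independence of the limit follows from Proposition~\ref{prop-tauJ-mon} together with mutual tilings of tall and flat rectangles, and the $\mathcal{S}$-independence from the translation invariance of $\mathbbm{P}$. Finally, Theorem~\ref{thm-conv-tauq} gives $\mathbbm{P}(\tau^J_{\mathcal{R}_N} \leq \tau + \eta) \to 1$ for every $\tau \geq \tau^q(\tmmathbf{n})$ and $\eta > 0$, so $I_{\tmmathbf{n}}(\tau + \eta) = 0$, and continuity extends this to $I_{\tmmathbf{n}}(\tau) = 0$.

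The cornerstone of the argument is Theorem~\ref{thm-add-tauJ}: the naive geometric implication that top-to-bottom disconnection in every tile forces disconnection in $\mathcal{R}_N$ fails, since an open path in $\mathcal{R}_N$ may cross between adjacent tiles through their shared interior faces; this obstruction is exactly what the subadditivity of $\tau^J_\mathcal{R}$ resolves. Once this is accepted, the remaining delicate points are the simultaneous use of the thresholds $\tau$ and $\tau + \varepsilon(M,N)$ and the need to establish convexity (hence continuity) of $I_{\tmmathbf{n}}$ in parallel with the existence of the limit, in order to close the $\limsup/\liminf$ comparison at the target value of $\tau$.
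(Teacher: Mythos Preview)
Your approach is essentially the paper's: both use Theorem~\ref{thm-add-tauJ} together with the i.i.d.\ tile structure to obtain near-subadditivity of $N\mapsto -\log\mathbbm{P}(\tau^J_{\mathcal{R}_N}\le\tau)$, and both derive convexity from the two-threshold variant of the same tiling. The paper organises the limits differently---first defining $I_{\tmmathbf{n}}(\tau)=\lim_{\varepsilon\to 0^+}\inf_H\limsup_L I_{\mathcal{R}_{0,L,H}(\mathcal{S},\tmmathbf{n})}(\tau+\varepsilon)$ with $H$ fixed, then identifying this with the diagonal limit---which sidesteps the circularity you flag at the end, but the content is the same.

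There are three genuine slips. (1) The error from Theorem~\ref{thm-add-tauJ} is $\beta c_d(M/N+1/M)$; the $1/M$ term, arising precisely from closing edges between adjacent tiles (the very obstruction you mention in your final paragraph), does \emph{not} vanish as $N\to\infty$ with $M$ fixed. Your conclusion should read $\limsup_N I_N(\tau+\beta c_d/M)\le I_M(\tau)$ for each $M$, and removing the shift then genuinely requires the right-continuity you invoke from convexity. (2) The $\mathcal{S}$-independence is not a consequence of translation invariance of $\mathbbm{P}$: distinct $\mathcal{S}\in\mathbbm{S}_{\tmmathbf{n}}$ are related by rotations about $\tmmathbf{n}$, not lattice shifts. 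It follows instead by applying Theorem~\ref{thm-add-tauJ} with $\mathcal{S}\neq\mathcal{S}'$ for the outer and inner rectangles, giving $I_{(\mathcal{S},\tmmathbf{n})}\le I_{(\mathcal{S}',\tmmathbf{n})}$ and hence equality by symmetry. (3) You never establish $I_{\tmmathbf{n}}(\tau)<\infty$ for $\tau>\tau^{\min}(\tmmathbf{n})$. The paper obtains this via a Lipschitz estimate on $J\mapsto\tau^J_{\mathcal{R}}$ (see (\ref{eq-def-aeJ}) and Proposition~\ref{prop-ctrl-aeJ}), which yields $\limsup_L\tau^{J^{\min}+\eta}_{\mathcal{R}_{0,L,H}(\mathcal{S},\tmmathbf{n})}<\tau$ for small $\eta>0$ and hence $I_{\mathcal{R}_{0,L,H}}(\tau)\le -c_d H\log\mathbbm{P}(J_e\le J^{\min}+\eta)<\infty$.
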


For convenience, we extend the definition of $I_{\tmmathbf{n}}$ letting
\begin{equation}
  I_{\tmmathbf{n}} (\tau) = \left\{ \begin{array}{ll}
    + \infty & \text{for } \tau < \tau^{\min} (\tmmathbf{n})\\
    \lim_{\varepsilon \rightarrow 0^+} I_{\tmmathbf{n}} (\tau + \varepsilon) &
    \text{at } \tau = \tau^{\min} (\tmmathbf{n}) .
  \end{array} \right.
\end{equation}

\begin{figure}[h!]
  \begin{center}
  \includegraphics[width=8cm]{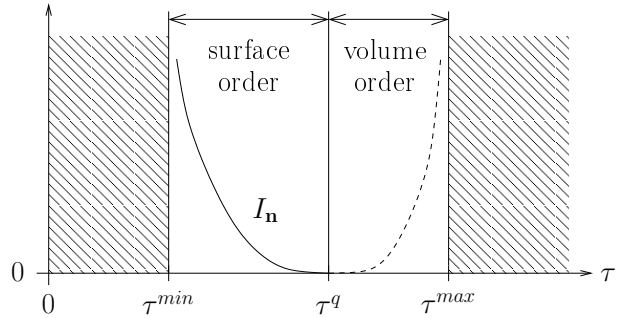}
  \end{center}
  \caption{\label{fig-rate-dev}Large deviations of surface tension.}
\end{figure}

In order to show that lower deviations are exactly of surface order, we need
to prove that $I_{\tmmathbf{n}}$ is positive on the left of $\tau^q$. We
developed an argument based on measure concentration coupled with a control of
the length of the interface. In the case of the Ising model at low
temperatures we could establish a first control:

\begin{theorem}
  \label{thm-Iquad-lowt}Assume $q = 2$ and $J^{\min} > 0$. Then, for $\beta$
  large enough there exists $c > 0$ such that, for all $r > 0$,
  \begin{equation}
    I_{\tmmathbf{n}} (\tau^q (\tmmathbf{n}) - r) \geqslant cr^2 .
  \end{equation}
\end{theorem}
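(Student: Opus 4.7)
The plan is to prove the Gaussian lower-tail concentration
\[
\mathbbm{P}\bigl(\tau^J_{\mathcal{R}_{0,N,\delta N}(\mathcal{S}, \tmmathbf{n})} \leqslant \mathbbm{E}\tau^J_{\mathcal{R}_{0,N,\delta N}(\mathcal{S}, \tmmathbf{n})} - r\bigr) \leqslant \exp(-c\, r^2 N^{d-1})
\]
for some $c = c(\beta, J^{\min}, d) > 0$ and all $r > 0$. Combined with Theorem~\ref{thm-conv-tauq} and the deterministic upper bound of Proposition~\ref{prop-tauJ-mon} (which together give $\mathbbm{E}\tau^J_{\mathcal{R}_N} \to \tau^q(\tmmathbf{n})$ by dominated convergence), absorbing $|\mathbbm{E}\tau^J_{\mathcal{R}_N} - \tau^q(\tmmathbf{n})| \leqslant r/2$ for $N$ large yields $I_{\tmmathbf{n}}(\tau^q(\tmmathbf{n}) - r) \geqslant c r^2/4$, which is the claim up to renaming the constant.

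First I compute the sensitivity of $\tau^J_{\mathcal{R}}$ to an individual coupling. Differentiating \eqref{eq-def-FK}--\eqref{eq-def-tauJ} through $p_e = 1 - e^{-\beta J_e}$ yields
\[
\frac{\partial \tau^J_{\mathcal{R}}}{\partial J_e} \;=\; \frac{1}{L^{d-1}} \cdot \frac{\beta}{1 - e^{-\beta J_e}} \cdot \Delta_e^J, \qquad \Delta_e^J := \Phi^{J,w}_{\mathcal{R}}(\omega_e = 1) - \Phi^{J,w}_{\mathcal{R}}(\omega_e = 1 \mid \mathcal{D}_{\mathcal{R}}),
\]
with $\Delta_e^J \in [0,1]$ by the FKG inequality applied to the decreasing event $\mathcal{D}_{\mathcal{R}}$ and the increasing event $\{\omega_e = 1\}$, and prefactor bounded by a constant $C = C(\beta, J^{\min})$ thanks to $J_e \geqslant J^{\min} > 0$. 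Summing,
\[
\sum_e \Delta_e^J \;=\; \Phi^{J,w}_{\mathcal{R}}(N_0 \mid \mathcal{D}_{\mathcal{R}}) - \Phi^{J,w}_{\mathcal{R}}(N_0), \qquad N_0(\omega) := \#\{e : \omega_e = 0\},
\]
so $\sum_e \Delta_e^J$ is the expected number of \emph{extra} closed edges forced by conditioning on disconnection.

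The heart of the proof, and its main obstacle, is the uniform interface-length estimate
\[
\sum_e \Delta_e^J \;\leqslant\; C(\beta)\, L^{d-1}
\]
(pointwise in $J$, or outside a $\mathbbm{P}$-negligible set); this is the "control of the length of the interface" referred to in the theorem. For $q = 2$, $\beta$ large, and $J^{\min} > 0$ I would establish it via a Peierls / cluster-expansion argument showing that conditioning on $\mathcal{D}_{\mathcal{R}}$ essentially superimposes a single separating interface of area comparable to the minimum cut $\sim L^{d-1}$ onto an otherwise typical FK configuration, so that the bulk contributions to $N_0$ cancel between $\Phi^{J,w}_{\mathcal{R}}$ and $\Phi^{J,w}_{\mathcal{R}}(\cdot \mid \mathcal{D}_{\mathcal{R}})$, leaving only the $O(L^{d-1})$ surface correction. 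Granted this, since $(\Delta_e^J)^2 \leqslant \Delta_e^J$,
\[
\sum_e \left( \frac{\partial \tau^J_{\mathcal{R}}}{\partial J_e} \right)^2 \;\leqslant\; \frac{C^2}{L^{2(d-1)}} \sum_e \Delta_e^J \;\leqslant\; \frac{C'(\beta)}{L^{d-1}},
\]
and feeding this variance proxy into a log-Sobolev / modified-bounded-differences concentration inequality for Lipschitz functions of the independent bounded couplings $(J_e)_e$ (of Boucheron--Lugosi--Massart type) delivers the announced Gaussian concentration. The differentiation, FKG, and concentration steps are routine; all of the real work goes into the low-temperature interface bound, where the hypotheses $q = 2$, $\beta$ large, and $J^{\min} > 0$ are genuinely needed.
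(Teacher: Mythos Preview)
Your proposal is correct and follows essentially the same route as the paper: the paper computes the same gradient (its $a_e^J = \Delta_e^J/p_e$, Proposition~\ref{prop-ctrl-aeJ}), bounds $\sum_e a_e^J \leqslant C N^{d-1}$ by exactly the Peierls argument in the Ising spin representation that you describe (Proposition~\ref{prop-lint-Ising}, using $q=2$, $\beta$ large, and $J_e \geqslant J^{\min}>0$), and runs the Herbst argument to obtain $\tau^\lambda(\tmmathbf{n}) \geqslant \lambda\tau^q(\tmmathbf{n}) - C(\beta)\lambda^2$, which by the duality \eqref{eq-rec-Itau} is equivalent to your Gaussian lower-tail bound. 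The one point the paper makes explicit that you leave inside the ``BLM-type'' black box is how to avoid a log-Sobolev hypothesis on $\mathbbm{P}$: it uses a generic product-measure entropy bound (Corollary~5.8 in Ledoux) together with the comparison $\sup_{J_e} a_e^J \leqslant e^\beta \inf_{J_e} a_e^J$ to control the oscillation in each coordinate.
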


In the general case a careful adaptation of the method yields:

\begin{theorem}
  \label{thm-Iquad-gal}For every $\tmmathbf{n} \in S^{d - 1}$, for
  Lebesgue-almost all $\beta \geqslant 0$,
  \begin{equation}
    \limsup_{r \rightarrow 0^+} \frac{I_{\beta, \tmmathbf{n}} (\tau^q_{\beta}
    (\tmmathbf{n}) - r)}{r^2} > 0 \label{eq-quad-lwb-I} .
  \end{equation}
\end{theorem}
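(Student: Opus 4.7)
The plan is to adapt the measure-concentration argument of Theorem \ref{thm-Iquad-lowt}: prove an Efron--Stein type variance bound $\mathrm{Var}(\tau^J_{\mathcal{R}_N}) = O(1/N^{d-1})$ by controlling the length of the dual interface realising the disconnection, upgrade it to Gaussian concentration, and translate the resulting tail estimate into a quadratic lower bound on $I_{\beta,\tmmathbf{n}}$ near $\tau^q_\beta(\tmmathbf{n})$. Outside the low-temperature regime of Theorem \ref{thm-Iquad-lowt} the interface is no longer deterministically localised, so the cut-set length must be controlled probabilistically; this forces the weaker limsup conclusion at Lebesgue-almost every $\beta$ rather than the full quadratic bound at every $r$.

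The main step is the sharp variance estimate. Replacing a single $J_e$ by an i.i.d.\ copy changes $L^{d-1}\tau^J_{\mathcal{R}_N} = -\log\Phi^{J,w}_{\mathcal{R}_N}(\mathcal{D}_{\mathcal{R}_N})$ by at most $\beta$ via a standard Radon--Nikodym comparison, but the naive Efron--Stein sum only yields $O(\beta^2/N^{d-2})$, which is insufficient for $d \ge 3$ and completely useless for $d=2$. The refinement is to observe that the effective influence of $J_e$ is of order $\beta\cdot\mathbbm{E}\Phi^{J,w}_{\mathcal{R}_N}\bigl(e\in\Gamma\mid\mathcal{D}_{\mathcal{R}_N}\bigr)$, where $\Gamma$ is the cheapest cut realising the disconnection. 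Using the coarse-graining of \cite{M01} together with the strict positivity of $\tau^q$ (Proposition \ref{prop-tauq-pos}), the typical $\Gamma$ has $(d-1)$-dimensional area of order $N^{d-1}$, and summing squared effective influences gives $\mathrm{Var}(\tau^J_{\mathcal{R}_N}) \le C(\beta)/N^{d-1}$. A bounded-differences / entropy-method argument (whose per-coordinate increment is controlled by the interface-membership probability above) then upgrades this to the Gaussian concentration inequality
\[
\mathbbm{P}\bigl(\tau^J_{\mathcal{R}_N} \le \mathbbm{E}\tau^J_{\mathcal{R}_N} - r\bigr) \le \exp\bigl(-c(\beta)\, r^2 N^{d-1}\bigr).
\]
Combined with $\mathbbm{E}\tau^J_{\mathcal{R}_N}\to\tau^q_\beta(\tmmathbf{n})$ (which follows from Theorem \ref{thm-conv-tauq} and Proposition \ref{prop-tauJ-mon}), this yields
\[
I_{\beta,\tmmathbf{n}}\bigl(\tau^q_\beta(\tmmathbf{n}) - r_k\bigr) \ge c(\beta)\, r_k^2 \,(1 - o(1))
\]
along any sequence $r_k\to 0$ with $\tau^q_\beta - \mathbbm{E}\tau^J_{\mathcal{R}_{N_k}} = o(r_k)$, giving the limsup bound (\ref{eq-quad-lwb-I}).

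The Lebesgue-a.e.\ qualification enters because the constant $C(\beta)$ must be finite and the interface-length control must be quantitatively effective at the chosen $\beta$. The expected cut-set length is governed by a $\beta$-derivative of a monotone annealed free energy, and since monotone functions on $[\hat\beta_c,\infty)$ are differentiable outside a Lebesgue-null set $\mathcal{N}$, restricting to $\beta\notin\mathcal{N}$ is precisely what secures $C(\beta)<\infty$ together with the requisite non-degeneracy. The genuine obstacle is the sharp variance estimate of the second step: outside the low-temperature regime the small-perturbation Peierls arguments driving Theorem \ref{thm-Iquad-lowt} are unavailable, and one must extract the $O(N^{d-1})$ bound on the cut-set length from the thermodynamic/renormalization input of \cite{M01} together with Proposition \ref{prop-tauq-pos} via a soft probabilistic argument rather than via deterministic contour estimates.
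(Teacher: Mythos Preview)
Your proposal contains the correct key idea but assembles it incorrectly. You rightly observe (third paragraph) that the expected cut-set length is governed by a $\beta$-derivative of a monotone quantity, so that monotonicity forces finiteness Lebesgue-a.e. This \emph{is} the heart of the paper's argument. However, you present it as a secondary technical point securing $C(\beta)<\infty$, while proposing coarse-graining from \cite{M01} and Proposition~\ref{prop-tauq-pos} as the primary mechanism for the $O(N^{d-1})$ interface-length bound (second and fourth paragraphs). That coarse-graining route is neither in the paper nor clearly feasible: positivity of $\tau^q$ gives a \emph{lower} bound on interface cost, not an upper bound on interface \emph{length}, and the renormalisation of \cite{M01} does not obviously control the number of edges in the microscopic optimal cut under $\Phi^{J,w}_{\mathcal R}(\,\cdot\mid\mathcal D_{\mathcal R})$.

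The paper bypasses any direct interface-length estimate. It works through the Legendre-dual quantity $\tau^\lambda_{\mathcal R}$ and Herbst's argument (Lemma~\ref{lem-dtau-ent}): the entropy of $\exp(f_\lambda)$ is bounded not by $\mathbbm{E}_\lambda\bigl(\sum_e a_e^J\bigr)$ (the interface length, used only at low temperature via Proposition~\ref{prop-lint-Ising}) but by $\mathbbm{E}_\lambda\bigl(\sum_e J_e\, a_e^J\bigr)$, which equals $\tfrac{L^{d-1}}{\lambda}\,\partial_\beta \tau^\lambda_{\mathcal R}$ exactly (second bound in Proposition~\ref{prop-upb-ent}). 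Integrating the resulting differential inequality in $\lambda$ gives $\limsup_{\lambda\to 0^+}(\tau^\lambda(\tmmathbf n)-\lambda\tau^q(\tmmathbf n))/\lambda^2 \ge -c\,K^{\mathbbm P,\beta}_{\tmmathbf n}$ with $K^{\mathbbm P,\beta}_{\tmmathbf n}$ an averaged $\beta$-derivative; then Fatou and Fubini give $\int_{\beta_1}^{\beta_2} K^{\mathbbm P,\beta}_{\tmmathbf n}\,d\beta \le \tilde\tau^q_{\beta_2}(\tmmathbf n)-\tilde\tau^q_{\beta_1}(\tmmathbf n)<\infty$, forcing $K^{\mathbbm P,\beta}_{\tmmathbf n}<\infty$ for a.e.~$\beta$, and Legendre duality (Lemma~\ref{lem-dual-as-Itau}) converts this into~\eqref{eq-quad-lwb-I}. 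The $\beta$-derivative trick is thus the \emph{entire} argument, not a finishing touch: there is no separate interface-length control, no Efron--Stein step, and no appeal to \cite{M01} or Proposition~\ref{prop-tauq-pos}.
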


These quadratic lower bounds on the rate function generalize common controls
for directed polymers models {\cite{N262,N271}}, which were introduced in
order to represent interfaces in the two-dimensional Ising model with random
couplings at low temperatures {\cite{N167,N262}}. Its is probable however that
the quadratic order in the former Theorems is not optimal in two dimensions,
as the comparison with directed polymers suggests that
\begin{equation}
  I_{\tmmathbf{n}} (\tau^q (\tmmathbf{n}) - r) \underset{r \rightarrow
  0^+}{\sim} cr^{3 / 2} \text{ \ and \ } \xi = \frac{2}{3} .
  \label{eq-Ixi-lpp}
\end{equation}
This scaling has been established rigorously for the zero-temperature limit of
directed polymers: \tmtextit{last passage percolation}, for a geometric
distribution of the passage times, see Theorem 1.1 and (2.23) in
{\cite{N258}}.

Some of our results on phase coexistence and on the dynamics of the dilute
Ising model {\cite{M00}} require that lower large deviations are actually of
surface order. An easy but important consequence of Theorem
\ref{thm-Iquad-gal} is:

\begin{corollary}
  \label{cor-NI-count}The lower large deviations are of surface order when
  $\beta \mapsto \tau^q_{\beta} (\tmmathbf{n})$ is left continuous. Hence the
  set
  \begin{equation}
    \mathcal{N}_I = \left\{ \beta \geqslant 0 : \exists \tmmathbf{n} \in S^{d
    - 1} \text{ and } r > 0 \text{ such that } I_{\beta, \tmmathbf{n}}
    (\tau^q_{\beta} (\tmmathbf{n}) - r) = 0 \right\} \label{eq-def-NI}
  \end{equation}
  is at most countable.
\end{corollary}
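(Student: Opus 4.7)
\emph{Two monotonicities.} The argument rests on two monotonicities of the rate function. First, $\tau \mapsto I_{\beta, \tmmathbf{n}}(\tau)$ is non-increasing by Theorem \ref{thm-rate-I}. Second, since $\tau^J_{\mathcal{R}}$ is non-decreasing in $\beta$ (Proposition \ref{prop-tauJ-mon}), for every fixed $\tau$ the probability $\mathbbm{P}(\tau^J_{\mathcal{R}} \leqslant \tau)$ is non-increasing in $\beta$, so passing to the limit in (\ref{eq-conv-Itaud}) shows that $\beta \mapsto I_{\beta, \tmmathbf{n}}(\tau)$ is non-decreasing.

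\emph{First claim.} Theorem \ref{thm-Iquad-gal} asserts that for Lebesgue-almost every $\beta$, $\limsup_{r \downarrow 0} I_{\beta, \tmmathbf{n}}(\tau^q_\beta(\tmmathbf{n}) - r)/r^2 > 0$; combined with the $\tau$-monotonicity this immediately upgrades to $I_{\beta, \tmmathbf{n}}(\tau^q_\beta(\tmmathbf{n}) - r) > 0$ for every $r > 0$. Now fix $\tmmathbf{n}$ and suppose $\beta \mapsto \tau^q_\beta(\tmmathbf{n})$ is left-continuous at $\beta_0$. Given $r > 0$, use the left-continuity together with the density of the Lebesgue-full set above to choose $\beta < \beta_0$ with
\[ \tau^q_{\beta_0}(\tmmathbf{n}) - \tau^q_\beta(\tmmathbf{n}) < r/2 \quad \text{and} \quad I_{\beta, \tmmathbf{n}}(\tau^q_\beta(\tmmathbf{n}) - r/2) > 0. \]
Since $\tau^q_{\beta_0}(\tmmathbf{n}) - r < \tau^q_\beta(\tmmathbf{n}) - r/2$, the two monotonicities give
\[ I_{\beta_0, \tmmathbf{n}}(\tau^q_{\beta_0}(\tmmathbf{n}) - r) \geqslant I_{\beta, \tmmathbf{n}}(\tau^q_{\beta_0}(\tmmathbf{n}) - r) \geqslant I_{\beta, \tmmathbf{n}}(\tau^q_\beta(\tmmathbf{n}) - r/2) > 0, \]
which is the stated surface-order lower bound.

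\emph{Countability.} By the contrapositive of the first claim, $\mathcal{N}_I \subset \bigcup_{\tmmathbf{n} \in S^{d-1}} D_{\tmmathbf{n}}$ where $D_{\tmmathbf{n}}$ is the (countable) discontinuity set of the non-decreasing map $\beta \mapsto \tau^q_\beta(\tmmathbf{n})$. To compress this union to a countable one, pick $(\tmmathbf{n}_k)_{k \geqslant 1}$ dense in $S^{d-1}$ and invoke the fact that $\tmmathbf{n} \mapsto \tau^q_\beta(\tmmathbf{n})$ extends by positive homogeneity to a convex function on $\mathbbm{R}^d$ which, since it is bounded on $S^{d-1}$ by $c_d \beta J^{\max}$ (Proposition \ref{prop-tauJ-mon}), is Lipschitz on $S^{d-1}$ with constant controlled uniformly on any bounded interval of $\beta$. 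A standard $\varepsilon/3$ argument then transfers left-continuity at each $\beta_0 \notin \bigcup_k D_{\tmmathbf{n}_k}$ from the dense directions $\tmmathbf{n}_k$ to every $\tmmathbf{n} \in S^{d-1}$, whence $\mathcal{N}_I \subset \bigcup_k D_{\tmmathbf{n}_k}$, which is a countable union of countable sets.

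\emph{Main obstacle.} The first claim is a direct consequence of Theorem \ref{thm-Iquad-gal} and the two monotonicities. The only non-routine input is the equi-Lipschitz control of $\tmmathbf{n} \mapsto \tau^q_\beta(\tmmathbf{n})$, uniform in $\beta$ on bounded intervals, that drives the passage from a countable dense set of directions to all of $S^{d-1}$; this rests on the convexity and positive homogeneity of the quenched surface tension established in Section \ref{sec-tau}.
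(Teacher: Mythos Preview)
Your proof is correct and follows essentially the same route as the paper's. Both arguments hinge on the two monotonicities you identify, use Theorem \ref{thm-Iquad-gal} to sandwich (the paper phrases this as $\tau^q_{\beta^-}(\tmmathbf{n}) \leqslant \tilde{\tau}^q_\beta(\tmmathbf{n}) \leqslant \tau^q_\beta(\tmmathbf{n})$ via the auxiliary quantity $\tilde{\tau}^q$ of (\ref{eq-def-taut}), but your direct argument with $\beta < \beta_0$ is equivalent), and then reduce the countability of $\mathcal{N}_I$ to the countability of $\bigcup_k D_{\tmmathbf{n}_k}$ for a dense sequence of directions.

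The one presentational difference is in the last step: you invoke an equi-Lipschitz bound on $\tmmathbf{n}\mapsto \tau^q_\beta(\tmmathbf{n})$, uniform on bounded $\beta$-intervals, to run an $\varepsilon/3$ argument. The paper instead observes that the left-limit $\tmmathbf{n}\mapsto \tau^q_{\beta^-}(\tmmathbf{n})$ is itself continuous (its homogeneous extension is a pointwise limit of the convex functions $f^q_{\beta-\varepsilon}$, hence convex), so that agreement with $\tau^q_\beta(\cdot)$ on a dense set of directions forces agreement everywhere. Both devices rest on Proposition \ref{prop-conv-tau} and yield the same conclusion; the paper's version avoids the explicit Lipschitz constant.
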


We end the presentation on surface tension with the definition of the surface
tension under the averaged Gibbs measure. It is the Fenchel-Legendre transform
of $I_{\tmmathbf{n}}$,
\begin{equation}
  \tau^{\lambda} (\tmmathbf{n}) = \inf_{\tau \in \mathbbm{R}} \left\{ \lambda
  \tau + I_{\tmmathbf{n}} (\tau) \right\} \label{eq-dual-Itaul}
\end{equation}
which coincides with the surface tension under an average of Gibbs measures:
\begin{equation}
  \tau^{\lambda} (\tmmathbf{n}) = \lim_{N \rightarrow \infty} - \frac{1}{N^{d
  - 1}} \log \mathbbm{E} \left( \left[ \Phi^{J, w}_{\mathcal{R}^N} \left(
  \mathcal{D}_{\mathcal{R}^N} \right) \right]^{\lambda} \right) \text{ \ \ \ \
  } \lambda > 0, \tmmathbf{n} \in S^{d - 1} \label{eq-cvg-taul}
\end{equation}
where $\mathcal{R}^N =\mathcal{R}_{0, N, \delta N} (\mathcal{S},
\tmmathbf{n})$, as shown in Proposition \ref{prop-conv-taul}. The particular
case $\lambda = 1$ corresponds to the usual notion of surface tension under
the averaged measure (or {\tmem{annealed}} surface tension) and we denote
$\tau^a (\tmmathbf{n}) = \tau^{\lambda = 1} (\tmmathbf{n})$.

The asymptotics of $\tau^{\lambda} / \lambda$ as $\lambda \rightarrow 0$ or $+
\infty$ are given in Proposition \ref{prop-taul}. An important question about
the surface tension under the averaged Gibbs measure is whether the random
media is able to turn Jensen's inequality
\begin{equation}
  \tau^{\lambda} (\tmmathbf{n}) \leqslant \lambda \tau^q (\tmmathbf{n})
\end{equation}
into a strict inequality. A partial answer to this question is given in the
next Section. Let us explicit the connection between the strict inequality
$\tau^{\lambda} (\tmmathbf{n}) \leqslant \lambda \tau^q (\tmmathbf{n})$ and
the asymptotics of $I_{\tmmathbf{n}}$ on the left of $\tau^q (\tmmathbf{n})$:
the opposite of the slope of $I_{\tmmathbf{n}}$ on the left of $\tau^q
(\tmmathbf{n})$ is exactly
\begin{equation}
  \alpha_{\tmmathbf{n}} = \sup \left\{ \lambda > 0 : \tau^{\lambda}
  (\tmmathbf{n}) = \lambda \tau^q (\tmmathbf{n}) \right\},
\end{equation}
with the convention that $\sup \emptyset = 0$.

Finally, as in the non-random case, the homogeneous extension of each of these
notions of surface tension $\tau^q$, $\tau^{\lambda}$, $\tau^{\min}$ and
$\tau^{\max}$ are convex and continuous (Proposition \ref{prop-conv-tau}).

\subsection{Low temperature asymptotics}

\label{sec-intro-lowt}The low temperature asymptotics of surface tension
permit to give a more precise insight into the properties of surface tension
in random media. First we need to introduce the concept of {\tmem{maximal
flow}} through the capacities $J$, where $J = (J_e)_{e \in E (\mathbbm{Z}^d)}$
is the family of random couplings introduced in the former section. Here we
give only a brief overview of maximal flows. The reader is invited to consult
{\cite{N244,N245}} for a pedagogical introduction. Recent results on maximal
flows, including large deviations, can be found in {\cite{N273,N370,N352}}.

We will use an analogy for describing maximal flows. Imagine a liquid which
has to cross a lattice made of tubes with limited capacity. Then, the maximal
flow, in a given direction, is the quantity of liquid that can flow through
the lattice, per unit of surface.

Given a rectangular parallelepiped $\mathcal{R}$ as in (\ref{eq-def-R}) and $I
\subset E ( \hat{\mathcal{R}})$, we consider the event that $\omega$ is closed
on $I$:
\[ \mathcal{Z}_I = \left\{ \omega_e = 0, \forall e \in I \right\} . \]
We say that $I$ is an \tmtextit{interface} for $\mathcal{R}$ if $\mathcal{Z}_I
\subset \mathcal{D}_{\mathcal{R}}$ and $\forall e \in I, \mathcal{Z}_{I
\setminus \{e\}} \not{\subset} \mathcal{D}_{\mathcal{R}}$. In other words, $I$
is an interface for $\mathcal{R}$ if the disconnection on $I$ is enough for
disconnecting $\partial^+ \hat{\mathcal{R}}$ from $\partial^-
\hat{\mathcal{R}}$ and if there is no superfluous edge in $I$. This notion of
interface corresponds to the geometrical notion of interface if, to the edges
of $I$ we associate their dual $d - 1$ dimensional facets.

According to the max-flow min-cut Theorem {\cite{N386}}, the maximum flow
from $\partial^- \hat{\mathcal{R}}$ to $\partial^+ \hat{\mathcal{R}}$ by the
edges of capacities $J_e$ is also the flow through the interface of minimal
capacity. We use this characterization for our definition. Given a rectangular
parallelepiped $\mathcal{R}=\mathcal{R}_{x, L, H} (\mathcal{S}, \tmmathbf{n})$
as in (\ref{eq-def-R}) we call $\mathcal{I}(\mathcal{R})$ the set of
interfaces for $\mathcal{R}$ and define the maximal flow in $\mathcal{R}$, for
a realization $J$ of the media, as
\begin{equation}
  \mu^J_{\mathcal{R}} = \frac{1}{L^{d - 1}} \inf_{I \in
  \mathcal{I}(\mathcal{R})} \sum_{e \in I} J_e . \label{eq-def-muJ}
\end{equation}
This quantity has the same properties as surface tension since it is as well
sub-additive. In particular, the maximal flow in $\mathcal{R}_{0, N, \delta N}
(\mathcal{S}, \tmmathbf{n})$ converges in $\mathbbm{P}$-probability, upper
deviations occur at volume order and lower deviations occur at surface order
{\cite{N274,N350}}. We will make use of the following results:

\begin{theorem}
  \label{thm-conv-mu}There exists $\mu (\tmmathbf{n}) \in [0, + \infty)$, the
  maximal flow for the distribution $\mathbbm{P}$ in the direction
  $\tmmathbf{n} \in S^{d - 1}$, such that, for any $\delta > 0$ and
  $\mathcal{S} \in \mathbbm{S}_{\tmmathbf{n}}$,
  \[ \mu^J_{\mathcal{R}_{0, N, \delta N} (\tmmathbf{n}, \mathcal{S})}
     \underset{N \rightarrow \infty}{\longrightarrow} \mu (\tmmathbf{n})
     \text{ \ in $\mathbbm{P}$-probability} . \]
  It is positive if and only if $\mathbbm{P}(J_e > 0) > p_c (d)$. Furthermore,
  \begin{equation}
    J^{\min} \|\tmmathbf{n}\|_1 \leqslant \mu (\tmmathbf{n})
    \label{eq-mu-Jmin}
  \end{equation}
  and the inequality is strict when $\mathbbm{P}(J_e > J^{\min}) > p_c (d)$.
\end{theorem}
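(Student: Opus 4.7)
I would adopt the standard framework for maximal flows in i.i.d. random environments. The convergence in probability, the deterministic character of the limit $\mu(\tmmathbf{n})$, and the characterization $\mu(\tmmathbf{n}) > 0 \iff \mathbbm{P}(J_e > 0) > p_c(d)$ are direct consequences of the analyses carried out in \cite{N274,N350}: the functional $L^{d-1} \mu^J_{\mathcal{R}}$ is sub-additive under a partition of the basis $x+L\mathcal{S}$ (one glues the minimal interfaces of each sub-parallelepiped together, paying only a boundary correction of order $L^{d-2}H$), and combining sub-additivity with shift invariance and ergodicity of the product measure $\mathbbm{P}$ via a multi-parameter sub-additive ergodic theorem gives almost sure convergence to a deterministic constant, independent of $\delta>0$ and $\mathcal{S}\in\mathbbm{S}_{\tmmathbf{n}}$ by the same tiling/comparison argument used in Theorem \ref{thm-conv-tauq}. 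The equivalence between $\mu(\tmmathbf{n})>0$ and $\mathbbm{P}(J_e>0)>p_c(d)$ is obtained by monotonicity, after comparing the $J$-flow with the Bernoulli flow on $\{e : J_e > 0\}$, for which the positivity criterion is the Kesten-Zhang theorem. The two remaining points specific to this statement are the deterministic lower bound $\mu(\tmmathbf{n}) \geqslant J^{\min} \|\tmmathbf{n}\|_1$ and the strict inequality under the percolation assumption on $\{J_e > J^{\min}\}$.

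\textbf{Deterministic lower bound.} It suffices to show that every interface $I \in \mathcal{I}(\mathcal{R})$ satisfies $|I| \geqslant \|\tmmathbf{n}\|_1 L^{d-1}(1+o(1))$, since then $\sum_{e \in I} J_e \geqslant J^{\min}|I|$. I would argue coordinate by coordinate via a projection/BV inequality. Fix $i \in \{1,\ldots,d\}$ and let $\pi_i$ be the orthogonal projection onto the hyperplane $\tmmathbf{e}_i^{\perp}$. Any affine line in direction $\tmmathbf{e}_i$ joining a point of $\partial^+\hat{\mathcal{R}}$ to a point of $\partial^-\hat{\mathcal{R}}$ must cross at least one edge of $I$ parallel to $\tmmathbf{e}_i$, because the characteristic function of the plus-cluster $R^+$ (the connected component of $\partial^+\hat{\mathcal{R}}$ in $\hat{\mathcal{R}} \setminus I$) jumps along such a line. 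The set of base points in $\tmmathbf{e}_i^{\perp}$ of these lines has $(d-1)$-volume equal, up to an $o(L^{d-1})$ boundary term, to the shadow of the mid-section $L\mathcal{S}$ viewed along $\tmmathbf{e}_i$, namely $|\tmmathbf{n} \cdot \tmmathbf{e}_i| L^{d-1}$. Hence the number of $\tmmathbf{e}_i$-parallel edges of $I$ is at least $|\tmmathbf{n} \cdot \tmmathbf{e}_i| L^{d-1}(1+o(1))$, and summing over $i$ gives $|I| \geqslant \|\tmmathbf{n}\|_1 L^{d-1}(1+o(1))$.

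\textbf{Strict inequality.} Assume $\mathbbm{P}(J_e > J^{\min}) > p_c(d)$ and pick $\eta > 0$ small enough that the level set $E^{\star} = \{e : J_e > J^{\min} + \eta\}$ still satisfies $\mathbbm{P}(e \in E^{\star}) > p_c(d)$. Define perturbed capacities $\tilde{J}_e = \eta \tmmathbf{1}_{\{e \in E^{\star}\}}$; these are $\eta$ times a supercritical Bernoulli bond percolation, so by the positivity characterization already proved (applied to the environment $\tilde{J}$) the normalized maximal flow $\tilde{\mu}^J_{\mathcal{R}}$ converges in probability to some $\tilde{\mu}(\tmmathbf{n}) > 0$. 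Since $J_e \geqslant J^{\min} + \tilde{J}_e$ edge by edge, for any optimal interface $I^{\star}$ for the $J$-capacities,
\[
L^{d-1}\mu^J_{\mathcal{R}} = \sum_{e \in I^{\star}} J_e \geqslant J^{\min}|I^{\star}| + \sum_{e \in I^{\star}} \tilde{J}_e \geqslant J^{\min}\|\tmmathbf{n}\|_1 L^{d-1}(1+o(1)) + \tilde{\mu}^J_{\mathcal{R}} L^{d-1},
\]
where the first lower bound is the projection argument applied to $I^{\star}$ and the second uses that $I^{\star}$ is an admissible cut for the $\tilde{J}$-capacities too. Passing to the limit yields $\mu(\tmmathbf{n}) \geqslant J^{\min}\|\tmmathbf{n}\|_1 + \tilde{\mu}(\tmmathbf{n}) > J^{\min}\|\tmmathbf{n}\|_1$. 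The most delicate step is the projection/BV estimate for interfaces of arbitrary (possibly very irregular) geometry, and in particular the identification of the slant factor $|\tmmathbf{n}\cdot\tmmathbf{e}_i|$ uniformly in $I$; once this deterministic isoperimetric input is granted, the decomposition $J_e = J^{\min} + (J_e - J^{\min})$ and the reduction to a supercritical Bernoulli residual proceed cleanly.
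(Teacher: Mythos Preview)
Your proposal is correct and follows the same outline that the paper sketches after the statement of Theorem~\ref{thm-conv-mu}: convergence via sub-additivity (with references to \cite{N274,N350}), the positivity criterion via the percolation results \cite{N337,N338,N170}, the lower bound \eqref{eq-mu-Jmin} from the fact that every interface has at least $\|\tmmathbf{n}\|_1 L^{d-1}(1+o(1))$ edges, and the strict inequality from the existence of a supercritical percolating net of edges with $J_e \geqslant J^{\min}+\varepsilon$ (the paper also points to Proposition~4.1 in~\cite{N370}). Your decomposition $J_e \geqslant J^{\min} + \tilde J_e$ with $\tilde J_e = \eta\,\tmmathbf{1}_{\{J_e > J^{\min}+\eta\}}$, together with the observation that the $J$-optimal cut $I^\star$ is an admissible cut for $\tilde J$ as well, is exactly the mechanism the paper alludes to when it says the percolating net ``is responsible for the strict inequality''; you have simply made the inequality $\mu(\tmmathbf{n}) \geqslant J^{\min}\|\tmmathbf{n}\|_1 + \tilde\mu(\tmmathbf{n})$ explicit.
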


The convergence of the maximal flow is a consequence of the sub-additivity. It
is shown in {\cite{N337}} that the maximal flow is 0 when $\mathbbm{P}(J_e >
0) \leqslant p_c (d)$, while its positivity was established in {\cite{N338}},
under the conjecture that the critical threshold for percolation and slab
percolation do coincide, proved later on in {\cite{N170}}. The inequality
(\ref{eq-mu-Jmin}) is easily obtained from the remark that minimal interfaces
have cardinal of order $N^{d - 1} \|\tmmathbf{n}\|_1$. When $\mathbbm{P}(J_e >
J^{\min}) > p_c (d)$, for small $\varepsilon > 0$ there is a percolating net
of edges with values $J_e \geqslant J^{\min} + \varepsilon$ {\cite{N170,N09}},
which is responsible for the strict inequality. See also Proposition 4.1 in
{\cite{N370}}.

It turns out that the maximal flow determines the asymptotics of the quenched
value of surface tension at low temperatures. Precisely, we show that:

\begin{proposition}
  \label{prop-tauq-mu}Let $\mathbbm{P}$ be a product measure on $[0, 1]^d$
  such that $\mathbbm{P}(J_e > 0) = 1$. Then, uniformly over $\tmmathbf{n} \in
  S^{d - 1}$,
  \begin{equation}
    \lim_{\beta \rightarrow \infty} \frac{\tau^q_{\beta}
    (\tmmathbf{n})}{\beta} = \mu (\tmmathbf{n}) . \label{eq-tauq-mu}
  \end{equation}
\end{proposition}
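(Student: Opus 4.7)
The plan is to match the two asymptotics at the finite-volume level: show that, $\mathbb{P}$-almost surely (or with high $\mathbb{P}$-probability) and for $\beta$ sufficiently large, $\tau^J_{\mathcal{R}}/\beta$ equals $\mu^J_{\mathcal{R}}$ up to an error $o_\beta(1)+o_N(1)$. Passing to $N\to\infty$ with Theorems \ref{thm-conv-tauq} and \ref{thm-conv-mu} then yields pointwise convergence $\tau^q_\beta(\tmmathbf{n})/\beta\to\mu(\tmmathbf{n})$. Uniformity in $\tmmathbf{n}$ comes for free at the end: both sides extend to convex positively homogeneous functions on $\mathbb{R}^d$ (cf.\ Proposition \ref{prop-conv-tau}), and pointwise convergence of convex functions to a continuous convex limit on $\mathbb{R}^d$ is automatically uniform on the compact set $S^{d-1}$.

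For the upper bound, I would use stochastic domination: since $q\geqslant 1$, $\Phi^{J,w}_{\mathcal{R}}$ is dominated by the product Bernoulli measure of parameters $p_e=1-e^{-\beta J_e}$, and $\mathcal{Z}_I$ is a decreasing event, hence
\[
\Phi^{J,w}_{\mathcal{R}}(\mathcal{Z}_I)\;\geqslant\;\prod_{e\in I}(1-p_e)\;=\;\exp\Bigl(-\beta\sum_{e\in I}J_e\Bigr).
\]
Choosing $I$ to realize the infimum in \eqref{eq-def-muJ} and using $\mathcal{Z}_I\subset\mathcal{D}_{\mathcal{R}}$ gives $\tau^J_{\mathcal{R}}\leqslant\beta\mu^J_{\mathcal{R}}$, whence $\tau^q_\beta(\tmmathbf{n})\leqslant\beta\mu(\tmmathbf{n})$ after taking limits.

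For the lower bound, iterating the DLR equation edge by edge on $I$ and using $q(1-p_e)/(p_e+q(1-p_e))\leqslant q(1-p_e)$ yields $\Phi^{J,w}_{\mathcal{R}}(\mathcal{Z}_I)\leqslant q^{|I|}\exp(-\beta\sum_{e\in I}J_e)$. Every $\omega\in\mathcal{D}_{\mathcal{R}}$ contains some interface among its closed edges, so by union bound
\[
\Phi^{J,w}_{\mathcal{R}}(\mathcal{D}_{\mathcal{R}})\;\leqslant\;\sum_{k\geqslant k_{\min}}\sum_{I\in\mathcal{I}(\mathcal{R}),\,|I|=k} q^{k}\exp\Bigl(-\beta\sum_{e\in I}J_e\Bigr).
\]
A Peierls-type argument (encoding minimal cuts as lattice animals anchored at $\partial^-\hat{\mathcal{R}}$) bounds $\#\{I\in\mathcal{I}(\mathcal{R}):|I|=k\}\leqslant L^{d-1}c_d^k$. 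When $J^{\min}>0$, every interface satisfies $\sum_{e\in I}J_e\geqslant\max\bigl(L^{d-1}\mu^J_{\mathcal{R}},\,J^{\min}|I|\bigr)$; splitting the sum at the crossover $k^\ast=L^{d-1}\mu^J_{\mathcal{R}}/J^{\min}$, and choosing $\beta$ large enough that $c_dq\,e^{-\beta J^{\min}}<1/2$, both sub-sums are controlled by $\exp\bigl(-\beta L^{d-1}\mu^J_{\mathcal{R}}+L^{d-1}\mu^J_{\mathcal{R}}\log(c_dq)/J^{\min}\bigr)$, yielding $\tau^J_{\mathcal{R}}/\beta\geqslant\mu^J_{\mathcal{R}}-O(1/\beta)$.

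The only real obstacle is the case $J^{\min}=0$, in which the bound $J^{\min}|I|$ disappears and one must control the contribution of atypically cheap large interfaces. I would handle this by a Cram\'er-type large-deviations estimate: since $J_e>0$ a.s., the rate function $\Lambda^\ast(\delta):=\sup_t(t\delta-\log\mathbb{E}e^{tJ_e})$ satisfies $\Lambda^\ast(\delta)\to+\infty$ as $\delta\to 0^+$, so fixing $\delta>0$ with $\Lambda^\ast(\delta)>2\log c_d$ gives
\[
\mathbb{P}\Bigl(\exists\,I\in\mathcal{I}(\mathcal{R}),\,|I|=k,\;\tsum_{e\in I}J_e<\delta k\Bigr)\;\leqslant\;L^{d-1}\bigl(c_d e^{-\Lambda^\ast(\delta)}\bigr)^{k},
\]
which is summable in $k$ and vanishes as $N\to\infty$. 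On the corresponding high-probability event, $\delta|I|$ replaces $J^{\min}|I|$ in the preceding splitting argument, and the same conclusion $\tau^J_{\mathcal{R}}/\beta\geqslant\mu^J_{\mathcal{R}}-O(1/\beta)$ follows. Passing to $N\to\infty$ and then $\beta\to\infty$, combined with the convexity-based uniformization in $\tmmathbf{n}$, completes the proof.
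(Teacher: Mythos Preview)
Your proposal is correct and follows the same broad architecture as the paper (upper bound by closing an optimal cut; lower bound by a Peierls union bound over interfaces, split by interface length), but the execution of the lower bound and of the uniformity differs in an interesting way.

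For long interfaces, the paper does not separate the cases $J^{\min}>0$ and $J^{\min}=0$. It simply takes the $\mathbb{P}$-expectation of the tail sum $\sum_{|I|\geqslant cL^{d-1}} q^{|I|}\exp(-\beta\sum_{e\in I}J_e)$, uses independence to get $[c_d q\,\mathbb{E}e^{-\beta J_e}]^{cL^{d-1}}$, and then applies Markov's inequality; the sole hypothesis $\mathbb{P}(J_e>0)=1$ enters as $\rho_\beta:=c_d q\,\mathbb{E}e^{-\beta J_e}\to 0$. Optimising the cutoff $c$ then gives $\tau^q_\beta(\tmmathbf{n})\geqslant\beta\mu(\tmmathbf{n})\cdot\log(1/\rho_\beta)/(\log(c_d q)+\log(1/\rho_\beta))$, a bound that is explicit and \emph{automatically} uniform in $\tmmathbf{n}$ (the $\tmmathbf{n}$-dependence sits only in the bounded factor $\mu(\tmmathbf{n})$). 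Your route---a deterministic splitting when $J^{\min}>0$, and a Cram\'er estimate on $\sum_{e\in I}J_e$ to manufacture an ``effective'' $J^{\min}=\delta$ when $J^{\min}=0$---also works, and has the merit of keeping the analysis quenched as long as possible; the paper's expectation-plus-Markov trick is shorter and avoids the case distinction.

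For uniformity, your convexity argument (pointwise convergence of the positively homogeneous convex extensions forces uniform convergence on $S^{d-1}$) is a clean, reusable alternative to the paper's direct route; it is more conceptual but requires one extra classical fact from convex analysis, whereas the paper simply reads uniformity off the explicit inequality.
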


Clearly, (\ref{eq-tauq-mu}) also holds in the case $\mathbbm{P}(J_e > 0)
\leqslant p_c (d)$ since $\tau^q_{\beta} (\tmmathbf{n}) \leqslant \beta \mu
(\tmmathbf{n}) = 0$ (cf. Lemma \ref{lem-tau-lt} and Theorem
\ref{thm-conv-mu}). When $\mathbbm{P}(J_e > 0) > p_c (d)$ a renormalization
argument allows us to prove that:

\begin{proposition}
  \label{prop-tauqlt-b}Assume that $\mathbbm{P}(J_e > 0) > p_c (d)$. Then,
  \begin{equation}
    \liminf_{\beta \rightarrow + \infty} \frac{\tau^q_{\beta}
    (\tmmathbf{n})}{\beta} > 0 \label{eq-conv-tauq-infty},
  \end{equation}
  uniformly over $\tmmathbf{n} \in S^{d - 1}$.
\end{proposition}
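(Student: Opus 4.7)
My plan is to prove $\tau^q_\beta(\tmmathbf{n}) \geq c\beta - C$ for all $\beta$ large, with $c>0$ and $C$ independent of $\beta$ and $\tmmathbf{n}\in S^{d-1}$; dividing by $\beta$ and letting $\beta\to\infty$ then gives the desired uniform liminf. The natural candidate for $c$ is (a fraction of) the maximal flow $\mu(\tmmathbf{n})$: Theorem \ref{thm-conv-mu} ensures $\mu(\tmmathbf{n})>0$ for every direction under the hypothesis $\mathbb{P}(J_e>0)>p_c(d)$, and since the positively homogeneous extension of $\mu$ to $\mathbb{R}^d$ is convex---being defined by a sub-additive sequence---it is continuous on the compact $S^{d-1}$, so $\inf_{\tmmathbf{n}}\mu(\tmmathbf{n})>0$.

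The first ingredient is a Peierls-type upper bound on disconnection. Writing $\mathcal{D}_{\mathcal{R}} \subset \bigcup_{I\in\mathcal{I}(\mathcal{R})} \mathcal{Z}_I$, a union bound reduces the task to controlling each $\Phi^{J,w}_{\mathcal{R}}(\mathcal{Z}_I)$. The standard FK estimate
\[
\Phi^{J,w}_{\mathcal{R}}(\mathcal{Z}_I) \;\leq\; q^{|I|}\prod_{e\in I}\frac{1-p_e}{p_e}
\]
follows by injecting each $\omega$ with $\omega_{|I}=0$ into the configuration obtained by opening all edges of $I$ and noting that this modifies the cluster count by at most $|I|$, hence the weight ratio is at most $q^{|I|}\prod_{e\in I}p_e/(1-p_e)$. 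Combined with the combinatorial count $|\mathcal{I}(\mathcal{R})|\leq e^{C_d L^{d-1}}$ (interfaces correspond to $\ast$-connected dual surfaces through $\mathcal{R}$) and the max-flow min-cut identity $\inf_{I\in\mathcal{I}(\mathcal{R})}\sum_{e\in I}J_e = L^{d-1}\mu^J_{\mathcal{R}}$, this argument formally produces $\tau^q_\beta(\tmmathbf{n})\geq \beta\mu(\tmmathbf{n}) - C_d - \log q$ provided $J^{\min}>0$, using $\mu^J_{\mathcal{R}_N}\to\mu(\tmmathbf{n})$ in $\mathbb{P}$-probability (Theorem \ref{thm-conv-mu}).

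The difficulty is that when $J^{\min}=0$ the factor $(1-p_e)/p_e \sim 1/(\beta J_e)$ blows up on weak edges and the above estimate loses control; this forces a renormalization. Pick $\eta>0$ with $\mathbb{P}(J_e\geq\eta)>p_c(d)$, partition $\hat{\mathcal{R}}$ into blocks of side $K$, and call a block \emph{good} if its $\eta$-strong subgraph satisfies the Grimmett--Marstrand connectivity property of supercritical percolation (\cite{N170}); for $K$ large the good-block process percolates with density close to one. The main obstacle, and the core of the argument, is then to show quantitatively and uniformly in $\tmmathbf{n}$ that every interface $I\in\mathcal{I}(\mathcal{R})$ crosses a positive density of good blocks through their $\eta$-strong edges, so that $\sum_{e\in I} J_e \geq c(K)\eta L^{d-1}$ on an event of large $\mathbb{P}$-probability, while the combined contribution of bad blocks remains absorbable in an $e^{C_K L^{d-1}}$ combinatorial term. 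Once this renormalized min-cut estimate is established, the Peierls scheme produces $\tau^q_\beta(\tmmathbf{n})\geq c(K)\eta\beta - C_K$, uniformly in $\tmmathbf{n}\in S^{d-1}$, which is the desired conclusion.
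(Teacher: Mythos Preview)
Your renormalization approach---selecting $\eta>0$ with $\mathbbm{P}(J_e\geq\eta)>p_c(d)$, declaring $K$-blocks good when the $\eta$-strong subgraph has the supercritical crossing property, and then running a block-level Peierls argument to show that any disconnecting interface must close at least $c(N/K)^{d-1}$ edges with $J_e\geq\eta$---is exactly the paper's proof, and yields the same conclusion $\tau^q_\beta(\tmmathbf{n})\geq c\eta\beta/K^{d-1}-C_K$ uniformly in $\tmmathbf{n}$. One minor caveat on your warm-up paragraph: the global count $|\mathcal{I}(\mathcal{R})|\leq e^{C_dL^{d-1}}$ is false (interfaces can be arbitrarily long), so even when $J^{\min}>0$ one must stratify by $|I|$ and use the per-size bound $\#\{I:|I|=n\}\leq (c_d)^n$ as in the proof of Proposition~\ref{prop-tauq-mu}; since you abandon that route anyway, this does not affect the main argument.
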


On the other hand, the surface tension under the averaged Gibbs measure is
asymptotically determined by $J^{\min}$:

\begin{proposition}
  \label{prop-taul-Jmin}For all product measure $\mathbbm{P}$ on $[0, 1]^d$
  and all $\lambda > 0$, uniformly over $\tmmathbf{n} \in S^{d - 1}$,
  \begin{equation}
    \lim_{\beta \rightarrow + \infty} \frac{\tau^{\lambda}_{\beta}
    (\tmmathbf{n})}{\beta} = \lambda J^{\min} \|\tmmathbf{n}\|_1
    \label{eq-conv-taul-Jmin} .
  \end{equation}
\end{proposition}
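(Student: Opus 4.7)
I would prove the claimed limit through matching upper and lower bounds on $\tau^\lambda_\beta(\tmmathbf{n})/\beta$ using the averaged-disconnection characterisation \eqref{eq-cvg-taul}. The lower bound reduces the annealed problem to the deterministic case $J \equiv J^{\min}$ via the monotonicity of $\Phi^{J,w}_{\mathcal{R}}(\mathcal{D}_\mathcal{R})$ in $J$ and is then closed by Proposition \ref{prop-tauq-mu}. The upper bound is obtained by forcing the disconnection through a deterministic, nearly flat staircase interface orthogonal to $\tmmathbf{n}$ and averaging the resulting edgewise estimate over the independent couplings.

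\textbf{Lower bound.} The event $\mathcal{D}_{\mathcal{R}^N}$ is decreasing in $\omega$; combined with $J_e \geq J^{\min}$ a.s.\ and the comparison inequality (i) recalled in the excerpt,
\[ \Phi^{J,w}_{\mathcal{R}^N}(\mathcal{D}_{\mathcal{R}^N}) \;\leq\; \Phi^{J^{\min},w}_{\mathcal{R}^N}(\mathcal{D}_{\mathcal{R}^N}), \]
where on the right-hand side $J^{\min}$ denotes constant couplings $J \equiv J^{\min}$. Raising to the $\lambda$-th power, taking $\mathbbm{E}$, then $-\log$ and dividing by $N^{d-1}$, the limit $N \to \infty$ delivers $\tau^\lambda_\beta(\tmmathbf{n}) \geq \lambda\,\tau^{\min}_\beta(\tmmathbf{n})$. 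If $J^{\min} > 0$, Proposition \ref{prop-tauq-mu} applied to the deterministic distribution at $J^{\min}$ gives $\tau^{\min}_\beta(\tmmathbf{n})/\beta \to \mu^{\min}(\tmmathbf{n})$ uniformly in $\tmmathbf{n}$; and the maximal flow for constant unit capacities equals $\|\tmmathbf{n}\|_1$ (minimal staircase cut-sets orthogonal to $\tmmathbf{n}$ contain $L^{d-1}\|\tmmathbf{n}\|_1(1+o(1))$ edges and max-flow/min-cut matches this count), so $\mu^{\min}(\tmmathbf{n}) = J^{\min}\|\tmmathbf{n}\|_1$. The case $J^{\min} = 0$ makes this bound trivial.

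\textbf{Upper bound.} Fix $\tmmathbf{n}$ and let $I^* \subset E(\hat{\mathcal{R}}^N)$ be the set of lattice edges whose dual $(d-1)$-facets are crossed by the central hyperplane of $\mathcal{R}^N$ orthogonal to $\tmmathbf{n}$. By the same staircase count, $|I^*| = L^{d-1}\|\tmmathbf{n}\|_1(1 + o(1))$ uniformly in $\tmmathbf{n}$, and $\mathcal{Z}_{I^*} \subset \mathcal{D}_{\mathcal{R}^N}$. Since $q \geq 1$, the conditional random-cluster marginal satisfies $\Phi(\omega_e = 0 \,|\, \omega_{E \setminus \{e\}}) \geq 1 - p_e = e^{-\beta J_e}$, and iterating this (or equivalently applying FKG to the decreasing events $\{\omega_e = 0\}_{e \in I^*}$) produces
\[ \Phi^{J,w}_{\mathcal{R}^N}(\mathcal{D}_{\mathcal{R}^N}) \;\geq\; \Phi^{J,w}_{\mathcal{R}^N}(\mathcal{Z}_{I^*}) \;\geq\; \prod_{e \in I^*} e^{-\beta J_e}. \]
As $I^*$ is deterministic and the $J_e$ are i.i.d., taking the $\lambda$-th power and $\mathbbm{E}$-expectation factorises: for any $\varepsilon > 0$, setting $c_\varepsilon := \mathbbm{P}(J_e < J^{\min} + \varepsilon) > 0$,
\[ \mathbbm{E}\bigl[\Phi^{J,w}_{\mathcal{R}^N}(\mathcal{D}_{\mathcal{R}^N})\bigr]^\lambda \;\geq\; \prod_{e \in I^*} \mathbbm{E}\,e^{-\lambda\beta J_e} \;\geq\; \bigl(c_\varepsilon\, e^{-\lambda\beta(J^{\min} + \varepsilon)}\bigr)^{|I^*|}. \]
Taking $-\log$, dividing by $N^{d-1}$, and letting successively $N \to \infty$, $\beta \to \infty$, $\varepsilon \to 0^+$ yields $\limsup_\beta \tau^\lambda_\beta(\tmmathbf{n})/\beta \leq \lambda J^{\min}\|\tmmathbf{n}\|_1$, uniformly in $\tmmathbf{n}$.

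\textbf{Expected obstacle.} The only delicate step is the edgewise lower bound $\Phi(\omega_e = 0 \,|\, \omega_{E \setminus \{e\}}) \geq e^{-\beta J_e}$ along every $e \in I^*$; this reduces to the standard conditional calculation in the random-cluster model and crucially uses $q \geq 1$. Beyond that, the argument is a short combination of monotonicity in $J$, independence of the couplings, and monotone convergence, so I would expect the proof to take only a few lines in the paper.
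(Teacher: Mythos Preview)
Your upper bound is exactly the paper's: Lemma \ref{lem-tau-lt} forces a flat cut-set closed, factorises over i.i.d.\ couplings, and obtains $\tau^{\lambda}_{\beta}(\tmmathbf{n}) \leqslant \|\tmmathbf{n}\|_1 \log\bigl(1/\mathbbm{E}\,e^{-\lambda\beta J_e}\bigr)$, which after dividing by $\beta$ and letting $\beta\to\infty$ gives $\lambda J^{\min}\|\tmmathbf{n}\|_1$.

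Your lower bound, however, follows a genuinely different route from the paper. The paper does \emph{not} reduce to the deterministic model via $\tau^{\lambda}\geqslant\lambda\tau^{\min}$ and Proposition \ref{prop-tauq-mu}; instead it proves directly (Proposition \ref{prop-tault-Js1}) the matching lower bound $\tau^{\lambda}_{\beta}(\tmmathbf{n}) \geqslant (1-o(1))\,\|\tmmathbf{n}\|_1 \log\bigl(1/\mathbbm{E}\,e^{-\lambda\beta J_e}\bigr)$ by a Peierls sum over all interfaces, combined with the inequality $(\sum x_i)^{\lambda}\leqslant\sum x_i^{\lambda}$ for $\lambda\leqslant 1$ and Minkowski for $\lambda\geqslant 1$. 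Your argument is correct and arguably more economical, since it recycles the monotonicity $\tau^{\lambda}\geqslant\lambda\tau^{\min}$ (Proposition \ref{prop-taul}) and the quenched asymptotic already established in Proposition \ref{prop-tauq-mu}. What the paper's direct approach buys is a sharper intermediate statement: it identifies the full first-order term $\|\tmmathbf{n}\|_1\log\bigl(1/\mathbbm{E}\,e^{-\lambda\beta J_e}\bigr)$ rather than just its leading order $\lambda\beta J^{\min}\|\tmmathbf{n}\|_1$, and this sharper form is precisely what is needed for the refinement announced just after the Proposition (the equivalent of $\tau^{\lambda}_{\beta}$ when $J^{\min}=0$ and $\mathbbm{P}(J_e>0)=1$, Proposition \ref{prop-tault-Js1}).
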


In the case $J^{\min} = 0$ and $\mathbbm{P}(J_e > 0) = 1$, an equivalent to
$\tau^{\lambda}_{\beta} (\tmmathbf{n})$ is given in Proposition
\ref{prop-tault-Js1}.

These asymptotics have consequences on the shape of the crystals under both
the quenched and the averaged Gibbs measure (see Proposition
\ref{prop-Wulff-lowt} below). They also immediately imply that the inequality
$\tau^{\lambda}_{\beta} (\tmmathbf{n}) \leqslant \lambda \tau^q_{\beta}
(\tmmathbf{n})$ is strict at low temperatures in a number of cases:

\begin{corollary}
  \label{cor-taulq-lowt}Assume that $\mathbbm{P}(J_e > J^{\min}) > p_c (d)$.
  Then, for any $\lambda > 0$ there is $\beta_c^{\lambda} < \infty$ such that
  \begin{equation}
    \tau^{\lambda}_{\beta} (\tmmathbf{n}) < \lambda \tau^q_{\beta}
    (\tmmathbf{n}) \text{, \ \ \ } \forall \tmmathbf{n} \in S^{d - 1}, \forall
    \beta > \beta_c^{\lambda} . \label{ineq-taulq-st}
  \end{equation}
\end{corollary}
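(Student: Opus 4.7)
The plan is to compare the leading-order behaviour in $\beta$ of $\tau^\lambda_\beta(\tmmathbf{n})$ and $\lambda \tau^q_\beta(\tmmathbf{n})$ via the three low-temperature asymptotics of Section \ref{sec-intro-lowt}. By Proposition \ref{prop-taul-Jmin} the ratio $\tau^\lambda_\beta(\tmmathbf{n})/\beta$ tends to $\lambda J^{\min}\|\tmmathbf{n}\|_1$, while the hypothesis $\mathbbm{P}(J_e > J^{\min}) > p_c(d)$ will force $\tau^q_\beta(\tmmathbf{n})/\beta$ to have a strictly larger asymptotic behaviour. The proof naturally splits according to whether $J^{\min} = 0$ or $J^{\min} > 0$, since the two propositions controlling $\tau^q_\beta$ from below use slightly different hypotheses.

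\emph{Case $J^{\min} > 0$.} The definition of $J^{\min}$ forces $\mathbbm{P}(J_e > 0) = 1$, so Proposition \ref{prop-tauq-mu} applies and yields $\tau^q_\beta(\tmmathbf{n})/\beta \to \mu(\tmmathbf{n})$ uniformly on $S^{d-1}$. Combined with Proposition \ref{prop-taul-Jmin} one obtains
\[
\frac{\lambda \tau^q_\beta(\tmmathbf{n}) - \tau^\lambda_\beta(\tmmathbf{n})}{\beta} \longrightarrow \lambda \bigl( \mu(\tmmathbf{n}) - J^{\min} \|\tmmathbf{n}\|_1 \bigr) \quad \text{as } \beta \to \infty,
\]
uniformly in $\tmmathbf{n}$. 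Theorem \ref{thm-conv-mu} provides the pointwise strict inequality $\mu(\tmmathbf{n}) > J^{\min}\|\tmmathbf{n}\|_1$ under our hypothesis. Since both $\mu$ and $\|\cdot\|_1$ are continuous on the compact sphere $S^{d-1}$, this upgrades to a uniform positive gap $\eta := \inf_{\tmmathbf{n} \in S^{d-1}}\bigl(\mu(\tmmathbf{n}) - J^{\min}\|\tmmathbf{n}\|_1\bigr) > 0$, from which (\ref{ineq-taulq-st}) follows for all $\beta$ larger than some $\beta_c^\lambda$.

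\emph{Case $J^{\min} = 0$.} Then $\mathbbm{P}(J_e > 0) = \mathbbm{P}(J_e > J^{\min}) > p_c(d)$, so Proposition \ref{prop-tauqlt-b} provides a constant $c > 0$ with $\tau^q_\beta(\tmmathbf{n}) \geq c\beta$ for all $\tmmathbf{n} \in S^{d-1}$ and all $\beta$ sufficiently large, while Proposition \ref{prop-taul-Jmin} degenerates to $\tau^\lambda_\beta(\tmmathbf{n})/\beta \to 0$ uniformly in $\tmmathbf{n}$. For $\beta$ large enough one therefore has $\tau^\lambda_\beta(\tmmathbf{n}) < \lambda c \beta / 2 \leq \lambda \tau^q_\beta(\tmmathbf{n})$ uniformly in $\tmmathbf{n}$, concluding the proof. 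The whole argument is essentially a combination of results already established; the only genuine step beyond quoting them is the compactness argument in the first case, used to promote a pointwise strict inequality on $S^{d-1}$ to a uniform one, which is the only place where something might require care.
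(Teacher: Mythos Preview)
Your proof is correct and follows exactly the line the paper intends: the corollary is stated there as an immediate consequence of Propositions \ref{prop-tauq-mu}, \ref{prop-tauqlt-b} and \ref{prop-taul-Jmin} together with the strict inequality in Theorem \ref{thm-conv-mu}, and your case split $J^{\min}>0$ versus $J^{\min}=0$ is the natural way to invoke the right hypothesis in each proposition. The only ingredient you add beyond the paper's one-line justification is the compactness argument upgrading the pointwise strict inequality $\mu(\tmmathbf{n})>J^{\min}\|\tmmathbf{n}\|_1$ to a uniform gap; this relies on the continuity of $\mu$ on $S^{d-1}$, which the paper does not prove explicitly but which is standard (it follows from sub-additivity of maximal flows by the same convexity argument as Proposition \ref{prop-conv-tau}, and is implicit in the references cited for maximal flows and in the use of $\mathcal{W}^\mu$ in Proposition \ref{prop-Wulff-lowt}).
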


In particular if $J^{\min} = 0$ and if there is still a phase transition (i.e.
$\mathbbm{P}(J_e > 0) > p_c (d)$), then the inequality is always strict at low
temperatures.

One consequence of (\ref{ineq-taulq-st}) is the strict inequality
$\tau^{\min}_{\beta} (\tmmathbf{n}) < \tau^q_{\beta} (\tmmathbf{n})$ under the
same assumptions than in the Corollary, as $\lambda \tau^{\min}_{\beta}
(\tmmathbf{n}) \leqslant \tau^{\lambda}_{\beta} (\tmmathbf{n})$ (Proposition
\ref{prop-taul}).

Let us conclude on a comparison with the directed polymer model in $1 + 1$
dimensions: for the latter model, it was proved recently {\cite{N145}} that
the Lyapunov exponent is positive at all $\beta \geqslant 0$, which
corresponds in our settings to the strict inequality $\tau_{\beta}^a
(\tmmathbf{n}) = \tau^{\lambda = 1}_{\beta} < \tau^q_{\beta} (\tmmathbf{n})$.

\subsection{Phase coexistence}

We describe finally the phenomenon of phase coexistence in the dilute Ising
model. Phase coexistence occurs when both the plus and the minus phase are
present at the same time and occupy (distinct) regions of the domain. This
phenomenon does not occur naturally in the Ising model. One way of obtaining
phase coexistence is by conditioning the measure $\mu^J_{\Lambda}$ on the
event that the overall magnetization
\begin{equation}
  m_{\Lambda} = \frac{1}{| \Lambda |}  \sum_{x \in \Lambda} \sigma_x
\end{equation}
is smaller than $m < m_{\beta}$. Under this conditional measure, we will show
that the two phases do coexist and that the minus phases occupies a fraction
of the volume $v$ such that $(1 - 2 v) m_{\beta} = m$. Furthermore, the shape
$U$ of the region containing the minus phase is deterministic : if $\tau$ is
the surface tension of the model, the observed shape minimizes the
{\tmem{surface energy}}
\[ \mathcal{F}(U) = \int_{\partial U} \tau (\tmmathbf{n}) d s \]
under the volume constraint $\tmop{Vol} (U) \geqslant v$, and this implies
that $U$ is a translated of $v^{1 / d} \mathcal{W}$ where $\mathcal{W}$ is the
renormalized Wulff crystal associated to $\tau$:
\begin{equation}
  \mathcal{W}= \lambda \left\{ x \in \mathbbm{R}^d : x \cdot \tmmathbf{n}
  \leqslant \tau (\tmmathbf{n}), \forall \tmmathbf{n} \in S^{d - 1} \right\}
  \label{eq-Wulff}
\end{equation}
where $\lambda > 0$ is chosen such that $\tmop{Vol} (\mathcal{W}) = 1$.

Before we state our results, let us recall that $\mathcal{N}_I$ stands for the
at-most-countable set of $\beta$ at which lower large deviations for surface
tension are possible at less than surface order (Corollary \ref{cor-NI-count})
and $\hat{\beta}_c$ is the slab percolation threshold (\ref{eq-betasp}).
Another important notation is
\begin{equation}
  \mathcal{N}= \left\{ \beta \geqslant 0 : \lim_{N \rightarrow \infty}
  \mathbbm{E} \Phi^{J, f}_{\hat{\Lambda}_N} \neq \lim_{N \rightarrow \infty}
  \mathbbm{E} \Phi^{J, w}_{\hat{\Lambda}_N} \right\} \label{eq-def-N}
\end{equation}
the set of $\beta$ such that infinite volume averaged FK measures are not
unique. $\mathcal{N}$ is at most countable (Theorem 2.3 in {\cite{M01}}).

We denote by $\mathcal{W}^q$ (resp. $\mathcal{W}^{\lambda}$) the Wulff crystal
associated with the surface tension $\tau^q$ (resp. $\tau^{\lambda}$) as in
(\ref{eq-Wulff}), and $v = \alpha^d$ the fraction of the volume occupied by
the minus phase. The Wulff crystal $\alpha \mathcal{W}$ of volume $v$ fits
into the unit box $[0, 1]^d$ only if $\alpha \tmop{diam}_{\infty}
(\mathcal{W}) \leqslant 1$, where
\[ \tmop{diam}_{\infty} (A) = \sup_{x, y \in A} \|x - y\|_{\infty}, \text{ \ \
   \ } A \subset \mathbbm{R}^d . \]
Our first theorem concerns the cost of the lower large deviations for the
magnetization. In the sequel, $\Lambda_N =\{1, \ldots, N\}^d$.

\begin{theorem}
  \label{thm-cost-phco}Assume $\beta > \hat{\beta}_c$ with $\beta \notin
  \mathcal{N}$. Then, for all $0 \leqslant \alpha < 1 / \tmop{diam}_{\infty}
  (\mathcal{W}^q)$,
  \begin{equation}
    \frac{1}{N^{d - 1}} \log \mu^{J, +}_{\Lambda_N} \left(
    \frac{m_{\Lambda_N}}{m_{\beta}} \leqslant 1 - 2 \alpha^d \right)
    \underset{N \rightarrow \infty}{\longrightarrow} -\mathcal{F}^q (\alpha
    \mathcal{W}^q) \text{ \ \ in } \mathbbm{P} \text{-probability.}
  \end{equation}
\end{theorem}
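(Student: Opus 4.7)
The plan is to work in the Fortuin--Kasteleyn representation and reduce the low-magnetization event to a geometric event of phase separation through the coarse graining developed in \cite{M01}. Via the Edwards--Sokal coupling, conditionally on $\omega$ the spin of each FK cluster is $\pm 1$ with probability $1/2$, independently over clusters not touching the boundary; the plus boundary condition forces all boundary-connected clusters to be plus. Hence the event $m_{\Lambda_N}/m_\beta \leq 1-2\alpha^d$ is, after integrating out $\sigma$, an event on $\omega$ asking for a macroscopic excess of $-$ spin selection among free clusters. Using Theorem 5.7 of \cite{M01}, I would partition $\Lambda_N$ into mesoscopic blocks on which a local phase $\{+,-,\mathrm{bad}\}$ is defined, and the hypothesis $\beta \notin \mathcal N$ together with the $\mathbb{P}$-a.s. convergence of mean magnetization to $m_\beta$ (uniqueness of infinite volume averaged FK measure) ensures that bad blocks have negligible density and that the fraction of minus blocks must exceed $\alpha^d - o(1)$.

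For the upper bound, I would follow the $L^1$ Wulff strategy of \cite{N06,N11,N12}: the coarse-grained phase profile, rescaled into $[0,1]^d$, determines a Caccioppoli set $U$ with $\mathrm{Vol}(U) \geq \alpha^d$; BV compactness and the local uniformity of the mesoscopic phase allow one to approximate $\partial^* U$ by a polyhedral skeleton made of finitely many flat facets of directions $\mathbf{n}_i$. Each facet of diameter $\sim \varepsilon N$ is encased in a thin rectangular parallelepiped $\mathcal R_i$ of normal $\mathbf n_i$, and the event that the mesoscopic phase changes across the facet is contained in $\mathcal D_{\mathcal R_i}$, whose probability is $\exp(-L_i^{d-1}\tau^q(\mathbf n_i)(1+o(1)))$ by Theorem \ref{thm-conv-tauq}. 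Summing the logarithms along the skeleton gives $-\mathcal F^q(U) + o(N^{d-1})$, and the isoperimetric inequality for the Wulff functional yields $\mathcal F^q(U) \geq \mathcal F^q(\alpha \mathcal W^q)$ since $\mathrm{Vol}(U) \geq \alpha^d$. The entropy of the set of admissible skeletons is only $\exp(o(N^{d-1}))$, so a union bound does not spoil the estimate.

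For the matching lower bound, using the condition $\alpha\,\mathrm{diam}_\infty(\mathcal W^q) < 1$, I would place a translated $N\alpha \mathcal W^q + z_N$ strictly inside $\Lambda_N$, cover its boundary by finitely many rectangular parallelepipeds $(\mathcal R_i)$ with normals $\mathbf n_i$ tangent to $\partial \mathcal W^q$ and total surface approaching $\mathcal F^q(\alpha \mathcal W^q)$ as the mesh refines, and force $\bigcap_i \mathcal D_{\mathcal R_i}$ under $\Phi^{J,w}_{\Lambda_N}$. The FKG inequality of point (ii) factorizes the probability into a product of $\Phi^{J,w}_{\mathcal R_i}(\mathcal D_{\mathcal R_i})$ (after sandwiching boundary conditions by monotonicity), and Theorem \ref{thm-conv-tauq} gives each factor the right exponential rate $\tau^q(\mathbf n_i)$. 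On this event the interior of the crystal is FK-disconnected from the boundary, so the spin of the enclosed clusters is $\pm 1$ with probability $1/2$ independently; the event that the global choice produces at least the required fraction $\alpha^d$ of minus spins has probability $\geq 1/2$ by symmetry, so we pay only a $o(N^{d-1})$ entropy factor.

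The main obstacle is the upper bound, and specifically the transition from the convergence-in-probability statement of Theorem \ref{thm-conv-tauq} to a uniform exponential lower bound on the disconnection probability along an a priori unknown interface. This forces quantitative surface-order control of the lower large deviations of $\tau^J_{\mathcal R}$ (Theorem \ref{thm-Iquad-gal} and Corollary \ref{cor-NI-count}), without which the union bound over all mesoscopic skeletons could not be absorbed into the $o(N^{d-1})$ error; one implicitly needs $\beta \notin \mathcal N_I$ as well, or else an approximation argument with $\beta' \uparrow \beta$ combined with the monotonicity of $\tau^J_{\mathcal R}$ in $\beta$ from Proposition \ref{prop-tauJ-mon}. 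A secondary issue is the handling of bad blocks near the interface, which must not accidentally contribute volume fraction to the minus phase; this is addressed by the smallness of the bad set provided by the coarse-graining, again crucially using $\beta > \hat\beta_c$ and $\beta \notin \mathcal N$.
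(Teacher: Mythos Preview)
Your lower bound is essentially the paper's construction (Propositions \ref{prop-lwb-disc}--\ref{prop-lwb-phco-final}), and is fine.

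The upper bound, however, has a genuine gap at the step ``the event that the mesoscopic phase changes across the facet is contained in $\mathcal D_{\mathcal R_i}$, whose probability is $\exp(-L_i^{d-1}\tau^q(\mathbf n_i)(1+o(1)))$''. Two things fail here. First, an $L^1$ phase change across a slab does \emph{not} imply $\mathcal D_{\mathcal R_i}$: knowing that most blocks on one side carry the $+$ phase and most on the other side the $-$ phase does not force an interface of closed edges separating $\partial^+\hat{\mathcal R}_i$ from $\partial^-\hat{\mathcal R}_i$. Second, even if you had the disconnection, the surface tension $\tau^J_{\mathcal R_i}$ is defined under the \emph{wired} boundary condition on $\mathcal R_i$, whereas inside $\Lambda_N$ the boundary condition on $\partial\mathcal R_i$ is random and dominated by wired; monotonicity then gives $\Phi^{J,w}_{\Lambda_N}(\mathcal D_{\mathcal R_i})\geqslant\Phi^{J,w}_{\mathcal R_i}(\mathcal D_{\mathcal R_i})$, which is the wrong direction for an upper bound. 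The paper fixes this by introducing an $L^1$ surface tension $\tilde\tau^{J,\delta,K}_{N\mathcal R}$ (equation \eqref{eq-def-tauJL1}) that captures the $L^1$ event directly with a supremum over all spin boundary conditions, and then compares it in two steps to the usual $\tau^J_{\mathcal R}$: first to a free-boundary surface tension via the minimal section argument (Proposition \ref{prop-minsect-arg}), then from free to wired via the argument of \cite{N12} (Proposition \ref{prop-comp-tauJf-tauJ}). Both comparisons hold up to volume-order large deviations in $J$, so they are harmless.

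Your diagnosis of the ``main obstacle'' is also off. You are worried about a union bound over all mesoscopic skeletons and conclude that surface-order lower large deviations of $\tau^J$ (hence $\beta\notin\mathcal N_I$) are implicitly needed. The paper avoids this entirely: exponential tightness (Proposition \ref{prop-exp-tight}) confines $\mathcal M_K/m_\beta$ to a neighborhood of the compact set $\operatorname{BV}_a$, and $L^1$-compactness of $\operatorname{BV}_a$ yields a \emph{finite} cover by balls $\mathcal V(u_i,\varepsilon(u_i))$. The upper bound is then applied to each of the finitely many $u_i$ via Proposition \ref{prop-upb-phco-final}, which requires only the in-probability convergence $\tau^J_{\mathcal R^N}\to\tau^q(\mathbf n)$ from Theorem \ref{thm-conv-tauq}. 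This is why Theorem \ref{thm-cost-phco} asserts convergence in $\mathbb P$-probability under $\beta\notin\mathcal N$ alone; the extra hypothesis $\beta\notin\mathcal N_I$ is only needed to upgrade to almost-sure convergence in Theorem \ref{thm-shape-phco}.
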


Then we describe the geometry of the two phases. We consider a mesoscopic
scale $K \in \mathbbm{N}^{\star}$ and define the magnetization profile
$\mathcal{M}_K$ as
\begin{equation}
  \begin{array}{cccc}
    \mathcal{M}_K : & [0, 1]^d & \longrightarrow & [- 1, 1]\\
    & x & \longmapsto & \frac{1}{K^d} \sum_{z \in \Lambda_N \cap \Delta_{i
    (x)}} \sigma_z
  \end{array} \label{eq-def-MK}
\end{equation}
where
\begin{equation}
  i (x) = \left( \left[ \frac{Nx_1}{K} \right], \ldots, \left[ \frac{Nx_d}{K}
  \right] \right) \text{ \ and \ } \Delta_i = Ki +\{1, \ldots, K\}^d .
\end{equation}
Hence, unless $x$ is too close to the border of $[0, 1]^d$, $\mathcal{M}_K
(x)$ is the magnetization in a block of side-length $K$ that contains $Nx$.
Theorem 5.7 in {\cite{M01}} provides a strong stochastic control on
$\mathcal{M}_K$ when $\beta > \hat{\beta}_c$. In particular, when $K$ is large
enough, at every $x$ the probability that $\mathcal{M}_K (x)$ is close to
either $m_{\beta}$ or $- m_{\beta}$ is close to one under the averaged measure
$\mathbbm{E} \mu^{J, +}_{\Lambda_N}$. Hence $\mathcal{M}_K / m_{\beta}$
describes the geometry of the phases in the Ising model: it is close to one on
the plus phase region, close to minus one on the minus phase region.

We need a few more notations. To $U \subset \mathbbm{R}^d$ Borel measurable,
we associate the profile
\begin{equation}
  \chi_U : x \in \mathbbm{R}^d \mapsto \left\{ \begin{array}{ll}
    1 & \text{if } x \notin U\\
    - 1 & \text{else}
  \end{array} \label{chi} \right.
\end{equation}
and denote by $\|.\|_{L^1}$ the norm of the $L^1$-space $L^1 \left( [0, 1]^d ;
\mathbbm{R} \right)$. We also consider the set of vectors $z$ such that the
translate $z + U$ fits into $[0, 1]^d$:
\begin{equation}
  \mathcal{T} \left( U \right) = \left\{ z \in \mathbbm{R}^d : z + U \subset
  [0, 1]^d \right\} . \label{T}
\end{equation}
Our second theorem describes the geometrical structure of the two phases when
they coexist:

\begin{theorem}
  \label{thm-shape-phco}Assume that $\beta > \hat{\beta}_c$ and $\beta \notin
  \mathcal{N}$. For all $0 \leqslant \alpha < 1 / \tmop{diam}_{\infty}
  (\mathcal{W}^q)$ and $\varepsilon > 0$, for any $K$ large enough one has
  \begin{equation}
    \lim_{N \rightarrow \infty} \mu^{J, +}_{\Lambda_N} \left( \inf_{z \in
    \mathcal{T}(\alpha \mathcal{W}^q)} \left. \left\|
    \frac{\mathcal{M}_K}{m_{\beta}} - \chi_{z + \alpha \mathcal{W}^q}
    \right\|_{L^1} \leqslant \varepsilon \right|
    \frac{m_{\Lambda_N}}{m_{\beta}} \leqslant 1 - 2 \alpha^d \right) = 1
    \label{eq-shape-phco-q}
  \end{equation}
  in $\mathbbm{P}$-probability ($\mathbbm{P}$-a.s. when $\beta \notin
  \mathcal{N}_I$).
\end{theorem}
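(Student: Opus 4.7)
The plan is to follow the $L^1$-approach to the Wulff construction (as developed in \cite{N06,N11,N12,N70}), adapted to the random medium via the surface-order lower large deviation bound of Corollary \ref{cor-NI-count} and the coarse graining of \cite{M01}.

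First, I would use Theorem 5.7 in \cite{M01} to attach to each configuration $\sigma$ a random Borel set $A_N \subset [0,1]^d$ representing the minus phase region and satisfying $\|\mathcal{M}_K/m_{\beta} - \chi_{A_N}\|_{L^1} \leqslant \eta / 2$ outside an exceptional event whose $\mathbb{P}$-averaged probability decays at surface order. The conditioning $m_{\Lambda_N}/m_{\beta} \leqslant 1 - 2 \alpha^d$ then imposes $\tmop{Vol}(A_N) \geqslant \alpha^d - o(1)$, and the plus boundary condition, combined with the surface-order cost of minus clusters reaching $\partial \Lambda_N$, allows one to assume $A_N$ lies at positive distance from $\partial [0,1]^d$.

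Second, for any smooth test set $A \subset [0,1]^d$ with $\tmop{Vol}(A) \geqslant \alpha^d$, I would establish the quenched surface-energy upper bound
\[
\mu^{J, +}_{\Lambda_N} \bigl( \|\chi_{A_N} - \chi_A\|_{L^1} \leqslant \eta \bigr) \leqslant \exp \bigl( -N^{d-1} \mathcal{F}^q(A) + o(N^{d-1}) \bigr),
\]
with $\mathbb{P}$-probability tending to $1$ ($\mathbb{P}$-a.s.\ when $\beta \notin \mathcal{N}_I$). The strategy is to cover a tubular neighbourhood of $\partial A$ by small, quasi-tangent parallelepipeds $\mathcal{R}_i$ oriented along the outer normal $\tmmathbf{n}_i$ to $\partial A$, bound the probability via FKG and DLR by the product $\prod_i \Phi^{J, w}_{\mathcal{R}_i} (\mathcal{D}_{\mathcal{R}_i})$, and take logarithms. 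The resulting random Riemann sum $\sum_i L_i^{d-1} \tau^J_{\mathcal{R}_i}(\tmmathbf{n}_i)$ converges in $\mathbb{P}$-probability to $\int_{\partial A} \tau^q_{\beta}(\tmmathbf{n})\, ds = \mathcal{F}^q(A)$ thanks to independence across disjoint $\mathcal{R}_i$, Theorem \ref{thm-conv-tauq}, and the surface-order control of Corollary \ref{cor-NI-count}; a Borel-Cantelli argument upgrades the convergence to almost sure when $\beta \notin \mathcal{N}_I$.

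Combining this upper bound with the matching lower bound from Theorem \ref{thm-cost-phco}, any $L^1$-subsequential limit $A$ of $A_N$ under the conditional measure (extracted via BV compactness for sets of uniformly bounded perimeter) must satisfy $\tmop{Vol}(A) \geqslant \alpha^d$, $A \subset [0,1]^d$, and $\mathcal{F}^q(A) \leqslant \mathcal{F}^q(\alpha \mathcal{W}^q)$. The Wulff isoperimetric inequality for the continuous, convex surface tension $\tau^q$ (Proposition \ref{prop-conv-tau}), combined with the assumption $\alpha \cdot \tmop{diam}_{\infty}(\mathcal{W}^q) < 1$ which leaves genuine translation freedom inside $[0,1]^d$, then identifies $A$ as a translate $z + \alpha \mathcal{W}^q$, yielding (\ref{eq-shape-phco-q}). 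The hardest step will be the quenched surface-energy upper bound: absent the surface-order rate function of Theorem \ref{thm-Iquad-gal}, the fluctuations of $\log \Phi^{J, w}_{\mathcal{R}_i}(\mathcal{D}_{\mathcal{R}_i})$ could be of the same order as their means, the Riemann-sum identification with $\mathcal{F}^q(A)$ would collapse, and neither the variational principle nor the Wulff shape could be recovered.
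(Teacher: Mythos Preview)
Your overall architecture matches the paper's: a lower bound via Proposition~\ref{prop-lwb-phco-final}, an upper bound on the probability of any fixed $L^1$-neighbourhood, then BV compactness plus the Wulff isoperimetric inequality. However, the heart of your upper bound --- ``bound the probability via FKG and DLR by the product $\prod_i \Phi^{J, w}_{\mathcal{R}_i} (\mathcal{D}_{\mathcal{R}_i})$'' --- does not work as stated, and this is where the paper invests most of its effort.

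The event $\|\mathcal{M}_K/m_{\beta} - \chi\|_{L^1(\mathcal{R}_i)} \leqslant \eta$ is a statement about block-averaged \emph{spin} magnetization; it does not imply the \emph{cluster} disconnection event $\mathcal{D}_{\mathcal{R}_i}$. Even if the upper half of $\mathcal{R}_i$ is mostly plus and the lower half mostly minus, there is no deterministic layer where $\omega$-connections are forbidden. The paper therefore introduces an intermediate $L^1$-surface tension $\tilde{\tau}^{J,\delta,K}_{N\mathcal{R}}$ (equation~(\ref{eq-def-tauJL1})), which is what Proposition~\ref{prop-upb-phco} actually produces. Passing from this to the percolative surface tension requires the \emph{minimal section argument} (Proposition~\ref{prop-minsect-arg}), and even then one lands on the \emph{free} boundary quantity $\tilde{\tau}^J_{\mathcal{R}^N}$ of (\ref{eq-def-tauJt}), because conditioning on the exterior of $\mathcal{R}_i$ via DLR gives an arbitrary boundary condition, not wired. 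A further nontrivial step (Proposition~\ref{prop-comp-tauJf-tauJ}, adapting \cite{N12}) is needed to show $\tilde{\tau}^J_{\mathcal{R}^N} \geqslant \tau^J_{\mathcal{R}^N} - c_d\delta$ up to volume-order large deviations. Your FKG/DLR shortcut bypasses both of these, and neither is automatic.

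A second, smaller gap: you invoke ``BV compactness for sets of uniformly bounded perimeter'' without saying how the perimeter bound is obtained. This is the role of the exponential tightness estimate (Proposition~\ref{prop-exp-tight}), which shows that $\mathcal{M}_K/m_{\beta}$ lies in $\mathcal{V}(\tmop{BV}_a,\gamma)$ except on an event of cost $\geqslant Ca\, N^{d-1}$; without it the compactness step has no input. The paper then covers the compact $\tmop{BV}_a$ by finitely many $L^1$-balls and applies the upper bound to each, rather than extracting subsequential limits as you propose --- the two are equivalent once tightness is in hand, but tightness itself must be proved.
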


Note that, although we state our theorems for the Ising model, they could
easily be adapted to the Potts model with random interactions, or to
random-cluster models, as the two fundamental tools for the study of phase
coexistence, the coarse graining {\cite{M01}} and the study of surface
tension, were developed in the more general setting of the random-cluster
model ($q \geqslant 1$) with random couplings.

The fact that we consider $K$ large but finite is a slight improvement with
respect to former works. In general, one can take any $K = K_N$ such that $1
\ll K_N \ll N$ because on the one hand, $\mathcal{M}_{K_N}$ is close to the
local mean of $\mathcal{M}_K$ as $K_N \gg 1$, and this local mean is close to
$\chi_{z + \alpha \mathcal{W}}$ because the $K_N$-blocks intersecting $N
\partial (z + \alpha \mathcal{W})$ contribute to a negligible volume as $K_N
\ll N$.

Let us conclude this paragraph on a first consequence of the presence of
random couplings : the limit shape of the droplet at low temperatures is
smoother. In the case of the pure Ising model, the Wulff crystal converges to
the unit hypercube $[\pm 1 / 2]^d$ as the temperature goes to zero. Here,
$\mathcal{W}^q$ converges to the Wulff crystal associated with the maximal
flow $\mu$ when $\mathbbm{P}(J_e > 0) = 1$ (Proposition
\ref{prop-Wulff-lowt}). Little is known on the crystal $\mathcal{W}^{\mu}$
associated to the maximal flow $\mu$. Yet, as discussed in Section
\ref{sec-shape-Wmu}, an argument by Durrett and Liggett {\cite{N257}} shows
that $\mathcal{W}^{\mu}$ is not a square when, for instance, $d = 2$,
\[ \mathbbm{P} \left( J_e = 1 / 2 \right) = p \text{ \ and \ } \mathbbm{P}
   \left( J_e = 1 \right) = 1 - p \]
with $\overrightarrow{p_c} < p < 1$, where $\overrightarrow{p_c}$ is the
critical threshold for oriented bond percolation.

\subsection{Phase coexistence under averaged Gibbs measures}

Now we consider the issue of phase coexistence under averaged Gibbs measures,
that is, when phase coexistence is imposed on both the spin configuration and
the random couplings. Before we go further, we would like to remark that
averaged Gibbs measures do not have the physical meaning of the quenched
measure: in quenched ferromagnets, the disorder is frozen and thus cannot be
influenced by the spin configuration itself. However, the analysis presented
here gives an insight on the phenomenon of {\tmem{localization}} which can
occur in models with media randomness.

First we remark that the cost for phase coexistence is here determined by the
surface tension $\tau^{\lambda} (\tmmathbf{n})$:

\begin{theorem}
  \label{thm-cost-lambda}For all $\lambda > 0$ and $0 \leqslant \alpha < 1 /
  \tmop{diam}_{\infty} (\mathcal{W}^{\lambda})$,
  \begin{equation}
    \frac{1}{N^{d - 1}} \log \mathbbm{E} \left[ \left( \mu^{J, +}_{\Lambda_N}
    \left( \frac{m_{\Lambda_N}}{m_{\beta}} \leqslant 1 - 2 \alpha^d \right)
    \right)^{\lambda} \right] \underset{N \rightarrow \infty}{\longrightarrow}
    -\mathcal{F}^{\lambda} (\alpha \mathcal{W}^{\lambda}) .
    \label{eq-cost-phco-l}
  \end{equation}
\end{theorem}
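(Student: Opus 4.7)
The plan is to mimic the proof of Theorem \ref{thm-cost-phco}, but systematically replacing the quenched surface tension $\tau^q$ by $\tau^\lambda$, which is available through the convergence (\ref{eq-cvg-taul}): for every rectangular parallelepiped $\mathcal{R}^N = \mathcal{R}_{0,N,\delta N}(\mathcal{S},\tmmathbf{n})$,
\[
\mathbb{E}\left[\left(\Phi^{J,w}_{\mathcal{R}^N}(\mathcal{D}_{\mathcal{R}^N})\right)^\lambda\right] = \exp\bigl(-N^{d-1}(\tau^\lambda(\tmmathbf{n})+o(1))\bigr).
\]
Throughout, the key structural fact is that two parallelepipeds with disjoint edge sets give $\mathbb{P}$-independent FK measures (noted after equation (\ref{eq-def-FK})), so $\lambda$-th moments factorize across disjoint regions.

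For the \emph{lower bound} on the log-expectation, fix $\alpha<1/\operatorname{diam}_\infty(\mathcal{W}^\lambda)$ and a translate $U^\star = z + \alpha\mathcal{W}^\lambda \subset [0,1]^d$ realizing $\mathcal{F}^\lambda(\alpha\mathcal{W}^\lambda)$. Approximate $\partial U^\star$ by a polyhedral surface whose facets $\mathcal{S}_i$ (with unit normals $\tmmathbf{n}_i$) have side length $K/N$; erect disjoint slabs $\mathcal{R}_i$ of basis $\mathcal{S}_i$ and height $\delta K/N$ straddling these facets so that simultaneous disconnection in all $\mathcal{R}_i$ forces the minus phase to occupy at least a region close to $NU^\star$, hence $m_{\Lambda_N}/m_\beta \leq 1 - 2\alpha^d + o(1)$. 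Using the Edwards--Sokal coupling from point \textit{iii} after (\ref{eq-def-FK}) and FKG, one obtains
\[
\mu^{J,+}_{\Lambda_N}\left(\frac{m_{\Lambda_N}}{m_\beta}\leq 1-2\alpha^d\right)\ \geq\ c(\beta)\,\prod_i \Phi^{J,w}_{\mathcal{R}_i}(\mathcal{D}_{\mathcal{R}_i}).
\]
Raising to the power $\lambda$ and using the independence of the $\mathcal{R}_i$ under $\mathbb{P}$ factorizes the expectation, and cell-by-cell application of (\ref{eq-cvg-taul}) yields a lower bound of $-\sum_i (K/N)^{d-1}\tau^\lambda(\tmmathbf{n}_i) + o(1) \to -\mathcal{F}^\lambda(U^\star)$.

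For the \emph{upper bound}, I would combine the coarse-graining of \cite{M01} with a cell-by-cell disconnection estimate. On a block scale $K$, the event $\{m_{\Lambda_N}/m_\beta\leq 1-2\alpha^d\}$ forces, $\mathbb{P}$-almost surely, the mesoscopic profile $\mathcal{M}_K$ to take values close to $-m_\beta$ on a set $U_K$ of relative volume $\geq \alpha^d - \varepsilon$; the number of admissible $U_K$ is at most $\exp(o(N^{d-1}))$. For each $U_K$ the coarse graining provides a disjoint cover of $\partial U_K$ by mesoscopic parallelepipeds $(\mathcal{R}_i^{U_K})$ through which the FK cluster configuration must be disconnected. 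Consequently
\[
\mu^{J,+}_{\Lambda_N}\left(\frac{m_{\Lambda_N}}{m_\beta}\leq 1-2\alpha^d\right)\ \leq\ \exp(o(N^{d-1}))\ \max_{U_K}\,\prod_i \Phi^{J,w}_{\mathcal{R}_i^{U_K}}(\mathcal{D}_{\mathcal{R}_i^{U_K}}).
\]
Taking the $\lambda$-th power, distributing the maximum at the cost of a union bound, and taking $\mathbb{P}$-expectation, independence across disjoint $\mathcal{R}_i^{U_K}$ again factorizes the bound into one term $\exp(-\tau^\lambda(\tmmathbf{n}_i)(K/N)^{d-1}(1+o(1)))$ per cell. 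The $\limsup$ of the log-expectation is then bounded by $-\inf_{U}\mathcal{F}^\lambda(U)$, the infimum being taken over shapes of relative volume $\geq \alpha^d$ fitting inside $[0,1]^d$. Convexity and continuity of $\tau^\lambda$ (Proposition \ref{prop-conv-tau}) together with the Wulff isoperimetric inequality identify this infimum as $\mathcal{F}^\lambda(\alpha\mathcal{W}^\lambda)$, using the assumption $\alpha\operatorname{diam}_\infty(\mathcal{W}^\lambda)<1$.

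The main obstacle is the upper bound: one must verify that the coarse graining of \cite{M01}, which is formulated for the \emph{random} FK measure, can be combined with the averaged-$\lambda$-moment bound (\ref{eq-cvg-taul}) in such a way that the stochastic dependencies between mesoscopic cells are controlled. The crucial point is to choose the mesoscopic cover so that its cells have pairwise disjoint edges (hence $\mathbb{P}$-independent FK restrictions), and to make sure that the polyhedral approximation error between $\partial U_K$ and the actual piecewise-planar interface is negligible both in its surface measure and in its oscillation of the normal direction, so that $\tau^\lambda$ applied cell-by-cell recovers $\mathcal{F}^\lambda(U_K)$ in the limit $K \to \infty$.
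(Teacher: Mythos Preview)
Your lower bound is essentially the paper's argument (Proposition~\ref{prop-lwb-phco-final}): disconnect around the boundary of $z_0+\alpha\mathcal{W}^\lambda$ via disjoint parallelepipeds, use the conditional spin symmetry inside the disconnected region (Proposition~\ref{prop-lwb-cond-phco}), and exploit $\mathbb{P}$-independence across disjoint boxes to factorize the $\lambda$-th moment and recover $\tau^\lambda$ cell by cell. So far fine.

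The upper bound, however, has two genuine gaps. First, your union bound over mesoscopic profiles $U_K$ does not give the claimed cardinality $\exp(o(N^{d-1}))$. At fixed block scale $K$ there are $\exp(cN^d)$ block configurations; even restricting to profiles of bounded block-perimeter leaves $\exp(c(N/K)^{d-1})$ of them, which is surface order and competes with the cost $\mathcal{F}^\lambda$. The paper avoids this entirely: it first uses exponential tightness (Proposition~\ref{prop-exp-tight}) to localize $\mathcal{M}_K/m_\beta$ near the compact set $\mathrm{BV}_a$, and then covers $\mathrm{BV}_a$ by \emph{finitely many} $L^1$-balls independent of $N$. The upper bound is proved ball by ball (Proposition~\ref{prop-upb-phco-final}), and only at the very end does the Wulff variational problem enter. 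Second, the implication ``profile close to $\chi$ in a box $\Rightarrow$ disconnection with cost $\Phi^{J,w}(\mathcal{D})$'' is not free. The paper introduces an $L^1$-surface tension $\tilde\tau^{J,\delta,K}$ (equation~(\ref{eq-def-tauJL1})), relates it to disconnection with \emph{free} boundary via the minimal section argument (Proposition~\ref{prop-minsect-arg}), and then compares free to wired surface tension (Proposition~\ref{prop-comp-tauJf-tauJ}); both comparisons are controlled only up to $\mathbb{P}$-probability $1-\exp(-cN^d)$, which is why the averaged upper bound in Proposition~\ref{prop-upb-phco-final} carries an additive $n\exp(-cN^d)$ term. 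Your direct claim that coarse graining ``provides a disjoint cover \ldots\ through which the FK configuration must be disconnected'' collapses these two nontrivial steps.

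The obstacle you flag at the end---dependencies between cells under $\mathbb{E}[(\cdot)^\lambda]$---is actually the easy part, handled exactly as you say by disjointness of edge sets. The hard parts are the two points above.
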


The inequality $\tau^{\lambda} < \lambda \tau^q$ at low temperatures
(Corollary \ref{cor-taulq-lowt}) implies that
\[ \mathcal{F}^{\lambda} (\mathcal{W}^{\lambda}) \leqslant
   \mathcal{F}^{\lambda} (\mathcal{W}^q) < \lambda \mathcal{F}^q
   (\mathcal{W}^q), \]
in other words the cost for phase coexistence is {\tmem{strictly smaller}}
under the averaged Gibbs measure than under the quenched Gibbs measure. One
can go further and analyze the cost for reducing the cost for phase
coexistence under averaged Gibbs measures: the functional
\[ \mathcal{J} \left( f \right) = \sup_{\lambda > 0} \{\mathcal{F}^{\lambda}
   (\mathcal{W}^{\lambda}) - \lambda f\} \in [0, \infty] \text{, \ \ \ } f \in
   \mathbbm{R} \]
is the rate function for lower deviations of the cost for phase coexistence.
If $\mathcal{W}^{\min}$ and $\mathcal{F}^{\min}$ stand respectively for the
Wulff crystal and the surface energy associated to $\tau^{\min}$, then
$\mathcal{J}$ is infinite on the left of $\mathcal{F}^{\min}
(\mathcal{W}^{\min})$, finite on the right of $\mathcal{F}^{\min}
(\mathcal{W}^{\min})$ and zero on the right of $\mathcal{F}^q
(\mathcal{W}^q)$, and:

\begin{corollary}
  For any $f \neq \mathcal{F}^{\min} (\mathcal{W}^{\min})$ and $\alpha
  \geqslant 0$ small enough,
  \[ \lim_N \frac{1}{N^{d - 1}} \log \mathbbm{P} \left( \frac{1}{N^{d - 1}}
     \log \mu^{J, +}_{\Lambda_N} \left( \frac{m_{\Lambda_N}}{m_{\beta}}
     \leqslant 1 - 2 \alpha^d \right) \geqslant - \alpha^{d - 1} f \right) = -
     \alpha^{d - 1} \mathcal{J}(f) . \]
\end{corollary}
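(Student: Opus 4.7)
The plan is to recognize the statement as a Cram\'er-type large deviations result for the sequence
\[ X_N := \frac{1}{N^{d-1}} \log \mu^{J,+}_{\Lambda_N}\left(\frac{m_{\Lambda_N}}{m_\beta} \leq 1 - 2\alpha^d\right), \]
which takes values in $(-\infty, 0]$, and to deduce it from Theorem~\ref{thm-cost-lambda}. Using the scaling $\mathcal{F}^\lambda(\alpha U) = \alpha^{d-1} \mathcal{F}^\lambda(U)$ of the surface functional, Theorem~\ref{thm-cost-lambda} is nothing but the convergence, for every $\lambda > 0$, of the normalized log-Laplace transforms
\[ \phi_N(\lambda) := \frac{1}{N^{d-1}} \log \mathbbm{E}\left[e^{\lambda N^{d-1} X_N}\right] \xrightarrow[N \to \infty]{} \phi(\lambda) := -\alpha^{d-1} \mathcal{F}^\lambda(\mathcal{W}^\lambda), \]
provided $\alpha$ is small enough that $\alpha < 1/\tmop{diam}_\infty(\mathcal{W}^\lambda)$ for $\lambda$ near the optimizer $\lambda_0$ identified below. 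Moreover $\phi$ is convex in $\lambda$: by (\ref{eq-dual-Itaul}), $\lambda \mapsto \tau^\lambda(\tmmathbf{n})$ is concave as an infimum of affine functions of $\lambda$, and this concavity is preserved by integration against surface measure and by taking the infimum over shapes of prescribed volume, hence $-\phi$ is concave.

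For the upper bound I would apply a Markov-type inequality: for each $\lambda > 0$,
\[ \mathbbm{P}\left(X_N \geq -\alpha^{d-1} f\right) \leq e^{\lambda \alpha^{d-1} f N^{d-1}} \mathbbm{E}\left[e^{\lambda N^{d-1} X_N}\right], \]
so that taking logarithms, dividing by $N^{d-1}$, passing to the limit, and optimizing over $\lambda > 0$ yields
\[ \limsup_N \frac{1}{N^{d-1}} \log \mathbbm{P}\left(X_N \geq -\alpha^{d-1} f\right) \leq -\alpha^{d-1} \sup_{\lambda > 0}\left(\mathcal{F}^\lambda(\mathcal{W}^\lambda) - \lambda f\right) = -\alpha^{d-1} \mathcal{J}(f). \]

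For the matching lower bound, the standard exponential tilting argument of G\"artner-Ellis applies. Pick $\lambda_0 > 0$ achieving the supremum in $\mathcal{J}(f)$; its existence and finiteness come from the hypothesis $f \neq \mathcal{F}^{\min}(\mathcal{W}^{\min})$, which rules out the degenerate regime where the optimizer escapes to $+\infty$ (this regime corresponds, via Proposition~\ref{prop-taul-Jmin}, to the $\beta \to \infty$ asymptotics $\tau^\lambda/\lambda \to J^{\min} \|\tmmathbf{n}\|_1$). The tilted law $d\tilde{\mathbbm{P}}_N := e^{\lambda_0 N^{d-1} X_N}/\mathbbm{E}[e^{\lambda_0 N^{d-1} X_N}]\, d\mathbbm{P}$ has log-Laplace transform $\psi_N(\mu) = \phi_N(\lambda_0 + \mu) - \phi_N(\lambda_0)$ converging to $\phi(\lambda_0 + \mu) - \phi(\lambda_0)$ in a full neighborhood of $\mu = 0$, and a Chebyshev estimate in $\mu$ then gives the concentration $\tilde{\mathbbm{P}}_N(|X_N + \alpha^{d-1} f| \leq \epsilon) \to 1$. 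Undoing the tilt one obtains
\[ \liminf_N \frac{1}{N^{d-1}} \log \mathbbm{P}\left(X_N \geq -\alpha^{d-1} f\right) \geq -\alpha^{d-1} \mathcal{J}(f) - 2 \lambda_0 \epsilon, \]
and sending $\epsilon \to 0$ concludes the proof.

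The main obstacle is the differentiability of $\phi$ at $\lambda_0$, which is what guarantees that $X_N$ concentrates at the correct value $\phi'(\lambda_0) = -\alpha^{d-1} f$ under the tilted law. Differentiability of the convex function $\phi$ holds off an at most countable set, and the assumption $f \neq \mathcal{F}^{\min}(\mathcal{W}^{\min})$ together with the smallness of $\alpha$ places $-\alpha^{d-1}f$ in the relative interior of the range of $\phi'$, where an exposed supporting hyperplane exists. The locally uniform convergence $\phi_N \to \phi$ needed to push the Chebyshev estimate through the tilting is automatic from pointwise convergence of convex functions on an open interval.
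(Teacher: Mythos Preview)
The paper does not give an explicit proof of this Corollary; it is stated directly after the definition of $\mathcal{J}$ as an immediate consequence of Theorem~\ref{thm-cost-lambda}. Your G\"artner--Ellis approach from the convergence of the log-Laplace transforms (which is precisely the content of Theorem~\ref{thm-cost-lambda}) is exactly the argument the paper has in mind, and the upper bound via Markov plus the tilting lower bound are carried out correctly in the main regime $\mathcal{F}^{\min}(\mathcal{W}^{\min}) < f < \mathcal{F}^q(\mathcal{W}^q)$.

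Two small points deserve more care. First, your claim that $f \neq \mathcal{F}^{\min}(\mathcal{W}^{\min})$ guarantees a \emph{finite} optimizer $\lambda_0$ is only correct when $f > \mathcal{F}^{\min}(\mathcal{W}^{\min})$; for $f < \mathcal{F}^{\min}(\mathcal{W}^{\min})$ one has $\mathcal{J}(f)=+\infty$ and the optimizer escapes to $+\infty$. In that regime the lower bound is vacuous, but the upper bound now requires either Theorem~\ref{thm-cost-lambda} for arbitrarily large $\lambda$ (hence a uniform bound on $\tmop{diam}_\infty(\mathcal{W}^\lambda)$, which follows from $\tau^\lambda/\lambda \to \tau^{\min}$ in Proposition~\ref{prop-taul} when $\tau^{\min}>0$), or the direct observation that $\tau^J_{\mathcal{R}} \geqslant \tau^{\min}_{\mathcal{R}}$ forces $X_N \leqslant -\alpha^{d-1}\mathcal{F}^{\min}(\mathcal{W}^{\min})+o(1)$ deterministically, making the event eventually empty. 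Second, the differentiability of $\phi$ at $\lambda_0$ (needed for the tilted concentration) is asserted rather than proved; one should either invoke essential smoothness or note that for $f$ in the open interval $(\mathcal{F}^{\min}(\mathcal{W}^{\min}),\mathcal{F}^q(\mathcal{W}^q))$ the point $-\alpha^{d-1}f$ lies in the interior of the range of subgradients of the convex limit $\phi$, which is enough for the G\"artner--Ellis lower bound on the half-line event $\{X_N \geqslant -\alpha^{d-1}f\}$.
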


Upper deviations for the cost of phase coexistence, on the other hand, happen
at volume order (cf. the proof of Proposition \ref{prop-upb-phco-final}).

The shape of crystals under averaged Gibbs measures is as well determined by
the surface tension $\tau^{\lambda} (\tmmathbf{n})$:

\begin{theorem}
  \label{thm-Wulff-lambda1}For any $0 \leqslant \alpha < 1 /
  \tmop{diam}_{\infty} (\mathcal{W}^{\lambda = 1})$ and $\varepsilon > 0$, for
  any $K$ large enough one has
  \begin{equation}
    \lim_{N \rightarrow \infty} \left( \mathbbm{E} \mu^{J, +}_{\Lambda_N}
    \right) \left( \inf_{z \in \mathcal{T}(\alpha \mathcal{W}^{\lambda = 1})}
    \left. \left\| \frac{\mathcal{M}_K}{m_{\beta}} - \chi_{z + \alpha
    \mathcal{W}^{\lambda = 1}} \right\|_{L^1} \leqslant \varepsilon \right| 
    \frac{m_{\Lambda_N}}{m_{\beta}} \leqslant 1 - 2 \alpha^d \right) = 1.
    \label{eq-shape-phco-l}
  \end{equation}
\end{theorem}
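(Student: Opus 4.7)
The plan is to mirror the argument behind the quenched shape theorem (Theorem \ref{thm-shape-phco}) but to run the $L^1$ upper bound directly under the averaged measure $\mathbb{E}\mu^{J,+}_{\Lambda_N}$ rather than $\mathbb{P}$-almost surely under $\mu^{J,+}_{\Lambda_N}$. The averaged surface tension $\tau^{\lambda=1}=\tau^a$ will then appear in place of $\tau^q$ because, by (\ref{eq-cvg-taul}), it is precisely this quantity that governs the asymptotic decay of $\mathbb{E}\Phi^{J,w}(\mathcal{D})$ on a mesoscopic scale. The matching lower bound on the cost of the conditioning event is already supplied by Theorem \ref{thm-cost-lambda} at $\lambda=1$, so it suffices to produce a sharp upper bound on the averaged probability of simultaneously having $m_{\Lambda_N}/m_\beta\leq 1-2\alpha^d$ and the profile $\mathcal{M}_K/m_\beta$ staying $\varepsilon$-far in $L^1$ from every translate of $\alpha\mathcal{W}^{\lambda=1}$.

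For the upper bound I would invoke the coarse graining of \cite{M01} (Theorem 5.7 there) to transfer the problem to a random-cluster setting on mesoscopic blocks: on a large-probability event, $\mathcal{M}_K(x)/m_\beta$ is close to either $+1$ or $-1$ at each $x$, so it encodes a macroscopic subset $U_N\subset[0,1]^d$ of volume at least $\alpha^d-o(1)$ representing the minus phase. Following the $L^1$ construction of \cite{N06,N11,N12}, one approximates the boundary of $U_N$ by a polyhedral skeleton whose facets are covered by pairwise disjoint mesoscopic rectangular parallelepipeds $(\mathcal{R}_i)$ aligned with their normals $\mathbf{n}_i$. The mismatch between plus outside and minus inside $U_N$ forces the disconnection events $\mathcal{D}_{\mathcal{R}_i}$ of Definition \ref{def-tauJ} to occur simultaneously, so that DLR and FKG give, as in the quenched proof,
\begin{equation}
\mu^{J,+}_{\Lambda_N}\bigl(\mathcal{M}_K\text{ encodes }U_N\bigr)\leq\prod_i\Phi^{J,w}_{\mathcal{R}_i}\bigl(\mathcal{D}_{\mathcal{R}_i}\bigr).
\end{equation}
Because the $\mathcal{R}_i$ are disjoint, the couplings seen by different factors are $\mathbb{P}$-independent; taking expectation then yields the crucial factorization
\begin{equation}
\mathbb{E}\mu^{J,+}_{\Lambda_N}\bigl(\mathcal{M}_K\text{ encodes }U_N\bigr)\leq\prod_i\mathbb{E}\Phi^{J,w}_{\mathcal{R}_i}\bigl(\mathcal{D}_{\mathcal{R}_i}\bigr),
\end{equation}
and by (\ref{eq-cvg-taul}) with $\lambda=1$ each factor is at most $\exp(-(1+o(1))L_i^{d-1}\tau^{\lambda=1}(\mathbf{n}_i))$, where $L_i$ is the side-length of $\mathcal{R}_i$. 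Summing over the (countably many) skeleton classes and optimizing produces an upper bound of the form $\exp(-N^{d-1}\mathcal{F}^{\lambda=1}(U_N)+o(N^{d-1}))$.

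To conclude, I would invoke the Wulff variational principle: among measurable $U\subset[0,1]^d$ with $\operatorname{Vol}(U)\geq\alpha^d$, the surface energy $\mathcal{F}^{\lambda=1}(U)$ is minimized exactly by the translates of $\alpha\mathcal{W}^{\lambda=1}$ that lie in $[0,1]^d$, and quantitatively any such $U$ whose $L^1$-distance to this family exceeds $\varepsilon$ satisfies
\begin{equation}
\mathcal{F}^{\lambda=1}(U)\geq\mathcal{F}^{\lambda=1}\bigl(\alpha\mathcal{W}^{\lambda=1}\bigr)+\delta(\varepsilon)
\end{equation}
for some $\delta(\varepsilon)>0$; the assumption $\alpha<1/\operatorname{diam}_\infty(\mathcal{W}^{\lambda=1})$ is exactly what ensures the minimizer fits inside $[0,1]^d$, and the continuity and convexity of the homogeneous extension of $\tau^{\lambda=1}$ (Proposition \ref{prop-conv-tau}) make this quantitative inequality available. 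Dividing by the sharp lower bound of Theorem \ref{thm-cost-lambda} at $\lambda=1$ then gives $\exp(-N^{d-1}\delta(\varepsilon)+o(N^{d-1}))$ for the conditional probability of a non-Wulff profile, which yields (\ref{eq-shape-phco-l}).

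The main technical obstacle is arranging the skeleton decomposition so that the mesoscopic parallelepipeds $(\mathcal{R}_i)$ used to read off surface tension are \emph{truly} disjoint rather than merely overlapping in negligibly many edges, since this disjointness is what makes the $\mathbb{P}$-independence step go through. This requires careful buffering between facets, in the spirit of \cite{N11}. In the quenched setting one could have worked with overlapping blocks and used sub-additivity of $-\log\Phi^{J,w}(\mathcal{D})$, but in the averaged setting the $\mathbb{P}$-independence is essential to convert the product of local disconnection costs into the annealed surface tension $\tau^{\lambda=1}$ via (\ref{eq-cvg-taul}).
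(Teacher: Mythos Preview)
Your proposal is correct and follows the same route the paper indicates: the paper explicitly says the averaged case is proved exactly like the quenched Theorem \ref{thm-shape-phco}, with the upper bound obtained from the chain Proposition \ref{prop-upb-phco} $\to$ Proposition \ref{prop-minsect-arg} $\to$ Proposition \ref{prop-comp-tauJf-tauJ} and then factorized over the disjoint $\mathcal{R}_i$ of a $\delta$-covering using $\mathbbm{P}$-independence (see the averaged half of the proof of Proposition \ref{prop-upb-phco-final}), combined with exponential tightness (Proposition \ref{prop-exp-tight}) and compactness of $\tmop{BV}_a$. One caveat: your displayed inequality $\mu^{J,+}_{\Lambda_N}(\ldots)\leqslant\prod_i\Phi^{J,w}_{\mathcal{R}_i}(\mathcal{D}_{\mathcal{R}_i})$ is not literally true by DLR and FKG alone---the $L^1$ event does not deterministically force $\mathcal{D}_{\mathcal{R}_i}$, and the inherited boundary condition is not wired---so the minimal-section and free-to-wired comparisons (Propositions \ref{prop-minsect-arg} and \ref{prop-comp-tauJf-tauJ}, which hold up to events of $\mathbbm{P}$-probability $e^{-cN^d}$) are genuinely needed before you can take the $\mathbbm{E}$ and factorize.
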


This result extends in fact to all $\lambda > 0$, at the price however of
heavier notations, because $\mathbbm{E}[(\mu^{J, +}_{\Lambda_N}
(.))^{\lambda}]$ is not a measure when $\lambda \neq 1$:

\begin{theorem}
  \label{thm-Wulff-lambda}For any $\lambda > 0$, any $0 \leqslant \alpha < 1 /
  \tmop{diam}_{\infty} (\mathcal{W}^{\lambda})$ and $\varepsilon > 0$, for any
  $K$ large enough one has
  \[ \lim_{N \rightarrow \infty} \frac{\mathbbm{E} \left[ \left( \mu^{J,
     +}_{\Lambda_N} \left( \inf_{z \in \mathcal{T}(\alpha
     \mathcal{W}^{\lambda})} \left. \left\| \frac{\mathcal{M}_K}{m_{\beta}} -
     \chi_{z + \alpha \mathcal{W}^{\lambda}} \right\|_{L^1} \leqslant
     \varepsilon \text{ and } \frac{m_{\Lambda_N}}{m_{\beta}} \leqslant 1 - 2
     \alpha^d \right) \right)^{\lambda} \right] \right.}{\mathbbm{E} \left[
     \left( \mu^{J, +}_{\Lambda_N} \left( \frac{m_{\Lambda_N}}{m_{\beta}}
     \leqslant 1 - 2 \alpha^d \right) \right)^{\lambda} \right]} = 1. \]
\end{theorem}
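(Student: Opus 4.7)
The plan is to prove Theorem \ref{thm-Wulff-lambda} by a $\lambda$-th moment version of the scheme used for its $\lambda=1$ counterpart (Theorem \ref{thm-Wulff-lambda1}). Set $B_N = \{m_{\Lambda_N}/m_\beta \leq 1 - 2\alpha^d\}$ and let $A_N$ denote the event $\{\inf_{z \in \mathcal{T}(\alpha \mathcal{W}^\lambda)} \|\mathcal{M}_K/m_\beta - \chi_{z + \alpha \mathcal{W}^\lambda}\|_{L^1} \leq \varepsilon\}$ appearing in the numerator. Since
\[
\mu^{J,+}_{\Lambda_N}(B_N)^\lambda \leq 2^{(\lambda-1)^+}\!\left(\mu^{J,+}_{\Lambda_N}(A_N \cap B_N)^\lambda + \mu^{J,+}_{\Lambda_N}(A_N^c \cap B_N)^\lambda\right),
\]
the ratio in the theorem tends to $1$ as soon as $\mathbb{E}[\mu^{J,+}_{\Lambda_N}(A_N^c \cap B_N)^\lambda] = o(\mathbb{E}[\mu^{J,+}_{\Lambda_N}(B_N)^\lambda])$. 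Theorem \ref{thm-cost-lambda} furnishes the denominator lower bound $\exp(-N^{d-1}\mathcal{F}^\lambda(\alpha \mathcal{W}^\lambda) + o(N^{d-1}))$, so everything reduces to an upper bound on the numerator.

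For that bound I would use the coarse graining of Theorem 5.7 in \cite{M01} (available since $\beta > \hat{\beta}_c$) to replace, outside a $\mu^{J,+}_{\Lambda_N}$-negligible event at surface order, the event $A_N^c \cap B_N$ by the existence of a random Caccioppoli set $V_N \subset [0,1]^d$ of volume close to $\alpha^d$, far in $L^1$ from every translate of $\alpha \mathcal{W}^\lambda$, and such that $\mathcal{M}_K/m_\beta \approx -\chi_{V_N}$. Covering the admissible $V_N$ by finitely many polyhedral approximants $(U^{(k)})_{k \in \mathcal{K}}$ at a mesh $\eta \ll \varepsilon$ and using $(\sum a_i)^\lambda \leq |\mathcal{K}|^{(\lambda-1)^+} \sum a_i^\lambda$, the task reduces to bounding $\mathbb{E}[\mu^{J,+}_{\Lambda_N}(E_k)^\lambda]$ for each $k$, where $E_k = \{\|\chi_{V_N} - \chi_{U^{(k)}}\|_{L^1} \leq \eta\}$. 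Tile a neighbourhood of $N \partial U^{(k)}$ by edge-disjoint rectangular parallelepipeds $(\mathcal{R}_{k,j})_j$ whose basis is orthogonal to the outward normal $\tmmathbf{n}_{k,j}$ of $U^{(k)}$. The Edwards--Sokal coupling together with the monotonicity properties of the random-cluster measure yield a pointwise-in-$J$ product bound $\mu^{J,+}_{\Lambda_N}(E_k) \leq \prod_j \Phi^{J,\pi_j}_{\mathcal{R}_{k,j}}(\mathcal{D}_{\mathcal{R}_{k,j}})$ for a choice of boundary conditions $\pi_j$ which, because $\beta \notin \mathcal{N}$, is equivalent to the wired one at exponential order. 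Raising to the $\lambda$-th power, taking $\mathbb{E}$, and invoking the $\mathbb{P}$-independence of $(\Phi^{J,\pi_j}_{\mathcal{R}_{k,j}})_j$ over edge-disjoint tiles, the characterization (\ref{eq-cvg-taul}) produces $\exp(-L_{k,j}^{d-1} \tau^\lambda(\tmmathbf{n}_{k,j}) + o(L_{k,j}^{d-1}))$ per tile, which integrates to $\mathbb{E}[\mu^{J,+}_{\Lambda_N}(E_k)^\lambda] \leq \exp(-N^{d-1} \mathcal{F}^\lambda(U^{(k)}) + o(N^{d-1}))$.

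The proof is then closed by Wulff's strict isoperimetric principle: convexity and continuity of $\tau^\lambda$ (Proposition \ref{prop-conv-tau}) make $\alpha \mathcal{W}^\lambda$ the unique (modulo translation) minimizer of $\mathcal{F}^\lambda$ at volume $\alpha^d$, with a strict gap on the $\varepsilon$-complement, so that $\inf_{k \in \mathcal{K}} \mathcal{F}^\lambda(U^{(k)}) \geq \mathcal{F}^\lambda(\alpha \mathcal{W}^\lambda) + \delta(\varepsilon)$ for some $\delta(\varepsilon) > 0$; summing the $|\mathcal{K}|$ tile-wise upper bounds and dividing by the denominator yields the desired vanishing ratio. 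The main obstacle is exactly the tile-wise $\lambda$-th moment bound: one must obtain the pointwise-in-$J$ product structure $\mu^{J,+}_{\Lambda_N}(E_k) \leq \prod_j \Phi^{J,\pi_j}_{\mathcal{R}_{k,j}}(\mathcal{D}_{\mathcal{R}_{k,j}})$ \emph{before} raising to the $\lambda$-th power so that $\mathbb{P}$-independence over disjoint tiles factorizes the expectation and matches (\ref{eq-cvg-taul}) up to a subexponential error. This boils down to the same rectangle-tiling machinery developed for Theorems \ref{thm-cost-phco} and \ref{thm-Wulff-lambda1} (Edwards--Sokal coupling, FK monotonicity, coarse-graining, and uniqueness at $\beta \notin \mathcal{N}$), adapted so that $\tau^\lambda$ rather than $\lambda \tau^a$ appears as the natural per-surface cost.
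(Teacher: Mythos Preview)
Your overall architecture is the paper's: lower bound from Proposition~\ref{prop-lwb-phco-final} (equivalently Theorem~\ref{thm-cost-lambda}), upper bound from Proposition~\ref{prop-upb-phco-final}, exponential tightness from Proposition~\ref{prop-exp-tight}, and the compactness of $\tmop{BV}_a$ to reduce to finitely many profiles; the paper in fact says explicitly that Theorems~\ref{thm-cost-lambda}, \ref{thm-Wulff-lambda1} and \ref{thm-Wulff-lambda} are proved in complete analogy with the quenched case and gives no separate argument.

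One point in your sketch is imprecise and worth fixing, because it is exactly the ``main obstacle'' you yourself flag. You write a \emph{pointwise-in-$J$} bound
\[
\mu^{J,+}_{\Lambda_N}(E_k)\;\leqslant\;\prod_j \Phi^{J,\pi_j}_{\mathcal{R}_{k,j}}\!\left(\mathcal{D}_{\mathcal{R}_{k,j}}\right),
\]
attributing it to the Edwards--Sokal coupling and FK monotonicity. That inequality is not available in this form: the $L^1$-event $E_k$ does not force disconnection in each tile without the \emph{minimal section argument}, and the latter (Proposition~\ref{prop-minsect-arg}) together with the free-to-wired comparison (Proposition~\ref{prop-comp-tauJf-tauJ}) are statements that hold only up to a $\mathbbm{P}$-deviation of volume order, not pointwise in $J$. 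The paper's mechanism is two-step: the Gibbs property alone gives the pointwise product bound with the $L^1$-surface tensions $\tilde{\tau}^{J,\delta,K}_{N\mathcal{R}_i}$ (Proposition~\ref{prop-upb-phco}), and \emph{then} one replaces each $\tilde{\tau}^{J,\delta,K}_{N\mathcal{R}_i}$ by $\tau^J_{\mathcal{R}_i^N}-o_{\delta}(1)$ on a set of $\mathbbm{P}$-probability $1-e^{-cN^d}$; the complementary event contributes only $ne^{-cN^d}$ to $\mathbbm{E}[(\cdot)^\lambda]$ and is harmless. After this substitution the $\tau^J_{\mathcal{R}_i^N}$ are genuinely $\mathbbm{P}$-independent (disjoint boxes), the $\lambda$-th moment factorizes, and (\ref{eq-cvg-taul}) delivers $\tau^{\lambda}(\tmmathbf{n}_i)$ per tile, exactly as in the proof of Proposition~\ref{prop-upb-phco-final}. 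With that correction your proposal matches the paper.
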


We conclude the summary of our results with a description of a phenomenon of
{\tmem{localization}}. First, let us characterize the typical value of the
surface tension $\tau^J_{\mathcal{R}}$ under the averaged measure, conditioned
to phase coexistence. For any $\beta \geqslant 0, \lambda > 0$ and
$\tmmathbf{n} \in S^{d - 1}$, this value stands between
\begin{eqnarray}
  \hat{\tau}^{\lambda, -} (\tmmathbf{n}) & = & \inf \left\{ \tau \geqslant 0 :
  I_{\tmmathbf{n}} (\tau) + \lambda \tau = \tau^{\lambda} (\tmmathbf{n})
  \right\} \\
  \text{and \ } \hat{\tau}^{\lambda, +} (\tmmathbf{n}) & = & \sup \left\{ \tau
  \geqslant 0 : I_{\tmmathbf{n}} (\tau) + \lambda \tau = \tau^{\lambda}
  (\tmmathbf{n}) \right\} . 
\end{eqnarray}
The equality $\hat{\tau}^{\lambda, -} (\tmmathbf{n}) = \hat{\tau}^{\lambda, +}
(\tmmathbf{n})$ holds whenever there is at most one $\tau$ at which the slope
of $I_{\tmmathbf{n}}$ equals $\lambda$, that is, for all but at most countably
many values of $\lambda > 0$. Note also that the strict inequality
$\tau^{\lambda} (\tmmathbf{n}) < \lambda \tau^q (\tmmathbf{n})$ implies
$\hat{\tau}^{\lambda, +} (\tmmathbf{n}) < \tau^q (\tmmathbf{n})$. We also
consider a similar quantity for the quenched value of surface tension:
\begin{equation}
  \tilde{\tau}^q (\tmmathbf{n}) = \inf \left\{ \tau : I_{\tmmathbf{n}} (\tau)
  = 0 \right\} \label{eq-def-taut}
\end{equation}
which coincides with $\tau^q (\tmmathbf{n})$ for all $\tmmathbf{n} \in S^{d -
1}$, for all but at most countably many $\beta \geqslant 0$, see Corollary
\ref{cor-NI-count}.

Our last Theorem describes the value of the surface tension $\tau^J$
conditionally on the position of the crystal: we prove that the typical value
of surface tension is $\tau^q$ outside the boundary of the crystal, while on
the boundary it is reduced to $\hat{\tau}^{\lambda}$. When $\tau^{\lambda}
(\tmmathbf{n}) < \lambda \tau^q (\tmmathbf{n})$ for some $\tmmathbf{n} \in
S^{d - 1}$, the location of the Wulff crystal under averaged Gibbs measures is
thus {\tmem{determined}} by the realization of the media: the boundary of the
crystal coincides with the place where surface tension is reduced.

\begin{theorem}
  \label{thm-phco-tauJ}Let $\lambda > 0$, $0 \leqslant \alpha < 1 /
  \tmop{diam}_{\infty} (\mathcal{W}^{\lambda})$ and $z \in \mathcal{T}(\alpha
  \mathcal{W}^{\lambda})$. Consider $h, \delta, \gamma > 0$ and a
  parallelepiped rectangle $\mathcal{R}=\mathcal{R}_{x, h, \delta h}
  (\tmmathbf{n}, \mathcal{S}) \subset (0, 1)^d$ as in (\ref{eq-def-R}). Call
  $\mathcal{R}^N = N\mathcal{R}+ z_N (\mathcal{R})$ where $z_N (\mathcal{R})
  \in (- 1 / 2, 1 / 2]^d$ is chosen such that the center of $\mathcal{R}^N$
  belongs to $\mathbbm{Z}^d$. For $\varepsilon > 0$ small enough and $K$ large
  enough,
  \[ \lim_{N \rightarrow \infty} \frac{\mathbbm{E} \left[ \left( \mu^{J,
     +}_{\Lambda_N} \left( \tau^J_{\mathcal{R}^N} \in \mathcal{A} \text{ and }
     \left\| \frac{\mathcal{M}_K}{m_{\beta}} - \chi_{z + \alpha
     \mathcal{W}^{\lambda}} \right\|_{L^1} \leqslant \varepsilon \right)
     \right)^{\lambda} \right]}{\mathbbm{E} \left[ \left( \mu^{J,
     +}_{\Lambda_N} \left( \left\| \frac{\mathcal{M}_K}{m_{\beta}} - \chi_{z +
     \alpha \mathcal{W}^{\lambda}} \right\|_{L^1} \leqslant \varepsilon
     \right) \right)^{\lambda} \right]} = 1 \]
  when
  \begin{enumerateroman}
    \item $\mathcal{R} \cap z + \alpha \partial \mathcal{W}^{\lambda} =
    \emptyset$ and $\mathcal{A}= \left[ \tilde{\tau}^q (\tmmathbf{n}) -
    \gamma, \tau^q (\tmmathbf{n}) + \gamma \right]$
    
    \item or $x \in z + \alpha \partial \mathcal{W}^{\lambda}$, $\tmmathbf{n}$
    is the outer local normal to $z + \alpha \mathcal{W}^{\lambda}$ at $x$,
    $h$ is small enough and $\mathcal{A}= \left[ \hat{\tau}^{\lambda, -}
    (\tmmathbf{n}) - \gamma, \hat{\tau}^{\lambda, +} (\tmmathbf{n}) + \gamma
    \right]$.
  \end{enumerateroman}
\end{theorem}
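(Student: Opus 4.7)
The statement says a ratio of two averaged moments tends to $1$; equivalently, I will show that if in the numerator one replaces $\mathcal{A}$ by $\mathcal{A}^c$, the resulting ratio goes to $0$. The heuristic is that the averaged conditional measure pays two costs in the small window $\mathcal{R}^N$: a surface-energy cost $\lambda \tau^J_{\mathcal{R}^N}$ (present only when an interface crosses $\mathcal{R}^N$, i.e.\ in case (ii)) and a media cost $I_{\tmmathbf{n}}(\tau^J_{\mathcal{R}^N})$ coming from the large deviations of Theorem \ref{thm-rate-I}. Legendre duality (\ref{eq-dual-Itaul}) shows that $\lambda \tau + I_{\tmmathbf{n}}(\tau)$ is minimised exactly on the interval $[\hat{\tau}^{\lambda,-},\hat{\tau}^{\lambda,+}]$, while $I_{\tmmathbf{n}}(\tau) = 0$ precisely for $\tau \in [\tilde{\tau}^q,+\infty)$. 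Theorem \ref{thm-phco-tauJ} simply records these two minimisations.

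\textbf{Decoupling $\mathcal{R}^N$.} The key preliminary step is to use the coarse graining of \cite{M01} (Theorem 5.7) on the event $\{\|\mathcal{M}_K/m_\beta - \chi_{z+\alpha\mathcal{W}^{\lambda}}\|_{L^1}\le\varepsilon\}$: the random-cluster interface in the FK representation (point ii of the joint representation) follows $N(z+\alpha\partial\mathcal{W}^{\lambda})$ up to mesoscopic fluctuations. Using the DLR equation for $\Phi^{J,\pi,q}$ and the surface-tension estimate of Definition \ref{def-tauJ}, one can replace, up to sub-exponential $o(N^{d-1})$ errors, the contribution of $\mathcal{R}^N$ to $\mu^{J,+}_{\Lambda_N}(\cdots)$ by a local factor depending on $J|_{\mathcal{R}^N}$ alone: in case (i) this factor is essentially $1$, while in case (ii) it is $\exp\bigl(-(Nh)^{d-1}\tau^J_{\mathcal{R}^N}(\tmmathbf{n})\bigr)$, the cost of maintaining a flat interface with normal $\tmmathbf{n}$ across $\mathcal{R}^N$.

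\textbf{Case (i): $\mathcal{R}$ avoids the boundary.} Since no interface crosses $\mathcal{R}^N$, the conditional Gibbs weight inside $\mathcal{R}^N$ (pure plus or pure minus configurations only) cancels with the partition function, so $(\mu^{J,+}_{\Lambda_N}(A))^{\lambda}$ is essentially independent of $J|_{\mathcal{R}^N}$. Factorising this independence gives, up to sub-exponential errors,
\[
  \frac{\mathbb{E}\bigl[(\mu^{J,+}_{\Lambda_N}(A\cap\{\tau^J_{\mathcal{R}^N}\in\mathcal{A}^c\}))^\lambda\bigr]}{\mathbb{E}\bigl[(\mu^{J,+}_{\Lambda_N}(A))^\lambda\bigr]}
  \lesssim \mathbb{P}\bigl(\tau^J_{\mathcal{R}^N}\in\mathcal{A}^c\bigr).
\]
By Theorem \ref{thm-updev-tauJ} the upper tail is sub-exponentially small at volume order, while by the very definition of $\tilde{\tau}^q(\tmmathbf{n})$ one has $I_{\tmmathbf{n}}(\tilde{\tau}^q(\tmmathbf{n})-\gamma)>0$, so Theorem \ref{thm-rate-I} yields an $\exp(-c(Nh)^{d-1})$ bound on the lower tail. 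The ratio goes to zero.

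\textbf{Case (ii): $\mathcal{R}$ sits on the boundary.} Using the local factorisation, the ratio reduces at leading exponential order to
\[
  \frac{\mathbb{E}\bigl[e^{-\lambda(Nh)^{d-1}\tau^J_{\mathcal{R}^N}(\tmmathbf{n})}\,\mathbf{1}_{\tau^J_{\mathcal{R}^N}\in\mathcal{A}^c}\bigr]}{\mathbb{E}\bigl[e^{-\lambda(Nh)^{d-1}\tau^J_{\mathcal{R}^N}(\tmmathbf{n})}\bigr]}.
\]
Applying Varadhan's lemma with the rate function $I_{\tmmathbf{n}}$ provided by Theorem \ref{thm-rate-I} (valid because $(Nh)^{d-1}\to\infty$ and the weight $e^{-\lambda(Nh)^{d-1}\tau}$ is continuous bounded in $\tau$), the denominator is $\exp(-(Nh)^{d-1}\tau^{\lambda}(\tmmathbf{n}))$ by (\ref{eq-dual-Itaul}), while the numerator is $\exp(-(Nh)^{d-1}\inf_{\tau\in\mathcal{A}^c}\{\lambda\tau+I_{\tmmathbf{n}}(\tau)\})$. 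Convexity of $I_{\tmmathbf{n}}$ and the definition of $[\hat{\tau}^{\lambda,-},\hat{\tau}^{\lambda,+}]$ as the argmin of $\lambda\tau+I_{\tmmathbf{n}}(\tau)$ imply the strict gap $\inf_{\mathcal{A}^c}\{\lambda\tau+I_{\tmmathbf{n}}(\tau)\}>\tau^{\lambda}(\tmmathbf{n})$, so the ratio decays like $\exp(-c\gamma(Nh)^{d-1})\to 0$.

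\textbf{Main obstacle.} The delicate issue is not any of the three Legendre-duality computations but the replacement of the true Gibbs measure $\mu^{J,+}_{\Lambda_N}$ restricted to the profile event by a product of a local FK measure on $\mathcal{R}^N$ (for which $\tau^J_{\mathcal{R}^N}$ is directly relevant) and a factor independent of $J|_{\mathcal{R}^N}$. The surface-order gain $c\gamma(Nh)^{d-1}$ is finite for fixed $h$, so the sub-exponential error terms produced by the coarse graining, by boundary effects on $\partial\mathcal{R}^N$, and by the Varadhan approximation must all be proven to be $o((Nh)^{d-1})$. Choosing $\varepsilon$ small, $K$ large (after $h$ is fixed) and exploiting the assumption $\beta>\hat{\beta}_c$ via the coarse graining of \cite{M01} is the mechanism which keeps these errors below the surface-order threshold.
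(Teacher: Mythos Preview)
Your overall strategy is the same as the paper's: reduce to showing that replacing $\mathcal{A}$ by $\mathcal{A}^c$ in the numerator produces something strictly smaller at the $N^{d-1}$ exponential scale than the denominator, which by Theorems \ref{thm-cost-lambda} and \ref{thm-Wulff-lambda} behaves like $\exp(-N^{d-1}\mathcal{F}^{\lambda}(\alpha\mathcal{W}^{\lambda}))$. Your treatment of the Legendre duality in case (ii) and of the tail of $\tau^J_{\mathcal{R}^N}$ in case (i) is also in line with the paper (the paper packages the case-(ii) computation as Lemma \ref{lem-tauhat}).

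Where your proposal is too thin is precisely the step you flag as the main obstacle: the decoupling of $\mathcal{R}^N$ from the rest of the profile event. You appeal to ``the coarse graining of \cite{M01}'' and ``the DLR equation'' to replace the contribution of $\mathcal{R}^N$ by a local factor, but this is not how the paper makes the step rigorous, and as written it is not a proof. The actual mechanism in the paper is the $\delta$-covering machinery: one bounds the profile event from above via Proposition \ref{prop-upb-phco}, which factorises (through the Gibbs property) the cost as a product of $L^1$-surface-tension terms $\tilde{\tau}^{J,\delta,K}_{N\mathcal{R}_i}$ over disjoint boxes. These terms are then compared to the genuine $\tau^J_{\mathcal{R}_i^N}$ via Propositions \ref{prop-minsect-arg} and \ref{prop-comp-tauJf-tauJ}, at the price of volume-order large deviations. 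Only then does one get a product of terms that are \emph{independent under $\mathbb{P}$} (because the boxes are disjoint), which is what allows the factor $\mathbf{1}_{\{\tau^J_{\mathcal{R}^N}\in\mathcal{A}^c\}}$ to be pulled out in case (i), and what reduces case (ii) to the single-box moment treated by Lemma \ref{lem-tauhat}. The coarse graining of \cite{M01} is used \emph{inside} the proofs of Propositions \ref{prop-minsect-arg} and \ref{prop-comp-tauJf-tauJ}, not directly at the level of your decoupling step. Without invoking this specific chain (Propositions \ref{prop-upb-phco}, \ref{prop-minsect-arg}, \ref{prop-comp-tauJf-tauJ}), your argument has a genuine gap: there is no stated reason why the interaction of $J|_{\mathcal{R}^N}$ with the global Gibbs measure should disappear up to errors that are $o(1)$ on the $N^{d-1}$ scale.
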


\subsection{Acknowledgments}

During the elaboration of this work I enjoyed numerous stimulating discussions
with Thierry Bodineau. Most of the results presented here were obtained during
a PhD Thesis at Universit\'e Paris Diderot {\cite{M00}}. I am also grateful to
Marie Theret and Rapha\"el Rossignol for useful and pleasant discussion about
maximal flows and concentration.

\section{Surface tension}

\label{sec-tau}As announced in the former Section, surface tension is a
fundamental tool for understanding the mechanism of phase coexistence. It
quantifies the free energy per surface unit of an interface separating the
plus and minus phases in the dilute Ising model. In this Section, we prove the
convergence of surface tension in dilute models and study its large
deviations.

\subsection{Sub-additivity and convergence}

\label{sec-ts-add}In many aspects the surface tension for the dilute Ising
model is similar to the one of the Ising model with deterministic couplings.
It has the crucial property of being {\tmem{sub-additive}}, as in the uniform
case {\cite{N08}}: this is shown in Theorem \ref{thm-add-tauJ} below. We
present here the proof of Proposition \ref{prop-tauJ-mon}, Theorem
\ref{thm-add-tauJ} and finally Theorem \ref{thm-conv-tauq}. We also explain
why surface tension is positive under the assumption that $\beta >
\hat{\beta}_c$ (Proposition \ref{prop-tauq-pos}).

\begin{proof}
  (Proposition \ref{prop-tauJ-mon}). The surface tension
  $\tau^J_{\mathcal{R}}$ is a non-decreasing function of $J$ and $\beta$
  because $\mathcal{D}_{\mathcal{R}}$ is a decreasing event while the measure
  $\Phi^J_{\mathcal{R}}$ stochastically increases with $p = 1 - \exp (- \beta
  J_e)$.
  
  Now we consider $H' \geqslant H$ and call $\mathcal{R}=\mathcal{R}_{x, L,
  H} (\mathcal{S}, \tmmathbf{n})$ and $\mathcal{R}' =\mathcal{R}_{x, L, H'}
  (\mathcal{S}, \tmmathbf{n})$. In view of the DLR equation and of the
  monotonicity of $\Phi_{\mathcal{R}}^{J, \pi}$ along $\pi$, the measure
  $\Phi_{\mathcal{R}'}^{J, w}$ restricted to $E ( \hat{\mathcal{R}})$ is
  stochastically smaller than $\Phi_{\mathcal{R}}^{J, w}$. On the other hand,
  it is clear that $\mathcal{D}_{\mathcal{R}} \subset
  \mathcal{D}_{\mathcal{R}'}$, and because $\mathcal{D}_{\mathcal{R}}$ is a
  decreasing event we conclude that
  \[ \Phi_{\mathcal{R}}^{J, w} \left( \mathcal{D}_{\mathcal{R}} \right)
     \leqslant \Phi_{\mathcal{R}'}^{J, w} \left( \mathcal{D}_{\mathcal{R}}
     \right) \leqslant \Phi_{\mathcal{R}'}^{J, w} \left(
     \mathcal{D}_{\mathcal{R}'} \right), \]
  which shows that $\tau^J_{\mathcal{R}}$ is a non-increasing function of $H$.
  
  It is clear from the definition that $\tau^{\min}_{\mathcal{R}} \geqslant
  0$. The inequality $\tau^{\min}_{\mathcal{R}} \leqslant \tau^J_{\mathcal{R}}
  \leqslant \tau^{\max}_{\mathcal{R}}$ is a consequence of the monotony in
  $J$. We conclude with the upper bound on $\tau^{\max}_{\mathcal{R}}$.
  Because of the monotony in $H$ we can take $H = 2 \sqrt{d}$ (which ensures
  that disconnection is still possible). We have: $\tau^{\max}_{\mathcal{R}}
  \leqslant \tau^{\max}_{\mathcal{R}'}$ where $\mathcal{R}' =\mathcal{R}_{x,
  L, 2 \sqrt{d}} (\mathcal{S}, \tmmathbf{n})$. It is enough to close all the
  edges of $\widehat{\mathcal{R}'}$ to realize the disconnection in
  $\widehat{\mathcal{R}'}$. The DLR equation, combined with the monotonicity
  of $\Phi_{\{e\}}^{J^{\max}, \pi}$ along the boundary condition $\pi$ yields:
  \[ \tau^{\max}_{\mathcal{R}'} \leqslant - \frac{1}{L^{d - 1}} \log \prod_{e
     \in E ( \widehat{\mathcal{R}'})} \Phi_{\{e\}}^{J^{\max}, w} \left(
     \{\omega_e = 0\} \right) = \beta J^{\max}  \frac{|E (
     \widehat{\mathcal{R}'}) |}{L^{d - 1}} . \]
  Finally, $|E ( \widehat{\mathcal{R}'}) |$ is not larger than $2 d$ times the
  cardinal of $\widehat{\mathcal{R}'}$, which is itself not larger than the
  volume of $V = \bigcup_{x \in \widehat{\mathcal{R}'}} \left( x + [0, 1]^d
  \right) \subset \mathcal{R}_{0, L + 2 \sqrt{d}, 3 \sqrt{d}} (\mathcal{S},
  \tmmathbf{n})$. Consequently,
  \[ \tau^{J^{\max}}_{\mathcal{R}'} \leqslant \beta J^{\max} \times 2 d \times
     \frac{\left( L + 2 \sqrt{d} \right)^{d - 1} \times 6 \sqrt{d}}{L^{d - 1}}
     \leqslant \beta J^{\max} \times 6 \times 2^d d^{3 / 2} . \]
\end{proof}

Now we address the issue of sub-additivity. It is a fundamental tool not only
for proving the convergence of surface tension, but also for establishing the
large deviations principles in the next Section.

\begin{theorem}
  \label{thm-add-tauJ}Consider $\tmmathbf{n} \in S^{d - 1}$, $\mathcal{S},
  \mathcal{S}' \subset \mathbbm{S}_{\tmmathbf{n}}$ and $H, l \geqslant 2
  \sqrt{d}$, $L \geqslant 4 \sqrt{d} l$. Let $\mathcal{R}=\mathcal{R}_{0, L, H
  + \sqrt{d} / 2} (\mathcal{S}, \tmmathbf{n})$. There is a collection
  $(\mathcal{R}_i)_{i \in \mathcal{C}}$ of rectangular parallelepipeds
  $\mathcal{R}_i =\mathcal{R}_{z_i, l, H} (\mathcal{S}', \tmmathbf{n})$ that
  are disjoint subsets of $\mathcal{R}$, centered at $z_i \in \mathbbm{Z}^d$,
  with
  \begin{equation}
    1 - c_d  \left( \frac{l}{L} + \frac{1}{l} \right) \leqslant \left(
    \frac{l}{L} \right)^{d - 1} |\mathcal{C}| \leqslant 1 \label{eq-card-C}
  \end{equation}
  such that, for any $J : E ( \hat{\mathcal{R}}) \rightarrow [0, 1]$:
  \begin{equation}
    \tau^J_{\mathcal{R}} \leqslant \frac{1}{|\mathcal{C}|} \sum_{i \in
    \mathcal{C}} \tau^J_{\mathcal{R}_i} + \beta c_d \left( \frac{l}{L} +
    \frac{1}{l} \right) \label{eq-tauJ-sa}
  \end{equation}
  where $c_d < \infty$ is a constant that depends on $d$ only.
\end{theorem}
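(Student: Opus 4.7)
The plan is to prove sub-additivity by realizing disconnection in $\mathcal{R}$ as the combination of disconnection events inside the smaller boxes $\mathcal{R}_i$ together with a ``sealing'' event on the remaining edges near the equatorial hyperplane, and then to use FKG and DLR to decouple.

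First, I would set up the geometry: let $\Pi_0 = \{ y : y \cdot \tmmathbf{n} = 0 \}$ be the equatorial hyperplane of $\mathcal{R}$, and tile the cross-section $L \mathcal{S} \cap \Pi_0$ by translates $z_i + l \mathcal{S}'$ with $z_i \in \mathbbm{Z}^d \cap \Pi_0$ (up to rounding errors of order $1$). A standard packing argument inside a hypercube of side $L$ with tiles of side $l$ yields a disjoint family of size $|\mathcal{C}|$ satisfying (\ref{eq-card-C}), with the condition $L \geqslant 4 \sqrt{d} \, l$ ensuring at least one tile fits. The parallelepipeds $\mathcal{R}_i = \mathcal{R}_{z_i, l, H}(\mathcal{S}', \tmmathbf{n})$ are pairwise disjoint and contained in $\mathcal{R}$ because $\mathcal{R}$ has extension $H + \sqrt{d}/2$ in the $\tmmathbf{n}$ direction, which is strictly more than $H$.

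Next comes the key geometric observation. Let $G$ be the set of edges of $E(\hat{\mathcal{R}})$ that cross the slab $\{y : |y \cdot \tmmathbf{n}| \leqslant \sqrt{d}/2\}$ but do not lie in any $E(\hat{\mathcal{R}}_i)$. Then the event
\begin{equation*}
  \mathcal{E} = \mathcal{Z}_G \cap \bigcap_{i \in \mathcal{C}} \mathcal{D}_{\mathcal{R}_i}
\end{equation*}
is contained in $\mathcal{D}_{\mathcal{R}}$: any open path in $\hat{\mathcal{R}}$ from $\partial^+ \hat{\mathcal{R}}$ to $\partial^- \hat{\mathcal{R}}$ must cross the equatorial slab; if it does so inside some $\hat{\mathcal{R}}_i$ then $\mathcal{D}_{\mathcal{R}_i}$ is violated, and otherwise it would traverse an edge of $G$. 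Estimating $|G|$ is then elementary: the total number of slab-crossing edges in $\hat{\mathcal{R}}$ is of order $L^{d-1}$, while those inside the $\mathcal{R}_i$'s account for $|\mathcal{C}| \cdot O(l^{d-1})$, so (\ref{eq-card-C}) gives $|G| \leqslant c_d L^{d-1} (l/L + 1/l)$.

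The probabilistic part is then a clean application of the comparison inequalities. Both $\bigcap_i \mathcal{D}_{\mathcal{R}_i}$ and $\mathcal{Z}_G$ are decreasing events, so FKG yields
\begin{equation*}
  \Phi^{J, w}_{\mathcal{R}}(\mathcal{D}_{\mathcal{R}}) \;\geqslant\; \Phi^{J, w}_{\mathcal{R}}(\mathcal{Z}_G) \cdot \Phi^{J, w}_{\mathcal{R}}\!\left( \bigcap_i \mathcal{D}_{\mathcal{R}_i} \right).
\end{equation*}
Using FKG once more for the $\mathcal{D}_{\mathcal{R}_i}$'s and then DLR together with the monotonicity of the random-cluster measure in the boundary condition (the wired boundary on each $\mathcal{R}_i$ is the largest, hence minimizes the probability of the decreasing event $\mathcal{D}_{\mathcal{R}_i}$), I obtain $\Phi^{J, w}_{\mathcal{R}}(\mathcal{D}_{\mathcal{R}_i}) \geqslant \Phi^{J, w}_{\mathcal{R}_i}(\mathcal{D}_{\mathcal{R}_i})$, so the second factor is at least $\prod_i \Phi^{J, w}_{\mathcal{R}_i}(\mathcal{D}_{\mathcal{R}_i})$. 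For the first factor, FKG applied to the single-edge events $\{ \omega_e = 0 \}$ together with the bound $\Phi^{J, w}_{\mathcal{R}}(\omega_e = 0) \geqslant 1 - p_e = e^{-\beta J_e}$ (valid for the random-cluster model with $q \geqslant 1$ by stochastic domination over Bernoulli percolation with parameter $p_e$) gives $-\log \Phi^{J, w}_{\mathcal{R}}(\mathcal{Z}_G) \leqslant \beta |G|$.

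Finally, taking $-L^{-(d-1)} \log$ of the resulting inequality, dividing the sum over $i$ by $|\mathcal{C}|$ and using $l^{d-1} |\mathcal{C}| \leqslant L^{d-1}$ from (\ref{eq-card-C}) together with $\tau^J_{\mathcal{R}_i} \geqslant 0$, I obtain (\ref{eq-tauJ-sa}) with constant $c_d$ coming from the estimate on $|G|$. The main obstacle I anticipate is purely combinatorial: carefully producing the disjoint family $(\mathcal{R}_i)$ with integer centers while tracking the loss from boundary effects and from the possible misalignment of $\mathcal{S}'$ with $\mathcal{S}$, so that (\ref{eq-card-C}) holds with a single constant $c_d$ depending only on dimension; everything else is a routine combination of FKG, DLR and the bounds already assembled in Proposition \ref{prop-tauJ-mon}.
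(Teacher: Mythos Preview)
Your proposal is correct and follows essentially the same route as the paper: tile the equatorial cross-section with the smaller boxes, observe that disconnection in each $\mathcal{R}_i$ together with closing the residual slab edges forces $\mathcal{D}_{\mathcal{R}}$, then decouple via FKG/DLR and monotonicity in the boundary condition, and finally count the residual edges. One small correction: for a generic direction $\tmmathbf{n}$ the set $\mathbbm{Z}^d \cap \Pi_0$ reduces to $\{0\}$, so you cannot literally take $z_i \in \mathbbm{Z}^d \cap \Pi_0$; instead (as the paper does, and as your parenthetical ``rounding'' remark already anticipates) one takes $z_i$ to be the lattice point nearest to the ideal center on $\Pi_0$, and this displacement by at most $\sqrt{d}/2$ is exactly why $\mathcal{R}$ is given height $H + \sqrt{d}/2$ rather than $H$.
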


Let us make a few comments on this Theorem. First, a key feature of the
sub-additivity as formulated in Theorem \ref{thm-add-tauJ} is the
\tmtextit{independence} of the $\tau^J_{\mathcal{R}_i}$ under $\mathbbm{P}$
since the $\mathcal{R}_i$ are disjoint. Note that as well, the
$\tau^J_{\mathcal{R}_i}$ have the same law as the $\mathcal{R}_i$ are all
centered at lattice points. Three error terms appear in Theorem
\ref{thm-add-tauJ}. Their origins are as follows (see also Figure
\ref{fig-subadd-tauJ}):
\begin{enumerateroman}
  \item the term $\beta c_d / l$ stands for the cost of disconnection in the
  middle section of $\mathcal{R}$ between adjacent $\mathcal{R}_i$,
  
  \item the term $\beta c_d l / L$ represents the cost of disconnection in the
  area not covered by the $\mathcal{R}_i$
  
  \item and the increase of $H$ by $\sqrt{d} / 2$ for $\mathcal{R}$ with
  respect to the $\mathcal{R}_i$ is a consequence of the requirement that the
  $\mathcal{R}_i$ be all centered at lattice points.
\end{enumerateroman}
The last error term could be avoided for \tmtextit{rational} directions
$\tmmathbf{n} \in S^{d - 1}$, yet (as the two others) it will soon disappear
when we take the limit $H \rightarrow \infty$.

\begin{figure}[h!]
\begin{center}
  \includegraphics[width=8cm]{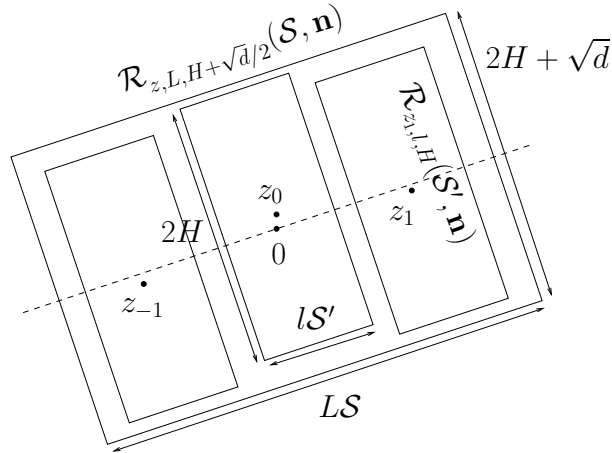}
  \end{center}
  \caption{\label{fig-subadd-tauJ}The rectangular parallelepiped $\mathcal{R}$
  and the collection $(\mathcal{R}_i)_{i \in \mathcal{C}}$ in
  Theorem~\ref{thm-add-tauJ}.}
\end{figure}

The reader will notice that the use of the FK representation permits to give a
relatively short proof of Theorem \ref{thm-add-tauJ}.

\begin{proof}
  (Theorem \ref{thm-add-tauJ}). We begin with the definition of $z_i$ and
  $\mathcal{C}$. We call $(\tmmathbf{e}'_k)_{k = 1 \ldots d - 1}$ the edges of
  $\mathcal{S}'$ and $\tmmathbf{e}'_d =\tmmathbf{n}$, so that
  $(\tmmathbf{e}'_k)_{k = 1 \ldots d}$ is an orthonormal basis of
  $\mathbbm{R}^d$. For all $i = (i_k)_{k = 1 \ldots d - 1} \in \mathbbm{Z}^{d
  - 1}$ we define $z_i$ as the unique point of $\mathbbm{Z}^d$ such that
  \[ \left( l + \sqrt{d} \right) \sum_{k = 1}^{d - 1} i_k \tmmathbf{e}'_k \in
     z_i + \left[ - \frac{1}{2}, \frac{1}{2} \right)^d \]
  and call
  \[ \mathcal{C}= \left\{ i \in \mathbbm{Z}^{d - 1} : \mathcal{R}_i \subset
     \mathcal{R} \right\} \]
  letting
  \[ \mathcal{R}=\mathcal{R}_{0, L, H + \sqrt{d} / 2} (\mathcal{S},
     \tmmathbf{n}) \text{ \ \ and \ \ } \mathcal{R}_i =\mathcal{R}_{z_i, l, H}
     (\mathcal{S}', \tmmathbf{n}) . \]
  We proceed with the proof of (\ref{eq-card-C}) first. We call
  $\mathcal{H}_{\tmmathbf{n}}$ the hyperplane of $\mathbbm{R}^d$ orthogonal to
  $\tmmathbf{n}$ that contains $0$ and remark that the orthogonal projections
  of $z_i + l\mathcal{S}'$ (for all $i \in \mathcal{C}$) on
  $\mathcal{H}_{\tmmathbf{n}}$ are disjoint and all included in
  $L\mathcal{S}$. Hence their total surface $|\mathcal{C}|l^{d - 1}$ does not
  exceed the surface of $L\mathcal{S}$, namely $L^{d - 1}$, and the upper
  bound in (\ref{eq-card-C}) follows. Reusing the previous notations we call
  \[ z_i' = \left( l + \sqrt{d} \right) \sum_{k = 1}^{d - 1} i_k
     \tmmathbf{e}'_k \text{, \ \ \ } \forall i \in \mathbbm{Z}^{d - 1} \]
  so that $z'_i \in \mathcal{H}_{\tmmathbf{n}}$. We consider then
  \[ \mathcal{C}' = \left\{ i \in \mathbbm{Z}^{d - 1} : z'_i + \left( l +
     \sqrt{d} \right) \mathcal{S}' \subset L\mathcal{S} \right\} . \]
  In view of the inequality $d (z_i, z'_i) \leqslant \sqrt{d} / 2$ it follows
  that $z_i + l\mathcal{S}' \subset \mathcal{R}$, for all $i \in
  \mathcal{C}'$, hence $\mathcal{C}' \subset \mathcal{C}$. On the other hand,
  for any $i \in \mathbbm{Z}^{d - 1}$ such that $z'_i + (l +
  \sqrt{d})\mathcal{S}' \cap (L - 2 \sqrt{d} (l + \sqrt{d}))\mathcal{S} \neq
  \emptyset$ we have $i \in \mathcal{C}'$, hence
  \[ \left( \frac{L - 2 \sqrt{d} (l + \sqrt{d})}{l + \sqrt{d}} \right)^{d - 1}
     \leqslant |\mathcal{C}' | \leqslant |\mathcal{C}| \]
  and
  \begin{eqnarray*}
    \left( \frac{l}{L} \right)^{d - 1} |\mathcal{C}| & \geqslant & \left(
    \frac{l}{l + \sqrt{d}} - 2 \sqrt{d}  \frac{l}{L} \right)^{d - 1}\\
    & \geqslant & \left( 1 - \frac{\sqrt{d}}{l} - 2 \sqrt{d}  \frac{l}{L}
    \right)^{d - 1}\\
    & \geqslant & 1 - (d - 1) \left( \frac{\sqrt{d}}{l} + 2 \sqrt{d} 
    \frac{l}{L} \right)
  \end{eqnarray*}
  which yields the lower bound for (\ref{eq-card-C}). We pass now to the proof
  of (\ref{eq-tauJ-sa}) and call
  \[ \mathcal{E}= \left\{ e \in E ( \hat{\mathcal{R}}) \setminus \bigcup_{i
     \in \mathcal{C}} E ( \widehat{\mathcal{R}^{}}_i) : d (e,
     \mathcal{H}_{\tmmathbf{n}}) \leqslant \frac{\sqrt{d}}{2} \right\} \]
  where $d (e, \mathcal{H}_{\tmmathbf{n}})$ stands for the shortest distance
  between one extremity of $e$ and $\mathcal{H}_{\tmmathbf{n}}$. The inclusion
  \[ \left( \bigcap_{i \in \mathcal{C}} \mathcal{D}_{\mathcal{R}_i}^{} \right)
     \bigcap \left\{ \omega_e = 0, \forall e \in \mathcal{E} \right\} \subset
     \mathcal{D}_{\mathcal{R}}^{} \]
  holds: consider $\omega$ that belongs to the left-hand side and let $c$ an
  $\omega$-open path issued from $\partial^+ \hat{\mathcal{R}}$. Every times
  $c$ enters some $\hat{\mathcal{R}}_i$ by the upper boundary $\partial^+
  \hat{\mathcal{R}}$, it also exits by the same upper boundary since $\omega
  \in \mathcal{D}_{\hat{\mathcal{R}}_i}^{}$. As $c$ cannot use the edges of
  $\mathcal{E}$ it is not able to cross the middle hyperplane
  $\mathcal{H}_{\tmmathbf{n}}$ elsewhere than in the $\hat{\mathcal{R}}_i$,
  and in particular it cannot reach $\partial^- \hat{\mathcal{R}}$. Since the
  $\mathcal{D}_{\hat{\mathcal{R}}_i}$ as well as the $\left\{ \omega_e = 0
  \right\}$ are decreasing events, the DLR equations and the monotonicity
  along the boundary condition for $\Phi^J$ imply that
  \begin{eqnarray}
    \Phi^{J, w}_{\hat{\mathcal{R}}} (\mathcal{D}_{\mathcal{R}}) & \geqslant &
    \prod_{i \in \mathcal{C}} \Phi^{J, w}_{\mathcal{R}_i}
    (\mathcal{D}_{\mathcal{R}_i}) \times \prod_{e \in \mathcal{E}} \Phi^{J,
    w}_{\{e\}} (\{\omega_e = 0\}) \nonumber\\
    & \geqslant & \prod_{i \in \mathcal{C}} \Phi^{J, w}_{\mathcal{R}_i}
    (\mathcal{D}_{\mathcal{R}_i}) \times \exp \left( - \beta |\mathcal{E}|
    \right)  \label{ineq-binf-PhiD}
  \end{eqnarray}
  as $\Phi^{J, w}_{\{e\}} (\{\omega_e = 0\}= 1 - p_e = \exp \left( - \beta J_e
  \right) \geqslant \exp (- \beta)$. We proceed then with an estimate over the
  cardinality of $\mathcal{E}$: we call $F = \left\{ x \in \mathbbm{Z}^d :
  \exists y, \{x, y\} \in \mathcal{E} \right\}$ the set of extremities of some
  $e \in \mathcal{E}$ and remark that $|\mathcal{E}| \leqslant d \tmop{Vol}
  \left( V \right)$ where $V = \bigcup_{x \in F} x + [0, 1]^d$. We have
  \[ V \subset \mathcal{R}_{0, L + 2 \sqrt{d}, 3 \sqrt{d} / 2} (\mathcal{S},
     \tmmathbf{n}) \text{ \ \ while \ \ } V \cap \mathcal{R}_{z_i, l - 2
     \sqrt{d}, \infty} (\mathcal{S}', \tmmathbf{n}) = \emptyset \text{, \ }
     \forall i \in \mathcal{C}, \]
  hence
  \[ \left| \mathcal{E} \right| \leqslant d \times \frac{3 \sqrt{d}}{2} \times
     \left(  \left( L + 2 \sqrt{d} \right)^{d - 1} - |\mathcal{C}| \left( l -
     2 \sqrt{d} \right)^{d - 1} \right) \leqslant c_d L^{d - 1}  \left(
     \frac{l}{L} + \frac{1}{l} \right) \]
  in view of the lower bound in (\ref{eq-card-C}). Taking logarithms in
  (\ref{ineq-binf-PhiD}) and dividing by $- L^{d - 1}$ we obtain the
  inequality
  \[ \tau^J_{\mathcal{R}} \leqslant \left( \frac{l}{L} \right)^{d - 1} \sum_{i
     \in \mathcal{C}} \tau^J_{\mathcal{R}_i} + c_d \beta \left( \frac{l}{L} +
     \frac{1}{l} \right) \]
  and (\ref{eq-tauJ-sa}) follows from the upper bound in (\ref{eq-card-C}).
  
  We conclude with a word on the structure of the sequence
  $(\tau^J_{\mathcal{R}_i})_{i \in \mathcal{C}}$. The $\mathcal{R}_i$ are
  disjoint by construction, hence so are the edge sets $E (
  \hat{\mathcal{R}}_i)$, hence the $\tau^J_{\mathcal{R}_i}$ are independent.
  They are identically distributed as the $\mathcal{R}_i$ are all centered at
  lattice points, $\mathbbm{P}$ being translation invariant as a product
  measure.
\end{proof}

Now we establish the convergence for surface tension and prove Theorem
\ref{thm-conv-tauq}. The proof of this Theorem is based on the sub-additivity
of surface tension. We do not apply directly Kingman's sub-additive Theorem
{\cite{N299}} as we want to show that $\tau^q$ does not depend on
$\mathcal{S}$, nor on $\delta$.

\begin{proof}
  Taking the expectation $\mathbbm{E}$ in the sub-additivity inequality
  (\ref{eq-tauJ-sa}) we get
  \[ \mathbbm{E} \tau_{\mathcal{R}_{0, L, H + \sqrt{d} / 2} (\mathcal{S},
     \tmmathbf{n})}^J \leqslant \mathbbm{E} \tau_{\mathcal{R}_{0, l, H}
     (\mathcal{S}', \tmmathbf{n})}^J + \beta c_d \left( \frac{l}{L} +
     \frac{1}{l} \right) . \]
  Applying $\limsup_{L \rightarrow \infty}$, then $\liminf_{l \rightarrow
  \infty}$ and taking the decreasing limit in $H$ we obtain
  \[ \lim_{H \rightarrow \infty} \limsup_{L \rightarrow \infty} \mathbbm{E}
     \tau_{\mathcal{R}_{0, L, H} (\mathcal{S}, \tmmathbf{n})}^J \leqslant
     \lim_{H \rightarrow \infty} \liminf_{L \rightarrow \infty} \mathbbm{E}
     \tau_{\mathcal{R}_{0, L, H} (\mathcal{S}', \tmmathbf{n})}^J \]
  which proves that
  \begin{equation}
    \tau^q (\tmmathbf{n}) = \lim_{H \rightarrow \infty} \liminf_{L \rightarrow
    \infty} \mathbbm{E} \tau_{\mathcal{R}_{0, L, H} (\mathcal{S},
    \tmmathbf{n})}^J = \lim_{H \rightarrow \infty} \limsup_{L \rightarrow
    \infty} \mathbbm{E} \tau_{\mathcal{R}_{0, L, H} (\mathcal{S},
    \tmmathbf{n})}^J \label{eq-def-tauq-LH}
  \end{equation}
  exists and does not depend on $\mathcal{S} \in \mathbbm{S}_n$.
  
  We prove now the convergence $\tau_{\mathcal{R}^N}^J \rightarrow \tau^q
  (\tmmathbf{n})$ in $\mathbbm{P}$-probability, where $\mathcal{R}^N
  =\mathcal{R}_{0, N, \delta N} (\mathcal{S}, \tmmathbf{n})$. The
  sub-additivity (\ref{eq-tauJ-sa}) yields: for any $\delta > 0$ and $N$ large
  enough,
  \[ \tau_{\mathcal{R}^N}^J \leqslant \tau_{\mathcal{R}_{0, N, H + \sqrt{d} /
     2} (\mathcal{S}, \tmmathbf{n})}^J \leqslant \frac{1}{|\mathcal{C}|}
     \sum_{i \in \mathcal{C}} \tau^J_{\mathcal{R}_{z_i, L, H}} + \beta c_d
     \left( \frac{L}{N} + \frac{1}{L} \right) \]
  Taking $\limsup_{N \rightarrow \infty}$ and applying the strong law of large
  numbers give:
  \[ \limsup_{N \rightarrow \infty} \tau_{\mathcal{R}^N}^J \leqslant
     \mathbbm{E} \tau_{\mathcal{R}_{0, L, H} (\mathcal{S}, \tmmathbf{n})}^J +
     \frac{\beta c_d }{L} \text{ \ \ \ } \mathbbm{P} \text{-a.s.} \]
  and after $\liminf_{L \rightarrow \infty}$ and $\lim_{H \rightarrow \infty}$
  we see that, for all $\mathcal{S} \in \mathbbm{S}_{\tmmathbf{n}}$ and
  $\delta > 0$,
  \begin{equation}
    \limsup_{N \rightarrow \infty} \tau_{\mathcal{R}^N}^J \leqslant \tau^q
    (\tmmathbf{n}) \text{ \ \ \ } \mathbbm{P} \text{-a.s.}
    \label{eq-upb-tauJ-tauq}
  \end{equation}
  On the other hand, the sub-additivity (\ref{eq-tauJ-sa}) is also responsible
  for the convergence of $\mathbbm{E} \tau_{\mathcal{R}^N}^J$: remark that
  \[ \mathbbm{E} \tau_{\mathcal{R}_{0, L, \delta N + \sqrt{d} / 2}
     (\mathcal{S}, \tmmathbf{n})}^J \leqslant \mathbbm{E}
     \tau_{\mathcal{R}^N}^J + \beta c_d \left( \frac{N}{L} + \frac{1}{N}
     \right), \]
  hence $\limsup_{L \rightarrow \infty}$ followed by $\liminf_{N \rightarrow
  \infty}$ give:
  \begin{equation}
    \tau^q (\tmmathbf{n}) \leqslant \liminf_{N \rightarrow \infty} \mathbbm{E}
    \tau_{\mathcal{R}^N}^J . \label{eq-binf-EtauJ}
  \end{equation}
  Together with (\ref{eq-upb-tauJ-tauq}) and (\ref{eq-binf-EtauJ}), the
  boundedness of $\tau_{\mathcal{R}^N}^J$ ensures the convergence in
  probability.
\end{proof}

Let us sketch now a proof of Proposition \ref{prop-tauq-pos}, namely that the
quenched surface tension $\tau^q (\tmmathbf{n})$ is \tmtextit{positive} for
any $\beta > \hat{\beta}_c$: thanks to the renormalization argument of
{\cite{M01}}, one can compare the surface tension $\tau^a = \tau^{\lambda =
1}$ under the averaged Gibbs measure to the surface tension of high density
site percolation, which is positive. The claim follows as $\tau^q \geqslant
\tau^a$ by Jensen's inequality.

\subsection{Upper large deviations}

Due to the presence of the random couplings, surface tension can
\tmtextit{fluctuate} around its typical value. The sub-additivity permits to
study the order of the cost of large deviations. First, we examine upper
deviations and prove Theorem \ref{thm-updev-tauJ}. The proof is based on the
following argument: we split $\mathcal{R}_{0, N, \delta N} (\mathcal{S},
\tmmathbf{n})$ into $cN$ rectangular parallelepipeds $\mathcal{R}_i$ with
finite height $H$. In order to increase $\tau^J_{\mathcal{R}_{0, N, \delta N}
(\mathcal{S}, \tmmathbf{n})}$ one has to increase surface tension in each
$\mathcal{R}_i$, but the cost of increasing one $\tau^J_{\mathcal{R}_i}$ is
already of surface order by sub-additivity.

\begin{proof}
  (Theorem \ref{thm-updev-tauJ}). As a first step towards the proof we
  estimate the cost for upper deviations of surface tension in a rectangular
  parallelepiped of fixed height, using the sub-additivity of $\tau^J$. From
  the definition of $\tau^q (\tmmathbf{n})$ at (\ref{eq-def-tauq-LH}) it
  follows that for any $H$ large enough,
  \[ \limsup_L \mathbbm{E} \tau^J_{\mathcal{R}_{0, L, H} (\mathcal{S},
     \tmmathbf{n})} \leqslant \tau^q (\tmmathbf{n}) + \frac{\varepsilon}{6} .
  \]
  Given such an $H$ we fix $l$ large enough such that $\mathbbm{E}
  \tau^J_{\mathcal{R}_{0, l, H} (\mathcal{S}, \tmmathbf{n})} \leqslant \tau^q
  (\tmmathbf{n}) + \varepsilon / 3$ and $c_d \beta / l \leqslant \varepsilon /
  4$, where $c_d$ refers to the constant in the sub-additivity equation. With
  the notations of Theorem \ref{thm-add-tauJ} we have:
  \begin{equation}
    \tau^J_{\mathcal{R}_{0, L, H + \sqrt{d} / 2} (\mathcal{S}, \tmmathbf{n})}
    \leqslant \frac{1}{|\mathcal{C}|} \sum_{i \in \mathcal{C}}
    \tau^J_{\mathcal{R}_{z_i, l, H} (\mathcal{S}, \tmmathbf{n})} +
    \frac{\varepsilon}{4} + \beta c_d \frac{l}{L} \label{eq-subadd-tauJ-eps}
  \end{equation}
  and the $\tau^J_{\mathcal{R}_{z_i, l, H} (\mathcal{S}, \tmmathbf{n})}$ are
  i.i.d. variables of mean not larger than $\tau^q (\tmmathbf{n}) +
  \varepsilon / 3$. Hence, Cram\'er's Theorem tells that
  \[ \mathbbm{P} \left( \frac{1}{|\mathcal{C}|} \sum_{i \in \mathcal{C}}
     \tau^J_{\mathcal{R}_{z_i, l, H} (\mathcal{S}, \tmmathbf{n})} \geqslant
     \tau^q (\tmmathbf{n}) + \frac{\varepsilon}{2} \right) \leqslant \exp (-
     c|\mathcal{C}|) \]
  for some $c > 0$. Reporting in (\ref{eq-subadd-tauJ-eps}) proves that for
  any $\varepsilon > 0$, for any $H$ large enough:
  \begin{equation}
    \limsup_{L \rightarrow \infty} \frac{1}{L^{d - 1}} \log \mathbbm{P} \left(
    \tau^J_{\mathcal{R}_{0, L, H} (\mathcal{S}, \tmmathbf{n})} \geqslant
    \tau^q (\tmmathbf{n}) + \varepsilon \right) < 0 \label{eq-cost-inc-tauJLH}
  \end{equation}
  -- that is, the cost for increasing $\tau^J_{\mathcal{R}_{0, L, H}
  (\mathcal{S}, \tmmathbf{n})}$ is of surface order. We fix such an $H$ and
  decompose now the rectangular parallelepiped $\mathcal{R}=\mathcal{R}_{0, N,
  \delta N} (\mathcal{S}, \tmmathbf{n})$ in the direction $\tmmathbf{n}$.
  Precisely, we let
  \[ \tilde{x}_i = 2 \left( H + \frac{\sqrt{d}}{2} \right) i\tmmathbf{n}
     \text{, \ \ \ } \forall i \in \mathbbm{Z} \text{ \ \ and \ \ }
     \tilde{\mathcal{R}}_i =\mathcal{R}_{\tilde{x}_i, N, H + \sqrt{d} / 2}
     (\mathcal{S}, \tmmathbf{n}) . \]
  We call $\mathcal{G}$ the set of $i \in \mathbbm{Z}$ such that
  $\tilde{\mathcal{R}}_i \subset \mathcal{R}$ and consider, for all $i \in
  \mathcal{G}$, $x_i$ the point of $\mathbbm{Z}^d$ such that $\tilde{x}_i \in
  x_i + [- 1 / 2, 1 / 2)^d$ and let
  \[ \mathcal{R}_i =\mathcal{R}_{x_i, N - \sqrt{d}, H} (\mathcal{S},
     \tmmathbf{n}) . \]
  The rectangular parallelepipeds $\mathcal{R}_i$ are disjoint subsets of
  $\mathcal{R}=\mathcal{R}_{0, N, \delta N} (\mathcal{S}, \tmmathbf{n})$, all
  centered at lattice points. Furthermore, if we call
  $\mathcal{E}_{\tmop{lat}}$ the set of edges in $E ( \hat{\mathcal{R}})$ with
  one extremity at distance at most $\sqrt{d}$ from the lateral boundary of
  $\mathcal{R}$, we have:
  \[ \omega \in \bigcup_{i \in \mathcal{G}} \mathcal{D}_{\mathcal{R}_i} \text{
     \ and \ } \omega_e = 0, \forall e \in \mathcal{E}_{\tmop{lat}} \text{ \ }
     \Rightarrow \text{ \ } \omega \in \mathcal{D}_{\mathcal{R}} . \]
  Hence the DLR equation yields:
  \begin{eqnarray*}
    \Phi^{J, w}_{\mathcal{R}} \left( \mathcal{D}_{\mathcal{R}} \right) &
    \geqslant & \max_{i \in \mathcal{G}} \Phi^{J, w}_{\mathcal{R}} \left(
    \omega_e = 0, \forall e \in \mathcal{E}_{\tmop{lat}} \text{ \ and \ }
    \omega \in \mathcal{D}_{\mathcal{R}_i} \right)\\
    & \geqslant & e^{- \beta |\mathcal{E}_{\tmop{lat}} |} \times \max_{i \in
    \mathcal{G}} \Phi^{J, w}_{\mathcal{R}_i} \left( \omega \in
    \mathcal{D}_{\mathcal{R}_i} \right) .
  \end{eqnarray*}
  As $|\mathcal{E}_{\tmop{lat}} | \leqslant c_d \delta N^{d - 1}$ we conclude
  finally to the inequality
  \begin{eqnarray}
    \tau^J_{\mathcal{R}} & \leqslant & c_d \delta \beta + \min_{i \in
    \mathcal{G}} \tau^J_{\mathcal{R}_i} .  \label{eq-tauJ-inf-min}
  \end{eqnarray}
  Inequality (\ref{eq-tauJ-inf-min}) states that in order to increase
  significantly $\tau^J_{\mathcal{R}}$, one must increase each
  $\tau^J_{\mathcal{R}_i}$. Yet, the cost for increasing one of the
  $\tau^J_{\mathcal{R}_i}$ is of surface order (\ref{eq-cost-inc-tauJLH}), and
  the $\tau^J_{\mathcal{R}_i}$ are independent variables. Hence for any
  $\delta > 0$ such that $c_d \delta \beta < \varepsilon$,
  \[ \limsup_{N \rightarrow \infty} \frac{1}{N^d} \log \mathbbm{P} \left(
     \tau^J_{\mathcal{R}_{0, N, \delta N} (\mathcal{S}, \tmmathbf{n})}
     \geqslant \tau^q (\tmmathbf{n}) + 2 \varepsilon \right) < 0. \]
  As $\tau^J_{\mathcal{R}_{0, N, \delta N} (\mathcal{S}, \tmmathbf{n})}$
  decreases with $\delta$, the claim follows for arbitrary $\delta > 0$.
\end{proof}

\subsection{Lower large deviations}

\label{sec-tau-LD}Contrary to upper deviations, lower large deviations occur
at surface order. Here we consider the rate function $I_{\tmmathbf{n}}$ for
lower large deviations. The fact that deviations occur at the same order as
the disconnecting event defining surface tension is responsible for the
distinct behavior of surface tension under quenched and averaged measures.
Explicit bounds on the rate function $I_{\tmmathbf{n}}$ will be derived in
Sections \ref{sec-conc-lowt} and \ref{sec-conc-gal}.

\begin{proof}
  We begin with the definition of the rate function $I_{\mathcal{R}}$ in a
  rectangular parallelepiped $\mathcal{R}=\mathcal{R}_{0, L, H} (\mathcal{S},
  \tmmathbf{n})$ as the surface cost for reducing $\tau^J_{\mathcal{R}}$ to
  $\tau$:
  \[ I_{\mathcal{R}} \left( \tau \right) = - \frac{1}{L^{d - 1}} \log
     \mathbbm{P} \left( \tau^J_{\mathcal{R}} \leqslant \tau \right) . \]
  According to Proposition \ref{prop-tauJ-mon}, $I_{\mathcal{R}_{0, L, H}
  (\mathcal{S}, \tmmathbf{n})} \left( \tau \right)$ is a non-increasing
  function of $\tau$ and $H$. Hence the limit
  \begin{equation}
    I_{(\mathcal{S}, \tmmathbf{n})} \left( \tau \right) = \lim_{\varepsilon
    \rightarrow 0^+} \inf_H \limsup_L I_{\mathcal{R}_{0, L, H} (\mathcal{S},
    \tmmathbf{n})} \left( \tau + \varepsilon \right) \in [0, \infty]
    \label{eq-def-In}
  \end{equation}
  exists -- we introduce the parameter $\varepsilon > 0$ in order to
  compensate for the error terms in (\ref{eq-tauJ-sa}). It is clearly a
  non-increasing function of $\tau$. We prove now that it is also convex in
  $\tau$ and that it does not depend on $\mathcal{S} \in
  \mathbbm{S}_{\tmmathbf{n}}$: let $\mathcal{S}' \in \mathbbm{S}_n$,
  $\varepsilon > 0$ and $\alpha \in [0, 1]$. Using the notations
  $\mathcal{R}=\mathcal{R}_{0, L, H + \sqrt{d} / 2} (\mathcal{S},
  \tmmathbf{n})$, $\mathcal{R}_i =\mathcal{R}_{z_i, l, H} (\mathcal{S}',
  \tmmathbf{n})$ and $\mathcal{C}$ of the sub-additivity Theorem (Theorem
  \ref{thm-add-tauJ}), we have
  \[ \tau^J_{\mathcal{R}} \leqslant \frac{|\mathcal{C}^1 |}{|\mathcal{C}|}
     \tau^1 + \frac{|\mathcal{C}^2 |}{|\mathcal{C}|} \tau^2 + \varepsilon +
     \beta c_d  \left( \frac{l}{L} + \frac{1}{l} \right) \]
  if $\mathcal{C}^1 \sqcup \mathcal{C}^2$ is a partition of $\mathcal{C}$ such
  that
  \begin{equation}
    \tau^J_{\mathcal{R}_i} \leqslant \left\{ \begin{array}{ll}
      \tau^1 + \varepsilon & \text{if } i \in \mathcal{C}^1\\
      \tau^2 + \varepsilon & \text{if } i \in \mathcal{C}^2 .
    \end{array} \label{eq-cond-In-conv} \right.
  \end{equation}
  The probability for realizing condition (\ref{eq-cond-In-conv}) equals
  \[ \exp \left( - |\mathcal{C}^1 |l^{d - 1} I_{\mathcal{R}_{0, l, H}
     (\mathcal{S}', \tmmathbf{n})} \left( \tau^1 + \varepsilon \right) -
     |\mathcal{C}^2 |l^{d - 1} I_{\mathcal{R}_{0, l, H} (\mathcal{S}',
     \tmmathbf{n})} \left( \tau^2 + \varepsilon \right) \right) \]
  and letting $|\mathcal{C}^1 | / |\mathcal{C}| \rightarrow \alpha$ and $L
  \rightarrow \infty$ we see that
  \begin{equation}
    \begin{array}{l}
      \limsup_L I_{\mathcal{R}_{0, L, H + \sqrt{d} / 2} (\mathcal{S},
      \tmmathbf{n})} \left( \alpha \tau^1 + (1 - \alpha) \tau^2 + 2
      \varepsilon + \beta c_d / l \right) \leqslant\\
      \alpha I_{\mathcal{R}_{0, l, H} (\mathcal{S}', \tmmathbf{n})} \left(
      \tau^1 + \varepsilon \right) + (1 - \alpha) I_{\mathcal{R}_{0, l, H}
      (\mathcal{S}', \tmmathbf{n})} \left( \tau^2 + \varepsilon \right) .
    \end{array} \label{eq-conv-ISn}
  \end{equation}
  Taking the superior limit in $l$, then the limit in $H$, then $\varepsilon
  \rightarrow 0^+$ we obtain
  \[ I_{(\mathcal{S}, \tmmathbf{n})} \left( \alpha \tau^1 + (1 - \alpha)
     \tau^2 \right) \leqslant \alpha I_{(\mathcal{S}', \tmmathbf{n})} \left(
     \tau^1 \right) + (1 - \alpha) I_{(\mathcal{S}', \tmmathbf{n})} \left(
     \tau^2 \right) \]
  which proves both the independence of $I_{(\mathcal{S}, \tmmathbf{n})}$ with
  respect to $\mathcal{S}$ (take $\alpha = 1$) and the convexity along $\tau$.
  We let now $I_{\tmmathbf{n}} = I_{(\mathcal{S}, \tmmathbf{n})}$ and postpone
  the proof of (\ref{eq-conv-Itaud}) for a while. The continuity of
  $I_{\tmmathbf{n}}$ on the interior of the domain of finiteness of
  $I_{\tmmathbf{n}}$ is a consequence of its convexity. Hence we examine the
  domain of finiteness of $I_{\tmmathbf{n}}$. Let first $\tau < \tau^{\min}
  (\tmmathbf{n})$. If $\varepsilon > 0$ is small enough, the event
  $\tau^J_{\mathcal{R}_{0, L, H} (\mathcal{S}, \tmmathbf{n})} \leqslant \tau +
  \varepsilon < \tau^{\min} (\tmmathbf{n})$ has a probability zero and
  consequently, $I_{\tmmathbf{n}} (\tau) = + \infty$. The second easy regime
  is $\tau \geqslant \tau^q (\tmmathbf{n})$: from Proposition
  \ref{thm-updev-tauJ} we infer that $\lim_{L \rightarrow \infty}
  \mathbbm{P}(\tau^J_{\mathcal{R}_{0, L, H} (\mathcal{S}, \tmmathbf{n})}
  \leqslant \tau + \varepsilon) = 1$ provided that $H$ is large enough and
  this implies $I_{\tmmathbf{n}} (\tau) = 0$. If at last $\tau > \tau^{\min}
  (\tmmathbf{n})$, there is $H$ such that
  \[ \limsup_L \tau^{J^{\min}}_{\mathcal{R}_{0, L, H} (\mathcal{S},
     \tmmathbf{n})} < \tau . \]
  We will prove that, for $\delta > 0$ small enough we still have:
  \begin{equation}
    \limsup_L \tau^{J^{\min} + \delta}_{\mathcal{R}_{0, L, H} (\mathcal{S},
    \tmmathbf{n})} < \tau \label{eq-tau-Jmind} .
  \end{equation}
  If we let $\mathcal{R}=\mathcal{R}_{0, L, H} (\mathcal{S}, \tmmathbf{n})$
  and differentiate along $\delta$, we obtain
  \begin{eqnarray*}
    \frac{\partial \tau^{J^{\min} + \delta}_{\mathcal{R}}}{\partial \delta} &
    = & \sum_{e \in E ( \hat{\mathcal{R}})} \left. \frac{\partial
    \tau^J_{\mathcal{R}}}{\partial J_e} \right|_{J = J^{\min} + \delta}
  \end{eqnarray*}
  yet, (\ref{eq-def-aeJ}) and Proposition \ref{prop-ctrl-aeJ} indicate that
  for any $J \in \mathcal{J}$,
  \[ \frac{L^{d - 1}}{\beta}  \frac{\partial \tau^J_{\mathcal{R}}}{\partial
     J_e} \leqslant 1. \]
  As a consequence, $\tau^{J^{\min} + \delta}_{\mathcal{R}}$ is a $c_d \beta
  H$-Lipschitz function of $\delta$. The same is true for $\limsup_L
  \tau^{J^{\min} + \delta}_{\mathcal{R}_{0, L, H} (\mathcal{S},
  \tmmathbf{n})}$, thus (\ref{eq-tau-Jmind}) holds true for $\delta > 0$ small
  enough. Now we write, for any $L$ large enough:
  \begin{eqnarray*}
    I_{\mathcal{R}_{0, L, H} (\mathcal{S}, \tmmathbf{n})} \left( \tau \right)
    & = & - \frac{1}{L^{d - 1}} \log \mathbbm{P} \left(
    \tau^J_{\mathcal{R}_{0, L, H} (\mathcal{S}, \tmmathbf{n})} \leqslant \tau
    \right)\\
    & \leqslant & - \frac{1}{L^{d - 1}} \log \mathbbm{P} \left( J_e \leqslant
    J^{\min} + \delta, \text{ \ } \forall e \in E \left( \hat{\mathcal{R}}_{0,
    L, H} (\mathcal{S}, \tmmathbf{n}) \right) \right)\\
    & \leqslant & c_d H \times (- \log \mathbbm{P}(J_e \in [J^{\min},
    J^{\min} + \delta]))
  \end{eqnarray*}
  which is finite thanks to the definition of $J^{\min}$. This ends the proof
  that $I_{\tmmathbf{n}} (\tau) < \infty$, for any $\tau > \tau^{\min}
  (\tmmathbf{n})$.
  
  We address at last the convergence (\ref{eq-conv-Itaud}). The inequality
  $I_{\mathcal{R}_{0, N, \delta N} (\mathcal{S}, \tmmathbf{n})} \left( \tau
  \right) \leqslant I_{\mathcal{R}_{0, N, H} (\mathcal{S}, \tmmathbf{n})}
  \left( \tau \right)$ when $N \delta \geqslant H$ yields an upper bound on
  the superior limit:
  \[ \limsup_N I_{\mathcal{R}_{0, N, \delta N} (\mathcal{S}, \tmmathbf{n})}
     \left( \tau \right) \leqslant \inf_H \limsup_L I_{\mathcal{R}_{0, L, H}
     (\mathcal{S}, \tmmathbf{n})} \left( \tau \right) \leqslant
     I_{\tmmathbf{n}} (\tau^-) = I_{\tmmathbf{n}} (\tau) \]
  for all $\tau > \tau^{\min} (\tmmathbf{n})$, thanks to the continuity of
  $I_{\tmmathbf{n}}$. For the lower bound we use the sub-additivity of surface
  tension. Applying (\ref{eq-conv-ISn}) with $\alpha = 1$, $l = N$, $H =
  \delta N$ yields: for any $\varepsilon > 0$ and $N$ large enough,
  \[ \limsup_L I_{\mathcal{R}_{0, L, \delta N + \sqrt{d} / 2} (\mathcal{S},
     \tmmathbf{n})} \left( \tau + 3 \varepsilon \right) \leqslant
     I_{\mathcal{R}_{0, N, \delta N} (\mathcal{S}, \tmmathbf{n})} \left( \tau
     + \varepsilon \right) \]
  and replacing $\tau + \varepsilon$ with $\tau$, we obtain after the limits
  $N \rightarrow \infty$ and $\varepsilon \rightarrow 0^+$ the lower bound
  \[ I_{\tmmathbf{n}} (\tau) \leqslant \liminf_N I_{\mathcal{R}_{0, N, \delta
     N} (\mathcal{S}, \tmmathbf{n})} \left( \tau \right) \text{, \ \ \ }
     \forall \tau \in \mathbbm{R}. \]
\end{proof}

\subsection{Surface tension under averaged Gibbs measures}

The rate function $I_{\tmmathbf{n}}$ can be analyzed through a dual quantity:
the surface tension under the averaged Gibbs measure defined at
(\ref{eq-dual-Itaul}). The duality of Fenchel-Legendre transforms for convex
functions (Lemma 4.5.8 in {\cite{N71}}) implies that $\lambda \mapsto
\tau^{\lambda} (\tmmathbf{n})$ is concave and that
\begin{equation}
  I_{\tmmathbf{n}} (\tau) = \sup_{\lambda > 0} \{\tau^{\lambda} (\tmmathbf{n})
  - \lambda \tau\} \label{eq-rec-Itau} .
\end{equation}
As we said at (\ref{eq-cvg-taul}), $\tau^{\lambda} (\tmmathbf{n})$ can be
interpreted as the surface tension under an average of $\Phi^{J,
w}_{\mathcal{R}}$. Indeed, if we let
\begin{equation}
  \tau^{\lambda}_{\mathcal{R}} = - \frac{1}{L^{d - 1}} \log \mathbbm{E} \left(
  \left[ \Phi^{J, w}_{\mathcal{R}} \left( \mathcal{D}_{\mathcal{R}} \right)
  \right]^{\lambda} \right) = - \frac{1}{L^{d - 1}} \log \mathbbm{E} \left(
  \exp \left( - \lambda L^{d - 1} \tau^J_{\mathcal{R}} \right) \right),
  \label{eq-def-taul-R}
\end{equation}
for any rectangular parallelepiped $\mathcal{R}$ of side-length $L$ as in
(\ref{eq-def-R}), then Varadhan's Lemma yields:

\begin{proposition}
  \label{prop-conv-taul}For any $\lambda > 0$ and $\tmmathbf{n} \in S^{d -
  1}$, for any sequence of rectangular parallelepipeds $\mathcal{R}^N
  =\mathcal{R}_{0, N, \delta N} (\mathcal{S}, \tmmathbf{n})$ with $\delta > 0$
  and $\mathcal{S} \in \mathbbm{S}_{\tmmathbf{n}}$, the quantity
  $\tau^{\lambda}_{\mathcal{R}^N}$ converges to $\tau^{\lambda}
  (\tmmathbf{n})$:
  \begin{equation}
    \lim_N \tau^{\lambda}_{\mathcal{R}^N} = \tau^{\lambda} (\tmmathbf{n})
    \label{eq-taul-st} .
  \end{equation}
  Thus, the limit does not depend on $\delta > 0$ nor on $\mathcal{S} \in
  \mathbbm{S}_n$.
\end{proposition}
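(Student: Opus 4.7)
The plan is to derive \eqref{eq-taul-st} as a Laplace-type consequence of Theorem \ref{thm-rate-I}, that is, a hand-rolled application of Varadhan's Lemma to the continuous bounded function $\tau \mapsto -\lambda \tau$. The setting is benign: by Proposition \ref{prop-tauJ-mon}, the random variable $\tau^J_{\mathcal{R}^N}$ is almost surely confined to the compact interval $K = [0, c_d \beta J^{\max}]$ (with $\mathcal{R}^N = \mathcal{R}_{0,N,\delta N}(\mathcal{S},\tmmathbf{n})$ throughout), so the integrand $\exp(-\lambda N^{d-1}\tau)$ is bounded on the support for every fixed $\lambda > 0$.

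For the upper bound $\limsup_N \tau^{\lambda}_{\mathcal{R}^N} \leq \tau^{\lambda}(\tmmathbf{n})$, I would start from the trivial inequality
\[
\mathbb{E}\exp\bigl(-\lambda N^{d-1}\tau^J_{\mathcal{R}^N}\bigr) \geq \exp(-\lambda N^{d-1}\tau)\,\mathbb{P}\bigl(\tau^J_{\mathcal{R}^N} \leq \tau\bigr),
\]
apply $-\frac{1}{N^{d-1}}\log$ to both sides, and invoke Theorem \ref{thm-rate-I} to conclude $\limsup_N \tau^{\lambda}_{\mathcal{R}^N} \leq \lambda\tau + I_{\tmmathbf{n}}(\tau)$ for every $\tau > \tau^{\min}(\tmmathbf{n})$. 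Taking the infimum over such $\tau$ and using the extension convention for $I_{\tmmathbf{n}}$ at $\tau^{\min}(\tmmathbf{n})$ yields $\limsup_N \tau^{\lambda}_{\mathcal{R}^N} \leq \tau^{\lambda}(\tmmathbf{n})$ by \eqref{eq-dual-Itaul}.

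For the lower bound $\liminf_N \tau^{\lambda}_{\mathcal{R}^N} \geq \tau^{\lambda}(\tmmathbf{n})$, I would partition $K$ into $M_\varepsilon \leq c_d\beta J^{\max}/\varepsilon + 1$ adjacent intervals $[k\varepsilon,(k+1)\varepsilon]$ and write
\[
\mathbb{E}\exp\bigl(-\lambda N^{d-1}\tau^J_{\mathcal{R}^N}\bigr) \leq \sum_{k=0}^{M_\varepsilon-1} \exp(-\lambda N^{d-1} k\varepsilon)\,\mathbb{P}\bigl(\tau^J_{\mathcal{R}^N} \leq (k+1)\varepsilon\bigr).
\]
Applying $-\frac{1}{N^{d-1}}\log$ turns the sum into a max up to an $o(1)$ correction since $M_\varepsilon$ does not depend on $N$, and each probability is controlled by Theorem \ref{thm-rate-I}. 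Passing to $N \to \infty$ and then $\varepsilon \to 0^+$ delivers $\liminf_N \tau^{\lambda}_{\mathcal{R}^N} \geq \tau^{\lambda}(\tmmathbf{n}) - \lambda\varepsilon$, hence the desired bound.

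The only subtle point I anticipate is the alignment of the Fenchel-Legendre transform \eqref{eq-dual-Itaul} with the Laplace-type limit at the boundary $\tau = \tau^{\min}(\tmmathbf{n})$: for $\tau < \tau^{\min}(\tmmathbf{n})$ the probability $\mathbb{P}(\tau^J_{\mathcal{R}^N} \leq \tau)$ is eventually zero by Proposition \ref{prop-tauJ-mon}, so such values of $\tau$ contribute harmlessly to the upper partition sum; meanwhile, the extension convention $I_{\tmmathbf{n}}(\tau^{\min}(\tmmathbf{n})) = \lim_{\varepsilon \to 0^+} I_{\tmmathbf{n}}(\tau^{\min}(\tmmathbf{n}) + \varepsilon)$ is precisely what ensures the variational identity with $\tau^{\lambda}(\tmmathbf{n})$ remains correct at the endpoint. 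Independence of the limit from $\delta > 0$ and $\mathcal{S} \in \mathbbm{S}_{\tmmathbf{n}}$ is inherited automatically from the corresponding property in Theorem \ref{thm-rate-I}.
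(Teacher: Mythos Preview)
Your proposal is correct and matches the paper's own approach: the paper simply states that the result follows from Varadhan's Lemma applied to the large deviation principle of Theorem~\ref{thm-rate-I} (using the a.s.\ compact range $[0,c_d\beta J^{\max}]$ from Proposition~\ref{prop-tauJ-mon}), and your argument is precisely a hand-rolled version of that lemma, with the boundary behaviour at $\tau^{\min}(\tmmathbf{n})$ handled exactly as the extension convention for $I_{\tmmathbf{n}}$ requires.
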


We defined at (\ref{eq-def-taut}) the value $\tilde{\tau}^q (\tmmathbf{n})$ of
the surface tension at which $I_{\tmmathbf{n}} (\tau)$ becomes zero. Below are
some immediate consequences of the definition of $\tau^{\lambda}
(\tmmathbf{n})$ at (\ref{eq-dual-Itaul}) together with (\ref{eq-taul-st}),
which allow to sketch the graph of $\lambda \mapsto \tau^{\lambda}
(\tmmathbf{n})$ on Figure \ref{fig-gr-taul}:

\begin{proposition}
  \label{prop-taul}The following inequalities hold:
  \begin{equation}
    \lambda \tau^{\min} (\tmmathbf{n}) \leqslant \tau^{\lambda} (\tmmathbf{n})
    \leqslant \lambda \tilde{\tau}^q (\tmmathbf{n}) \label{ineq-taul} \text{,
    \ \ } \forall \tmmathbf{n} \in S^{d - 1}, \lambda > 0
  \end{equation}
  while:
  \begin{equation}
    \frac{\tau^{\lambda} (\tmmathbf{n})}{\lambda}  \underset{\lambda
    \rightarrow 0^+}{\longrightarrow}  \tilde{\tau}^q (\tmmathbf{n}) \text{ \
    \ and \ \ } \frac{\tau^{\lambda} (\tmmathbf{n})}{\lambda} 
    \underset{\lambda \rightarrow + \infty}{\longrightarrow} \tau^{\min}
    (\tmmathbf{n}) \label{eq-prop-taul} \text{, \ \ \ } \forall \tmmathbf{n}
    \in S^{d - 1} .
  \end{equation}
  Hence, $\tau^{\lambda} (\tmmathbf{n})$ is positive if and only if
  $\tilde{\tau}^q (\tmmathbf{n}) > 0$. Furthermore:
  \begin{equation}
    \tau^{\lambda} (\tmmathbf{n}) \underset{\lambda \rightarrow +
    \infty}{\longrightarrow} \lim_{\tau \rightarrow 0^+} I_{\tmmathbf{n}}
    (\tau) \in [0, \infty] .
  \end{equation}
\end{proposition}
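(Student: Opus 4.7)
The plan is to deduce every assertion of Proposition~\ref{prop-taul} directly from the Fenchel--Legendre representation
\[ \tau^{\lambda}(\tmmathbf{n}) = \inf_{\tau \in \mathbbm{R}}\{\lambda\tau + I_{\tmmathbf{n}}(\tau)\}, \]
combined with the structural properties of $I_{\tmmathbf{n}}$ provided by Theorem~\ref{thm-rate-I} and the definition of $\tilde{\tau}^q(\tmmathbf{n})$: $I_{\tmmathbf{n}}$ is convex and non-increasing, it equals $+\infty$ on $(-\infty,\tau^{\min})$ and $0$ on $[\tilde{\tau}^q,+\infty)$, and belongs to $(0,+\infty)$ on $(\tau^{\min},\tilde{\tau}^q)$.

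For the sandwich (\ref{ineq-taul}), the upper bound follows by evaluating the infimum at $\tau = \tilde{\tau}^q(\tmmathbf{n})$, where $I_{\tmmathbf{n}}$ vanishes; the lower bound follows from $I_{\tmmathbf{n}} \geq 0$ together with the restriction $\tau \geq \tau^{\min}$ forced by finiteness of $I_{\tmmathbf{n}}$. For the asymptotics in (\ref{eq-prop-taul}) the idea is to complement (\ref{ineq-taul}) by matching bounds. In the $\lambda \to 0^+$ regime, fix $\varepsilon > 0$ and split the infimum at $\tilde{\tau}^q - \varepsilon$: on $[\tau^{\min},\tilde{\tau}^q-\varepsilon]$ monotonicity gives $I_{\tmmathbf{n}}(\tau) \geq I_{\tmmathbf{n}}(\tilde{\tau}^q - \varepsilon) > 0$, so $I_{\tmmathbf{n}}/\lambda$ diverges; on $[\tilde{\tau}^q-\varepsilon,+\infty)$ one has $\tau \geq \tilde{\tau}^q - \varepsilon$. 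This yields $\liminf_{\lambda\to 0^+} \tau^\lambda/\lambda \geq \tilde{\tau}^q - \varepsilon$, and $\varepsilon$ is arbitrary. In the $\lambda \to +\infty$ regime, pick any $\tau > \tau^{\min}$ (so that $I_{\tmmathbf{n}}(\tau) < \infty$) to get $\tau^\lambda/\lambda \leq \tau + I_{\tmmathbf{n}}(\tau)/\lambda \to \tau$, and then let $\tau \downarrow \tau^{\min}$.

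The positivity equivalence then falls out: if $\tilde{\tau}^q = 0$ the sandwich forces $\tau^\lambda = 0$, while if $\tilde{\tau}^q > 0$ the continuous map $\tau \mapsto \lambda\tau + I_{\tmmathbf{n}}(\tau)$ is bounded below by a positive constant on $[\tau^{\min},\tilde{\tau}^q]$ (pointwise positivity holding by linearity at $\tau > 0$ and, in the subcase $\tau^{\min} = 0$, by $I_{\tmmathbf{n}}(0) > 0$ at $\tau = 0$), while on $[\tilde{\tau}^q,+\infty)$ the value of the functional is trivially $\geq \lambda\tilde{\tau}^q > 0$. Finally, for the last display I would plug $\tau = 0$ into the infimum to obtain the upper bound $\tau^\lambda \leq I_{\tmmathbf{n}}(0) = \lim_{\tau \to 0^+}I_{\tmmathbf{n}}(\tau)$ (using the extended-value definition of $I_{\tmmathbf{n}}$ at $\tau^{\min}$), and for the matching lower bound reuse the splitting trick: given $M < \lim_{\tau\to 0^+}I_{\tmmathbf{n}}(\tau)$, monotonicity provides $\tau_M > 0$ with $I_{\tmmathbf{n}} \geq M$ on $[0,\tau_M]$, and on $[\tau_M,+\infty)$ the linear term $\lambda\tau \geq \lambda\tau_M$ dominates as $\lambda \to \infty$.

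No step appears to present a genuine obstacle: the proof is essentially a routine manipulation of a Fenchel--Legendre transform relying only on properties of $I_{\tmmathbf{n}}$ already established.
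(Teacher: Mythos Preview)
Your proposal is correct and follows exactly the line the paper indicates: the paper does not write out a proof of Proposition~\ref{prop-taul} but merely states that it consists of ``immediate consequences of the definition of $\tau^{\lambda}(\tmmathbf{n})$ at (\ref{eq-dual-Itaul}) together with (\ref{eq-taul-st}),'' i.e.\ routine manipulations of the Fenchel--Legendre transform using the structure of $I_{\tmmathbf{n}}$ established in Theorem~\ref{thm-rate-I}. Your argument supplies precisely those details, and the splitting tricks you use for the two asymptotics and for the last limit are the standard ones.
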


\begin{figure}[h!]
  \begin{center}
  \includegraphics[width=8cm]{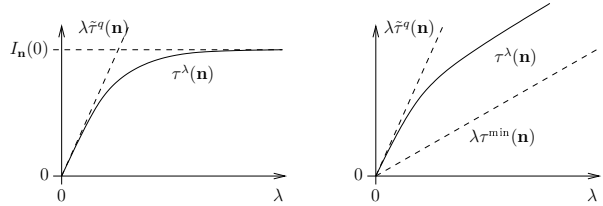}
  \end{center}
  \caption{\label{fig-gr-taul}The graph of $\lambda \mapsto \tau^{\lambda}
  (\tmmathbf{n})$ in the case of dilution ($\tau^{\min} = 0$ and
  $I_{\tmmathbf{n}} (0) < \infty$, left) and distributions with $\tau^{\min} >
  0$ (right).}
\end{figure}

Another important yet classical fact is the \tmtextit{convexity} of surface
tension {\cite{N08}}. The proposition below is a consequence of the weak
triangle inequality for $\tau^J_{\mathcal{R}}$ (see {\cite{N08}} or
{\cite{N70}} for the uniform case, or Appendix 2.5.2 in {\cite{M00}}).

\begin{proposition}
  \label{prop-conv-tau}Let $f^q$ be the homogeneous extension of $\tau^q$ to
  $\mathbbm{R}^d$, namely:
  \[ f^q (x) = \left\{ \begin{array}{ll}
       \|x\| \tau^q (x /\|x\|) & \text{if } x \in \mathbbm{R}^d \setminus
       \{0\}\\
       0 & \text{if } x = 0,
     \end{array} \right. \]
  and let $f^{\lambda}$ (resp. $\tilde{f}^q$) be the homogeneous extension of
  $\tau^{\lambda}$ (resp. $\tilde{\tau}^q$) to $\mathbbm{R}^d$. Then, $f^q$,
  $f^{\lambda}$ and $\tilde{f}^q$ are convex and $\tau^q$, $\tau^{\lambda}$
  and $\tilde{\tau}^q$ are continuous on $S^{d - 1}$.
\end{proposition}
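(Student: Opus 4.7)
The plan is to derive each of the three convexity statements from a weak triangle inequality for $\tau^J_{\mathcal{R}}$, and then conclude continuity on $S^{d-1}$ from the classical fact that a finite convex function on $\mathbbm{R}^d$ is continuous.

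First I would establish (or recall from Appendix 2.5.2 of \cite{M00}) the weak triangle inequality: for any non-degenerate simplex in $\mathbbm{R}^d$ with outer unit normals $\tmmathbf{n}_0, \ldots, \tmmathbf{n}_d$ and face areas $a_0, \ldots, a_d$ (so that $\sum_i a_i \tmmathbf{n}_i = 0$), there exist disjoint rectangular parallelepipeds $\mathcal{R}_i$ with basis orthogonal to $\tmmathbf{n}_i$ and side-length scaling like $a_i^{1/(d-1)} L$ such that
\[
  a_0 \tau^J_{\mathcal{R}_0} \leq \sum_{i=1}^d a_i \tau^J_{\mathcal{R}_i} + \varepsilon(L, H),
\]
with $\varepsilon(L,H) \to 0$ when $L \to \infty$ followed by $H \to \infty$. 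This is proved by the same FK disconnection argument used in Theorem~\ref{thm-add-tauJ}: by closing the $O(L^{d-2})$ edges near the ridges of the pyramid erected on the base face, the disconnection in each lateral $\mathcal{R}_i$ for $i \geq 1$ forces disconnection in $\mathcal{R}_0$. Disjointness of the $\mathcal{R}_i$ preserves independence under $\mathbbm{P}$.

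For $f^q$, I would take $\mathbbm{P}$-expectation and apply Theorem~\ref{thm-conv-tauq} in the limit to obtain $a_0 \tau^q(\tmmathbf{n}_0) \leq \sum_{i \geq 1} a_i \tau^q(\tmmathbf{n}_i)$. Setting $x_i = a_i \tmmathbf{n}_i$ and using the evenness $\tau^q(\tmmathbf{n}) = \tau^q(-\tmmathbf{n})$ inherited from the reflection symmetry of $\mathcal{D}_{\mathcal{R}}$, this yields the subadditivity $f^q(x + y) \leq f^q(x) + f^q(y)$ upon specialization to a $2$-simplex in the plane spanned by $x$ and $y$. Combined with positive $1$-homogeneity, subadditivity upgrades to convexity via
\[
  f^q(tx + (1-t)y) \leq f^q(tx) + f^q((1-t)y) = t f^q(x) + (1-t) f^q(y).
\]
For $f^\lambda$, I would apply the same weak triangle inequality inside $\exp(-\lambda L^{d-1} \cdot)$, use the $\mathbbm{P}$-independence of the $\tau^J_{\mathcal{R}_i}$ to factor the expectation, take logarithms, and pass to the limit via Proposition~\ref{prop-conv-taul} to obtain the same subadditivity for $f^\lambda$. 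For $\tilde{f}^q$, Proposition~\ref{prop-taul} gives $\tilde{f}^q = \sup_{\lambda > 0} f^\lambda / \lambda$ once homogeneously extended, and a pointwise supremum of convex functions is convex.

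For continuity, Proposition~\ref{prop-tauJ-mon} together with the bounds $f^\lambda \leq \lambda f^q$ (Jensen) and $\tilde{f}^q \leq f^q$ (Proposition~\ref{prop-taul}) show that $f^q, f^\lambda, \tilde{f}^q$ are finite on $\mathbbm{R}^d$; a finite convex function on $\mathbbm{R}^d$ is continuous, so restriction to the compact sphere $S^{d-1}$ gives continuity of $\tau^q$, $\tau^\lambda$ and $\tilde{\tau}^q$. The main potential obstacle is the weak triangle inequality itself, specifically controlling the error terms (ridge closing cost and the lattice rounding of non-integer simplex vertices) so that they vanish after $L \to \infty$ then $H \to \infty$; this is done exactly as in Theorem~\ref{thm-add-tauJ} and presents no new difficulty in the random setting thanks to disjointness of the $\mathcal{R}_i$.
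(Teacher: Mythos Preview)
Your proposal is correct and follows essentially the same approach as the paper, which itself does not give a proof but merely states that the proposition is a consequence of the weak triangle inequality for $\tau^J_{\mathcal{R}}$ (citing \cite{N08}, \cite{N70}, and Appendix~2.5.2 of \cite{M00}). You have filled in the details the paper omits: passing from the weak triangle inequality to subadditivity of the homogeneous extensions, handling $f^{\lambda}$ via independence and Proposition~\ref{prop-conv-taul}, recovering $\tilde{f}^q$ as $\sup_{\lambda>0} f^{\lambda}/\lambda$, and deducing continuity from finiteness and convexity.
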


\subsection{Concentration at low temperatures}

\label{sec-conc-lowt}In this Section and the next one we establish
respectively Theorems \ref{thm-Iquad-lowt} and \ref{thm-Iquad-gal}. In both
cases we use concentration of measure theory, which is a very efficient tool
for analyzing the fluctuations of product measures. In the case of polymers or
even spin glasses it yields relevant bounds on the probabilities of
deviations, see {\cite{N216}} for a review. Concerning the Ising (or
random-cluster) model with random couplings, its application to the deviations
of surface tension requires a control over the surface of the interface, and
this is the point where the proofs of Theorems \ref{thm-Iquad-lowt} and
\ref{thm-Iquad-gal} differ: at low temperatures one can control rather easily
the length of the interface, while under the only assumptions of Theorem
\ref{thm-Iquad-gal} the same control is not immediate.

The surface tension $\tau^{\lambda} (\tmmathbf{n})$ under averaged Gibbs
measure plays an important role here, as well as the modified measure
$\mathbbm{E}_{\lambda}$ defined at (\ref{eq-def-Elambda}) below. We will
obtain lower bounds on $\tau^{\lambda} (\tmmathbf{n})$, which correspond to
lower bounds on $I_{\tmmathbf{n}} (\tau)$ by (\ref{eq-rec-Itau}).

Rather than making the assumption that the product measure $\mathbbm{P}$
satisfies a logarithmic Sobolev inequality as in {\cite{M00}}{\footnote{Usual
measures such as dilution $\mathbbm{P}(J_e \in \{0, 1\}) = 1$, or $J_e$ with
positive density on $[0, 1]$ do satisfy a logarithmic Sobolev inequality, cf.
{\cite{N216}} or Theorems 4.2, 6.6 and Section 6.3 in {\cite{N187}}.}}, we use
general bounds on product measure (Corollary 5.8 in {\cite{N216}}). The author
thanks Rapha\"el Rossignol for pointing out this improvement. The proof of
Theorem \ref{thm-Iquad-lowt} is made of four steps, the first three being
common with the proof of Theorem \ref{thm-Iquad-gal}.

The first step consists in relating the derivative of the surface tension
$\tau^{\lambda}_{\mathcal{R}} (\tmmathbf{n})$ in a rectangular parallelepiped
$\mathcal{R}$ as in (\ref{eq-def-R}), with a basis of side-length $L$, to the
entropy of the positive function $\exp (f_{\lambda})$ where
\[ f_{\lambda} = - \lambda L^{d - 1} \tau_{\mathcal{R}}^J . \]
We recall that the \tmtextit{entropy} of a positive measurable function $f$
with $\mathbbm{E}(f \log (1 + f)) < \infty$ is
\begin{equation}
  \tmop{Ent}_{\mathbbm{P}} (f) =\mathbbm{E}(f \log f) -\mathbbm{E}(f) \log
  \mathbbm{E}(f) . \label{eq-def-Ent}
\end{equation}
With these notations, it is immediate that:

\begin{lemma}
  \label{lem-dtau-ent}For any $\lambda > 0$,
  \begin{eqnarray}
    - \frac{\partial}{\partial \lambda} \left(
    \frac{\tau^{\lambda}_{\mathcal{R}}}{\lambda} \right) & = &
    \frac{1}{\lambda^2 L^{d - 1}}  \frac{\tmop{Ent}_{\mathbbm{P}} (\exp
    (f_{\lambda}))}{\mathbbm{E} \left( \exp \left( f_{\lambda}) \right)
    \right.} 
  \end{eqnarray}
\end{lemma}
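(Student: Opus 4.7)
The statement is essentially a computational identity, and the plan is to verify it by direct differentiation.

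First, I would expand the definition. By equation \eqref{eq-def-taul-R}, with $f_\lambda = -\lambda L^{d-1}\tau^J_{\mathcal{R}}$, one has
\[
\frac{\tau^{\lambda}_{\mathcal{R}}}{\lambda} \;=\; -\frac{1}{\lambda L^{d-1}}\,\log \mathbb{E}\bigl(\exp(f_\lambda)\bigr).
\]
Setting $g(\lambda) = \log \mathbb{E}(\exp(f_\lambda))$, the task reduces to showing that
\[
-\frac{\partial}{\partial \lambda}\!\left(\frac{g(\lambda)}{\lambda L^{d-1}}\right) \;=\; \frac{1}{\lambda^2 L^{d-1}}\,\frac{\operatorname{Ent}_{\mathbb{P}}(\exp(f_\lambda))}{\mathbb{E}(\exp(f_\lambda))}.
\]

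Next I would differentiate. The quotient rule gives
\[
-\frac{\partial}{\partial \lambda}\!\left(\frac{g(\lambda)}{\lambda L^{d-1}}\right) \;=\; \frac{g(\lambda) - \lambda\, g'(\lambda)}{-\lambda^2 L^{d-1}} \;=\; \frac{\lambda\, g'(\lambda) - g(\lambda)}{\lambda^2 L^{d-1}}.
\]
Differentiation under the expectation is legitimate because $\tau^J_{\mathcal{R}}$ is bounded uniformly in $J$ (Proposition \ref{prop-tauJ-mon}), which gives a uniform integrable dominating function for $\partial_\lambda \exp(f_\lambda) = -L^{d-1}\tau^J_{\mathcal{R}}\exp(f_\lambda)$ on any compact interval of $\lambda$. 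Using $\partial_\lambda f_\lambda = f_\lambda/\lambda$ (since $f_\lambda$ is linear in $\lambda$), one obtains
\[
g'(\lambda) \;=\; \frac{\mathbb{E}\!\left(\tfrac{f_\lambda}{\lambda}\,\exp(f_\lambda)\right)}{\mathbb{E}(\exp(f_\lambda))},
\qquad\text{hence}\qquad
\lambda\, g'(\lambda) \;=\; \frac{\mathbb{E}(f_\lambda \exp(f_\lambda))}{\mathbb{E}(\exp(f_\lambda))}.
\]

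Finally I would match this with the entropy. By the definition \eqref{eq-def-Ent},
\[
\operatorname{Ent}_{\mathbb{P}}(\exp(f_\lambda)) \;=\; \mathbb{E}(f_\lambda \exp(f_\lambda)) - \mathbb{E}(\exp(f_\lambda))\,\log \mathbb{E}(\exp(f_\lambda)),
\]
so dividing by $\mathbb{E}(\exp(f_\lambda))$ produces exactly $\lambda g'(\lambda) - g(\lambda)$. Combining the two displays gives the claim. There is no real obstacle here; the only subtlety worth flagging is the interchange of derivative and expectation, which is handled by the a priori bound $0 \leq \tau^J_{\mathcal{R}} \leq c_d \beta J^{\max}$ from Proposition \ref{prop-tauJ-mon}.
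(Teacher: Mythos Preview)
Your proof is correct and is exactly the direct computation the paper has in mind (the paper simply declares the lemma ``immediate'' without writing out a proof). One cosmetic remark: there is a harmless sign slip in your reduction---since $\tau^{\lambda}_{\mathcal{R}}/\lambda = -g(\lambda)/(\lambda L^{d-1})$, the quantity $-\partial_\lambda(\tau^{\lambda}_{\mathcal{R}}/\lambda)$ equals $+\partial_\lambda\bigl(g(\lambda)/(\lambda L^{d-1})\bigr)$, not the negative; your subsequent quotient-rule line carries a compensating sign error, so the final expression $(\lambda g'(\lambda)-g(\lambda))/(\lambda^2 L^{d-1})$ and the match with the entropy are both right.
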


As a second step we study the quantity
\begin{equation}
  a_e^J = \frac{L^{d - 1}}{\beta}  \frac{\partial
  \tau^J_{\mathcal{R}}}{\partial J_e} \label{eq-def-aeJ} .
\end{equation}
The proposition below provides an interpretation of $a_e^J$ as the probability
that the disconnecting interface due to the event $\mathcal{D}_{\mathcal{R}}$
passes through the edge $e$. We prove also, and this is crucial for our
construction, that the actual value of $J_e$ does not influence too much that
of $a_e^J$:

\begin{proposition}
  \label{prop-ctrl-aeJ}For any $e$, $a_e^J$ is a $\mathcal{C}^{\infty}$
  function of the $J_{e'}$. For any $J \in [0, 1]^{E (\mathbbm{Z}^d)}$, one
  has
  \begin{equation}
    a_e^J = \frac{1}{p_e} \left( \Phi^{J, w}_{\mathcal{R}} (\omega_e) -
    \Phi^{J, w}_{\mathcal{R}} (\omega_e |\mathcal{D}_{\mathcal{R}}) \right)
    \text{ \ \ if \ } J_e > 0
  \end{equation}
  together with the following inequalities:
  \begin{equation}
    0 \leqslant a_e^J \leqslant 1 \text{ \ \ \ and \ \ \ } \sup_{J_e} a^J_e
    \leqslant e^{\beta} \inf_{J_e} a^J_e . \label{eq-ctrl-aeJ}
  \end{equation}
\end{proposition}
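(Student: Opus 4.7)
The plan is to establish the identity for $a_e^J$ by direct computation, and then prove the three inequalities separately. The $\mathcal{C}^\infty$ regularity of $a_e^J = (L^{d-1}/\beta) \partial_{J_e} \tau^J_{\mathcal{R}}$ in the couplings is clear from the smoothness of $\tau^J_{\mathcal{R}}$: the probability $\Phi^{J,w}_{\mathcal{R}}(\mathcal{D}_{\mathcal{R}})$ is a polynomial in the $p_{e'} = 1 - e^{-\beta J_{e'}}$ that stays positive under the standing hypothesis $L, H \geq 2\sqrt{d}$ of Proposition \ref{prop-tauJ-mon}. For the formula itself, writing $p_e = 1 - e^{-\beta J_e}$ and splitting the partition function according to the value of $\omega_e$, one derives the standard identity
\[ \frac{\partial}{\partial p_e} \log \Phi^{J,w}_{\mathcal{R}}(A) = \frac{\Phi^{J,w}_{\mathcal{R}}(\omega_e \mid A) - \Phi^{J,w}_{\mathcal{R}}(\omega_e)}{p_e(1-p_e)} \]
valid for any event $A$ of positive probability. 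Applied to $A = \mathcal{D}_{\mathcal{R}}$ and combined with $\partial p_e / \partial J_e = \beta (1-p_e)$, this yields the claimed formula for $a_e^J$ when $J_e > 0$.

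The bound $a_e^J \geq 0$ follows from the FKG inequality, since $\omega_e$ is increasing whereas $\mathbf{1}_{\mathcal{D}_{\mathcal{R}}}$ is decreasing, so the two are negatively correlated under $\Phi^{J,w}_{\mathcal{R}}$. For the bound $a_e^J \leq 1$, the DLR equation gives
\[ \Phi^{J,w}_{\mathcal{R}}(\omega_e = 1 \mid \omega_{e'}, e' \neq e) \in \left\{ p_e,\, \frac{p_e}{p_e + (1-p_e) q} \right\}, \]
depending on whether the endpoints of $e$ are joined through $\omega_{\neq e} \vee w$; both values are at most $p_e$ since $q \geq 1$, hence $\Phi^{J,w}_{\mathcal{R}}(\omega_e) \leq p_e$ after integration, and $p_e a_e^J \leq \Phi^{J,w}_{\mathcal{R}}(\omega_e) \leq p_e$.

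The oscillation estimate is the crux. The strategy is to isolate the $J_e$-dependence by factoring the weight of the edge $e$ out of both $\Phi^{J,w}_{\mathcal{R}}(\mathcal{D}_{\mathcal{R}})$ and $\Phi^{J,w}_{\mathcal{R}}(\{\omega_e = 1\} \cap \mathcal{D}_{\mathcal{R}})$. A short computation gives
\[ a_e^J = \frac{(x-y)\, u}{(u + (1-u) x)(u + (1-u) y)}, \qquad u = 1 - p_e, \]
where $x = Z_1/Z_0$ and $y = Z_1^{\mathcal{D}}/Z_0^{\mathcal{D}}$, with $Z_k$ (resp.\ $Z_k^{\mathcal{D}}$) the partition function of all configurations (resp.\ those in $\mathcal{D}_{\mathcal{R}}$) satisfying $\omega_e = k$, weighted through the edges $e' \neq e$ only. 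Since the $p_{e'}$ for $e' \neq e$ and the cluster counts $C_k$ depend only on $(J_{e'})_{e' \neq e}$, the quantities $x, y \in [0, 1]$ do not depend on $J_e$, and only the factor $f(u) = u / [(u+(1-u)x)(u+(1-u)y)]$ carries the $J_e$-dependence. Computing
\[ (\log f)'(u) = \frac{1}{u} - \frac{1-x}{x + u(1-x)} - \frac{1-y}{y + u(1-y)}, \]
and using the elementary bound $0 \leq (1-z)/(z + u(1-z)) \leq 1/u$ valid for $z \in [0,1]$ and $u > 0$, one obtains $|(\log f)'(u)| \leq 1/u$. Integrating over the range $u \in [e^{-\beta}, 1]$ of $u$ as $J_e$ varies in $[0, 1]$ yields $|\log f(u_1) - \log f(u_2)| \leq \beta$, which is exactly the desired bound since $a_e^J = (x-y) f(u)$.

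The main obstacle is this final estimate: it relies on the algebraic factorization that isolates the $J_e$-dependence into the single scalar function $f(u)$, together with the tight derivative bound $|(\log f)'| \leq 1/u$. The formula for $a_e^J$ and the inequalities $0 \leq a_e^J \leq 1$ are then routine consequences of the FKG inequality and the DLR equation.
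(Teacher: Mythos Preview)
Your proof is correct and follows essentially the same strategy as the paper. The smoothness, the formula for $a_e^J$, and the bounds $0\leqslant a_e^J\leqslant 1$ are handled identically. For the oscillation estimate the paper proceeds slightly more directly: it differentiates $a_e^J$ in $J_e$ and obtains
\[
\frac{\partial a_e^J}{\partial J_e}=\beta\,a_e^J\Bigl(1-\tfrac{\Phi^{J,w}_{\mathcal R}(\omega_e)}{p_e}-\tfrac{\Phi^{J,w}_{\mathcal R}(\omega_e\mid\mathcal D_{\mathcal R})}{p_e}\Bigr),
\]
so that $\bigl|\partial_{J_e}\log a_e^J\bigr|\leqslant\beta$ immediately from $0\leqslant \Phi(\omega_e)/p_e,\ \Phi(\omega_e\mid\mathcal D_{\mathcal R})/p_e\leqslant 1$. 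Your factorization $a_e^J=(x-y)f(u)$ with $u=e^{-\beta J_e}$ and the bound $|(\log f)'(u)|\leqslant 1/u$ is the same inequality after the change of variables $du/dJ_e=-\beta u$; the two arguments are algebraically equivalent, yours simply making the $J_e$-independence of the remaining factors explicit.
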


The controls (\ref{eq-ctrl-aeJ}), together with Corollary 5.8 in
{\cite{N216}}, permit to establish the third step. Given a rectangular
parallelepiped $\mathcal{R}$ as in (\ref{eq-def-R}) and $\lambda \geqslant 0$,
we introduce the probability measure $\mathbbm{P}_{\lambda}$ that to any
bounded measurable $h : J \mapsto h (J) \in \mathbbm{R}$ gives expectation
\begin{equation}
  \mathbbm{E}_{\lambda} (h (J)) =\mathbbm{E} \left( h (J) \frac{\exp \left( -
  \lambda L^{d - 1} \tau^J_{\mathcal{R}} \right)}{\mathbbm{E} \exp \left( -
  \lambda L^{d - 1} \tau^J_{\mathcal{R}} \right)} \right) .
  \label{eq-def-Elambda}
\end{equation}
\begin{proposition}
  \label{prop-upb-ent}Denote $m_{\mathbbm{P}} =\mathbbm{E}(J_e)$. For any
  $\lambda \geqslant 0$, we have both
  \begin{eqnarray}
    \frac{\tmop{Ent}_{\mathbbm{P}} (\exp (f_{\lambda}))}{\mathbbm{E} \left(
    \exp \left( f_{\lambda}) \right) \right.} & \leqslant & \lambda^2 
    \frac{\beta^2 e^{\beta (1 + \lambda)}}{4}  \left\{ \begin{array}{l}
      \mathbbm{E}_{\lambda} \left(  \sum_{e \in E ( \hat{\mathcal{R}})} a_e^J
      \right)\\
      \frac{1}{m_{\beta}} L^{d - 1}  \frac{1}{\lambda}  \frac{\partial
      \tau^{\lambda}_{\mathcal{R}}}{\partial \beta} .
    \end{array} \label{eq-lwb-dtaul}  \right.
  \end{eqnarray}
\end{proposition}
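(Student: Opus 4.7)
The plan is to apply a modified logarithmic Sobolev inequality for product measures on the bounded cube $[0,1]^{E(\hat{\mathcal{R}})}$ (Corollary~5.8 in \cite{N216}) to the function $g = \exp(f_\lambda)$, and then massage the resulting bound using the two controls on $a_e^J$ supplied by Proposition~\ref{prop-ctrl-aeJ}. The Ledoux corollary yields an estimate of the shape
\[
\tmop{Ent}_{\mathbbm{P}}(e^{f_\lambda}) \;\leq\; \tfrac{1}{4} \sum_{e \in E(\hat{\mathcal{R}})} \mathbbm{E}\Bigl[(\tmop{osc}_e f_\lambda)^2\, e^{f_\lambda}\, \exp(\tmop{osc}_e f_\lambda)\Bigr],
\]
where $\tmop{osc}_e f_\lambda$ is the oscillation of $f_\lambda$ as $J_e$ varies over $[0,1]$ with the remaining couplings frozen. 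By definition (\ref{eq-def-aeJ}), $\partial_{J_e} f_\lambda = -\lambda \beta\, a_e^J$, so (\ref{eq-ctrl-aeJ}) gives first $\tmop{osc}_e f_\lambda \leq \lambda \beta$ (hence the exponential correction is at most $e^{\lambda\beta}$), and second $\sup_{J_e} a_e^J \leq e^\beta a_e^J$, whence
\[
(\tmop{osc}_e f_\lambda)^2 \;\leq\; (\lambda\beta)\cdot(\lambda\beta \sup_{J_e} a_e^J) \;\leq\; \lambda^2 \beta^2 e^\beta\, a_e^J.
\]
Injecting both bounds produces the prefactor $\lambda^2\beta^2 e^{\beta(1+\lambda)}/4$ and, after dividing by $\mathbbm{E}(e^{f_\lambda})$, the first form of (\ref{eq-lwb-dtaul}).

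For the second form I would exploit the scale invariance of the random-cluster weights: since $p_e = 1 - e^{-\beta J_e}$ depends on $(\beta, J_e)$ only through the product $\beta J_e$, the surface tension $\tau^J_\mathcal{R}$ is homogeneous in the sense that $\tau^J_\mathcal{R}(\beta,J) = \tilde{\tau}(\beta J)$. Euler's identity then gives
\[
\beta\, \partial_\beta \tau^J_\mathcal{R} \;=\; \sum_e J_e\, \partial_{J_e} \tau^J_\mathcal{R} \;=\; \frac{\beta}{L^{d-1}} \sum_e J_e\, a_e^J,
\]
so $\sum_e J_e a_e^J = L^{d-1}\,\partial_\beta \tau^J_\mathcal{R}$. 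Differentiating $\tau^\lambda_\mathcal{R} = -\frac{1}{L^{d-1}} \log \mathbbm{E}[e^{f_\lambda}]$ in $\beta$ yields $\partial_\beta \tau^\lambda_\mathcal{R} = \lambda\, \mathbbm{E}_\lambda(\partial_\beta \tau^J_\mathcal{R})$, and therefore
\[
\mathbbm{E}_\lambda\Bigl(\sum_e J_e a_e^J\Bigr) \;=\; \frac{L^{d-1}}{\lambda}\, \partial_\beta \tau^\lambda_\mathcal{R}.
\]
To replace $\sum_e a_e^J$ by $\sum_e J_e a_e^J /m_{\mathbbm{P}}$ I would use the joint oscillation estimate for $a_e^J e^{f_\lambda}$: combining $\sup_{J_e}(a_e^J) \leq e^\beta \inf_{J_e}(a_e^J)$ from (\ref{eq-ctrl-aeJ}) with the uniform bound $\sup_{J_e} e^{f_\lambda} \leq e^{\lambda \beta} \inf_{J_e} e^{f_\lambda}$ gives $a_e^J e^{f_\lambda}|_{J_e = t_1} \leq e^{\beta(1+\lambda)}\, a_e^J e^{f_\lambda}|_{J_e = t_2}$ for any $t_1,t_2 \in [0,1]$; integrating against the measures $d\mathbbm{P}(t_1)$ and $t_2\, d\mathbbm{P}(t_2)/m_{\mathbbm{P}}$ yields the desired comparison.

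The main obstacle is making the single factor $e^{\beta(1+\lambda)}$ suffice in both bounds rather than a doubled exponent: naively substituting the comparison of the previous paragraph into the first form of the inequality would produce $e^{2\beta(1+\lambda)}$. To avoid this, the two alternative bounds in (\ref{eq-lwb-dtaul}) should be derived \emph{in parallel} from the entropic inequality of Step~1 -- once by dominating $(\tmop{osc}_e f_\lambda)^2$ by $\lambda^2\beta^2 e^\beta a_e^J$ and once by dominating the same quantity directly in a form where the sum $\sum_e J_e a_e^J$ can be read off via the scaling identity -- so that a single oscillation correction $e^{\lambda\beta}$ appears in each. The routine ingredients (Corollary~5.8 itself, the smoothness of $a_e^J$, and the Euler identity) are standard; the care lies in the bookkeeping of the exponential constants.
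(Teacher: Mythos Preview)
Your derivation of the first bound is correct and matches the paper's argument: you invoke the same entropic inequality (Corollary~5.8 in \cite{N216}), bound the oscillation via $|\partial_{J_e}f_\lambda|=\lambda\beta a_e^J\le\lambda\beta$, and use $\sup_{J_e}a_e^J\le e^\beta a_e^J$ together with $\exp(\tmop{osc}_e f_\lambda)\le e^{\lambda\beta}$ to produce the factor $\lambda^2\beta^2 e^{\beta(1+\lambda)}/4$ in front of $\mathbbm{E}_\lambda(\sum_e a_e^J)$.

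The second bound, however, has a genuine gap. You correctly identify the obstacle --- substituting the comparison $\mathbbm{E}[a_e^J e^{f_\lambda}]\le e^{\beta(1+\lambda)}\mathbbm{E}[J_e a_e^J e^{f_\lambda}]/m_{\mathbbm{P}}$ into the first bound doubles the exponent --- but your proposed fix (``dominate $(\tmop{osc}_e f_\lambda)^2$ directly in a form where $\sum_e J_e a_e^J$ can be read off'') cannot work as stated: the oscillation is \emph{independent} of $J_e$, so no pointwise bound of the form $(\tmop{osc}_e f_\lambda)^2\le C\,J_e a_e^J$ is possible (the right side vanishes at $J_e=0$).

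The paper's trick, which you are missing, is to pass to the \emph{infimum} over $J_e$ before inserting the weight. Starting from the entropic inequality one bounds
\[
\bigl(\sup_{J_e}\partial_{J_e}f_\lambda\bigr)^2\exp\bigl(\sup_{J_e}f_\lambda\bigr)
\;\le\;
\lambda^2\beta^2 e^{\beta(1+\lambda)}\,\Bigl(\inf_{J_e}a_e^J\Bigr)\,\Bigl(\inf_{J_e}e^{f_\lambda}\Bigr),
\]
using $(\sup a_e^J)^2\le\sup a_e^J\le e^\beta\inf a_e^J$ and $\sup e^{f_\lambda}\le e^{\lambda\beta}\inf e^{f_\lambda}$. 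The point is that the right-hand side is now \emph{constant in $J_e$}, so multiplying by $J_e/m_{\mathbbm{P}}$ and integrating over $J_e$ costs nothing (the extra factor has mean one and is independent of the rest). One then bounds the infima back up by the actual values to obtain $\frac{1}{m_{\mathbbm{P}}}\mathbbm{E}_\lambda(\sum_e J_e a_e^J)$ with a single factor $e^{\beta(1+\lambda)}$. Your Euler-identity computation of $\mathbbm{E}_\lambda(\sum_e J_e a_e^J)=\frac{L^{d-1}}{\lambda}\partial_\beta\tau^\lambda_{\mathcal{R}}$ is correct and is exactly how the paper concludes.
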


The second majoration leads to Theorem \ref{thm-Iquad-gal}, while the first
one yields Theorem \ref{thm-Iquad-lowt} after a last step: using Peierls'
argument we show that, in the Ising model ($q = 2$) with couplings $J_e
\geqslant \varepsilon > 0$, the length of the interface is of order $N^{d -
1}$.

\begin{proposition}
  \label{prop-lint-Ising}Let $q = 2$ and $\varepsilon > 0$. There exists $c_d
  < \infty$ such that, for $\beta$ large enough, for
  $\mathcal{R}=\mathcal{R}^N =\mathcal{R}_{0, N, \delta N} (\mathcal{S},
  \tmmathbf{n})$ with $\delta \in (0, 1)$, for any realization $J$ of the
  random couplings such that $J_e \geqslant \varepsilon$ and $N$ large enough,
  \begin{eqnarray}
    \sum_{e \in E ( \widehat{\mathcal{R}^N})} a_e^J & \leqslant & \frac{c_d
    }{\varepsilon} N^{d - 1} . 
  \end{eqnarray}
\end{proposition}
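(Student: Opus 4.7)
The plan is to translate the sum $\sum_e a_e^J$ into the expected area of the Ising interface with mixed boundary conditions via the Edwards--Sokal coupling, and then bound that expected area by a Peierls argument based on the lower bound $J_e \geq \varepsilon$.

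First, using the identity from Proposition \ref{prop-ctrl-aeJ},
$$a_e^J = \frac{1}{p_e}\bigl[\Phi^{J,w}_{\mathcal{R}}(\omega_e = 1) - \Phi^{J,w}_{\mathcal{R}}(\omega_e = 1 \mid \mathcal{D}_{\mathcal{R}})\bigr],$$
together with $p_e = 1 - e^{-\beta J_e} \geq 1 - e^{-\beta \varepsilon} \geq 1/2$ (valid for $\beta$ large enough), one reduces to bounding the sum of differences. Since $q = 2$, the Edwards--Sokal coupling of Section 2.2 identifies $\Phi^{J,w}_\mathcal{R}$ with the $\omega$-marginal of the joint measure whose $\sigma$-marginal is $\mu^{J,+}_\mathcal{R}$, while conditioning on $\mathcal{D}_\mathcal{R}$ (disconnecting $\partial^+\hat{\mathcal{R}}$ from $\partial^-\hat{\mathcal{R}}$) corresponds, up to a symmetric $\pm$ swap, to the Ising measure $\mu^{J,\pm}_{\mathcal{R}}$ with $+$ on $\partial^+\hat{\mathcal{R}}$ and $-$ on $\partial^-\hat{\mathcal{R}}$. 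The conditional law $\Phi^{J,w}(\omega_e=1\mid\sigma) = p_e \mathbf{1}_{\sigma_x = \sigma_y}$ then gives
$$\Phi^{J,w}(\omega_e = 1) - \Phi^{J,w}(\omega_e = 1 \mid \mathcal{D}) = p_e\bigl[\mu^{J,+}(\sigma_x = \sigma_y) - \mu^{J,\pm}(\sigma_x = \sigma_y)\bigr] \leq p_e\, \mu^{J,\pm}(\sigma_x \neq \sigma_y),$$
so that $a_e^J \leq \mu^{J,\pm}_{\mathcal{R}^N}(\sigma_x \neq \sigma_y)$ and
$$\sum_{e \in E(\widehat{\mathcal{R}^N})} a_e^J \;\leq\; \mathbb{E}_{\mu^{J,\pm}_{\mathcal{R}^N}}\bigl[|\partial^*\sigma|\bigr],$$
where $|\partial^*\sigma|$ counts edges $\{x,y\}$ with $\sigma_x \neq \sigma_y$, i.e., the area of the Ising interface.

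It remains to show $\mathbb{E}_{\mu^{J,\pm}_{\mathcal{R}^N}}[|\partial^*\sigma|] \leq c_d N^{d-1}/\varepsilon$. I would implement a Peierls contour expansion: under $\mu^{J,\pm}_{\mathcal{R}^N}$ there is a reference minimum-capacity cut separating $\partial^+\hat{\mathcal{R}}^N$ from $\partial^-\hat{\mathcal{R}}^N$, of area at most $c_d \|\tmmathbf{n}\|_1 N^{d-1}$. Any excess disagreement edges can be encoded as ``walls'' attached to this reference interface, and every extra plaquette of a wall carries an energetic cost of at least $e^{-2\beta\varepsilon}$ because $J_e \geq \varepsilon$. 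Summing a geometric series in the total wall area, with lattice-combinatorial factors independent of the orientation, yields for $\beta$ large enough a bound of the form $c_d N^{d-1}(1 + c'_d/\varepsilon) \leq c_d N^{d-1}/\varepsilon$.

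The main obstacle is the execution of this Peierls step, uniformly in the direction $\tmmathbf{n} \in S^{d-1}$: for irrational $\tmmathbf{n}$ the reference minimum cut need not be planar and may itself depend on the realization $J$, so one must define contours relative to a (possibly $J$-dependent) reference cut and enumerate walls of total area $k$ with multiplicity at most $(c_d)^k N^{d-1}$. Once these combinatorial estimates are in place the geometric summation is routine, and the factor $1/\varepsilon$ simply reflects the worst-case conversion of the minimum-cut area (which depends on $\|\tmmathbf{n}\|_1$) into a constant uniform over $\tmmathbf{n}$.
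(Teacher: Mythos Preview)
Your overall strategy --- reduce $\sum_e a_e^J$ to the expected Ising interface length under the mixed measure $\mu^{J,\pm}_{\mathcal R}$ and then run a Peierls argument --- is exactly the paper's. Two points, however, separate your execution from the paper's.

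\textbf{The Edwards--Sokal step.} Your identification ``$\Phi^{J,w}_{\mathcal R}(\,\cdot\mid\mathcal D_{\mathcal R})$ corresponds to $\mu^{J,\pm}_{\mathcal R}$'' is not correct as written: conditioning the \emph{wired} FK measure on internal disconnection gives $\Phi^{J,\pm}(\,\cdot\mid\mathcal D_{\mathcal R})$, not $\Phi^{J,\pm}$ itself, so the equality $\Phi^{J,w}(\omega_e=1\mid\mathcal D)=p_e\,\mu^{J,\pm}(\sigma_x=\sigma_y)$ does not hold in general. The paper bypasses this by using the partition-function identity $\tau^J_{\mathcal R}=N^{1-d}\log(Z^{J,+}_{\mathcal R}/Z^{J,\pm}_{\mathcal R})$ and differentiating in $J_e$ directly, which yields the clean formula
\[
a_e^J=\mu^{J,+}_{\mathcal R}(\sigma_x\sigma_y)-\mu^{J,\pm}_{\mathcal R}(\sigma_x\sigma_y)
\]
(hence $a_e^J\le 2\,\mu^{J,\pm}_{\mathcal R}(\sigma_x\neq\sigma_y)$), and then conditions on the Ising interface $I$ to get $\sum_e a_e^J\le \sum_I 2|I|\,\mu^{J,\pm}_{\mathcal R}(\mathcal S_I)$. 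Your target bound is therefore right up to a harmless factor of $2$, but the route to it should be the partition-function one.

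\textbf{The Peierls step.} Your proposed wall decomposition relative to a reference minimum cut is the hard way, and the obstacle you flag (uniformity in $\tmmathbf n$, $J$-dependent reference cuts) is genuine for that route. The paper avoids it entirely with a cruder but sufficient estimate: from $Z^{J,+}_{\mathcal R\setminus I}\le Z^{J,+}_{\mathcal R}$ and $Z^{J,\pm}_{\mathcal R}\ge Z^{J,+}_{\mathcal R}\exp(-\beta|\partial^-\hat{\mathcal R}|)$ one gets
\[
\mu^{J,\pm}_{\mathcal R}(\mathcal S_I)\le \exp\bigl(-\beta\varepsilon|I|+\beta c_d N^{d-1}\bigr),
\]
after which the usual Peierls count of interfaces of cardinal $n$ by $(c_d)^n$ makes the contribution of interfaces with $|I|\ge 2c_dN^{d-1}/\varepsilon$ negligible for $\beta$ large. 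No reference cut, no walls, and no dependence on $\tmmathbf n$ beyond the trivial bound $|\partial^-\hat{\mathcal R}|\le c_dN^{d-1}$.
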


We give now the proofs of all the propositions, followed by that of Theorem
\ref{thm-Iquad-lowt}.

\begin{proof}
  (Proposition \ref{prop-ctrl-aeJ}). The fact that $a_e^J$ is a
  $\mathcal{C}^{\infty}$ function of $J_{e'}$ is a consequence of the same
  property for $\tau^J_{\mathcal{R}}$, the quantity $\Phi^{J, w}_{\mathcal{R}}
  (\mathcal{D}_{\mathcal{R}})$ being always positive. We introduce next a few
  notations: we let
  \begin{equation}
    w_{\mathcal{R}}^J (\omega) = \prod_{e \in E ( \hat{\mathcal{R}})} \left(
    \frac{p_e}{1 - p_e} \right)^{\omega_e} q^{C_{E ( \hat{\mathcal{R}})}^w
    (\omega)}  \text{ \ \ and \ \ } Z_{\mathcal{R}}^J (\mathcal{A}) =
    \sum_{\omega \in \mathcal{A}} w_{\mathcal{R}}^J (\omega)
    \label{eq-def-phiw}
  \end{equation}
  for any $\omega \in \Omega_{E ( \hat{\mathcal{R}})}$ and $\mathcal{A}
  \subset \Omega_{E ( \hat{\mathcal{R}})}$, see (\ref{eq-def-FK}) for the
  definition of $C_{E ( \hat{\mathcal{R}})}^w (\omega)$. For all $J$ with $J_e
  > 0$, we have
  \[ \frac{\partial \log w_{\mathcal{R}}^J (\omega)}{\partial J_e} = \beta
     \frac{\omega_e}{p_e} \]
  and as a consequence, for all $J$ with $J_e > 0$,
  \begin{eqnarray*}
    a_e^J & = & - \frac{1}{\beta}  \frac{\partial}{\partial J_e} \log
    \frac{Z^J_{\mathcal{R}} (\mathcal{D}_{\mathcal{R}})}{Z^J_{\mathcal{R}}
    (\Omega_{E ( \hat{\mathcal{R}})})}\\
    & = & \frac{1}{p_e} \left( \Phi^{J, w}_{\mathcal{R}} (\omega_e) -
    \Phi^{J, w}_{\mathcal{R}} (\omega_e |\mathcal{D}_{\mathcal{R}}) \right) .
  \end{eqnarray*}
  Under this formulation, the FKG inequality and the bound $\Phi^{J,
  w}_{\mathcal{R}} \left( \omega_e \right) \leqslant p_e$ imply that $0
  \leqslant a_e^J \leqslant 1$ for any $J \in \mathcal{J}$ with $J_e > 0$, and
  the inequality extends by continuity to the whole of $\mathcal{J}$. We now
  calculate the derivative of $a^J_e$ along $J_e$ for $J_e > 0$ and obtain, as
  \begin{eqnarray*}
    \frac{\partial}{\partial J_e} \left[ \frac{\Phi^{J, w}_{\mathcal{R}}
    \left( \omega_e |\mathcal{A} \right)}{p_e} \right] & = & \beta \left[
    \frac{\Phi^{J, w}_{\mathcal{R}} \left( \omega_e |\mathcal{A} \right)}{p_e}
    - \frac{\Phi^{J, w}_{\mathcal{R}} \left( \omega_e |\mathcal{A}
    \right)^2}{p_e^2} \right],
  \end{eqnarray*}
  that, for any $J \in \mathcal{J} \tmop{with} J_e > 0$,
  \[ \frac{\partial a_e^J}{\partial J_e} = \beta a_e^J \left( 1 -
     \frac{\Phi^{J, w}_{\mathcal{R}} \left( \omega_e \right)}{p_e} -
     \frac{\Phi^{J, w}_{\mathcal{R}} \left( \omega_e
     |\mathcal{D}_{\mathcal{R}} \right)}{p_e} \right) . \]
  This implies in particular that
  \[ \left| \frac{\partial a_e^J}{\partial J_e} \right| \leqslant \beta a_e^J
  \]
  and the comparison $\sup_{J_e \in [0, 1]} a_e^J \leqslant e^{\beta}
  \inf_{J_e \in [0, 1]} a_e^J$ follows.
\end{proof}

\begin{proof}
  (Proposition \ref{prop-upb-ent}). According to Corollary 5.8 in
  {\cite{N216}} and to the Mean Value Theorem, we have
  \begin{eqnarray*}
    \frac{\tmop{Ent}_{\mathbbm{P}} (\exp (f_{\lambda}))}{\mathbbm{E} \left(
    \exp \left( f_{\lambda}) \right) \right.} & \leqslant & \frac{1}{4} 
    \sum_{e \in E ( \hat{\mathcal{R}})} \frac{\mathbbm{E} \left( \left(
    \sup_{J_e \in [0, 1]} \frac{\partial f_{\lambda}}{\partial J_e} \right)^2
    \exp (\sup_{J_e \in [0, 1]} f_{\lambda}) \right)}{\mathbbm{E} \left( \exp
    \left( f_{\lambda}) \right) \right.} .
  \end{eqnarray*}
  It is clear that
  \[ \frac{\partial f_{\lambda}}{\partial J_e} = - \lambda \beta a_e^J . \]
  On the other hand, Proposition \ref{prop-ctrl-aeJ} yields
  \[ \sup_{J_e \in [0, 1]} (a_e^J)^2 \leqslant \sup_{J_e \in [0, 1]} a_e^J
     \leqslant e^{\beta} \inf_{J_e \in [0, 1]} a_e^J \]
  and
  \[ \sup_{J_e \in [0, 1]} \exp (f_{\lambda}) \leqslant e^{\beta \lambda}
     \inf_{J_e \in [0, 1]} \exp (f_{\lambda}), \]
  hence
  \begin{eqnarray*}
    \frac{\tmop{Ent}_{\mathbbm{P}} (\exp (f_{\lambda}))}{\mathbbm{E} \left(
    \exp \left( f_{\lambda}) \right) \right.} & \leqslant & \frac{\lambda^2
    \beta^2 e^{\beta (1 + \lambda)}}{4}  \sum_{e \in E ( \hat{\mathcal{R}})}
    \mathbbm{E} \left( \inf_{J_e \in [0, 1]} a_e^J \times \frac{\inf_{J_e \in
    [0, 1]} \exp (f_{\lambda})}{\mathbbm{E} \left( \exp \left( f_{\lambda})
    \right) \right.} \right)
  \end{eqnarray*}
  and the first bound follows. For the second one, remark that as we take
  infimums over $J_e$ we in fact obtain a quantity that is
  \tmtextit{independent} of $J_e$. Thus
  \begin{eqnarray*}
    \frac{\tmop{Ent}_{\mathbbm{P}} (\exp (f_{\lambda}))}{\mathbbm{E} \left(
    \exp \left( f_{\lambda}) \right) \right.} & \leqslant & \frac{\lambda^2
    \beta^2 e^{\beta (1 + \lambda)}}{4}  \sum_{e \in E ( \hat{\mathcal{R}})}
    \mathbbm{E} \left( \frac{J_e}{m_{\mathbbm{P}}} \inf_{J_e \in [0, 1]} a_e^J
    \times \frac{\inf_{J_e \in [0, 1]} \exp (f_{\lambda})}{\mathbbm{E} \left(
    \exp \left( f_{\lambda}) \right) \right.} \right)\\
    & \leqslant & \frac{\lambda^2 \beta^2 e^{\beta (1 + \lambda)}}{4
    m_{\mathbbm{P}}} \mathbbm{E}_{\lambda} \left( \sum_{e \in E (
    \hat{\mathcal{R}})} J_e a_e^J \right)
  \end{eqnarray*}
  which ends the proof as
  \begin{eqnarray*}
    \frac{1}{\lambda}  \frac{\partial \tau^{\lambda}_{\mathcal{R}}}{\partial
    \beta} & = & \frac{1}{L^{d - 1}} \mathbbm{E}_{\lambda} \left( \sum_{e \in
    E ( \hat{\mathcal{R}})} J_e a_e^J \right) .
  \end{eqnarray*}
\end{proof}

\begin{proof}
  (Proposition \ref{prop-lint-Ising}). As $\mathcal{R}=\mathcal{R}_N
  =\mathcal{R}_{0, N, \delta N} (\mathcal{S}, \tmmathbf{n})$ is centered at
  the origin, we consider
  \begin{eqnarray*}
    \Sigma_{\mathcal{R}}^+ & = & \left\{ \sigma : \mathbbm{Z}^d \rightarrow
    \left\{ \pm 1 \right\} : \sigma_x = 1, \forall x \notin \hat{\mathcal{R}}
    \setminus \partial \hat{\mathcal{R}} \right\}\\
    \Sigma_{\mathcal{R}}^{\pm} & = & \left\{ \sigma : \mathbbm{Z}^d
    \rightarrow \left\{ \pm 1 \right\} : \sigma_x = \left\{ \begin{array}{ll}
      1 & \text{if } x \cdot \tmmathbf{n} \geqslant 0\\
      - 1 & \text{else}
    \end{array} \right., \forall x \notin \hat{\mathcal{R}} \setminus \partial
    \hat{\mathcal{R}} \right\}
  \end{eqnarray*}
  the set of spin configurations on $\hat{\mathcal{R}}$ with plus or mixed
  boundary conditions. The correspondence between the random-cluster
  representation (with $q = 2$) and Ising model gives
  \[ \tau^J_{\mathcal{R}} = \frac{1}{N^{d - 1}} \log \frac{Z^{J,
     +}_{\mathcal{R}}}{Z^{J, \pm}_{\mathcal{R}}} \]
  where $Z^{J, +}_{\mathcal{R}}$ and $Z^{J, \pm}_{\mathcal{R}}$ are the
  partition functions
  \begin{eqnarray*}
    Z^{J, +}_{\mathcal{R}} & = & \sum_{\sigma \in \Sigma^+_{\mathcal{R}}} \exp
    \left( \frac{\beta}{2} \sum_{e =\{x, y\} \in E ( \hat{\mathcal{R}})} J_e
    \sigma_x \sigma_y \right)\\
    \text{and \ \ } Z^{J, \pm}_{\mathcal{R}} & = & \sum_{\sigma \in
    \Sigma^{\pm}_{\mathcal{R}}} \exp \left( \frac{\beta}{2} \sum_{e =\{x, y\}
    \in E ( \hat{\mathcal{R}})} J_e \sigma_x \sigma_y \right),
  \end{eqnarray*}
  leading thus to
  \begin{equation}
    a_e^J = \mu^{J, +}_{\mathcal{R}} (\sigma_x \sigma_y) - \mu^{J,
    \pm}_{\mathcal{R}} (\sigma_x \sigma_y) \label{eq-aeJ-mu} \text{, \ }
    \forall e =\{x, y\} \in E ( \hat{\mathcal{R}})
  \end{equation}
  where $\mu^{J, \pm}_{\mathcal{R}}$ is the Ising model on $\hat{\mathcal{R}}$
  with mixed boundary condition (plus on $\partial^+ \hat{\mathcal{R}}$, minus
  on $\partial^- \hat{\mathcal{R}}$). We consider now an interface $I$ for
  $\mathcal{R}$ as in Section \ref{sec-intro-lowt}. We recall that it is a
  minimal set of edges such that connections from $\partial^+
  \hat{\mathcal{R}}$ to $\partial^- \hat{\mathcal{R}}$ through $E (
  \hat{\mathcal{R}}) \setminus I$ are impossible. We consider $I^+$ the upper
  part of the interface $I$:
  \[ I^+ =\{x : \exists y \in \mathbbm{Z}^d : \{x, y\} \in I \text{ \ and \ }
     x \nleftrightarrow \partial^- \hat{\mathcal{R}} \text{ in } E (
     \hat{\mathcal{R}}) \setminus I\} \]
  and define symmetrically the set $I^-$. We call then $\mathcal{S}_I$ the
  event that $I$ is the spin interface between $\partial^+ \hat{\mathcal{R}}$
  and $\partial^- \hat{\mathcal{R}}$ under the measure $\mu^{J,
  \pm}_{\mathcal{R}}$:
  \[ \mathcal{S}_I = \left\{ \sigma \in \Sigma^{\pm}_{\mathcal{R}} :
     \begin{array}{l}
       \sigma (x) = + 1, \forall x \in I^+\\
       \sigma (x) = - 1, \forall x \in I^-
     \end{array} \right\} . \]
  Conditionally on $\mathcal{S}_I$, the restriction of $\mu^{J,
  \pm}_{\mathcal{R}}$ to the upper (resp. lower) parts of $\hat{\mathcal{R}}$
  equals the Ising measure with uniform plus (resp. minus) boundary condition.
  Hence, for any $\{x, y\} \notin I$ we have
  \begin{equation}
    \mu^{J, \pm}_{\mathcal{R}} (\sigma_x \sigma_y |\mathcal{S}_I) \geqslant
    \mu^{J, +}_{\mathcal{R}} (\sigma_x \sigma_y), \label{eq-cond-mu-sxy}
  \end{equation}
  and consequently
  \begin{eqnarray}
    \sum_{e \in E ( \hat{\mathcal{R}})} a_e^J & \leqslant & \sum_{I \text{
    interface}} \mu^{J, \pm}_{\mathcal{R}} (\mathcal{S}_I) \times 2| I|. 
  \end{eqnarray}
  Thus it remains only to bound the average interface length, in the Ising
  sense, under $\mu^{J, \pm}_{\mathcal{R}}$. We remark that $\mu^{J,
  \pm}_{\mathcal{R}} (\mathcal{S}_I)$ can also be written as
  \begin{eqnarray*}
    \mu^{J, \pm}_{\mathcal{R}} (\mathcal{S}_I) & = & \frac{Z^{J,
    +}_{\mathcal{R} \setminus I} \exp \left( - \beta \sum_{e \in \Gamma} J_e
    \right)}{Z^{J, \pm}_{\mathcal{R}} }
  \end{eqnarray*}
  where $Z^{J, +}_{\mathcal{R} \setminus I}$ stands for the partition function
  associated to the set of configurations with plus boundary condition on
  $I^+$, $I^-$ and on $\partial \hat{\mathcal{R}}$. Thanks to the assumption
  $J_e \geqslant \varepsilon$ and to the remarks that
  \begin{eqnarray*}
    Z^{J, +}_{\mathcal{R} \setminus I} & \leqslant & Z^{J, +}_{\mathcal{R}}\\
    \text{and \ } Z^{J, \pm}_{\mathcal{R}} & \geqslant & Z^{J,
    +}_{\mathcal{R}} \exp \left( - \beta | \partial^- \hat{\mathcal{R}} |
    \right),
  \end{eqnarray*}
  we have
  \begin{eqnarray*}
    \mu^{J, \pm}_{\mathcal{R}} (\mathcal{S}_I) & \leqslant & \exp \left( -
    \beta \varepsilon |I| + \beta c_d N^{d - 1} \right)
  \end{eqnarray*}
  as $\delta < 1$. We conclude with a Peierls estimate and bound the number of
  interfaces of cardinal $n \geqslant 2 c_d N^{d - 1} / \varepsilon$ by
  $(c_d)^n$:
  \begin{eqnarray*}
    \sum_{e \in E ( \hat{\mathcal{R}})} a_e^J & \leqslant & \frac{2 c_d N^{d -
    1}}{\varepsilon} + \sum_{n \geqslant 2 c_d N^{d - 1} / \varepsilon} n
    (c_d)^n e^{- \beta \varepsilon n + \beta c_d N^{d - 1}}
  \end{eqnarray*}
  The second term goes to $0$ with $N \rightarrow \infty$ for $\beta$ large
  enough.
\end{proof}

\begin{proof}
  (Theorem \ref{thm-Iquad-lowt}). The combination of Lemma \ref{lem-dtau-ent},
  Propositions \ref{prop-upb-ent} and \ref{prop-lint-Ising} implies that in
  the setting of Theorem \ref{thm-Iquad-lowt},
  \begin{eqnarray*}
    - \frac{\partial}{\partial \lambda} \left(
    \frac{\tau^{\lambda}_{\mathcal{R}}}{\lambda} \right) & \leqslant &
    \frac{c_d \beta^2 e^{\beta (1 + \lambda)}}{4 J^{\min}} 
  \end{eqnarray*}
  for $\beta$ large enough, $\mathcal{R}=\mathcal{R}^N =\mathcal{R}_{0, N,
  \delta N} (\mathcal{S}, \tmmathbf{n})$, $\delta \in (0, 1)$ and $N$ large
  enough. Integrating over $\lambda$ we obtain, as $\lim_{\lambda \rightarrow
  0^+} \tau^{\lambda}_{\mathcal{R}} / \lambda =\mathbbm{E}
  \tau^J_{\mathcal{R}}$, the inequality
  \begin{eqnarray*}
    \tau^{\lambda}_{\mathcal{R}} & \geqslant & \lambda \mathbbm{E}
    \tau^J_{\mathcal{R}} - \lambda^2  \frac{c_d \beta^2 e^{\beta (1 +
    \lambda)}}{4 J^{\min}} .
  \end{eqnarray*}
  Letting $N \rightarrow \infty$ gives
  \begin{eqnarray*}
    \tau^{\lambda} (\tmmathbf{n}) & \geqslant & \lambda \tau^q (\tmmathbf{n})
    - \lambda^2  \frac{c_d \beta^2 e^{\beta (1 + \lambda)}}{4 J^{\min}}
  \end{eqnarray*}
  and the duality formula (\ref{eq-rec-Itau}) yields the claim with $c = c_d
  J^{\min} / (\beta^2 \exp (2 \beta))$, for large enough $\beta$.
\end{proof}

\subsection{Concentration in a general setting}

\label{sec-conc-gal}We give now the proof of Theorem \ref{thm-Iquad-gal},
which is based on Herbst's argument, together with the controls of Lemma
\ref{lem-dtau-ent} and Proposition \ref{prop-upb-ent}. We will then give the
proof of Corollary \ref{cor-NI-count}.

First we give an immediate consequence of the duality formula
(\ref{eq-rec-Itau}):

\begin{lemma}
  \label{lem-dual-as-Itau}Assume that
  \begin{equation}
    \limsup_{\lambda \rightarrow 0^+}  \frac{\tau^{\lambda} (\tmmathbf{n}) -
    \lambda \tau^q (\tmmathbf{n})}{\lambda^2} \geqslant - c \text{ \ \ for
    some } c \in [0, \infty] . \label{eq-asymp-taul-na}
  \end{equation}
  Then,
  \begin{equation}
    \limsup_{r \rightarrow 0^+} \frac{I_{\tmmathbf{n}} (\tau^q (\tmmathbf{n})
    - r)}{r^2} \geqslant \frac{1}{4 c} \label{eq-asymp-In} \in [0, \infty] .
  \end{equation}
\end{lemma}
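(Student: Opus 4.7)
The plan is to feed the hypothesis directly into the Fenchel--Legendre inversion formula (\ref{eq-rec-Itau}), which reads $I_{\tmmathbf{n}}(\tau) = \sup_{\lambda > 0}\{\tau^{\lambda}(\tmmathbf{n}) - \lambda \tau\}$. The content of the lemma is essentially that a second-order lower bound at the origin in the $\lambda$-variable dualizes, via Young's inequality $\lambda r \leq (c+\epsilon)\lambda^2 + r^2/(4(c+\epsilon))$, to a matching second-order lower bound at $\tau^q$ in the $\tau$-variable.

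First I would unfold the assumption (\ref{eq-asymp-taul-na}): for every $\epsilon > 0$ there exists a sequence $\lambda_n \downarrow 0$ such that
\begin{equation*}
\tau^{\lambda_n}(\tmmathbf{n}) \geq \lambda_n \tau^q(\tmmathbf{n}) - (c + \epsilon)\lambda_n^2
\end{equation*}
for all $n$ large enough. The case $c = \infty$ makes the conclusion vacuous, so I focus on $c \in [0,\infty)$; the borderline $c = 0$ is handled automatically by letting $\epsilon > 0$ be arbitrarily small in the final estimate.

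Second, I would evaluate the Fenchel--Legendre formula at $\tau = \tau^q(\tmmathbf{n}) - r$ for $r > 0$ and keep only the single term $\lambda = \lambda_n$ in the supremum:
\begin{equation*}
I_{\tmmathbf{n}}(\tau^q(\tmmathbf{n}) - r) \;\geq\; \tau^{\lambda_n}(\tmmathbf{n}) - \lambda_n(\tau^q(\tmmathbf{n}) - r) \;\geq\; \lambda_n r - (c + \epsilon)\lambda_n^2.
\end{equation*}
The right-hand side is maximized (as a function of $\lambda_n$ thought of as free) at $\lambda_n = r/(2(c+\epsilon))$, which suggests tying the two scales together: I would set $r_n := 2(c+\epsilon)\lambda_n$, which still tends to $0$. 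Substituting yields
\begin{equation*}
I_{\tmmathbf{n}}(\tau^q(\tmmathbf{n}) - r_n) \;\geq\; (c+\epsilon)\lambda_n^2 \;=\; \frac{r_n^2}{4(c+\epsilon)},
\end{equation*}
so that $\limsup_{r \to 0^+} I_{\tmmathbf{n}}(\tau^q(\tmmathbf{n}) - r)/r^2 \geq 1/(4(c+\epsilon))$, and sending $\epsilon \to 0^+$ gives (\ref{eq-asymp-In}).

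The main point of care is that the hypothesis is only a $\limsup$ and not a $\liminf$, but this is exactly why the argument selects a sequence $\lambda_n$ and produces a $\limsup$ conclusion on the $r$-side --- no uniform lower bound on $\tau^\lambda - \lambda\tau^q$ is needed, only the sequential one. Beyond that, there is no real obstacle; the argument is purely convex-analytic and uses none of the probabilistic structure.
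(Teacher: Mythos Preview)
Your proof is correct and is exactly the kind of direct argument the paper has in mind: the lemma is stated there as ``an immediate consequence of the duality formula (\ref{eq-rec-Itau})'' with no further proof, and you have simply spelled out that consequence by selecting a sequence $\lambda_n\downarrow 0$ from the $\limsup$ hypothesis, plugging it into (\ref{eq-rec-Itau}) at $\tau=\tau^q(\tmmathbf{n})-r_n$ with $r_n=2(c+\epsilon)\lambda_n$, and letting $\epsilon\to 0$. Your handling of the endpoints $c=0$ and $c=\infty$ is also fine (for $c=\infty$ the conclusion $\limsup\geqslant 0$ is automatic since $I_{\tmmathbf{n}}\geqslant 0$).
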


\begin{proof}
  (Theorem \ref{thm-Iquad-gal}). Given $\delta > 0$ and $\mathcal{S} \in
  \mathbbm{S}_{\tmmathbf{n}}$, we denote $\mathcal{R}^N$ the rectangular
  parallelepiped $\mathcal{R}^N =\mathcal{R}_{0, N, \delta N} (\tmmathbf{n},
  \mathcal{S})$ and introduce
  \[ K_{\tmmathbf{n}}^{\mathbbm{P}, \beta} = \liminf_{\lambda \rightarrow 0^+}
     \liminf_{N \rightarrow \infty} \frac{1}{\lambda} \int_0^{\lambda}
     \frac{\partial \tau_{\mathcal{R}^N}^{\lambda'}}{\partial \beta}  \frac{d
     \lambda'}{\lambda'} \label{eq-def-KPB} \in [0, \infty] \]
  In view of Theorem \ref{thm-conv-tauq} and Proposition \ref{prop-conv-taul}
  we have
  \begin{eqnarray*}
    \tau^{\lambda} (\tmmathbf{n}) - \lambda \tau^q (\tmmathbf{n}) & = &
    \lim_{N \rightarrow \infty} \tau^{\lambda}_{\mathcal{R}^N} - \lambda
    \mathbbm{E} \tau^J_{\mathcal{R}^N}\\
    & = & \lim_{N \rightarrow \infty} \lambda \int_0^{\lambda}
    \frac{\partial}{\partial \lambda'} \left(
    \frac{\tau^{\lambda'}_{\mathcal{R}^N}}{\lambda'} \right) d \lambda'
  \end{eqnarray*}
  as $\mathbbm{E} \tau^J_{\mathcal{R}^N} = \lim_{\lambda \rightarrow 0^+}
  \tau^{\lambda}_{\mathcal{R}^N} / \lambda$ for any $N$ finite. Lemma
  \ref{lem-dtau-ent} and Proposition \ref{prop-upb-ent} yield, for any
  $\varepsilon > 0$:
  \begin{eqnarray*}
    \limsup_{\lambda \rightarrow 0^+}  \frac{\tau^{\lambda} (\tmmathbf{n}) -
    \lambda \tau^q (\tmmathbf{n})}{\lambda^2} & \geqslant & - \frac{\beta^2
    e^{\beta (1 + \varepsilon)}}{4 m_{\mathbbm{P}}} \liminf_{\lambda
    \rightarrow 0^+} \liminf_{N \rightarrow \infty}  \frac{1}{\lambda}
    \int_0^{\lambda} \frac{\partial
    \tau_{\mathcal{R}^N_{}}^{\lambda'}}{\partial \beta} \frac{d
    \lambda'}{\lambda'}\\
    & = & - \frac{\beta^2 e^{\beta (1 + \varepsilon)}}{4 m_{\mathbbm{P}}}
    K_{\tmmathbf{n}}^{\mathbbm{P}, \beta}
  \end{eqnarray*}
  and an immediate application of Lemma \ref{lem-dual-as-Itau} gives, after
  the limit $\varepsilon \rightarrow 0$, the lower bound:
  \begin{equation}
    \limsup_{r \rightarrow 0^+} \frac{I_{\beta, \tmmathbf{n}} (\tau^q_{\beta}
    (\tmmathbf{n}) - r)}{r^2} \geqslant \frac{m_{\mathbbm{P}}}{\beta^2
    e^{\beta} K_{\tmmathbf{n}}^{\mathbbm{P}, \beta}} .
  \end{equation}
  The lower bound is positive when $K_{\tmmathbf{n}}^{\mathbbm{P}, \beta} <
  \infty$. In order to show that this is the case for Lebesgue almost all
  $\beta$, we evaluate the integral of $K_{\tmmathbf{n}}^{\mathbbm{P}, \beta}$
  on some interval $[\beta_1, \beta_2]$. For any $\delta > 0$ and $\mathcal{S}
  \in \mathbbm{S}_{\tmmathbf{n}}$, Fatou's Lemma and Fubini Theorem imply that
  \begin{eqnarray*}
    \int_{\beta_1}^{\beta_2} K_{\tmmathbf{n}}^{\mathbbm{P}, \beta} d \beta &
    \leqslant & \liminf_{\lambda \rightarrow 0^+} \liminf_{N \rightarrow
    \infty} \frac{1}{\lambda} \int_0^{\lambda} \int_{\beta_1}^{\beta_2}
    \frac{\partial \tau_{\mathcal{R}_N}^{\lambda'}}{\partial \beta}  \frac{d
    \lambda'}{\lambda'}\\
    & = & \liminf_{\lambda \rightarrow 0^+} \liminf_{N \rightarrow \infty}
    \frac{1}{\lambda} \int_0^{\lambda} \frac{\tau_{\beta_2,
    \mathcal{R}_N}^{\lambda'} - \tau_{\beta_1,
    \mathcal{R}_N}^{\lambda'}}{\lambda'} d \lambda' .
  \end{eqnarray*}
  The convergence as $N \rightarrow \infty$ is uniformly dominated (by
  Jensen's inequality and Proposition \ref{prop-tauJ-mon}, $0 \leqslant
  \tau_{\mathcal{R}_N}^{\lambda} \leqslant \lambda c_d \beta$) hence we
  finally obtain
  \begin{eqnarray}
    \int_{\beta_1}^{\beta_2} K_{\tmmathbf{n}}^{\mathbbm{P}, \beta} d \beta &
    \leqslant & \liminf_{\lambda \rightarrow 0^+} \frac{1}{\lambda}
    \int_0^{\lambda} \frac{\tau_{\beta_2}^{\lambda'} (\tmmathbf{n}) -
    \tau_{\beta_1}^{\lambda'} (\tmmathbf{n})}{\lambda'} d \lambda' \nonumber\\
    & = & \tilde{\tau}^q_{\beta_2} (\tmmathbf{n}) - \tilde{\tau}^q_{\beta_1}
    (\tmmathbf{n}) .  \label{eq-intK}
  \end{eqnarray}
  in view of (\ref{eq-prop-taul}). In particular,
  $K_{\tmmathbf{n}}^{\mathbbm{P}, \beta}$ is \tmtextit{finite} for Lebesgue
  almost all $\beta \geqslant 0$.
\end{proof}

We would like to make a remark on $K_{\tmmathbf{n}}^{\mathbbm{P}, \beta}$. In
view of Corollary \ref{cor-NI-count}, for Lebesgue almost every $\beta_1,
\beta_2$ with $\beta_1 \leqslant \beta_2$ one can replace
$\tilde{\tau}^q_{\beta_2} (\tmmathbf{n}) - \tilde{\tau}^q_{\beta_1}
(\tmmathbf{n})$ in (\ref{eq-intK}) with $\tau^q_{\beta_2} (\tmmathbf{n}) -
\tau^q_{\beta_1} (\tmmathbf{n})$. As a consequence, whenever $\tau^q_{\beta}
(\tmmathbf{n})$ is derivable on some interval, $K_{\tmmathbf{n}}^{\mathbbm{P},
\beta} \leqslant \partial \tau^q_{\beta} (\tmmathbf{n}) / \partial \beta$ for
Lebesgue almost every $\beta$ in that interval.

\begin{proof}
  (Corollary \ref{cor-NI-count}). We denote by
  \[ \tau^q_{\beta^-} (\tmmathbf{n}) = \lim_{\varepsilon \rightarrow 0^+}
     \tau^q_{\beta - \varepsilon} (\tmmathbf{n}) \]
  the left limit of $\tau^q_{\beta} (\tmmathbf{n})$. For any $\tau \in
  \mathbbm{R}$, $\beta \mapsto I_{\tmmathbf{n}} (\tau)$ is non-decreasing
  hence $\tilde{\tau}^q_{\beta} (\tmmathbf{n})$ (defined at
  (\ref{eq-def-taut})) does not decrease with $\beta$. According to Theorem
  \ref{thm-Iquad-gal}, $\tilde{\tau}^q_{\beta} (\tmmathbf{n})$ coincides with
  $\tau^q_{\beta} (\tmmathbf{n})$ for almost all $\beta$, hence
  \[ \tau^q_{\beta^-} (\tmmathbf{n}) \leqslant \tilde{\tau}^q_{\beta}
     (\tmmathbf{n}) \leqslant \tau^q_{\beta} (\tmmathbf{n}), \forall \beta
     \geqslant 0 \]
  hence the left continuity of $\tau^q_{\beta} (\tmmathbf{n})$ at a particular
  $\beta$ implies that $\tilde{\tau}^q_{\beta} (\tmmathbf{n}) = \tau^q_{\beta}
  (\tmmathbf{n})$, in other words that lower deviations are (at least) of
  surface order. This is the first part of the claim. Now we consider
  \[ \mathcal{D}= \left\{ \beta \in \mathbbm{R}^+, \exists \tmmathbf{n} \in
     S^{d - 1} : \tau^q_{\beta^-} (\tmmathbf{n}) \neq \tau^q_{\beta}
     (\tmmathbf{n}) \right\} \]
  and prove that $\mathcal{D}$ is at most countable. The homogeneous extension
  of $\tau^q_{\beta^-} (\tmmathbf{n})$ to $\mathbbm{R}^d$ is convex as the
  pointwise limit of the $f^q_{\beta - \varepsilon}$, hence $\tau^q_{\beta^-}
  (\tmmathbf{n})$ is a continuous function of $\tmmathbf{n} \in S^{d - 1}$.
  Consequently, for any dense sequence $(\tmmathbf{n}_n)_{n \in \mathbbm{N}}$
  in $S^{d - 1}$, we have
  \begin{eqnarray*}
    \mathcal{D} & \subset & \bigcup_{n \in \mathbbm{N}} \left\{ \beta \in
    \mathbbm{R}^+ : \tau^q_{\beta} (\tmmathbf{n}_n) \neq \tau^q_{\beta^-}
    (\tmmathbf{n}_n) \right\}
  \end{eqnarray*}
  which is at most countable.
\end{proof}

\section{Low temperatures asymptotics}

\label{sec-lowt}Here we study the low temperature asymptotics of surface
tension and prove the results presented in Section \ref{sec-intro-lowt}. We
begin with upper bounds on surface tension which hold in all generality, and
then establish lower bounds with the help of Peierls arguments.

\subsection{Upper bounds on surface tension}

Relevant upper bounds on surface tension are easily established:

\begin{lemma}
  \label{lem-tau-lt}Let $\mathbbm{P}$ be a product measure $\mathbbm{P}$ on
  $[0, 1]^d$, $\tmmathbf{n} \in S^{d - 1}$ and $\lambda > 0$. Then,
  \begin{eqnarray}
    \tau^q_{\beta} (\tmmathbf{n}) & \leqslant & \beta \mu (\tmmathbf{n}) 
    \label{eq-upb-tauq-mu}\\
    \text{and \ } \tau^{\lambda}_{\beta} (\tmmathbf{n}) & \leqslant &
    \|\tmmathbf{n}\|_1 \times \log \frac{1}{\mathbbm{E} \exp \left( - \lambda
    \beta J_e \right)}  \label{eq-upb-taul-n1}
  \end{eqnarray}
\end{lemma}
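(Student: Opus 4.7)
The plan is to use the max-flow min-cut characterization of $\mu^J_{\mathcal{R}}$ and the monotonicity properties of $\Phi^{J,w}_{\mathcal{R}}$ to produce explicit lower bounds on the disconnection probability by closing all edges of a well-chosen interface.

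First I would prove the general inequality $\tau^J_{\mathcal{R}} \leq \beta \mu^J_{\mathcal{R}}$ in any rectangular parallelepiped. Let $I \in \mathcal{I}(\mathcal{R})$ be an interface. Since $\mathcal{Z}_I \subset \mathcal{D}_{\mathcal{R}}$ and both are decreasing events, the DLR equation applied successively at the edges of $I$, together with the monotonicity of $\Phi^{J,\pi}_{\{e\}}$ along the boundary condition $\pi$, yields
\[
\Phi^{J,w}_{\mathcal{R}}(\mathcal{D}_{\mathcal{R}}) \geq \Phi^{J,w}_{\mathcal{R}}(\mathcal{Z}_I) \geq \prod_{e \in I} \Phi^{J,w}_{\{e\}}(\{\omega_e = 0\}) = \prod_{e \in I} e^{-\beta J_e}.
\]
Taking logarithms and dividing by $-L^{d-1}$ gives $\tau^J_{\mathcal{R}} \leq \frac{\beta}{L^{d-1}} \sum_{e \in I} J_e$, and infimum over $I \in \mathcal{I}(\mathcal{R})$ yields $\tau^J_{\mathcal{R}} \leq \beta \mu^J_{\mathcal{R}}$. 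Specializing to $\mathcal{R} = \mathcal{R}_{0,N,\delta N}(\mathcal{S},\mathbf{n})$ and passing to the $\mathbb{P}$-probability limit via Theorem \ref{thm-conv-tauq} on the left-hand side and Theorem \ref{thm-conv-mu} on the right-hand side yields (\ref{eq-upb-tauq-mu}).

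For (\ref{eq-upb-taul-n1}) I would not optimize over interfaces but instead pick a flat one. Let $\mathcal{H}_{\mathbf{n}}$ be the median hyperplane of $\mathcal{R}$ orthogonal to $\mathbf{n}$ and let $I_{\text{flat}} \subset E(\hat{\mathcal{R}})$ be the set of edges of $\mathbb{Z}^d$ with one endpoint strictly above $\mathcal{H}_{\mathbf{n}}$ and the other below. Then $I_{\text{flat}}$ (or a minimal subset thereof) is an interface for $\mathcal{R}$. A standard projection argument shows that the number of edges of $\mathbb{Z}^d$ in direction $\mathbf{e}_i$ crossing a hyperplane orthogonal to $\mathbf{n}$ inside the section $L\mathcal{S}$ equals $L^{d-1}|n_i|+o(L^{d-1})$, so summing over $i$ gives $|I_{\text{flat}}| = L^{d-1}\|\mathbf{n}\|_1 + o(L^{d-1})$. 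Applying the general bound above with this specific interface and using independence of the $J_e$,
\[
\mathbb{E}\bigl(\exp(-\lambda L^{d-1}\tau^J_{\mathcal{R}})\bigr) \geq \mathbb{E}\Bigl(\exp\bigl(-\lambda\beta \sum_{e \in I_{\text{flat}}} J_e\bigr)\Bigr) = \bigl(\mathbb{E} e^{-\lambda\beta J_e}\bigr)^{|I_{\text{flat}}|}.
\]
Using the definition (\ref{eq-def-taul-R}) of $\tau^{\lambda}_{\mathcal{R}}$ and dividing by $L^{d-1}$ gives $\tau^{\lambda}_{\mathcal{R}} \leq (|I_{\text{flat}}|/L^{d-1})\log(1/\mathbb{E} e^{-\lambda\beta J_e})$. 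Sending $L \to \infty$ via Proposition \ref{prop-conv-taul} then yields (\ref{eq-upb-taul-n1}).

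The main technical nuisance will be the counting of $|I_{\text{flat}}|$: one must argue carefully that edges crossing the median hyperplane project injectively (up to boundary effects of order $L^{d-2}$) onto the coordinate hyperplanes with density $|n_i|$. Everything else is an application of the DLR equation, FKG-type monotonicity in the boundary condition for the decreasing event $\mathcal{Z}_I$, and independence of the couplings under $\mathbb{P}$.
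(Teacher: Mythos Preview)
Your proposal is correct and follows essentially the same route as the paper: both use the DLR/monotonicity bound $\Phi^{J,w}_{\mathcal{R}}(\mathcal{D}_{\mathcal{R}}) \geq \prod_{e\in I} e^{-\beta J_e}$ to get $\tau^J_{\mathcal{R}} \leq \beta \mu^J_{\mathcal{R}}$, and for the averaged bound both pick a flat (minimal-cardinality) interface with $|I| \approx \|\tmmathbf{n}\|_1 L^{d-1}$ and factor the $\mathbbm{P}$-expectation by independence. The only cosmetic difference is that the paper simply asserts the minimal interface has cardinal about $\|\tmmathbf{n}\|_1 L^{d-1}$, whereas you spell out the edge-counting via projection onto coordinate hyperplanes.
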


\begin{proof}
  We begin with the proof of (\ref{eq-upb-tauq-mu}) and consider a rectangular
  parallelepiped $\mathcal{R}$. With the notations of Section
  \ref{sec-intro-lowt}, for all interface $I \in \mathcal{I}(\mathcal{R})$,
  the DLR equation yields
  \[ \Phi^{J, w}_{\mathcal{R}} \left( \mathcal{D}_{\mathcal{R}} \right)
     \geqslant \Phi^{J, w}_{\mathcal{R}} \left( \mathcal{Z}_I \right)
     \geqslant \prod_{e \in I} \Phi_{\{e\}}^{J, w} (\omega_e = 0) = \exp
     \left( - \beta \sum_{e \in I} J_e \right) \]
  and consequently $\tau^J_{\beta, \mathcal{R}} \leqslant \beta
  \mu_{\mathcal{R}}^J$, which implies (\ref{eq-upb-tauq-mu}) taking
  $\mathcal{R}=\mathcal{R}^N =\mathcal{R}_{0, N, \delta N} (\mathcal{S},
  \tmmathbf{n})$ and $N \rightarrow \infty$. Similarly, in view of the
  definition (\ref{eq-def-taul-R}) we have
  \begin{eqnarray*}
    \tau^{\lambda}_{\beta, \mathcal{R}} & \leqslant & - \frac{1}{L^{d - 1}}
    \log \mathbbm{E} \left( \prod_{e \in I} \Phi_{\{e\}}^{J, w} \left(
    \omega_e = 0 \right)^{\lambda} \right)\\
    & \leqslant & \frac{|I|}{L^{d - 1}} \log \frac{1}{\mathbbm{E} \exp \left(
    - \lambda \beta J_e \right)}
  \end{eqnarray*}
  which yields (\ref{eq-upb-taul-n1}) if we choose for $I$ the interface of
  smallest cardinal in $\mathcal{I}(\mathcal{R})$, which has a cardinal
  approximately $\|\tmmathbf{n}\|_1 L^{d - 1}$.
\end{proof}

\subsection{Quenched surface tension and maximal flows}

We present here the proof of Proposition \ref{prop-tauq-mu}, which is based on
a control of the length of the interface, using a Peierls argument, and then
the proof of Proposition \ref{prop-tauqlt-b} which uses a renormalization
argument.

\begin{proof}
  (Proposition \ref{prop-tauq-mu}). Given a rectangular parallelepiped
  $\mathcal{R}$, we have
  \[ \Phi^J_{\mathcal{R}} \left( \mathcal{D}_{\mathcal{R}} \right) \leqslant
     \sum_{I \in \mathcal{I}(\mathcal{R})} \Phi^J_{\mathcal{R}} \left(
     \mathcal{Z}_I \right) \leqslant \sum_{I \in \mathcal{I}(\mathcal{R})}
     \prod_{e \in I} q e^{- \beta J_e} . \]
  We decompose the sum according to the length of the interface: for any $c
  >\|\tmmathbf{n}\|_1$,
  \begin{eqnarray}
    \Phi^J_{\mathcal{R}} \left( \mathcal{D}_{\mathcal{R}} \right) & \leqslant
    & \sum_{I \in \mathcal{I}(\mathcal{R}) : |I| < c L^{d - 1}} q^{|I|} e^{-
    \beta L^{d - 1} \mu^J_{\mathcal{R}}}  \nonumber\\
    &  & + \sum_{I \in \mathcal{I}(\mathcal{R}) : |I| \geqslant cL^{d - 1}}
    q^{|I|} e^{- \beta \sum_{e \in I} J_e}  \label{eq-upb-phiDR-c}
  \end{eqnarray}
  The first term is not larger than
  \[ (c_d q)^{cL^{d - 1} } \exp \left( - \beta L^{d - 1} \mu^J_{\mathcal{R}} 
     \right) \]
  and the expectation of the second one is
  \[ \mathbbm{E} \left( \sum_{I \in \mathcal{I}(\mathcal{R}) : |I| \geqslant
     cL^{d - 1}} q^{|I|} e^{- \beta \sum_{e \in I} J_e} \right) \leqslant
     \frac{1}{1 - c_d q\mathbbm{E}(e^{- \beta J_e})} \times \left[ c_d
     q\mathbbm{E}(e^{- \beta J_e}) \right]^{cL^{d - 1}} \]
  if $\rho_{\beta} = c_d q\mathbbm{E}(e^{- \beta J_e}) < 1$, which is the case
  for $\beta$ large as $\mathbbm{P}(J_e = 0) = 0 < (c_d q)^{- 1}$. For any
  such $\beta$, applying Markov's inequality we obtain, for any $\varepsilon >
  0$:
  \begin{eqnarray*}
    \mathbbm{P} \left( \sum_{I \in \mathcal{I}(\mathcal{R}) : |I| \geqslant
    cL^{d - 1}} q^{|I|} e^{- \beta \sum_{e \in I} J_e} \geqslant
    (\rho_{\beta})^{(1 - \varepsilon) cL^{d - 1}} \right) & \leqslant &
    \frac{1}{1 - \rho_{\beta}} \times (\rho_{\beta})^{\varepsilon cL^{d - 1}}
    .
  \end{eqnarray*}
  Hence (\ref{eq-upb-phiDR-c}) shows that, for $J$ typical under $\mathbbm{P}$
  -- up to large deviations of surface order --
  \[ \Phi^J_{\mathcal{R}} \left( \mathcal{D}_{\mathcal{R}} \right) \leqslant
     (c_d q)^{cL^{d - 1} } \exp \left( - \beta L^{d - 1} \mu^J_{\mathcal{R}} 
     \right) + (\rho_{\beta})^{(1 - \varepsilon) cL^{d - 1}} \]
  which proves that
  \begin{eqnarray*}
    \tau^q_{\beta} (\tmmathbf{n}) & \geqslant & \min \left( \beta \mu
    (\tmmathbf{n}) - c \log (c_d q), c \log \frac{1}{\rho_{\beta}} \right)
  \end{eqnarray*}
  for any $\beta \geqslant 0$ such that $\rho_{\beta} < 1$. The lower bound is
  optimal for
  \[ c = \frac{\beta \mu (\tmmathbf{n})}{\log (c_d q) + \log
     \frac{1}{\rho_{\beta}}} \]
  which is negligible with respect to $\beta$ in the limit $\beta \rightarrow
  + \infty$, as $\log (1 / \rho_{\beta}) \rightarrow + \infty$. The limit
  (\ref{eq-tauq-mu}) follows -- the uniformity over $\tmmathbf{n} \in S^{d -
  1}$ is a consequence of the fact that $\mu$ is bounded. If $J_{\min} > 0$,
  then we even have, for some $C < \infty$, that for $\beta$ large enough
  (independent of $\tmmathbf{n} \in S^{d - 1}$),
  \[ \tau^q_{\beta} (\tmmathbf{n}) \geqslant \beta \mu (\tmmathbf{n}) - C. \]
\end{proof}

\begin{proof}
  (Proposition \ref{prop-tauqlt-b}). The proof for (\ref{eq-conv-tauq-infty})
  exploits a renormalization argument similar to the one used in {\cite{N27}}.
  As $\mathbbm{P}(J_e > 0) > p_c (d)$, for small enough $\varepsilon > 0$ it
  is still the case that $\mathbbm{P}(J_e \geqslant \varepsilon) > p_c (d)$.
  We say that $e \in \mathbbm{E}(\mathbbm{Z}^d)$ is open for $J$ if $J_e
  \geqslant \varepsilon$, and consider the connected components for these
  definition of open edges. A block $B_i^K = Ki +\{1, \ldots, 2 K\}^d$ ($i \in
  \mathbbm{Z}^d$) of side-length $2 K$, is said {\tmem{good}} when, in
  $B_i^K$,
  \begin{enumerateroman}
    \item there is a unique connected component of diameter larger or equal to
    $K$
    
    \item and this connected component touches all the faces of the block.
  \end{enumerateroman}
  The work of Pisztora {\cite{N09}} together with the knowledge that, in the
  case of independent bond percolation, the slab percolation threshold
  coincides with the threshold for percolation {\cite{N170}} imply that the
  sequence of random variables
  \[ \eta_i =\tmmathbf{1}_{\left\{ \text{The block } B_i^K \text{ is good}
     \right\}} \]
  stochastically dominate a site percolation process of parameter $\rho$ close
  to one, provided that $K$ is large enough. Provided that $\rho$ (hence $K$)
  is large enough, a Peierls argument shows that there is a probability $1 -
  \exp (cN^{d - 1})$ (for some $c > 0$, for $N$ large enough) that no
  $K$-block interface in $\mathcal{R}^N =\mathcal{R}_{0, N, \delta N}
  (\mathcal{S}, \tmmathbf{n})$ contains less than half of good blocks.
  
  Given a suitable $K$ we now establish a lower bound on the quenched surface
  tension. Consider a realization $J$ of the couplings with the property that
  no $K$-block interface in $\mathcal{R}^N =\mathcal{R}_{0, N, \delta N}
  (\mathcal{S}, \tmmathbf{n})$ contains less than half of good blocks. For any
  such $J$, the event of disconnection requires the choice of a block surface
  of cardinality at least $(N / K)^{d - 1}$, and that, in each good block, at
  least one edge with $J_e \geqslant \varepsilon$ be closed. Hence: for $N$
  large and $J$ typical up to surface order large deviations,
  \begin{eqnarray*}
    \Phi_{\mathcal{R}^N}^{J, w} (\mathcal{D}_{\mathcal{R}^N}) & \leqslant &
    \sum_{n \geqslant (N / K)^{d - 1}} (c_d)^n  \left[ c_d K^d qe^{- \beta
    \varepsilon} \right]^{(1 - \varepsilon) n}
  \end{eqnarray*}
  leading to $\tau^q_{\beta} \geqslant \left[ (1 - \varepsilon) \beta
  \varepsilon - \log \left( c_d ^2 K^d q) \right] / K^{d - 1} \right.$ for
  large enough $\beta$. The claim follows.
\end{proof}

\subsection{Surface tension under the averaged Gibbs measure}

Under the assumption $\mathbbm{P}(J_e > 0) = 1$, we establish a lower bound on
$\tau^{\lambda}_{\beta} (\tmmathbf{n})$ which is equivalent to the upper
bound:

\begin{proposition}
  \label{prop-tault-Js1}Assume that $\mathbbm{P}(J_e > 0) = 1$. Then,
  uniformly over $\tmmathbf{n} \in S^{d - 1}$,
  \begin{eqnarray}
    \tau^{\lambda}_{\beta} (\tmmathbf{n}) & \geqslant & (1 - o_{\beta
    \rightarrow \infty} (1)) \times \|\tmmathbf{n}\|_1 \times \log
    \frac{1}{\mathbbm{E} \exp \left( - \lambda \beta J_e \right)} . 
    \label{eq-equiv-taul-n1}
  \end{eqnarray}
\end{proposition}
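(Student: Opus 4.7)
The aim is to complement the upper bound~(\ref{eq-upb-taul-n1}) with a matching lower bound of the form $(1-o_{\beta\to\infty}(1))\,\|\tmmathbf{n}\|_1 \log(1/a)$, where $a:=a(\beta,\lambda)=\mathbbm{E}\exp(-\lambda\beta J_e)$. The strategy is to reuse the Peierls-type domination from the proof of Proposition~\ref{prop-tauq-mu},
\[
\Phi^{J,w}_{\mathcal{R}}(\mathcal{D}_{\mathcal{R}}) \;\le\; \sum_{I\in\mathcal{I}(\mathcal{R})} q^{|I|}\exp\!\Big(-\beta\sum_{e\in I}J_e\Big),
\]
and to take its $\lambda$-th moment; the independence of the couplings under $\mathbbm{P}$ will make the annealed factor $a^{|I|}$ appear edge by edge along any interface~$I$.

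Taking the $\lambda$-th moment requires distinguishing two regimes. For $\lambda\in(0,1]$ the elementary sub-additivity $(\sum x_i)^\lambda\le\sum x_i^\lambda$ together with independence yields
\[
\mathbbm{E}\bigl[(\Phi^{J,w}_{\mathcal{R}}(\mathcal{D}_{\mathcal{R}}))^\lambda\bigr] \;\le\; \sum_{I\in\mathcal{I}(\mathcal{R})}(q^{\lambda} a)^{|I|},
\]
while for $\lambda\ge 1$ I would invoke Minkowski's inequality for countable sums of non-negative random variables, obtaining
\[
\mathbbm{E}\bigl[(\Phi^{J,w}_{\mathcal{R}}(\mathcal{D}_{\mathcal{R}}))^\lambda\bigr]^{1/\lambda} \;\le\; \sum_{I\in\mathcal{I}(\mathcal{R})} q^{|I|} a^{|I|/\lambda}.
\]
In either case the right-hand side is of the form $\sum_I (qb)^{|I|}$ with $b=b(\beta,\lambda)\in\{a,a^{1/\lambda}\}$, and crucially $b\to 0$ as $\beta\to\infty$ by dominated convergence thanks to the hypothesis $\mathbbm{P}(J_e>0)=1$.

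The sum $\sum_{I\in\mathcal{I}(\mathcal{R})}(qb)^{|I|}$ is then controlled by a standard Peierls count: the number of interfaces of cardinal $n$ in $\hat{\mathcal{R}}$ is at most $c_d L^{d-1}(c_d)^n$, and every interface satisfies $|I|\ge n_{\min}(\tmmathbf{n},L)$ with $n_{\min}(\tmmathbf{n},L)/L^{d-1}\to\|\tmmathbf{n}\|_1$ (the unit-capacity min-cut). For $\beta$ large enough so that $c_d q b<1$, summing the geometric tail gives
\[
\sum_{I\in\mathcal{I}(\mathcal{R})}(qb)^{|I|} \;\le\; \frac{c_d L^{d-1}}{1-c_d q b}\,(c_d q b)^{n_{\min}(\tmmathbf{n},L)}.
\]
Taking logarithms, dividing by $L^{d-1}$, and letting $L\to\infty$ along $\mathcal{R}=\mathcal{R}_{0,N,\delta N}(\mathcal{S},\tmmathbf{n})$ via Proposition~\ref{prop-conv-taul} yields
\[
\tau^{\lambda}_{\beta}(\tmmathbf{n}) \;\ge\; \|\tmmathbf{n}\|_1\log(1/a)\,-\,\max(1,\lambda)\,\|\tmmathbf{n}\|_1\log(c_d q).
\]

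Dividing by the upper bound $\|\tmmathbf{n}\|_1\log(1/a)$ of~(\ref{eq-upb-taul-n1}) gives a ratio bounded below by $1-\max(1,\lambda)\log(c_d q)/\log(1/a)$, which is independent of $\tmmathbf{n}$ and tends to $1$ as $\beta\to\infty$; the uniformity in $\tmmathbf{n}\in S^{d-1}$ follows at once from $1\le\|\tmmathbf{n}\|_1\le\sqrt{d}$. The only step that goes slightly beyond Proposition~\ref{prop-tauq-mu} is the Minkowski argument for $\lambda>1$; everything else is a direct reprise of that Peierls computation, with the annealed weight $a^{|I|}$ or $a^{|I|/\lambda}$ in place of the quenched weight $\exp(-\beta\sum_{e\in I} J_e)$.
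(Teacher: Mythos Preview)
Your proposal is correct and follows essentially the same route as the paper: the same Peierls-type domination $\Phi^{J,w}_{\mathcal{R}}(\mathcal{D}_{\mathcal{R}})\le\sum_{I}q^{|I|}e^{-\beta\sum_{e\in I}J_e}$, the same case split using sub-additivity of $x\mapsto x^\lambda$ for $\lambda\le 1$ and Minkowski for $\lambda\ge 1$, and the same Peierls count over interface cardinalities to extract the factor $\|\tmmathbf{n}\|_1\log(1/a)$. The only cosmetic differences are your harmless extra prefactor $c_d L^{d-1}$ in the interface count (which vanishes after dividing by $L^{d-1}$ and taking $L\to\infty$) and the precise form of the constant in front of $\log(c_d q)$; the paper records the combined bound $\tau^{\lambda}_{\beta}\ge\|\tmmathbf{n}\|_1\bigl(\log(1/a)-(1+\lambda)\log c_d-\lambda\log q\bigr)$, which is equivalent to yours for the purpose of the $o_{\beta\to\infty}(1)$ conclusion.
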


Before we address the proof of Proposition \ref{prop-tault-Js1}, let us remark
that Proposition~\ref{prop-taul-Jmin} is a clear consequence of Lemma
\ref{lem-tau-lt} and Proposition \ref{prop-tault-Js1}.

\begin{proof}
  (Proposition \ref{prop-tault-Js1}). Remark that for any $I \subset
  \mathcal{I}(\mathcal{R})$,
  \[ \Phi^{J, w}_{\mathcal{R}} \left( \mathcal{Z}_I \right) \leqslant \prod_{e
     \in I} \Phi^{J, f}_{\mathcal{R}} \left( \omega_e = 0 \right) \leqslant
     \prod_{e \in I} q e^{- \beta J_e} . \]
  If $\lambda \leqslant 1$, the inequality $( \sum_{i = 1}^n x_i)^{\lambda}
  \leqslant \sum_{i = 1}^n x_i^{\lambda}$ for non-negative $x_i$ yields
  \[ \Phi^{J, w}_{\mathcal{R}} \left( \mathcal{D}_{\mathcal{R}}
     \right)^{\lambda} \leqslant \left( \sum_{I \in \mathcal{I}(\mathcal{R})}
     \Phi^{J, w}_{\mathcal{R}} \left( \mathcal{Z}_I \right) \right)^{\lambda}
     \leqslant \sum_{I \in \mathcal{I}(\mathcal{R})} \Phi^{J, w}_{\mathcal{R}}
     \left( \mathcal{Z}_I \right)^{\lambda} \]
  hence
  \begin{eqnarray*}
    \mathbbm{E} \left[ \left( \Phi^{J, w}_{\mathcal{R}} \left(
    \mathcal{D}_{\mathcal{R}} \right) \right)^{\lambda} \right] & \leqslant &
    \sum_{I \in \mathcal{I}(\mathcal{R})} \prod_{e \in I} q^{\lambda}
    \mathbbm{E}e^{- \lambda \beta J_e}\\
    & = & \sum_{I \in \mathcal{I}(\mathcal{R})} \left( q^{\lambda}
    \mathbbm{E}e^{- \lambda \beta J_e} \right)^{|I|}
  \end{eqnarray*}
  As $\mathbbm{E}e^{- \lambda \beta J_e} \rightarrow 0$ as $\beta \rightarrow
  \infty$ under the assumption $\mathbbm{P}(J_e > 0) = 1$, the Peierls
  argument gives a relevant lower bound : there is $c_d$ depending only on the
  dimension $d$, such that the number of interfaces of cardinal $n$ in
  $\mathcal{I}(\mathcal{R})$ is not larger than $(c_d)^n$. Hence,
  \begin{eqnarray*}
    \mathbbm{E} \left[ \left( \Phi^{J, w}_{\mathcal{R}} \left(
    \mathcal{D}_{\mathcal{R}} \right) \right)^{\lambda} \right] & \leqslant &
    \sum_{n \geqslant \min_{I \in \mathcal{I}(\mathcal{R})} |I|} (c_d
    q^{\lambda} \mathbbm{E}e^{- \lambda \beta J_e})^n\\
    & \leqslant & \frac{1}{1 - c_d q^{\lambda} \mathbbm{E}e^{- \lambda \beta
    J_e}} \times \left[ c_d q^{\lambda} \mathbbm{E}e^{- \lambda \beta J_e}
    \right]^{\min_{I \in \mathcal{I}(\mathcal{R})} |I|}
  \end{eqnarray*}
  for $\beta$ large enough, thus
  \begin{equation}
    \tau^{\lambda}_{\beta} \geqslant \|\tmmathbf{n}\|_1 \times \left( \log
    \frac{1}{\mathbbm{E}e^{- \lambda \beta J_e}} - (1 + \lambda) \log c_d -
    \lambda \log q \right) \label{eq-lwb-taul}
  \end{equation}
  for all $\lambda \leqslant 1$ and $\beta$ large enough. If $\lambda
  \geqslant 1$, Minkowski's inequality yields:
  \begin{eqnarray*}
    \left[ \mathbbm{E} \left[ \left( \Phi^{J, w}_{\mathcal{R}} \left(
    \mathcal{D}_{\mathcal{R}} \right) \right)^{\lambda} \right] \right]^{1 /
    \lambda} & \leqslant & \sum_{I \in \mathcal{I}(\mathcal{R})} \left[
    \mathbbm{E} \left[ \left( \Phi^{J, w}_{\mathcal{R}} \left( \mathcal{Z}_I
    \right) \right)^{\lambda} \right] \right]^{1 / \lambda}\\
    & \leqslant & \sum_{I \in \mathcal{I}(\mathcal{R})} \prod_{e \in I} q
    \left[ \mathbbm{E}e^{- \lambda \beta J_e} \right]^{1 / \lambda}
  \end{eqnarray*}
  and we conclude similarly that (\ref{eq-lwb-taul}) holds again for all
  $\lambda \geqslant 1$ and $\beta$ large enough. The claim
  (\ref{eq-equiv-taul-n1}) follows from the divergence $\lim_{\beta
  \rightarrow + \infty} \log (1 /\mathbbm{E}e^{- \lambda \beta J_e}) = +
  \infty$ under the assumption $\mathbbm{P}(J_e > 0) = 1$, and the convergence
  is uniform in $\tmmathbf{n} \in S^{d - 1}$ as (\ref{eq-lwb-taul}) holds for
  any $\beta$ large enough independent of $\tmmathbf{n}$.
\end{proof}

\subsection{Limit shapes at low temperatures}

The limit shape of Wulff crystals (\ref{eq-Wulff}) are immediately inferred
from the uniform limits for surface tension. The Proposition below is a
consequence of Proposition \ref{prop-tauq-mu} for the first point, Lemma
\ref{lem-tau-lt} and Proposition \ref{prop-tault-Js1} for the second one:

\begin{proposition}
  \label{prop-Wulff-lowt}Let $\mathbbm{P}$ be a product measure on $[0, 1]^d$
  with $\mathbbm{P}(J_e > 0) = 1$.
  \begin{enumerateroman}
    \item The Wulff crystal $\mathcal{W}^q$ converges to the Wulff crystal
    $\mathcal{W}^{\mu}$ associated with the maximal flow $\mu$.
    
    \item For any $\lambda > 0$, the Wulff crystal $\mathcal{W}^{\lambda}$
    converges to the hypercube $\mathcal{W}^{\|.\|_1} = [\pm 1 / 2]^d$.
  \end{enumerateroman}
\end{proposition}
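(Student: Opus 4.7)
The plan is to deduce the shape limits directly from the uniform-in-direction asymptotics for surface tension, together with the elementary fact that the Wulff construction is continuous with respect to uniform convergence of the surface tension on $S^{d-1}$, and is invariant under the rescaling $\tau \mapsto c\tau$ for any constant $c>0$.

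I would begin by recording the continuity of the Wulff construction. Write
\[
K^\tau \;=\; \{x \in \mathbbm{R}^d : x\cdot \tmmathbf{n} \leq \tau(\tmmathbf{n}),\ \forall \tmmathbf{n}\in S^{d-1}\}
\]
for the unnormalized Wulff body. If $\tau_n\to\tau_\infty$ uniformly on $S^{d-1}$ with $\tau_\infty$ continuous and bounded below by some $m>0$, then for every $\varepsilon\in(0,m)$ one has the sandwich $K^{\tau_\infty-\varepsilon}\subset K^{\tau_n}\subset K^{\tau_\infty+\varepsilon}$ for $n$ large, both inner and outer bodies being nondegenerate convex bodies whose Hausdorff distance to $K^{\tau_\infty}$ is $O(\varepsilon)$. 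Hausdorff convergence of the bodies forces convergence of their volumes to the (positive) volume of $K^{\tau_\infty}$, and after volume normalization one gets $\mathcal{W}^{\tau_n}\to \mathcal{W}^{\tau_\infty}$ in Hausdorff distance. Since $K^{c\tau}=cK^\tau$ and $\mathrm{Vol}(cK^\tau)=c^d\mathrm{Vol}(K^\tau)$, the normalized Wulff crystal $\mathcal{W}^\tau$ depends only on $\tau$ up to multiplicative positive constants.

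For part (i), Proposition \ref{prop-tauq-mu} provides $\tau^q_\beta(\tmmathbf{n})/\beta \to \mu(\tmmathbf{n})$ uniformly on $S^{d-1}$ as $\beta\to\infty$. Under the hypothesis $\mathbbm{P}(J_e>0)=1 > p_c(d)$, Theorem \ref{thm-conv-mu} gives $\mu(\tmmathbf{n})>0$ for every $\tmmathbf{n}\in S^{d-1}$; being a uniform limit on the compact sphere of the continuous functions $\tau^q_\beta/\beta$ (continuity from Proposition \ref{prop-conv-tau}), $\mu$ is itself continuous, hence bounded away from $0$ on $S^{d-1}$. The rescaling invariance of $\mathcal{W}$ gives $\mathcal{W}^q = \mathcal{W}^{\tau^q_\beta/\beta}$, and the continuity statement above yields $\mathcal{W}^q \to \mathcal{W}^\mu$.

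For part (ii), set $c_\beta := \log\bigl(1/\mathbbm{E}\exp(-\lambda\beta J_e)\bigr)$. The assumption $\mathbbm{P}(J_e>0)=1$ forces $\mathbbm{E}\exp(-\lambda\beta J_e)<1$, so $c_\beta>0$, and by dominated convergence $c_\beta\to\infty$ as $\beta\to\infty$. Lemma \ref{lem-tau-lt} gives $\tau^\lambda_\beta(\tmmathbf{n})/c_\beta\leq \|\tmmathbf{n}\|_1$, while Proposition \ref{prop-tault-Js1} gives $\tau^\lambda_\beta(\tmmathbf{n})/c_\beta\geq (1-o_{\beta\to\infty}(1))\|\tmmathbf{n}\|_1$, uniformly in $\tmmathbf{n}\in S^{d-1}$. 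Hence $\tau^\lambda_\beta/c_\beta \to \|\cdot\|_1$ uniformly, and as $\|\tmmathbf{n}\|_1\geq 1$ on $S^{d-1}$ the limit is bounded below by $1$. The continuity of the Wulff construction and the rescaling invariance then give $\mathcal{W}^\lambda \to \mathcal{W}^{\|\cdot\|_1}$. Finally, the function $\tmmathbf{n}\mapsto \|\tmmathbf{n}\|_1$ is precisely the support function of the cube $[-1,1]^d$, so $K^{\|\cdot\|_1}=[-1,1]^d$ has volume $2^d$ and its volume-normalized version is $[\pm 1/2]^d$, concluding the identification $\mathcal{W}^{\|\cdot\|_1}=[\pm 1/2]^d$.

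The only non-routine ingredient is the continuity of the Wulff construction on $\{\tau : \inf_{S^{d-1}}\tau > 0\}$; this is standard but is the step one must articulate carefully since the Wulff map is generally \emph{not} continuous without a uniform lower bound (a vanishing direction of $\tau$ can produce an unbounded $K^\tau$). In our setting the uniform lower bounds $\mu(\tmmathbf{n})>0$ and $\|\tmmathbf{n}\|_1\geq 1$ on $S^{d-1}$ remove this obstruction in both parts (i) and (ii).
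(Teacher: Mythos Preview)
Your proof is correct and follows exactly the route the paper indicates: the paper simply states that the result is a consequence of Proposition~\ref{prop-tauq-mu} for part~(i) and of Lemma~\ref{lem-tau-lt} together with Proposition~\ref{prop-tault-Js1} for part~(ii), leaving the passage from uniform convergence of surface tensions to convergence of normalized Wulff crystals implicit. You have made that passage explicit via the rescaling invariance and the continuity of the Wulff construction on surface tensions bounded away from zero, which is precisely what is needed.
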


\label{sec-shape-Wmu}We remarked above that the maximal flow determines the
limit shape of the crystal in the (quenched) dilute Ising model as the
temperature goes to zero, while the crystals under the averaged Gibbs measure
converges to the unit hypercube.

Let us explain how the result of Durrett and Liggett {\cite{N257}} for site
first passage percolation can be used to show that $\mathcal{W}^{\mu}$ is not
in general an hypercube. We consider $d = 2$ and $\mathbbm{P}$ such that
\begin{equation}
  \mathbbm{P} \left( J_e = \frac{1}{2} \right) = p \text{ \ and \ }
  \mathbbm{P} \left( J_e = 1 \right) = 1 - p \label{ex-Wmu}
\end{equation}
with $\overrightarrow{p_c} < p < 1$, where $\overrightarrow{p_c}$ is the
critical threshold for oriented bond percolation. In the two dimensional case,
one can also interpret $\mu (\tmmathbf{n})$ as the limit ratio over $N$ of the
time needed for reaching the position $N\tmmathbf{n}^{\perp}$, if to every
edge $e$ we associate a passage time $J_e$. If $\tmmathbf{n}$ belongs to the
cone of oriented percolation (modulo the symmetries of the lattice
$\mathbbm{Z}^2$) one can reach the position $N\tmmathbf{n}^{\perp}$ following
a directed path with all edges (except finitely many at the origin) satisfying
$J_e = 1 / 2$. Hence, in those directions,
\[ \mu (\tmmathbf{n}) = \frac{1}{2} \|\tmmathbf{n}\|_1 . \]
However, the argument of {\cite{N257}} (applied to bond in place of site first
passage percolation) shows that when $\tmmathbf{n}$ is close enough to the
axis, one has to use edges $J_e = 1$ with a positive frequency, thus $\mu
(\tmmathbf{n}) >\|\tmmathbf{n}\|_1 / 2$ for those directions. The reciprocity
formula
\[ \tau (\tmmathbf{n}) = \sup_{x \in \mathcal{W}^{\tau}} x \cdot \tmmathbf{n}
\]
for Wulff crystals, where $\mathcal{W}^{\tau} = \left\{ x : x \cdot
\tmmathbf{n} \leqslant \tau (\tmmathbf{n}), \forall \tmmathbf{n} \right\}$ is
the un-normalized Wulff crystal for $\tau$, shows that $\mathcal{W}^{\mu}$ is
not a square as $\mu (\tmmathbf{n})$ is not proportional to
$\|\tmmathbf{n}\|_1$.

\section{Phase coexistence}

\label{sec-phco}

\subsection{Profiles of bounded variation and surface energy}

\label{sec-BV}The coarse graining for the dilute Ising model (Theorem 5.10 in
{\cite{M01}}) implies that at every position, the local magnetization
$\mathcal{M}_K$ is close to $\pm m_{\beta}$ with large probability. In order
to describe the geometrical structure of the phases, we estimate the
probability that $\mathcal{M}_K / m_{\beta}$ be close, in $L^1$-distance, to a
given Borel measurable function $u : [0, 1]^d \rightarrow \{\pm 1\}$. As a
first step towards the description of phase coexistence, we define here the
set of profiles we consider, define surface energy and the associated
isoperimetric problem.

In the following, $\mathcal{L}^d$ stands for the Lebesgue measure on
$\mathbbm{R}^d$ and $\mathcal{H}^{d - 1}$ for the $d - 1$ dimensional
Hausdorff measure, which gives to any Borel set $X \subset \mathbbm{R}^d$ the
weight
\[ \mathcal{H}^{d - 1} \left( X \right) = \lim_{\delta \rightarrow 0^+}
   \frac{\alpha_{d - 1}}{2^{d - 1}} \inf \left\{ \sum_{i \in I} [\tmop{diam}
   (E_i)]^{d - 1} : \sup_{i \in I} \tmop{diam} (E_i) < \delta, X \subset
   \bigcup_{i \in I} E_i \right\} \]
where the infimum takes into account finite or countable coverings $(E_i)_{i
\in I}$, and $\alpha_{d - 1}$ is the volume of the unit ball of
$\mathbbm{R}^{d - 1}$. The $L^1$-distance between two Borel measurable
functions $u, v : [0, 1]^d \rightarrow \mathbbm{R}$ is
\[ \|u - v\|_{L^1} = \int_{[0, 1]^d} |u - v|d\mathcal{L}^d, \]
and the set $L^1$ is
\[ \left\{ u : [0, 1]^d \rightarrow \mathbbm{R} \text{ Borel measurable},
   \|u\|_{L^1} < \infty \right\} . \]
In order that $L^1$ be a Banach space for the $L^1$-norm, we identify $u : [0,
1]^d \rightarrow \mathbbm{R}$ with the class of functions $v : \|u - v\|_{L^1}
= 0$ that coincide with $u$ on a set of full measure. We also denote by
$\mathcal{V}(u, \delta)$ the neighborhood of radius $\delta > 0$ in $L^1$
around $u \in L^1$.

For the study of phase coexistence, we have to consider virtually any $u \in
L^1$ taking values in $\{\pm 1\}$. Before we can define the surface energy for
such profiles, a description of the boundary of these profiles is necessary.
It is done conveniently in the framework of bounded variation profiles
(Chapter 3 in {\cite{N129}}). Given a Borel subset $U \subset \mathbbm{R}^d$,
the variation (or perimeter) of $U$ is
\[ \mathcal{P}(U) = \sup \left\{ \int_U \tmop{div} f d\mathcal{L}^d, f \in
   \mathcal{C}^{\infty}_c (\mathbbm{R}^d, [- 1, 1]) \right\} \in [0, \infty]
\]
where $\mathcal{C}^{\infty}_c (\mathbbm{R}^d, [- 1, 1])$ is the set of
$\mathcal{C}^{\infty}$ functions from $\mathbbm{R}^d$ to $[- 1, 1]$ with
compact support, and $\tmop{div}$ the divergence operator:
\[ \tmop{div} f = \frac{\partial f}{\partial x_1} + \ldots + \frac{\partial
   f}{\partial x_n} . \]
To $U \subset \mathbbm{R}^d$ Borel measurable, we associate $u = \chi_U$ as in
(\ref{chi}) and define the set of bounded variation profiles $\tmop{BV}$ as
follows:
\[ \tmop{BV} = \left\{ u = \chi_U : U \subset (0, 1)^d \text{ is a Borel set
   and } \mathcal{P}(U) < \infty \right\} . \]
Bounded variations profiles $u = \chi_U \in \tmop{BV}$ have a
\tmtextit{reduced boundary} $\partial^{\star} u$ and an outer normal
$\tmmathbf{n}_.^u : \partial^{\star} u \rightarrow S^{d - 1}$ with, in
particular, $\mathcal{H}^{d - 1} (\partial^{\star} u) =\mathcal{P}(U)$.

This allows us to define the {\tmem{surface energy}} of bounded variation
profiles. As the outer normal $\tmmathbf{n}_.^u$ defined on $\partial^{\star}
u$ is Borel measurable, we can consider
\begin{equation}
  \mathcal{F}^q (u) = \int_{\partial^{\star} u} \tau^q (\tmmathbf{n}_x^u)
  d\mathcal{H}^{d - 1} (x) \label{eq-def-Ftau} \text{, \ \ \ \ } \forall u \in
  \tmop{BV}
\end{equation}
and
\begin{equation}
  \mathcal{F}^{\lambda} (u) = \int_{\partial^{\star} u} \tau^{\lambda}
  (\tmmathbf{n}_x^u) d\mathcal{H}^{d - 1} (x) \label{eq-def-Ftaul}  \text{, \
  \ \ \ } \forall u \in \tmop{BV}, \forall \lambda > 0.
\end{equation}
where $\tau^q$ (resp. $\tau^{\lambda}$) stands for the quenched surface
tension of the dilute Ising model (resp. surface tension under the averaged
Gibbs measure), see Theorem \ref{thm-conv-tauq} and (\ref{eq-dual-Itaul}).
Because the homogeneous extension of the surface tensions $\tau^q$ and
$\tau^{\lambda}$ are convex (Proposition \ref{prop-conv-tau}), $\mathcal{F}^q$
and $\mathcal{F}^{\lambda}$ are lower semi-continuous with respect to the
$L^1$-norm. See Chapter 14 in {\cite{N70}} or Theorem 2.1 in {\cite{N118}}.
For commodity, when $u = \chi_U \in \tmop{BV}$ we also denote by
$\mathcal{F}^q (U)$ (resp. $\mathcal{F}^{\lambda} (U)$) the surface energy of
$u$.

When surface tension is positive, the level sets of $\mathcal{F}^q$ and
$\mathcal{F}^{\lambda}$ are compact since, for all $a \geqslant 0$, the set
\begin{equation}
  \tmop{BV}_a = \left\{ u = \chi_U \in \tmop{BV} : \mathcal{P}(U) \leqslant a
  \right\} \label{eq-def-BVa}
\end{equation}
is itself compact for the $L^1$-norm, cf. Theorem 3.23 in {\cite{N129}}.
Consequently, $\mathcal{F}^q$ and $\mathcal{F}^{\lambda}$ are good rate
functions.

Let us conclude with a word on the solutions to the {\tmem{isoperimetric
problem}} of finding the $u \in \tmop{BV}$ such that
\begin{equation}
  \int_{[0, 1]^d} u \, d\mathcal{L}^d \leqslant \frac{m}{m_{\beta}} \text{ \
  and \ } \mathcal{F}^q (u) \text{ is minimal ?} \label{eq-uBV-opt}
\end{equation}
The renormalized Wulff crystal $\mathcal{W}^q$ (\ref{eq-Wulff}) is known to be
the solution to the same problem \tmtextit{without} the constraint that $U
\subset (0, 1)^d$. Precisely, the solutions to $U \subset \mathbbm{R}^d$ Borel
set with
\[ \mathcal{L}^d (U) = 1 \text{ \ and \ } \mathcal{F}^q (U) \text{ minimal} \]
are the translates of $\mathcal{W}^q$, as the homogeneous extension of
$\tau^q$ is convex (Proposition \ref{prop-conv-tau}) -- see {\cite{N91}},
{\cite{N89}} and {\cite{N90}}.

For $m < m_{\beta}$ not too small, $\mathcal{W}^q$ determines as well the
optimal profiles in the cube (\ref{eq-uBV-opt}). Consider $\alpha > 0$ with
\begin{equation}
 \alpha^d = \frac{1}{2}  \left( 1 - \frac{m}{m_{\beta}} \right) .
\end{equation}
The quantity $\alpha^d$ is precisely the least volume of $U$ corresponding to
$u = \chi_U \in \tmop{BV}$ with$\int_{[0, 1]^d} ud\mathcal{L}^d \leqslant m /
m_{\beta}$. If some translate of $\alpha \mathcal{W}^q$ fits into the unit
cube $[0, 1]^d$, that is if $\alpha \tmop{diam}_{\infty} (\mathcal{W}^q)
\leqslant 1$, then $\mathcal{T}(\alpha \mathcal{W}^q)$ defined at (\ref{T}) is
not empty and therefore the infimum of $\mathcal{F}^q (u)$ for $u \in
\tmop{BV}$ with $\int_{[0, 1]^d} ud\mathcal{L}^d \leqslant m / m_{\beta}$ is
exactly $\mathcal{F}^q (\alpha \mathcal{W}^q)$. As a consequence, for all
$\alpha$ satisfying $\alpha \tmop{diam}_{\infty} (\mathcal{W}^q)
\leqslant 1$ the optimal phase profiles correspond
to the translates of $\alpha \mathcal{W}^q$ that belong to $[0, 1]^d$, which
are the $z + \alpha \mathcal{W}^q$, for $z \in \mathcal{T}(\alpha
\mathcal{W}^q)$. The same remains true if we replace $\mathcal{F}^q$ and
$\mathcal{W}^q$ with $\mathcal{F}^{\lambda}$ and $\mathcal{W}^{\lambda}$, for
any $\lambda > 0$.

\subsection{Covering theorems for BV profiles}

\label{sec-cover}Covering theorems play an essential role in the study of
phase coexistence, as they allow to pass from the macroscopic scale (the phase
profile $u$) to the microscopic scale (the dilute Ising model). We give first
two definitions:

\begin{definition}
  \label{def-d-adapted-u}Let $u \in \tmop{BV}$, $\tau : \mathcal{S}^{d - 1}
  \mapsto [0, \infty]$ continuous, $\delta > 0$ and $\mathcal{R}$ a
  rectangular parallelepiped as in (\ref{eq-def-R}), included in $[0, 1]^d$.
  We say that $\mathcal{R}$ is $\delta$-adapted to $u$ and $\tau$ at $x \in
  \partial^{\star} u$ if the following holds:
  \begin{enumerateroman}
    \item If $\tmmathbf{n}=\tmmathbf{n}_x^u$ is the outer normal to $u$ at
    $x$, there are $\mathcal{S} \in \mathbbm{S}_{\tmmathbf{n}}$ and $h \in (0,
    \delta]$ such that, if $\mathcal{R} \subset (0, 1)^d$ (we say that
    $\mathcal{R}$ is interior), then
    \[ \mathcal{R}= x + h\mathcal{S}+ [\pm \delta h]\tmmathbf{n}, \]
    and if $\mathcal{R} \cap \partial [0, 1]^d \neq \emptyset$ (we say that
    $\mathcal{R}$ is on the border), then $x \in \partial [0, 1]^d$,
    $\tmmathbf{n}$ is also the outer normal to $[0, 1]^d$ at $x$ and
    \[ \mathcal{R}= x + h\mathcal{S}+ [- \delta h, 0]\tmmathbf{n}. \]
    \item We have
    \[ \mathcal{H}^{d - 1} \left( \partial^{\star} u \cap \partial \mathcal{R}
       \right) = 0, \]
    \[ \left| 1 - \frac{1}{h^{d - 1}} \mathcal{H}^{d - 1} \left(
       \partial^{\star} u \cap \mathcal{R} \right) \right| \leqslant \delta,
    \]
    and
    \[ \left| \tau (\tmmathbf{n}) - \frac{1}{h^{d - 1}} \int_{\partial^{\star}
       u \cap \mathcal{R}} \tau (\tmmathbf{n}_.^u) d\mathcal{H}^{d - 1}
       \right| \leqslant \delta . \]
    \item If $\chi : \mathbbm{R}^d \rightarrow \{\pm 1\}$ is the
    characteristic function of $\mathcal{R}$ defined by
    \[ \chi (z) = \left\{ \begin{array}{ll}
         + 1 & \text{if } (z - x) \cdot \tmmathbf{n} \geqslant 0\\
         - 1 & \text{else}
       \end{array}, \forall z \in \mathbbm{R}^d, \right. \]
    then
    \[ \frac{1}{2 \delta h^d} \int_{\mathcal{R}} | \chi - u|d\mathcal{H}^d
       \leqslant \delta . \]
  \end{enumerateroman}
\end{definition}

\begin{definition}
  \label{def-d-cover}Let $u \in \tmop{BV}$, $\tau : \mathcal{S}^{d - 1}
  \mapsto [0, \infty]$ continuous and $\delta > 0$. A finite sequence
  $(\mathcal{R}_i)_{i = 1 \ldots n}$ of disjoint rectangular parallelepipeds
  included in $[0, 1]^d$ is said to be a $\delta$-covering for
  $\partial^{\star} u$ and $\tau$ if each $\mathcal{R}_i$ is $\delta$-adapted
  to $u$ and $\tau$ and if
  \begin{equation}
    \mathcal{H}^{d - 1} \left( \partial^{\star} u \setminus \bigcup_{i = 1}^n
    \mathcal{R}_i \right) \leqslant \delta \label{eq-cov-lwb-left-delta} .
  \end{equation}
\end{definition}

The Vitali covering theorem (Theorem 13.3 in {\cite{N70}}) is especially well
adapted to our purpose. Given a Borel set $E \subset \mathbbm{R}^d$, we say
that a collection of sets $\mathcal{U}$ is a Vitali class for $E$ if, for each
$x \in E$ and $\delta > 0$, there is $U \in \mathcal{U}$ with $0 < \tmop{diam}
U < \delta$ containing $x$.

\begin{theorem}
  \tmtextup{[Vitali]} \label{thm-Vitali}Let $E \subset \mathbbm{R}^d$ be
  $\mathcal{H}^{d - 1}$-measurable and consider $\mathcal{U}$ a Vitali class
  of closed sets for $E$. Then, there is a countable disjoint sequence
  $(U_i)_{i \in I}$ in $\mathcal{U}$ such that
  \[ \text{either \ \ } \sum_{i \in I} (\tmop{diam} U_i)^{d - 1} = \infty
     \text{ \ \ or \ \ } \mathcal{H}^{d - 1} \left( E \setminus \bigcup_{i \in
     I} U_i \right) = 0. \]
\end{theorem}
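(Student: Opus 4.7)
The plan is a standard greedy-selection argument with a diameter-doubling enlargement step. First I would construct the sequence $(U_i)_{i \geq 1}$ inductively: having chosen disjoint $U_1,\dots,U_n \in \mathcal{U}$, let
\[
\mathcal{U}_n = \bigl\{ U \in \mathcal{U} : U \cap (U_1 \cup \cdots \cup U_n) = \emptyset \bigr\}
\]
and $d_n = \sup_{U \in \mathcal{U}_n} \operatorname{diam} U$. If $\mathcal{U}_n = \emptyset$ the procedure terminates, and in that case the Vitali property forces $E \subset \bigcup_{i \leq n} U_i$ up to an $\mathcal{H}^{d-1}$-negligible set (any excluded point would admit members of $\mathcal{U}$ of arbitrarily small diameter through it, each then forced to meet the previously chosen closed sets, pinning $x$ on the boundary of their union). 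Otherwise I select $U_{n+1} \in \mathcal{U}_n$ with $\operatorname{diam} U_{n+1} \geq d_n/2$.

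Next, I would assume $\sum_i (\operatorname{diam} U_i)^{d-1} < \infty$ (since otherwise the first alternative of the dichotomy already holds) and prove that the residue $E \setminus \bigcup_i U_i$ is $\mathcal{H}^{d-1}$-negligible. The convergence of the series forces $\operatorname{diam} U_i \to 0$, hence $d_n \to 0$. Given $x$ in the residue and an integer $n$, the Vitali property yields some $V \in \mathcal{U}_n$ through $x$ with positive diameter; since $\operatorname{diam} V$ is bounded below while $d_k \to 0$, the set $V$ cannot stay eligible indefinitely, so there is a least $j > n$ with $V \cap U_j \neq \emptyset$. At that step $V$ was still in $\mathcal{U}_{j-1}$, hence $\operatorname{diam} V \leq d_{j-1} \leq 2 \operatorname{diam} U_j$. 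Consequently $V$, and thus $x$, lies in the closed enlargement $\widehat{U}_j$ obtained by inflating $U_j$ by $2 \operatorname{diam} U_j$ in every direction; in particular $\operatorname{diam} \widehat{U}_j \leq 5 \operatorname{diam} U_j$.

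The conclusion then drops out from the definition of $\mathcal{H}^{d-1}$: for every $n$,
\[
E \setminus \bigcup_{i \leq n} U_i \; \subset \; \bigcup_{j > n} \widehat{U}_j,
\]
a cover by closed sets of diameter at most $5 \sup_{j > n} \operatorname{diam} U_j$, which tends to zero as $n \to \infty$. Therefore
\[
\mathcal{H}^{d-1}\!\left( E \setminus \bigcup_i U_i \right) \; \leq \; c_d \, \limsup_{n \to \infty} \sum_{j > n} (\operatorname{diam} U_j)^{d-1} \; = \; 0
\]
for some dimensional constant $c_d$, ending the proof. The main obstacle, as is typical for Vitali-type statements phrased for arbitrary closed sets, lies in the enlargement step: one must show that each competitor $V$ intersecting $U_j$ with $\operatorname{diam} V \leq 2 \operatorname{diam} U_j$ is contained in a controlled dilation of $U_j$ whose diameter is bounded by a \emph{fixed} multiple of $\operatorname{diam} U_j$, \emph{without} any regularity hypothesis on the shapes of members of $\mathcal{U}$. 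The factor $5$ above (in place of the usual $3$ one obtains for balls) reflects precisely this lack of regularity and the need to absorb competitors of diameter up to twice that of the selected set.
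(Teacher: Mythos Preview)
The paper does not supply its own proof of this statement: it is quoted verbatim as the Vitali covering theorem, with a reference to Theorem~13.3 in \cite{N70}, and is then used as a black box in the proof of Theorem~\ref{thm-d-cover}. So there is nothing to compare against on the paper's side.

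Your argument is the standard greedy/enlargement proof and is essentially correct. Two small points worth tightening. First, the termination case is actually cleaner than you state: if $\mathcal{U}_n=\emptyset$ and some $x\in E$ lay outside $\bigcup_{i\le n}U_i$, then since the $U_i$ are closed one could pick $V\in\mathcal{U}$ through $x$ with diameter less than $\min_{i\le n} d(x,U_i)$, and this $V$ would lie in $\mathcal{U}_n$, a contradiction; so $E\subset\bigcup_{i\le n}U_i$ exactly, and the parenthetical about boundaries is not needed. Second, your selection rule $\operatorname{diam}U_{n+1}\ge d_n/2$ presupposes $d_n<\infty$; if $d_n=\infty$ at some stage you should instead keep selecting disjoint members of diameter at least $1$, which immediately yields the first alternative $\sum_i(\operatorname{diam}U_i)^{d-1}=\infty$. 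With these cosmetic fixes the proof goes through as written.
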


The Vitali Theorem allows us to state a short proof of the following:

\begin{theorem}
  \label{thm-d-cover}For any $u \in \tmop{BV}$, $\tau : \mathcal{S}^{d - 1}
  \mapsto [0, \infty]$ continuous and $\delta, h > 0$, there is a
  $\delta$-covering $(\mathcal{R}_i)_{i = 1 \ldots n}$ for $\partial^{\star}
  u$ and $\tau$.
\end{theorem}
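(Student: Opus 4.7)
The plan is to apply the Vitali covering theorem (Theorem~\ref{thm-Vitali}) to a suitable class of $\delta$-adapted rectangular parallelepipeds. The key input is De Giorgi's structure theorem for sets of finite perimeter: at $\mathcal{H}^{d-1}$-almost every $x \in \partial^{\star} u$, the blow-ups $U_{x,r} = \{ y : x + r y \in U \}$ converge in $L^1_{\text{loc}}$ to the half-space $H_x = \{ z : z \cdot \tmmathbf{n}_x^u \leqslant 0 \}$, the density $r^{-(d-1)} \mathcal{H}^{d-1}(\partial^{\star} u \cap B(x,r)) \to \alpha_{d-1}$, and $r^{-(d-1)} \int_{\partial^{\star} u \cap B(x,r)} \tau(\tmmathbf{n}_y^u) \, d \mathcal{H}^{d-1}(y) \to \alpha_{d-1} \tau(\tmmathbf{n}_x^u)$ by continuity of $\tau$ together with $\tmmathbf{n}_y^u \to \tmmathbf{n}_x^u$ in $L^1$ on $\partial^{\star} u \cap B(x,r)$. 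These properties transfer from balls to rectangular parallelepipeds with arbitrary aspect ratios, provided the parallelepiped is oriented along $\tmmathbf{n}_x^u$ and is sufficiently small.

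The construction then goes as follows. Fix such a regular point $x$ in the interior of $(0,1)^d$ with normal $\tmmathbf{n} = \tmmathbf{n}_x^u$ and a base $\mathcal{S} \in \mathbbm{S}_{\tmmathbf{n}}$. For every sufficiently small side-length $s \in (0, h]$ consider $\mathcal{R}_{x,s,\delta s}(\mathcal{S}, \tmmathbf{n})$. By the three asymptotics recalled above, for $s$ small enough conditions (ii) and (iii) of Definition~\ref{def-d-adapted-u} are satisfied, and (i) holds by construction. Moreover, by Fubini and the finiteness of $\mathcal{H}^{d-1}(\partial^{\star} u)$, only countably many values of $s$ can fail the no-mass-on-boundary requirement $\mathcal{H}^{d-1}(\partial^{\star} u \cap \partial \mathcal{R}) = 0$, so arbitrarily small admissible $s$ remain. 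This gives a Vitali class of closed $\delta$-adapted parallelepipeds for the set $E_{\text{int}}$ of interior regular points. For boundary points $x \in \partial^{\star} u \cap \partial [0,1]^d$ whose normal $\tmmathbf{n}_x^u$ also serves as the outer normal to $[0,1]^d$, the same construction works using one-sided parallelepipeds of the form $x + s\mathcal{S} + [-\delta s, 0] \tmmathbf{n}$; the other boundary points form an $\mathcal{H}^{d-1}$-null set.

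Applying Theorem~\ref{thm-Vitali} to this Vitali class yields a countable disjoint family $(\mathcal{R}_i)_{i \in I}$ of $\delta$-adapted parallelepipeds with $\mathcal{H}^{d-1}(\partial^{\star} u \setminus \bigcup_i \mathcal{R}_i) = 0$, since the diameters are bounded by $\sqrt{d}\, h$ and the total $\mathcal{H}^{d-1}$-mass of $\partial^{\star} u$ is finite so the alternative $\sum_i (\operatorname{diam} \mathcal{R}_i)^{d-1} = \infty$ is ruled out (each $\mathcal{R}_i$ carries surface mass at least $(1-\delta) (s_i)^{d-1}$, which is comparable to $(\operatorname{diam} \mathcal{R}_i)^{d-1}$ up to a dimensional constant). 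Finally, extract a finite sub-family $i = 1, \ldots, n$ such that the remaining surface mass is at most $\delta$, which gives (\ref{eq-cov-lwb-left-delta}).

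The main obstacle is verifying carefully that the De Giorgi regularity transfers from balls to thin parallelepipeds of aspect ratio $\delta$ in an essentially uniform way, and handling the boundary case so that points of $\partial^{\star} u \cap \partial [0,1]^d$ do not leave an unavoidable residual mass; the exceptional set where the blow-up fails or where the normal is not compatible with the outer normal of $[0,1]^d$ has $\mathcal{H}^{d-1}$-measure zero, which is precisely what allows the Vitali extraction to exhaust $\partial^{\star} u$ up to a negligible set and hence up to any prescribed $\delta$ after truncating to finitely many terms.
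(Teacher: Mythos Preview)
Your proposal is correct and follows essentially the same route as the paper: identify an $\mathcal{H}^{d-1}$-null exceptional set outside of which arbitrarily small $\delta$-adapted parallelepipeds exist, apply the Vitali covering theorem to the resulting Vitali class, rule out the divergent alternative via the lower density bound $(1-\delta)s_i^{d-1}$ on the surface mass carried by each $\mathcal{R}_i$, and truncate to a finite subfamily. The only cosmetic differences are that the paper packages the blow-up regularity as Lemma~\ref{lem-conv-duR} and invokes the strong Besicovitch derivation theorem for the $\tau$-averaging condition (your appeal to continuity of $\tau$ and convergence of the normals achieves the same), and that the paper leaves the final truncation step implicit while you spell it out.
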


Before we give the proof of Theorem \ref{thm-d-cover} we recall a property of
the reduced boundary (see Theorem 3.59 in {\cite{N129}}):

\begin{lemma}
  \label{lem-conv-duR}Let $u \in \tmop{BV}$. For all $x \in \partial^{\star}
  u$, for all $\delta \in (0, 1)$, all $\mathcal{S} \in
  \mathbbm{S}_{\tmmathbf{n}_x^u}$ one has
  \[ \lim_{h \rightarrow 0^+} \frac{1}{h^{d - 1}} \mathcal{H}^{d - 1} \left(
     \partial^{\star} u \cap \dot{\mathcal{R}}_{x, h, \delta h} (\mathcal{S},
     \tmmathbf{n}_x^u) \right) = 1. \]
\end{lemma}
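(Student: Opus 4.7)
My plan is to deduce the lemma from De Giorgi's blow-up theorem for sets of finite perimeter, which is the standard structural result underlying the very notion of reduced boundary. Writing $u = \chi_E$, at $x \in \partial^{\star} u$ the rescaled sets $E_h = \{y \in \mathbb{R}^d : x + hy \in E\}$ converge in $L^1_{\mathrm{loc}}$ to the half-space $\{y : y \cdot \tmmathbf{n}_x^u \leqslant 0\}$ and, simultaneously, the rescaled perimeter measures
\[
\mu_h = h^{-(d-1)}\,(T_{x,h})_{\#}\bigl(\mathcal{H}^{d-1} \lfloor \partial^{\star} u\bigr), \qquad T_{x,h}(y) = (y-x)/h,
\]
converge weakly, as Radon measures on $\mathbb{R}^d$, to $\mathcal{H}^{d-1} \lfloor \pi$, where $\pi$ is the hyperplane through the origin orthogonal to $\tmmathbf{n}_x^u$ (this is precisely the content of Theorem~3.59 in \cite{N129}).

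Using the scaling identity $\mathcal{R}_{x, h, \delta h}(\mathcal{S}, \tmmathbf{n}_x^u) = x + h\,\mathcal{R}_{0, 1, \delta}(\mathcal{S}, \tmmathbf{n}_x^u)$ I would rewrite the ratio of interest as
\[
\frac{1}{h^{d-1}} \mathcal{H}^{d-1}\bigl(\partial^{\star} u \cap \dot{\mathcal{R}}_{x, h, \delta h}(\mathcal{S}, \tmmathbf{n}_x^u)\bigr) = \mu_h\bigl(\dot{\mathcal{R}}_{0, 1, \delta}(\mathcal{S}, \tmmathbf{n}_x^u)\bigr)
\]
and then invoke the Portmanteau theorem: the weak convergence $\mu_h \to \mathcal{H}^{d-1} \lfloor \pi$ implies convergence on any Borel set $A$ whose topological boundary has zero measure under the limit.

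The heart of the verification is therefore the geometric observation that, by the very definition of $\mathbbm{S}_{\tmmathbf{n}_x^u}$, the hypercube $\mathcal{S}$ lies inside $\pi$. Consequently
\[
\pi \cap \dot{\mathcal{R}}_{0, 1, \delta}(\mathcal{S}, \tmmathbf{n}_x^u) = \dot{\mathcal{S}}
\quad\text{and}\quad
\pi \cap \partial \mathcal{R}_{0, 1, \delta}(\mathcal{S}, \tmmathbf{n}_x^u) = \partial \mathcal{S},
\]
the first of area $\mathcal{H}^{d-1}(\dot{\mathcal{S}}) = 1$ (the surface of a unit $(d-1)$-hypercube) and the second a finite union of $(d-2)$-faces, hence of vanishing $\mathcal{H}^{d-1}$-measure. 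This legitimates the application of Portmanteau and produces the limit $1$.

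The only potential obstacle is the technical checking that the weak convergence of Radon measures applies to this specific open (non-compact-closure-inside-a-ball-but-still-bounded) rectangular parallelepiped; but since $\dot{\mathcal{R}}_{0,1,\delta}(\mathcal{S},\tmmathbf{n}_x^u)$ is bounded and its boundary is $\mathcal{H}^{d-1} \lfloor \pi$-negligible, this is immediate. No ingredient specific to the dilute Ising model enters; the lemma is invoked only to feed the Vitali-type covering statement (Theorem~\ref{thm-d-cover}) of the next subsection.
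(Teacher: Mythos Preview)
Your proposal is correct and aligns with the paper's treatment: the paper does not give a proof of this lemma at all but simply cites Theorem~3.59 in \cite{N129} (De Giorgi's structure theorem for the reduced boundary), which is exactly the blow-up result you invoke. Your argument is a clean unpacking of that citation---the rescaling identity, the weak convergence of the perimeter measures $\mu_h$ to $\mathcal{H}^{d-1}\lfloor\pi$, and the Portmanteau step using that $\pi\cap\partial\mathcal{R}_{0,1,\delta}(\mathcal{S},\tmmathbf{n}_x^u)$ is $(d-2)$-dimensional---so there is nothing to compare or correct.
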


\begin{proof}
  (Theorem \ref{thm-d-cover}). We design a set $E$ that has zero
  $\mathcal{H}^{d - 1}$-measure and such that the collection of closed
  rectangular parallelepipeds
  \[ \mathcal{U}_{\delta} = \left\{ \mathcal{R} \text{ $\delta$-adapted to $u$
     and $\tau$ at } x \in \partial^{\star} u \setminus E \right\} \]
  is a Vitali class for $\partial^{\star} u \setminus (E)$. This is enough to
  prove the claim: thanks to the Vitali covering Theorem, this implies the
  existence of a countable disjoint sequence $(\mathcal{R}_i)_{i \in I}$ of
  $\delta$-adapted rectangular parallelepipeds with either
  \[ \sum_{i \in I} (\tmop{diam} \mathcal{R}_i)^{d - 1} = \infty \text{ \ or \
     } \mathcal{H}^{d - 1} \left( \partial^{\star} u \setminus \bigcup_{i \in
     I} \mathcal{R}_i \right) = 0. \]
  The first case is in contradiction with the inequalities $1 / h_i^{d - 1}
  \mathcal{H}^{d - 1} \left( \partial^{\star} u \cap \mathcal{R}_i \right)
  \geqslant 1 - \delta$ and $\mathcal{H}^{d - 1} \left( \partial^{\star} u
  \right) < \infty$, hence the second is realized and the Theorem is proved.
  
  We define the set $E$ by its complement in $\partial^{\star} u$:
  $\partial^{\star} u \setminus E$ is the set of all $x \in \partial^{\star}
  u$ such that, for all $\mathcal{S} \in \mathbbm{S}_{\tmmathbf{n}^u_x}$, the
  following holds:
  \begin{enumerateroman}
    \item If $x \in \partial [0, 1]^d$, then $\tmmathbf{n}_x^u$ is the outer
    normal to $[0, 1]^d$ at $x$.
    
    \item The set $\left\{ h > 0 : \mathcal{H}^{d - 1} \left( \partial^{\star}
    u \cap \partial \mathcal{R}_{x, h, \delta h} (\mathcal{S},
    \tmmathbf{n}_x^u) \right) > 0 \right\}$ is at most countable.
    
    \item $\lim_{h \rightarrow 0^+} \frac{1}{h^{d - 1}} \mathcal{H}^{d - 1}
    \left( \partial^{\star} u \cap \dot{\mathcal{R}}_{x, h, \delta h}
    (\mathcal{S}, \tmmathbf{n}_x^u) \right) = 1$.
    
    \item $\lim_{h \rightarrow 0^+} \frac{1}{h^{d - 1}} \int_{\partial^{\star}
    u \cap \dot{\mathcal{R}}_{x, h, \delta h} (\mathcal{S}, \tmmathbf{n}_x^u)}
    \tau (\tmmathbf{n}_.^u) d\mathcal{H}^{d - 1} = \tau (\tmmathbf{n}_x^u)$.
    
    \item $\lim_{h \rightarrow 0^+} \frac{1}{h^d} \int_{\mathcal{R}_{x, h,
    \delta h} (\mathcal{S}, \tmmathbf{n}_x^u)} \left| \chi_{x,
    \tmmathbf{n}^u_x} - u_{} \right|_{} d\mathcal{L}^d = 0$.
  \end{enumerateroman}
  This definition for $E$ implies that $\mathcal{U}_{\delta}$ is a Vitali
  class of closed sets for $\partial^{\star} u \setminus E$. We conclude the
  proof of Theorem \ref{thm-d-cover} showing that $E$ has zero $\mathcal{H}^{d
  - 1}$-measure, and more precisely that each of conditions (i)-(v) is true
  for (at least) $\mathcal{H}^{d - 1}$-almost all $x \in \partial^{\star} u$:
  \begin{enumerateroman}
    \item This condition holds for all $x \in \partial^{\star} u$ because of
    the inclusion $U \subset (0, 1)^d$ if $u = \chi_U$, cf. Theorem 3.59 in
    {\cite{N129}}.
    
    \item Since the volume of $\partial^{\star} u$ is zero, (ii) holds for all
    $x$.
    
    \item Condition (iii) holds for all $x \in \partial^{\star} u$ in view of
    Lemma \ref{lem-conv-duR}.
    
    \item It is a consequence of the strong form of the Besicovitch derivation
    theorem (Theorem 5.52 in {\cite{N129}}) together with Lemma
    \ref{lem-conv-duR}, that condition (iv) holds for $\mathcal{H}^{d -
    1}$-almost all $x \in \partial^{\star} u$.
    
    \item Condition (v) holds for all $x \in \partial^{\star} u$, cf. Theorem
    3.59 in {\cite{N129}}.
  \end{enumerateroman}
\end{proof}

\subsection{Lower bound for phase coexistence}

Here we establish lower bounds for the probability of phase coexistence. In
view of the applications, in particular to the control of the dynamics
{\cite{M03}} or Chapter 4 in {\cite{M00}}, we establish it for a large class
of profiles, that include Wulff crystals and shapes with $C^1$ boundary.

Proposition \ref{prop-lwb-disc} below relates the probability of an event of
disconnection along the boundary of a given profile to the surface tension
$\tau^J$, for a given realization of the media. In Proposition
\ref{prop-lwb-cond-phco} we show that conditionally on this event of
disconnection, phase coexistence has large probability. Then we state in
Proposition \ref{prop-lwb-phco-final} a lower bound on the probability of
phase coexistence for both quenched and averaged measures.

Given some region $U \subset (0, 1)^d$, $N \in \mathbbm{N}^{\star}$ and
$\delta > 0$, we consider $\mathcal{E}^{N, \delta}_U$ the set of edges at
distance at most $N \sqrt{d} \delta$ from $N \partial U$:
\[ \mathcal{E}^{N, \delta}_U = \left\{ e \in E^w (\Lambda_N), d (e, N \partial
   U) \leqslant N \sqrt{d} \delta \right\} \]
(see Figure \ref{fig-binf-phco}) and call
\[ \mathcal{D}_U^{N, \delta} = \left\{ \omega \in \Omega_{E^w (\Lambda_N)} :
   x \overset{\omega}{\nleftrightarrow} y, \begin{array}{l}
     \forall x \in \Lambda_N \setminus NU, y \in \Lambda_N \cap NU \text{
     with}\\
     d (x / N, \partial U) > \sqrt{d} \delta \text{ and } d (y / N, \partial
     U) > \sqrt{d} \delta
   \end{array} \right\} \]
the event that disconnection occurs around $\partial U$. In order to be able
to control the probability of $\mathcal{D}_U^{N, \delta}$, we introduce the
following definition:

\begin{definition}
  \label{def-u-regular}We say that a profile $u = \chi_U$ is regular if
  \begin{enumerateroman}
    \item $U$ is open and at positive distance from the boundary $\partial [0,
    1]^d$ of the unit cube,
    
    \item $\partial U$ is $d - 1$ rectifiable and
    
    \item for small enough $r > 0$, $[0, 1]^d \setminus \left( \partial U + B
    (0, r) \right)$ has exactly two connected components.
  \end{enumerateroman}
\end{definition}

We recall that $E \subset \mathbbm{R}^d$ is a $d - 1$ rectifiable set if there
exists a Lipschitzian function mapping some bounded subset of $\mathbbm{R}^{d
- 1}$ onto $E$ (Definition 3.2.14 in {\cite{N304}}). It is the case in
particular of the boundary of non-empty Wulff crystals (Theorem 3.2.35 in
{\cite{N304}}) and of bounded polyhedral sets. It follows from Proposition
3.62 in {\cite{N129}} that any $u = \chi_U$ regular belongs to $\tmop{BV}$ and
that $\partial U = \partial^{\star} u$ up to a $\mathcal{H}^{d -
1}$-negligible set, so that the covering Theorem applies as well to $\partial
U$. Assumption (ii) in Definition \ref{def-u-regular} has the following
consequence:

\begin{lemma}
  \label{lem-mink-cont}Let $u = \chi_U \in \tmop{BV}$ be a regular profile.
  Then, for any $\delta > 0$, for any $\delta$-covering $(\mathcal{R}_i)_{i =
  1 \ldots n}$ of $u$, one has
  \[ \limsup_{r \rightarrow 0} \frac{\mathcal{L}^d \left( \left( \partial U
     \setminus \bigcup_{i = 1}^n \mathcal{R}_i \right) + B \left( 0, r \right)
     \right)}{r} \leqslant 2 \delta . \]
\end{lemma}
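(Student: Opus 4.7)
The plan is to reduce the statement to a Minkowski content computation on the $d-1$-rectifiable set $\partial U$, and then invoke Federer's theorem identifying Minkowski content with Hausdorff measure on rectifiable sets. Set
\[ F = \partial U \setminus \bigcup_{i=1}^n \mathcal{R}_i. \]
We want to estimate the $d$-dimensional Lebesgue measure of the $r$-neighborhood of $F$. Since $F \subset \partial U$ and $\partial U$ is $d-1$-rectifiable by assumption (ii) of Definition \ref{def-u-regular}, the set $F$ (and its closure $\overline F$) is itself $d-1$-rectifiable.

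First I would observe that passing from $F$ to $\overline F$ does not change the Hausdorff measure. The only points that can lie in $\overline F \setminus F$ are boundary points of some $\mathcal{R}_i$ that also belong to $\partial U$, that is, points of $\partial U \cap \bigcup_i \partial \mathcal{R}_i$. By clause (ii) of Definition \ref{def-d-adapted-u}, $\mathcal{H}^{d-1}(\partial^{\star} u \cap \partial \mathcal{R}_i) = 0$ for every $i$, and as remarked after Definition \ref{def-u-regular}, $\partial U$ and $\partial^{\star} u$ coincide up to an $\mathcal{H}^{d-1}$-null set. Hence $\mathcal{H}^{d-1}(\overline F) = \mathcal{H}^{d-1}(F)$, and by the defining property (\ref{eq-cov-lwb-left-delta}) of a $\delta$-covering,
\[ \mathcal{H}^{d-1}(\overline F) \leq \delta. \]

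Next I would apply Federer's theorem (Theorem 3.2.39 in \cite{N304}), which states that for a closed $d-1$-rectifiable subset $E \subset \mathbbm{R}^d$ with $\mathcal{H}^{d-1}(E) < \infty$, the upper and lower Minkowski contents of dimension $d-1$ both coincide with $\mathcal{H}^{d-1}(E)$, i.e.
\[ \lim_{r \to 0^+} \frac{\mathcal{L}^d(E + B(0,r))}{2r} = \mathcal{H}^{d-1}(E). \]
Applying this to $E = \overline F$ and using the inclusion $F + B(0,r) \subset \overline F + B(0,r)$, we obtain
\[ \limsup_{r \to 0^+} \frac{\mathcal{L}^d(F + B(0,r))}{r} \leq 2\, \mathcal{H}^{d-1}(\overline F) \leq 2 \delta, \]
which is the claim.

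The main (mild) obstacle is the measurability/closedness hypothesis in Federer's theorem, since $F$ itself is only Borel. This is precisely why I would pass to $\overline F$ and use the \emph{a priori} control $\mathcal{H}^{d-1}(\partial^{\star} u \cap \partial \mathcal{R}_i) = 0$ built into Definition \ref{def-d-adapted-u} to ensure no Hausdorff measure is lost in the closure. Everything else is direct: rectifiability of $F$ follows from that of $\partial U$ together with the stability of rectifiability under taking Borel subsets, and the bound $\mathcal{H}^{d-1}(F) \leq \delta$ is exactly the $\delta$-covering hypothesis.
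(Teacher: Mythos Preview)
Your proof is correct and follows essentially the same route as the paper's: both apply Federer's theorem (Theorem 3.2.39 in \cite{N304}) identifying the $(d-1)$-Minkowski content with $\mathcal{H}^{d-1}$ on closed rectifiable sets. The only cosmetic difference is that the paper works directly with the closed set $E = \partial U \setminus \bigcup_i \dot{\mathcal{R}}_i$ (removing the \emph{open} interiors, so closedness is automatic), whereas you take $F = \partial U \setminus \bigcup_i \mathcal{R}_i$ and pass to its closure; your explicit invocation of clause (ii) of Definition \ref{def-d-adapted-u} to justify $\mathcal{H}^{d-1}(\overline F) = \mathcal{H}^{d-1}(F)$ makes transparent a step the paper leaves implicit in its bound $\mathcal{H}^{d-1}(E) \leq \delta$.
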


\begin{proof}
  Clearly, the set
  \[ E = \partial U \setminus \bigcup_{i = 1}^n \dot{\mathcal{R}}_i \]
  is a closed, $d - 1$ rectifiable set. Thus, the $d - 1$ Minkowski content of
  $E$ equals the $d - 1$ dimensional Hausdorff measure of $E$ (Theorem 3.2.39
  in {\cite{N304}}). In other words:
  \[ \lim_{r \rightarrow 0} \frac{\mathcal{L}^d \left( E + B \left( 0, r
     \right) \right)}{2 r} =\mathcal{H}^{d - 1} \left( E \right) \leqslant
     \delta \]
  and the claim follows.
\end{proof}

Before we state Propositions \ref{prop-lwb-disc} and \ref{prop-lwb-cond-phco}
we give one more notation. The analysis of surface tension has been done for
rectangular parallelepiped centered \tmtextit{at lattice points}. Changing the
center of the parallelepipeds does not modify the behavior of surface tension,
but this would have led to heavier notations. We prefer to proceed to a small
adjustment here: given a macroscopic rectangular parallelepiped $\mathcal{R}
\subset (0, 1)^d$ and $N \in \mathbbm{N}^{\star}$, we let
\begin{equation}
  \mathcal{R}^N = N\mathcal{R}+ z_N (\mathcal{R}) \label{eq-def-RN}
\end{equation}
where $z_N (\mathcal{R}) \in (- 1 / 2, 1 / 2]^d$ is chosen such that the
center of $\mathcal{R}^N$ belongs to $\mathbbm{Z}^d$. Still, for any finite
collection $(\mathcal{R}_i)_{i = 1 \ldots n}$ of disjoint rectangular
parallelepipeds in $(0, 1)^d$ and large enough $N$, the collection
$(\mathcal{R}_i^N)_{i = 1 \ldots n}$ is disjoint and included in $(0, N)^d$.

\begin{proposition}
  \label{prop-lwb-disc}Consider a regular $u = \chi_U$. For any $\delta > 0$
  and any $\delta$-covering $(\mathcal{R}_i)_{i = 1 \ldots n}$ for $u$, we
  have
  \begin{equation}
    \frac{1}{N^{d - 1}} \log \Phi^{J, w}_{\Lambda_N} \left( \mathcal{D}_U^{N,
    \delta} \right) \geqslant - \sum_{i = 1}^n h_i^{d - 1}
    \tau^J_{\mathcal{R}_i^N} - c \beta \delta \label{eq-lwb-phi-decU}
  \end{equation}
  for any $N$ large enough, where $c < \infty$ depends on $d$ and $u$.
\end{proposition}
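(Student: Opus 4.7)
The strategy mimics the sub-additivity argument of Theorem~\ref{thm-add-tauJ}: combine, for each $i$, the local disconnection event $\mathcal{D}_{\mathcal{R}_i^N}$ with the closing of a suitable ``bridge'' edge set $\mathcal{E}$ that fills the gaps left by the covering.

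First, I would introduce $\mathcal{E} \subset E^w(\Lambda_N)$ as the union of (a) the edges $e = \{v_1, v_2\}$ such that $v_1/N, v_2/N$ lie on opposite sides of $\partial U$ but $e$ lies in no $E(\hat{\mathcal{R}}_i^N)$, and (b) the lateral-boundary edges of each $\hat{\mathcal{R}}_i^N$. The midpoint of any edge of type (a) lies within distance $\sqrt{d}/2$ of a point of $N(\partial U \setminus \bigcup_i \mathcal{R}_i)$, hence after rescaling by $1/N$ in a Minkowski neighborhood of $\partial U \setminus \bigcup_i \mathcal{R}_i$ of width $O(1/N)$; by Lemma~\ref{lem-mink-cont} this contributes $O(\delta N^{d-1})$ edges. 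The lateral boundary of $\hat{\mathcal{R}}_i^N$ contains at most $c_d\, \delta h_i^{d-1} N^{d-1}$ edges, and summing over $i$ using $\sum_i h_i^{d-1} \leq \mathcal{H}^{d-1}(\partial U)(1+\delta)$ contributes again $O(\delta N^{d-1})$. Thus $|\mathcal{E}| \leq c \delta N^{d-1}$ for $N$ large, with $c$ depending only on $d$ and $u$.

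Second, I would establish the geometric inclusion
$$\bigcap_{i=1}^n \mathcal{D}_{\mathcal{R}_i^N} \,\cap\, \{\omega_e = 0,\, \forall e \in \mathcal{E}\} \,\subset\, \mathcal{D}_U^{N,\delta}.$$
Assume $\omega$ lies in the left-hand side and, for contradiction, that there is an $\omega$-open path $c$ from some $y \in \Lambda_N \cap NU$ to some $x \in \Lambda_N \setminus NU$, both at distance exceeding $\sqrt{d}\delta N$ from $N\partial U$. Walking along $c$, select an edge $e = \{v_1, v_2\}$ with $v_1/N \in U$ and $v_2/N \notin U$. If $e$ lies in no $\hat{\mathcal{R}}_i^N$ then $e \in \mathcal{E}$ is closed, a contradiction. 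Otherwise $e \in E(\hat{\mathcal{R}}_i^N)$ for some $i$; the normal extension of $\mathcal{R}_i^N$ being $\delta h_i N \leq \delta^2 N < \sqrt{d}\delta N$ (for $\delta$ small), both $x$ and $y$ lie outside $\hat{\mathcal{R}}_i^N$. The portion of $c$ inside $\hat{\mathcal{R}}_i^N$ containing $e$ thus enters and exits $\hat{\mathcal{R}}_i^N$ across $\partial \hat{\mathcal{R}}_i^N$; the closing of lateral edges in $\mathcal{E}$ forces both entry and exit through $\partial^\pm \hat{\mathcal{R}}_i^N$; and since $e$ separates a $U$-vertex from a $U^c$-vertex while $\partial^+ \hat{\mathcal{R}}_i^N$ lies essentially in $NU^c$ and $\partial^- \hat{\mathcal{R}}_i^N$ essentially in $NU$ (by condition (iii) of Definition~\ref{def-d-adapted-u}), this produces an $\omega$-open subpath from $\partial^- \hat{\mathcal{R}}_i^N$ to $\partial^+ \hat{\mathcal{R}}_i^N$, contradicting $\mathcal{D}_{\mathcal{R}_i^N}$. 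The careful handling of how the curved $\partial U$ is approximated locally by the flat slabs $\mathcal{R}_i^N$ is the main obstacle, and is what forces the inclusion of the lateral boundary edges in $\mathcal{E}$.

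Finally, since $\mathcal{D}_{\mathcal{R}_i^N}$ and $\{\omega_e = 0\}$ are decreasing events and the edge sets $E(\hat{\mathcal{R}}_i^N)$ are mutually disjoint and disjoint from $\mathcal{E}$, iterating the DLR equation and the monotonicity of $\Phi^{J,\pi}$ in the boundary condition $\pi$ gives
$$\Phi^{J,w}_{\Lambda_N}(\mathcal{D}_U^{N,\delta}) \,\geq\, \prod_{i=1}^n \Phi^{J,w}_{\mathcal{R}_i^N}(\mathcal{D}_{\mathcal{R}_i^N}) \cdot \prod_{e \in \mathcal{E}} \Phi^{J,w}_{\{e\}}(\omega_e = 0).$$
Using $\Phi^{J,w}_{\{e\}}(\omega_e = 0) = e^{-\beta J_e} \geq e^{-\beta}$, the identity $\Phi^{J,w}_{\mathcal{R}_i^N}(\mathcal{D}_{\mathcal{R}_i^N}) = \exp\bigl(-(h_i N)^{d-1} \tau^J_{\mathcal{R}_i^N}\bigr)$, and the bound $|\mathcal{E}| \leq c \delta N^{d-1}$, the inequality (\ref{eq-lwb-phi-decU}) follows after taking logarithms and dividing by $N^{d-1}$.
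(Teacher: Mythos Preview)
Your proposal is correct and takes essentially the same approach as the paper. The paper's own proof is terser: it defines $\mathcal{E}$ as all edges within distance $1+\sqrt{d}$ of $N\bigl[(\partial U \setminus \bigcup_i \dot{\mathcal{R}}_i) \cup \bigcup_i \partial_{\text{lat}}\mathcal{R}_i\bigr]$ (slightly different from, but morally the same as, your bridge set), asserts the geometric inclusion without the path-tracing argument you sketch, bounds $|\mathcal{E}|$ via Lemma~\ref{lem-mink-cont} and Definition~\ref{def-d-adapted-u}, and concludes by the DLR equation.
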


\begin{proposition}
  \label{prop-lwb-cond-phco}Assume $\beta > \hat{\beta}_c$ and $\beta \notin
  \mathcal{N}$, and let $u = \chi_U$ regular. For any $\varepsilon > 0$, for
  small enough $\delta > 0$ there are $K \in \mathbbm{N}^{\star}$ and $c > 0$
  such that, for large enough $N$:
  \begin{equation}
    \mathbbm{P} \left( \inf_{\pi \in \mathcal{D}_U^{N, \delta}} \Psi^{J, w,
    +}_{\Lambda_N} \left( \left. \frac{\mathcal{M}_K}{m_{\beta}} \in
    \mathcal{V}(u, \varepsilon) \right| \omega = \pi \text{ on }
    \mathcal{E}_U^{N, \delta} \right) \leqslant \frac{1}{2} - e^{- c \sqrt{N}}
    \right) \leqslant e^{- c \sqrt{N}} . \label{eq-condproba-McloseW}
  \end{equation}
\end{proposition}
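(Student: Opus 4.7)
The plan is to combine an Edwards--Sokal/DLR decoupling with the coarse graining of~\cite{M01} and a $\mathbb{Z}_2$ symmetry argument. Fix $\pi \in \mathcal{D}_U^{N,\delta}$ and condition on $\omega = \pi$ on $\mathcal{E}_U^{N,\delta}$. By the DLR equation the residual random-cluster measure factorizes across the two components of $E^w(\Lambda_N) \setminus \mathcal{E}_U^{N,\delta}$: an \emph{interior} part carried by vertices with $d(\cdot/N, \partial U) > \sqrt{d}\,\delta$ inside $NU$, and an \emph{exterior} part carried by the complementary deep region, each with the boundary condition inherited from $\pi$. Under the coupling $\Psi^{J,w,+}_{\Lambda_N}$ the Ising spins are then read off $\omega \vee \pi$ by assigning $+1$ to every cluster touching $\Lambda_N^c$ and an independent uniform $\pm 1$ to every other cluster. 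Because $\pi \in \mathcal{D}_U^{N,\delta}$, no cluster meeting the deep interior reaches $\Lambda_N^c$, hence every such cluster receives an independent $\pm 1$ sign.

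Next, I would invoke the coarse graining of~\cite{M01} (Theorem~5.7) applied uniformly over admissible traces $\pi$: outside a $\mathbb{P}$-exceptional set of probability at most $e^{-c\sqrt{N}}$, for $K$ large enough and every $\pi \in \mathcal{D}_U^{N,\delta}$, with conditional $\Psi$-probability $\geq 1 - e^{-c\sqrt{N}}$ every $K$-block $\Delta_i$ far from $N\partial U$ and from $\partial \Lambda_N$ satisfies $|\mathcal{M}_K(\Delta_i)| \in [m_\beta - \eta, m_\beta + \eta]$. In the deep exterior the $+$ boundary of $\Psi^{J,w,+}$ pins the sign to $+1$; in the deep interior the sign equals the $\pm 1$ label of the dominant FK cluster through $\Delta_i$. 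The hypothesis $\beta \notin \mathcal{N}$ enters here, ensuring that the conditional infinite-volume states in each region are unique up to a global spin flip, hence that the coarse graining survives the boundary trace $\pi$.

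I would then exploit $\mathbb{Z}_2$ symmetry. Let $A^-$ (resp.\ $A^+$) be the event that every deep-interior $K$-block has $\mathcal{M}_K \leq -m_\beta + \eta$ (resp.\ $\geq m_\beta - \eta$). Since no deep-interior cluster is tied to the $+$ boundary, the involution flipping the signs of all deep-interior clusters preserves the conditional $\Psi$-measure and exchanges $A^-$ and $A^+$. Together with the previous step this forces $\Psi(A^- \mid \omega = \pi) \geq \frac{1}{2} - \frac{1}{2} e^{-c\sqrt{N}}$. On $A^-$ the profile $\mathcal{M}_K/m_\beta$ matches $u = \chi_U$ up to: (i) mesoscopic fluctuations of size $O(\eta)$ per block, (ii) blocks within $O(\delta N)$ of $N \partial U$, whose aggregate volume is controlled by the Minkowski content estimate (as used in Lemma~\ref{lem-mink-cont}) and contributes at most $C \delta \, \mathcal{H}^{d-1}(\partial U)$ to the $L^1$ norm, and (iii) blocks within $O(K)$ of $\partial \Lambda_N$, contributing $O(K/N)$. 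Choosing $\delta$ small, $K$ large and then $\eta$ small via a further increase of $K$, each of these is at most $\varepsilon/3$, giving $\|\mathcal{M}_K/m_\beta - u\|_{L^1} \leq \varepsilon$ and hence~(\ref{eq-condproba-McloseW}).

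The main obstacle is the \emph{uniform-in-$\pi$} coarse graining of Step~2: the results of~\cite{M01} are proved under the wired/plus boundary condition, whereas here one needs them under the conditional measure $\mu^{J,+}_{\Lambda_N}(\cdot \mid \omega = \pi)$ for arbitrary $\pi \in \mathcal{D}_U^{N,\delta}$. The way around is to sandwich the conditional measure on each of the two pieces between free and wired boundary conditions and use $\beta \notin \mathcal{N}$ to conclude that both sandwich measures converge to the same unique infinite-volume state, so the coarse graining applies with constants independent of $\pi$. A secondary subtlety is the aggregate contribution of non-dominant interior clusters whose signs can flip the wrong way; a standard Peierls estimate in the FK representation at $\beta > \hat\beta_c$ bounds their total vertex count by $O(\varepsilon N^d)$ outside a further surface-order-exceptional event, so that they do not destroy $A^-$.
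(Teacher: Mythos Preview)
Your overall architecture (DLR decoupling, coarse graining from \cite{M01}, $\mathbb{Z}_2$ symmetry on the interior clusters) matches the paper's, but there is a genuine gap in Step~2, and it is exactly the step that produces the peculiar rate $e^{-c\sqrt{N}}$.

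You claim that with conditional probability at least $1-e^{-c\sqrt{N}}$ \emph{every} $K$-block away from $N\partial U$ has $|\mathcal{M}_K|$ close to $m_\beta$. This cannot hold: the coarse graining of \cite{M01} only guarantees that the indicators $|\phi_i|$ stochastically dominate a Bernoulli field of parameter $p<1$, however close to $1$. Since there are order $(N/K)^d$ blocks, the probability that \emph{all} of them are good decays like $p^{cN^d}$, not $1-e^{-c\sqrt{N}}$. Nothing in your outline explains where the $\sqrt{N}$ comes from.

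The paper resolves this via a second, growing mesoscopic scale $L_N=[\sqrt{N}]$. One partitions the index set of $K$-blocks into $L_N$-blocks and defines the increasing event $f=1$ as ``every $L_N$-block contains a crossing cluster of good $K$-sites, of density $\geq 1-\delta$''. Under Bernoulli($p$) site percolation with $p$ close to $1$, each $L_N$-block satisfies this with probability $\geq 1-e^{-2cL_N^{d-1}}$ (Pisztora-type estimate), and a union bound over the $O(N^d)$ $L_N$-blocks yields $\mathcal{B}_p(f)\geq 1-e^{-c\sqrt{N}}$. The uniform-in-$\pi$ stochastic domination (Theorem~5.10\,(iv) in \cite{M01}) then transfers this to $\Psi^{J,+}_{\Lambda_N}(\cdot\mid\omega=\pi)$. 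On the event $f=1$ the phase in each $L_N$-block is determined by the sign of its crossing cluster, and your symmetry argument applies to these signs rather than to individual $K$-blocks. The $L^1$-closeness to $u$ then follows because the $1-\delta$ density of good $K$-blocks in each $L_N$-block, plus the thin strip near $\partial U$, bound the defect volume.

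So the missing ingredient is the intermediate scale $L_N$ and the replacement of ``all $K$-blocks good'' by ``all $L_N$-blocks have a dense crossing cluster of good $K$-blocks''.
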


\begin{proof}
  (Proposition \ref{prop-lwb-disc}). To realize the event of disconnection
  $\mathcal{D}_U^{N, \delta}$, it is enough to realize all the
  $\mathcal{D}_{\mathcal{R}_i^N}$ and to close all the edges that are at
  distance less than $1 + \sqrt{d}$ from
  \[ N \left[  \left( \partial U \setminus \bigcup_{i = 1}^n
     \dot{\mathcal{R}}_i \right) \cup \bigcup_{i = 1}^n \partial_{\tmop{lat}}
     \mathcal{R}_i \right] \]
  where $\partial_{\tmop{lat}} \mathcal{R}$ stands for the lateral boundary of
  $\mathcal{R}$, that is the faces of $\partial \mathcal{R}$ that are parallel
  to the orientation $\tmmathbf{n}$ of $\mathcal{R}$. Thanks to Lemma
  \ref{lem-mink-cont} and Definition \ref{def-d-adapted-u}, there are at most
  $\delta c_d N^{d - 1} \left( 1 +\mathcal{H}^{d - 1} (\partial U) \right)$
  such edges for large enough $N$. An immediate application of the DLR
  equation yields (\ref{eq-lwb-phi-decU}).
\end{proof}

\begin{figure}[h!]
  \begin{center}
  \includegraphics[width=8cm]{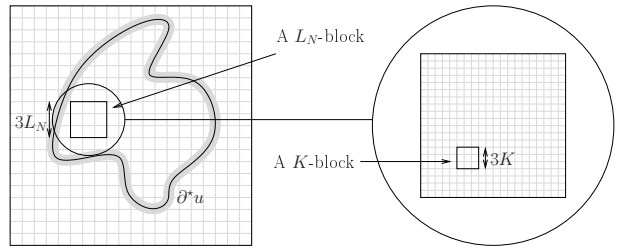}
  \end{center}
  \caption{\label{fig-binf-phco}The scales $K$ and $L_N$.}
\end{figure}

\begin{proof}
  (Proposition \ref{prop-lwb-cond-phco}). In order to obtain the claim for a
  mesoscopic scale $K$ that does not depend on $N$, we proceed to a coarse
  grained analysis at two characteristic scales $K$ and $L_N = [ \sqrt{N}]$.
  Given $K \in \mathbbm{N}^{\star}$, we consider $(\Delta_i, \Delta_i')_{i \in
  I_{\Lambda_N, K}}$ the $(K, K)$-covering of $\Lambda_N$ as in Definition 5.1
  in {\cite{M01}} as well as the phase indicator
  \[ (\phi_i)_{i \in I_{\Lambda_N, K}} \]
  given by Theorem 5.10 in {\cite{M01}}, for the tolerance $\delta$. We call
  $F = \left\{ 0, 1 \right\}^{I_{\Lambda_N, K}}$ the set of site
  configurations on the index of blocks $I_{\Lambda_N, K}$. In order to apply
  the stochastic domination Theorem 5.10 (iv) in {\cite{M01}}, we will define
  an increasing function $f : F \rightarrow \{0, 1\}$ with the appropriate
  properties. First, we need to describe the $L_N$-blocks: we call $(
  \tilde{\Delta}_j, \tilde{\Delta}'_j)_{j \in J_{N, K}}$ the $(L_N,
  L_N)$-covering for $I_{\Lambda_N, K}$ as in Definition 5.1 in {\cite{M01}}.
  Then we let
  \[ J = \left\{ j \in J_{N, K} : \forall i \in \tilde{\Delta}'_j, E^w \left(
     \Delta'_i \right) \cap \mathcal{E}^{N, \delta}_U = \emptyset \right\} \]
  and
  \[ I = \bigcup_{j \in J} \tilde{\Delta}'_j . \]
  Given $\rho \in F$ a site configuration on $I_{\Lambda_N, K}$ and $j \in J$,
  we say that the $L_N$-block $\tilde{\Delta}'_j$ is good if there is a
  crossing cluster of open sites for $\rho$ in $\tilde{\Delta}'_j$, of density
  at least $1 - \delta$. Then we define $f : F \rightarrow \{0, 1\}$ letting
  \[ f (\rho) =\tmmathbf{1}_{\left\{ \text{For all $j \in J$, }
     \tilde{\Delta}'_j \text{ is good} \right\}} . \]
  Clearly, $f$ is an increasing function. We prove now that its expectation is
  close to $1$ under high-parameter site percolation. Consider
  $\mathcal{B}_p^I$ the site percolation process on $I$ of density $p \in (0,
  1)$. According to Theorem 1.1 in {\cite{N72}}, for large enough $p < 1$
  there is $c > 0$ such that, for large enough $N$, for all $j \in J$:
  \[ \mathcal{B}_p^I \left( \left\{ \tilde{\Delta}'_j \text{ is good} \right\}
     \right) \geqslant 1 - \exp \left( - 2 c L_N^{d - 1} \right) \]
  and consequently (the cardinal of $J$ is bounded by $N^d$), for $p < 1$
  close enough to $1$, for large enough $N$,
  \[ \mathcal{B}_p^I \left( f \right) \geqslant 1 - \exp \left( - c \sqrt{N}
     \right) . \]
  Consequently, the stochastic domination for $(| \phi_i |)_{i \in
  I_{\Lambda_N, K}}$ (see Theorem 5.10 (iv) in {\cite{M01}}) yields the same
  lower bound on the expectation of $f ( (| \phi_i |)_{i \in I})$: for large
  enough $K$ (depending on $\delta$), there is $c > 0$ such that, for any $N$
  large enough:
  \begin{equation}
    \mathbb{E} \inf_{\pi} \Psi_{\Lambda_N, \beta}^{J, +}  \left( f \left(
    \left( | \phi_i | \right)_{i \in I} \right) \left| \omega = \pi \text{ on
    } E^w (\Lambda_N) \setminus \bigcup_{i \in I} E^w (\Delta'_i) \right.
    \right) \geqslant 1 - e^{- c \sqrt{N}} . \label{eq-lwb-EPsif}
  \end{equation}
  The event that $f \left( \left( | \phi_i | \right)_{i \in I} \right) = 1$
  gives a control on the magnetization. For large enough $N$, the blocks
  $(\Delta_i)_{i \in I}$ cover a fraction of $\Lambda_N$ that is close to $1
  -\mathcal{L}^d \left( \partial U + B (0, c_d \delta) \right)
  \underset{\delta \rightarrow 0^+}{\longrightarrow} 1$. This and the
  properties of $(\phi_i)_{i \in I_{\Lambda_N, K}}$ (Theorem 5.10 (i) and (ii)
  in {\cite{M01}}) imply that, for small enough $\delta > 0$, for large enough
  $N$:
  \[ f \left( \left( | \phi_i | \right)_{i \in I} \right) = 1 \Rightarrow
     \frac{\mathcal{M}_K}{m_{\beta}} \in \mathcal{V}(u, \varepsilon) \text{ or
     } \frac{\mathcal{M}_K}{m_{\beta}} \in \mathcal{V}(\tmmathbf{1},
     \varepsilon) . \]
  We now consider a boundary condition $\pi \in \mathcal{D}_U^{N, \delta}$.
  Because of the $\omega$-disconnection, the spin of the clusters touching
  some $\Delta_i \subset NU$ with $i \in I$ has a symmetric distribution under
  the conditional measure
  \[ \Psi_{\Lambda_N, \beta}^{J, +}  \left( . \left| f \left( \left( | \phi_i
     | \right)_{i \in I} \right) = 1 \text{ and } \omega = \pi \text{ on }
     \mathcal{E}^{N, \delta}_U \right. \right) . \]
  Hence, one has
  \[ \begin{array}{l}
       \inf_{\pi \in \mathcal{D}_U^{N, \delta}} \Psi_{\Lambda_N, \beta}^{J, +}
       \left( \left. \frac{\mathcal{M}_K}{m_{\beta}} \in \mathcal{V}(u,
       \varepsilon) \right| \omega = \pi \text{ on } \mathcal{E}^{N, \delta}_U
       \right)\\
       \hspace{2cm} \geqslant \frac{1}{2} \inf_{\pi \in \mathcal{D}_U^{N,
       \delta}} \Psi_{\Lambda_N, \beta}^{J, +}  \left( f \left( \left( |
       \phi_i | \right)_{i \in I} \right) \left| \omega = \pi \text{ on }
       \mathcal{E}^{N, \delta}_U \right. \right)
     \end{array} \]
  The claim follows as (\ref{eq-lwb-EPsif}) implies, as $\mathcal{E}^{N,
  \delta}_U \subset E^w (\Lambda_N) \setminus \bigcup_{i \in I} E^w
  (\Delta'_i)$, that
  \[ \mathbbm{P} \left( \inf_{\pi \in \mathcal{D}_U^{N, \delta}}
     \Psi_{\Lambda_N, \beta}^{J, +}  \left( f \left( \left( | \phi_i |
     \right)_{i \in I} \right) \left| \omega = \pi \text{ on } \mathcal{E}^{N,
     \delta}_U \right. \right) \leqslant 1 - e^{- c / 2 \sqrt{N}} \right)
     \leqslant e^{- c / 2 \sqrt{N}} . \]
\end{proof}

The final formulation of the lower bound for phase coexistence is the
following:

\begin{proposition}
  \label{prop-lwb-phco-final}Assume $\beta > \hat{\beta}_c$ and $\beta \notin
  \mathcal{N}$. For any $0 \leqslant \alpha < \text{$1 / \tmop{diam}_{\infty}
  (\mathcal{W}^q)$}$ and $\varepsilon > 0$ there exists $K \in
  \mathbbm{N}^{\star}$ such that,
  \begin{equation}
    \liminf_{N \rightarrow \infty} \frac{1}{N^{d - 1}} \log \mu^{J,
    +}_{\Lambda_N} \left( \frac{\mathcal{M}_K}{m_{\beta}} \in
    \mathcal{V}(\chi_{z_0 + \alpha \mathcal{W}^q}, \varepsilon) \right)
    \geqslant -\mathcal{F}^q (\chi_{\alpha \mathcal{W}^q}) \text{ \ \ \ \ }
    \mathbbm{P} \text{-a.s.} \label{eq-lwb-phcoex-q}
  \end{equation}
  where $z_0 = (1 / 2, \ldots 1 / 2)$. Similarly, for any $\lambda > 0$ and $0
  \leqslant \alpha < \text{$1 / \tmop{diam}_{\infty}
  (\mathcal{W}^{\lambda})$}$,
  \begin{equation}
    \liminf_{N \rightarrow \infty} \frac{1}{N^{d - 1}} \log \mathbbm{E} \left[
    \left( \mu^{J, +}_{\Lambda_N} \left( \frac{\mathcal{M}_K}{m_{\beta}} \in
    \mathcal{V}(\chi_{z_0 + \alpha \mathcal{W}^{\lambda}}, \varepsilon)
    \right) \right)^{\lambda} \right] \geqslant -\mathcal{F}^{\lambda}
    (\chi_{\alpha \mathcal{W}^{\lambda}}) \label{eq-lwb-phcoex-l} .
  \end{equation}
\end{proposition}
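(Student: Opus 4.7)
The plan is to reduce the phase-coexistence event to a disconnection event along the boundary of $\alpha\mathcal{W}^q$ (respectively $\alpha\mathcal{W}^\lambda$), estimate the disconnection probability via Proposition \ref{prop-lwb-disc} combined with the local asymptotics of surface tension, and then recover the macroscopic surface energy via a $\delta$-covering supplied by Theorem \ref{thm-d-cover}. Fix $\alpha<1/\mathrm{diam}_\infty(\mathcal{W})$ and set $U=z_0+\alpha\mathcal{W}$; since Wulff crystals are convex and centrally symmetric (as $\tau(\mathbf{n})=\tau(-\mathbf{n})$), the hypothesis places $U$ strictly inside $(0,1)^d$, and the boundary of a convex body is rectifiable (Theorem 3.2.35 in \cite{N304}), so $u=\chi_U$ is regular in the sense of Definition \ref{def-u-regular}. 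For $\delta>0$, apply Theorem \ref{thm-d-cover} to $u$ and the relevant surface tension ($\tau^q$ or $\tau^\lambda$) to obtain a disjoint $\delta$-covering $(\mathcal{R}_i)_{i=1,\ldots,n}$ of $\partial^\star u$, with outer normals $\mathbf{n}_i$ and side-lengths $h_i\leq\delta$.

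For the quenched bound, use the FK representation (points \textit{ii}--\textit{iii} of the coupling $\Psi^{J,w,+}_{\Lambda_N}$) to write
\[
\mu^{J,+}_{\Lambda_N}\bigl(\tfrac{\mathcal{M}_K}{m_\beta}\in\mathcal{V}(u,\varepsilon)\bigr)\geq\Phi^{J,w}_{\Lambda_N}(\mathcal{D}_U^{N,\delta})\cdot\inf_{\pi\in\mathcal{D}_U^{N,\delta}}\Psi^{J,w,+}_{\Lambda_N}\bigl(\tfrac{\mathcal{M}_K}{m_\beta}\in\mathcal{V}(u,\varepsilon)\,\big|\,\omega|_{\mathcal{E}_U^{N,\delta}}=\pi|_{\mathcal{E}_U^{N,\delta}}\bigr).
\]
Proposition \ref{prop-lwb-cond-phco} bounds the infimum from below by $1/2-e^{-c\sqrt{N}}$ with $\mathbb{P}$-probability at least $1-e^{-c\sqrt{N}}$, so by Borel--Cantelli the infimum is $\geq 1/3$ eventually, $\mathbb{P}$-a.s. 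Proposition \ref{prop-lwb-disc} provides $-\tfrac{1}{N^{d-1}}\log\Phi^{J,w}_{\Lambda_N}(\mathcal{D}_U^{N,\delta})\leq\sum_i h_i^{d-1}\tau^J_{\mathcal{R}_i^N}+c\beta\delta$, and Theorem \ref{thm-updev-tauJ} with Borel--Cantelli (applied to the finite collection of normals $\mathbf{n}_i$ along a countable dense sequence of tolerances) yields $\limsup_N\tau^J_{\mathcal{R}_i^N}\leq\tau^q(\mathbf{n}_i)$ $\mathbb{P}$-a.s. Adaptedness (Definition \ref{def-d-adapted-u}(ii)) gives $\sum_i h_i^{d-1}\tau^q(\mathbf{n}_i)\leq\mathcal{F}^q(u)+c'\delta$, and sending $\delta\to 0$ proves \eqref{eq-lwb-phcoex-q} (noting $\mathcal{F}^q(\chi_U)=\mathcal{F}^q(\chi_{\alpha\mathcal{W}^q})$ by translation invariance).

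For the $\lambda$-moment bound, the same decomposition raised to the $\lambda$-th power and integrated under $\mathbb{P}$ yields
\[
\mathbb{E}\bigl[\bigl(\mu^{J,+}_{\Lambda_N}(\cdots)\bigr)^\lambda\bigr]\geq(\tfrac{1}{3})^\lambda\,\mathbb{E}\bigl[\bigl(\Phi^{J,w}_{\Lambda_N}(\mathcal{D}_U^{N,\delta})\bigr)^\lambda\,\mathbf{1}_{\{\inf_\pi\Psi\geq 1/3\}}\bigr].
\]
The DLR-type inclusion underlying the proof of Proposition \ref{prop-lwb-disc} (exactly as in the proof of Theorem \ref{thm-add-tauJ}) gives $\Phi^{J,w}_{\Lambda_N}(\mathcal{D}_U^{N,\delta})\geq e^{-c\beta\delta N^{d-1}}\prod_i\Phi^{J,w}_{\mathcal{R}_i^N}(\mathcal{D}_{\mathcal{R}_i^N})$; since the edge sets $E(\hat{\mathcal{R}}_i^N)$ are pairwise disjoint, the factors on the right are $\mathbb{P}$-independent. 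Raising to the $\lambda$-power, taking expectation, and invoking \eqref{eq-def-taul-R} together with Proposition \ref{prop-conv-taul} (which applies since each basis side-length $h_i N\to\infty$) give
\[
\liminf_N\tfrac{1}{N^{d-1}}\log\mathbb{E}\bigl[\bigl(\Phi^{J,w}_{\Lambda_N}(\mathcal{D}_U^{N,\delta})\bigr)^\lambda\bigr]\geq-\sum_i h_i^{d-1}\tau^\lambda(\mathbf{n}_i)-\lambda c\beta\delta.
\]
The indicator costs at most $\mathbb{P}(\text{bad }J)\leq e^{-c\sqrt{N}}$, which is negligible next to $e^{-CN^{d-1}}$ since $d\geq 2$; Definition \ref{def-d-adapted-u}(ii) for $\tau^\lambda$ followed by $\delta\to 0$ yields \eqref{eq-lwb-phcoex-l}.

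The most delicate point is the quenched passage from convergence in probability of $\tau^J_{\mathcal{R}^N}$ to an almost-sure upper bound: Theorem \ref{thm-conv-tauq} alone only gives convergence in probability, and one must exploit the volume-order cost of upper deviations (Theorem \ref{thm-updev-tauJ}) so that Borel--Cantelli closes the argument uniformly over the finite covering. A subsidiary bookkeeping task is controlling the aggregation of $\delta$-errors from the covering, which is handled by the estimate $\sum_i h_i^{d-1}\leq(1-\delta)^{-1}\mathcal{H}^{d-1}(\partial^\star u)$ implicit in the second condition of Definition \ref{def-d-adapted-u}(ii).
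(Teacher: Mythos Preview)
Your overall architecture is exactly that of the paper: regularity of $u=\chi_{z_0+\alpha\mathcal{W}}$, a $\delta$-covering, the factorization via Propositions~\ref{prop-lwb-disc} and~\ref{prop-lwb-cond-phco}, and then the passage to $\tau^q$ (resp.\ $\tau^\lambda$) in the limit. The quenched part is fine.

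The averaged part, however, contains a genuine error. You write that ``the indicator costs at most $\mathbb{P}(\text{bad }J)\leq e^{-c\sqrt{N}}$, which is negligible next to $e^{-CN^{d-1}}$ since $d\geq 2$''. This is backwards: for $d\geq 2$ one has $\sqrt{N}\ll N^{d-1}$, hence $e^{-c\sqrt{N}}\gg e^{-CN^{d-1}}$. If you try to control $\mathbb{E}\bigl[(\prod_i\Phi^{J,w}_{\mathcal{R}_i^N}(\mathcal{D}_{\mathcal{R}_i^N}))^\lambda\,\mathbf{1}_{\text{good}}\bigr]$ by subtracting $\mathbb{P}(\text{bad})$ from the unrestricted expectation, the subtraction swamps the main term and you get nothing.

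The correct device, which the paper uses explicitly, is \emph{independence}: the infimum $\inf_{\pi\in\mathcal{D}_U^{N,\delta}}\Psi^{J,w,+}_{\Lambda_N}(\cdot\mid\omega=\pi\text{ on }\mathcal{E}_U^{N,\delta})$ is, by the DLR equation, measurable with respect to $(J_e)_{e\notin\mathcal{E}_U^{N,\delta}}$, whereas each $\tau^J_{\mathcal{R}_i^N}$ is measurable with respect to $(J_e)_{e\in E(\hat{\mathcal{R}}_i^N)}\subset(J_e)_{e\in\mathcal{E}_U^{N,\delta}}$. Thus the two factors in \eqref{eq-lwb-coU} are $\mathbb{P}$-independent, and one may write
\[
\mathbb{E}\Bigl[\bigl(\mu^{J,+}_{\Lambda_N}(\cdots)\bigr)^\lambda\Bigr]
\;\geq\;
\mathbb{E}\bigl[(\inf_\pi\Psi)^\lambda\bigr]\cdot
\prod_{i=1}^n\mathbb{E}\exp\bigl(-\lambda(h_iN)^{d-1}\tau^J_{\mathcal{R}_i^N}\bigr)\cdot e^{-\lambda c\beta\delta N^{d-1}}.
\]
The first factor is at least $(1/3)^\lambda(1-e^{-c\sqrt{N}})\geq(1/3)^\lambda/2$ for large $N$ by Proposition~\ref{prop-lwb-cond-phco}, and now contributes only a harmless constant to the $\tfrac{1}{N^{d-1}}\log$; the product is handled exactly as you indicate via Proposition~\ref{prop-conv-taul}. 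With this single replacement your argument goes through.
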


\begin{proof}
  Let $U = z_0 + \alpha \mathcal{W}^q$. According to Theorem 3.2.35 in
  {\cite{N304}}, $\partial U$ is rectifiable, hence the profile $u = \chi_U$
  is regular. Let $\varepsilon, \delta > 0$. Thanks to Theorem
  \ref{thm-d-cover} there exists a $\delta$-covering $(\mathcal{R}_i)_{i =
  1}^n$ adapted to the profile $\chi_U$ and $\tau^q$. Proposition
  \ref{prop-lwb-disc} applies and gives, for $\delta > 0$ small enough:
  \begin{eqnarray}
    \mu^{J, +}_{\Lambda_N} \left( \frac{\mathcal{M}_K}{m_{\beta}} \in
    \mathcal{V}(\chi_U, \varepsilon) \right) & \geqslant & \inf_{\pi \in
    \mathcal{D}_U^{N, \delta}} \Psi^{J, w, +}_{\Lambda_N} \left(
    \frac{\mathcal{M}_K}{m_{\beta}} \in \mathcal{V}(\chi_U, \varepsilon) |
    \omega = \pi \text{ on } \mathcal{E}_U^{N, \delta} \right) \nonumber\\
    &  & \times \exp \left( - N^{d - 1} \left( \sum_{i = 1}^n h_i^{d - 1}
    \tau^J_{\mathcal{R}_i^N} + c \beta \delta \right) \right) 
    \label{eq-lwb-coU}
  \end{eqnarray}
  where $c < \infty$ depends on $d$ and $u$. An important remark is that the
  two factors are \tmtextit{independent} under the product measure
  $\mathbbm{P}$. Proposition \ref{prop-lwb-cond-phco} yields:
  \begin{equation}
    \mathbbm{P} \left( \inf_{\pi \in \mathcal{D}_U^{N, \delta}} \Psi^{J, w,
    +}_{\Lambda_N} \left( \frac{\mathcal{M}_K}{m_{\beta}} \in
    \mathcal{V}(\chi_U, \varepsilon) | \omega = \pi \text{ on }
    \mathcal{E}_U^{N, \delta} \right) \leqslant \frac{1}{3} \right) \leqslant
    e^{- c \sqrt{N}} . \label{eq-P-MclU}
  \end{equation}
  We prove first (\ref{eq-lwb-phcoex-q}) and consider $\gamma, \xi > 0$. If
  $\delta > 0$ is small enough, Theorem \ref{thm-updev-tauJ} tells that the
  $\mathbbm{P}$-probability that $\tau^J_{\mathcal{R}_i^N} > \tau^q
  (\tmmathbf{n}_i) + \gamma$ for some $i \in \{1, \ldots, n\}$ decays like
  $\exp (- cN^d)$ where $c > 0$. Hence, with $\mathbbm{P}$-probability at
  least $1 - e^{- c \sqrt{N} / 3}$ we have
  \begin{eqnarray*}
    \frac{1}{N^{d - 1}} \log \mu^{J, +}_{\Lambda_N} \left(
    \frac{\mathcal{M}_K}{m_{\beta}} \in \mathcal{V}(\chi_{z_0 + \alpha
    \mathcal{W}^{\tau}}, \varepsilon) \right) & \geqslant & - \sum_{i = 1}^n
    h_i^{d - 1} (\tau^q (\tmmathbf{n}_i)^{} + \gamma) - c \beta \delta\\
    & \geqslant & -\mathcal{F}^q (\chi_{\alpha \mathcal{W}^q}) - \xi
  \end{eqnarray*}
  for small enough $\delta > 0$ and $\gamma > 0$. Borel-Cantelli Lemma ensures
  that $\mathbbm{P}$-almost surely,
  \begin{eqnarray*}
    \liminf_{N \rightarrow \infty} \frac{1}{N^{d - 1}} \log \mu^{J,
    +}_{\Lambda_N} \left( \frac{\mathcal{M}_K}{m_{\beta}} \in
    \mathcal{V}(\chi_{z_0 + \alpha \mathcal{W}^{\tau}}, \varepsilon) \right) &
    \geqslant & -\mathcal{F}^q (\chi_{\alpha \mathcal{W}^q}) - \xi
  \end{eqnarray*}
  and (\ref{eq-lwb-phcoex-q}) follows letting $\xi \rightarrow 0^+$. We
  conclude with the proof of (\ref{eq-lwb-phcoex-l}), take $\lambda > 0$ and
  denote here $U = z_0 + \alpha \mathcal{W}^{\lambda}$. Again, there exists a
  $\delta$-covering $(\mathcal{R}_i)_{i = 1}^n$ adapted to the profile
  $\chi_U$ and $\tau^{\lambda}$. For $N$ large enough, the $\mathcal{R}_i^N$
  are disjoint and hence the $\tau^J_{\mathcal{R}_i^N}$ are independent under
  $\mathbbm{P}$. Consequently, for $N$ large enough and $\lambda > 0$,
  (\ref{eq-lwb-coU}) and (\ref{eq-P-MclU}) give
  \begin{eqnarray*}
    \mathbbm{E} \left[ \left( \mu^{J, +}_{\Lambda_N} \left(
    \frac{\mathcal{M}_K}{m_{\beta}} \in \mathcal{V}(\chi_U, \varepsilon)
    \right) \right)^{\lambda} \right] & \geqslant & \frac{1}{2 \times
    3^{\lambda}} \times \prod_{i = 1}^l \mathbbm{E} \exp \left( - \lambda N^{d
    - 1} h_i^{d - 1} \tau^J_{\mathcal{R}_i^N} \right)\\
    &  & \times \exp \left( - \lambda N^{d - 1} c \beta \delta \right) .
  \end{eqnarray*}
  In view of Proposition \ref{prop-conv-taul}, this means
  \begin{eqnarray*}
    \liminf_{N \rightarrow \infty} \frac{1}{N^{d - 1}} \log \mathbbm{E} \left[
    \left( \mu^{J, +}_{\Lambda_N} \left( \frac{\mathcal{M}_K}{m_{\beta}} \in
    \mathcal{V}(\chi_U, \varepsilon) \right) \right)^{\lambda} \right] &
    \geqslant & - \sum_{i = 1}^n h_i^{d - 1} \tau^{\lambda} (\tmmathbf{n}_i) -
    \lambda c \beta \delta
  \end{eqnarray*}
  and the claim follows as $\delta \rightarrow 0$.
\end{proof}

\subsection{Upper bound for phase coexistence}

Here we address the opposite problem of providing an upper bound on the
probability of phase coexistence along a given phase profile. Our analysis
follows the same line as {\cite{N06,N07,N12}}. The cost of phase coexistence
is easily related (Proposition \ref{prop-upb-phco}) to another notion of
surface tension (\ref{eq-def-tauJL1}), that uses a $L^1$-characterization of
phase coexistence. Then the $L^1$-notion of surface tension is related to a
percolative definition of surface tension with {\tmem{free boundary}}
conditions, with the help of the minimal section argument (Proposition
\ref{prop-minsect-arg}). As in the uniform setting {\cite{N12}}, the surface
tension with free boundary condition differs very slightly from the usual
notion of surface tension (Proposition \ref{prop-comp-tauJf-tauJ}).

The $L^1$-definition of surface tension is as follows. Given $\delta > 0$, a
rectangular parallelepiped $\mathcal{R} \subset [0, 1]^d$ as in Definition
\ref{def-d-adapted-u} (i) and $K, N \in \mathbbm{N}^{\star}$ we define
\begin{equation}
  \tilde{\tau}^{J, \delta, K}_{N\mathcal{R}} = - \frac{1}{(h N)^{d - 1}} \log
  \sup_{\bar{\sigma} \in \Sigma^+_{\widehat{N\mathcal{R}}}} \mu^{J,
  \bar{\sigma}}_{\widehat{N\mathcal{R}}} \left( \left\|
  \frac{\mathcal{M}_K}{m_{\beta}} - \chi \right\|_{L^1 \left( \mathcal{R}
  \right)} \leqslant 2 \delta \mathcal{L}^d \left( \mathcal{R} \right) \right)
  \label{eq-def-tauJL1}
\end{equation}
where $\chi$ is the characteristic function of $\mathcal{R}$ as in Definition
\ref{def-d-adapted-u} (iii), and $\mu^{J,
\bar{\sigma}}_{\widehat{N\mathcal{R}}}$ the Gibbs measure on
$\widehat{N\mathcal{R}}$ with boundary condition $\bar{\sigma}$. We have:

\begin{proposition}
  \label{prop-upb-phco}Let $u \in \tmop{BV}$, $\delta > 0$ and assume that
  $(\mathcal{R}_i)_{i = 1 \ldots n}$ is a $\delta$-covering for $u$. Then, for
  any $\varepsilon > 0$ small enough, any $K, N \in \mathbbm{N}^{\star}$ one
  has:
  \begin{equation}
    \frac{1}{N^{d - 1}} \log \mu^{J, +}_{\Lambda_N} \left(
    \frac{\mathcal{M}_K}{m_{\beta}} \in \mathcal{V}(u, \varepsilon) \right)
    \leqslant - \sum_{i = 1}^n h_i^{d - 1} \tilde{\tau}^{J, \delta,
    K}_{N\mathcal{R}_i} \label{eq-upb-mag-u} .
  \end{equation}
\end{proposition}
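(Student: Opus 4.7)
The strategy is to split the global $L^1$-proximity event into a conjunction of local events, one per rectangle $\mathcal{R}_i$ of the covering, and then factorise the resulting probability via the DLR equation on the disjoint regions $\widehat{\mathcal{R}_i^N}$.

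First I would fix $\varepsilon > 0$ small enough that $\varepsilon \leqslant \delta \min_{i=1,\ldots,n} \mathcal{L}^d(\mathcal{R}_i)$, which is possible since the covering is finite and each rectangle has positive volume. Writing $\chi_i$ for the $\pm 1$-valued characteristic function of $\mathcal{R}_i$ associated to the outer normal $\tmmathbf{n}^u_{x_i}$ as in Definition~\ref{def-d-adapted-u}(iii), the adaptedness bound gives $\|\chi_i - u\|_{L^1(\mathcal{R}_i)} \leqslant \delta \mathcal{L}^d(\mathcal{R}_i)$. On the event $\{\mathcal{M}_K/m_\beta \in \mathcal{V}(u,\varepsilon)\}$, the triangle inequality applied locally on each $\mathcal{R}_i$ then yields
\[
\|\mathcal{M}_K/m_\beta - \chi_i\|_{L^1(\mathcal{R}_i)} \leqslant \varepsilon + \delta \mathcal{L}^d(\mathcal{R}_i) \leqslant 2\delta \mathcal{L}^d(\mathcal{R}_i).
\]
Denoting this local event by $A_i$, we obtain the inclusion $\{\mathcal{M}_K/m_\beta \in \mathcal{V}(u,\varepsilon)\} \subset \bigcap_{i=1}^n A_i$.

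Next, since the $\mathcal{R}_i$ are pairwise disjoint subsets of $[0,1]^d$, for $N$ large enough the microscopic boxes $\widehat{\mathcal{R}_i^N}$ are pairwise disjoint in $\Lambda_N$. The DLR equation for the Ising model then asserts that, conditionally on the spin configuration outside $\bigcup_i \widehat{\mathcal{R}_i^N}$, the restrictions $\sigma_{|\widehat{\mathcal{R}_i^N}}$ are independent with respective laws $\mu^{J,\bar\sigma}_{\widehat{\mathcal{R}_i^N}}$ determined by the boundary spins. Each $A_i$ is measurable with respect to $\sigma_{|\widehat{\mathcal{R}_i^N}}$, so integrating out the outside spins gives
\[
\mu^{J,+}_{\Lambda_N}\Bigl(\bigcap_{i=1}^n A_i\Bigr) \leqslant \prod_{i=1}^n \sup_{\bar\sigma} \mu^{J,\bar\sigma}_{\widehat{\mathcal{R}_i^N}}(A_i) = \prod_{i=1}^n \exp\bigl(-h_i^{d-1} N^{d-1}\tilde\tau^{J,\delta,K}_{N\mathcal{R}_i}\bigr),
\]
the equality being the very definition (\ref{eq-def-tauJL1}) of $\tilde\tau^{J,\delta,K}$. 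Taking logarithms and dividing by $N^{d-1}$ yields (\ref{eq-upb-mag-u}).

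The main technical subtlety is the measurability step: $\mathcal{M}_K(x)$ averages spins in a $K$-block that may straddle $\partial \widehat{\mathcal{R}_i^N}$ when $x$ lies within distance $O(K/N)$ of $\partial \mathcal{R}_i$, so strictly speaking $A_i$ is not measurable with respect to the interior spins alone. I would resolve this either by restricting the integral in the definition of $A_i$ to those $K$-blocks fully contained in $\widehat{\mathcal{R}_i^N}$ -- the excluded boundary blocks contribute at most a $O(K/(Nh_i))$-fraction to the $L^1$-norm on $\mathcal{R}_i$, which is absorbed in the slack $\delta \mathcal{L}^d(\mathcal{R}_i)$ between $\varepsilon + \delta \mathcal{L}^d(\mathcal{R}_i)$ and $2\delta \mathcal{L}^d(\mathcal{R}_i)$ -- or by adapting $\mathcal{M}_K$ to use only fully contained blocks near the rectangle boundaries. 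A second minor point is the matching of boundary conditions: for rectangles on the border of $[0,1]^d$ the outer spins are forced to $+1$ under $\mu^{J,+}_{\Lambda_N}$, which is precisely the constraint $\bar\sigma \in \Sigma^+_{\widehat{N\mathcal{R}_i}}$ built into (\ref{eq-def-tauJL1}), so the supremum is taken over the correct class in both interior and border cases.
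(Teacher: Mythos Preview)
Your proposal is correct and follows essentially the same route as the paper: localise the $L^1$-proximity event to each $\mathcal{R}_i$ via the triangle inequality and Definition~\ref{def-d-adapted-u}(iii), then factorise over the disjoint boxes using the DLR property and bound each factor by the supremum defining $\tilde\tau^{J,\delta,K}_{N\mathcal{R}_i}$. Your discussion of the $K$-block measurability issue and of the boundary-condition matching for border rectangles is in fact more explicit than the paper's own argument, which passes over these points silently; note only that the paper works with $\widehat{N\mathcal{R}_i}$ (no lattice recentering) rather than $\widehat{\mathcal{R}_i^N}$, so the disjointness holds for every $N$ and no ``$N$ large enough'' is needed.
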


\begin{proof}
  For $\varepsilon > 0$ small enough, the implication
  \[ \frac{\mathcal{M}_K}{m_{\beta}} \in \mathcal{V}(u, \varepsilon)
     \Rightarrow \left\| \frac{\mathcal{M}_K}{m_{\beta}} - u \right\|_{L^1
     (\mathcal{R}_i)} \leqslant \delta \mathcal{L}^d (\mathcal{R}_i) \text{, \
     \ \ } \forall i \in \{1, \ldots, n\} \]
  holds. Thanks to (iii) in Definition \ref{def-d-adapted-u}, for such
  $\varepsilon$ we have
  \[ \frac{\mathcal{M}_K}{m_{\beta}} \in \mathcal{V}(u, \varepsilon)
     \Rightarrow \left\| \frac{\mathcal{M}_K}{m_{\beta}} - \chi_i
     \right\|_{L^1 (\mathcal{R}_i)} \leqslant 2 \delta \mathcal{L}^d
     (\mathcal{R}_i) \text{, \ \ \ } \forall i \in \{1, \ldots, n\}. \]
  Now, the Gibbs property for $\mu^{J, +}_{\Lambda_N}$ implies that
  \begin{eqnarray*}
    \mu^{J, +}_{\Lambda_N} \left( \frac{\mathcal{M}_K}{m_{\beta}} \in
    \mathcal{V}(u, \varepsilon) \right) & \leqslant & \mu^{J, +}_{\Lambda_N}
    \left( \left\| \frac{\mathcal{M}_K}{m_{\beta}} - \chi_i  \right\|_{L^1
    (\mathcal{R}_i)} \leqslant 2 \delta \mathcal{L}^d (\mathcal{R}_i), \forall
    i \in \{1, \ldots, n\} \right)\\
    & = & \mu^{J, +}_{\Lambda_N} \left( \prod_{i = 1}^n \mu^{J,
    \sigma}_{\widehat{N\mathcal{R}_i}} \left( \left\|
    \frac{\mathcal{M}_K}{m_{\beta}} - \chi_i  \right\|_{L^1 (\mathcal{R}_i)}
    \leqslant 2 \delta \mathcal{L}^d (\mathcal{R}_i) \right) \right)\\
    & \leqslant & \exp \left( - h_i^{d - 1} N^{d - 1}  \tilde{\tau}^{J,
    \delta, K}_{N\mathcal{R}_i} \right)
  \end{eqnarray*}
  thanks to (\ref{eq-def-tauJL1}), and the claim is proved.
\end{proof}

Using the minimal section argument as in {\cite{N06}} one can compare the
$L^1$-surface tension to the surface tension under free boundary condition in
$\mathcal{R}=\mathcal{R}_{x, L, H} (\mathcal{S}, \tmmathbf{n})$, defined as
\begin{equation}
  \tilde{\tau}^J_{\mathcal{R}} = - \frac{1}{L^{d - 1}} \log \Phi^{J,
  f}_{\mathcal{R}} \left( \mathcal{D}_{\tilde{\mathcal{R}}} \right)
  \label{eq-def-tauJt}
\end{equation}
where $\tilde{\mathcal{R}} =\mathcal{R}_{x, L, H / 2} (\mathcal{S},
\tmmathbf{n})$ is a rectangular parallelepiped twice finer than $\mathcal{R}$.

\begin{proposition}
  \label{prop-minsect-arg}Assume $\beta > \hat{\beta}_c$ with $\beta \notin
  \mathcal{N}$. Then, there exists $c_{d, \delta} \in (0, \infty)$ with
  $\lim_{\delta \rightarrow 0} c_{d, \delta} = 0$ such that, for any
  $\mathcal{R}$ as in Definition \ref{def-d-adapted-u} (i), for any $\delta >
  0$, if $K$ is large enough then:
  \begin{equation}
    \limsup_N \frac{1}{N^d} \log \mathbbm{P} \left( \tilde{\tau}^{J, \delta,
    K}_{N\mathcal{R}} \leqslant \tilde{\tau}^J_{\mathcal{R}^N} - c_{d, \delta}
    \right) < 0.
  \end{equation}
\end{proposition}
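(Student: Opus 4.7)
The strategy is a classical minimal section argument, in the spirit of the deterministic proofs of the corresponding estimate in~\cite{N06,N12}. I aim to establish the inequality
\[\tilde\tau^{J,\delta,K}_{N\mathcal{R}} \geq \tilde\tau^J_{\mathcal{R}^N} - c_{d,\delta}\]
\emph{deterministically} in $J$ (for $N$ large enough, $K$ large enough and $\delta$ small enough), with $c_{d,\delta} \to 0$ as $\delta \to 0^+$. This makes the probability in the statement identically zero and yields the conclusion.

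First, fix $\bar\sigma \in \Sigma^+_{\widehat{N\mathcal{R}}}$ and lift $\mu^{J,\bar\sigma}_{\widehat{N\mathcal{R}}}$ to the Edwards--Sokal coupling $\Psi^{J,\bar\sigma}_{\widehat{N\mathcal{R}}}$: its $\omega$-marginal is a random-cluster measure $\Phi^{J,\pi(\bar\sigma)}_{\widehat{N\mathcal{R}}}$ with wired-type boundary condition $\pi(\bar\sigma)$. Next, partition $\widehat{N\mathcal{R}}$ into $M = \lceil c \delta^{-1/2}\rceil$ slabs of thickness $\asymp \sqrt\delta \, hN$ perpendicular to $\tmmathbf{n}$. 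The $L^1$-bound $\|\mathcal{M}_K/m_\beta - \chi\|_{L^1(\mathcal{R})} \leq 2\delta \mathcal{L}^d(\mathcal{R})$ implies, by averaging, that at least half of these slabs have $L^1$-deviation at most $c'\sqrt\delta \mathcal{L}^d(\text{slab})$; in particular I may select one such slab $\Sigma$ contained in the middle half $\tilde{\mathcal{R}}^N$ and straddling the equatorial hyperplane.

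Inside $\Sigma$, the local $L^1$-control forces all but an $O(\delta^{1/4})$-fraction (with respect to the cross-sectional volume $(hN)^{d-1}$) of the $K$-blocks to have magnetization close to $+m_\beta$ above the equatorial hyperplane, resp. close to $-m_\beta$ below; call such blocks \emph{good}. By the Edwards--Sokal compatibility $\sigma \prec \omega$, no $\omega$-cluster can contain both a good plus-block and a good minus-block. Closing the at most $c\delta^{1/4}(hN)^{d-1}$ edges incident to bad blocks and the thin lateral ring bordering $\tilde{\mathcal{R}}^N$ therefore forces $\omega \in \mathcal{D}_{\tilde{\mathcal{R}}^N}$; by the DLR equation and the single-edge bound $\Phi^{J,\pi}_{\{e\}}(\omega_e=0)\geq e^{-\beta}$, the cost of these closures is at most $\exp(\beta c''\sqrt\delta(hN)^{d-1})$, and the same closures decouple the interior of $\tilde{\mathcal{R}}^N$ from the exterior, reducing $\Phi^{J,\pi(\bar\sigma)}_{\widehat{N\mathcal{R}}}$ to $\Phi^{J,f}_{\widehat{N\mathcal{R}}}$. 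An entropy factor $\exp(c\sqrt\delta\log(1/\delta)(hN)^{d-1})$ accounts for the choice of the slab among $M$ possibilities and of the bad blocks, and is of the same vanishing order in $\delta$. Collecting all these bounds yields
\[\sup_{\bar\sigma}\mu^{J,\bar\sigma}_{\widehat{N\mathcal{R}}}\!\left(\|\mathcal{M}_K/m_\beta - \chi\|_{L^1(\mathcal{R})} \leq 2\delta\mathcal{L}^d(\mathcal{R})\right) \leq e^{c_{d,\delta}(hN)^{d-1}}\,\Phi^{J,f}_{\widehat{N\mathcal{R}}}(\mathcal{D}_{\tilde{\mathcal{R}}^N}),\]
which is the sought inequality.

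The main obstacle is to maintain surface-order control throughout the argument. Producing an actual separation of $\partial^+\tilde{\mathcal{R}}^N$ from $\partial^-\tilde{\mathcal{R}}^N$---rather than a disconnection local to a portion of the slab---requires the good blocks to cover the full horizontal cross-section of $\Sigma$; balancing the exceptional $O(\delta^{1/4})$-fraction of bad blocks together with the decoupling of $\pi(\bar\sigma)$ into free boundary condition, while keeping the aggregate error $c_{d,\delta}$ vanishing with $\delta$, is the delicate combinatorial point. The coarse graining of~\cite{M01} (Theorem 5.10) is invoked only to \emph{interpret} the $L^1$-control at the mesoscopic scale $K$, so no further probabilistic input is needed and the argument remains deterministic in $J$.
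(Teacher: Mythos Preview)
Your overall architecture---select a minimal slab, isolate a surface-negligible set of bad blocks, close the edges around them, and deduce the disconnection event $\mathcal{D}_{\tilde{\mathcal{R}}^N}$---is indeed the minimal section argument of \cite{N06}, to which the paper explicitly defers. But there is a genuine gap in your central step, and it is precisely that step which forces the statement to be probabilistic in $J$ rather than deterministic.

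The claim ``by the Edwards--Sokal compatibility $\sigma \prec \omega$, no $\omega$-cluster can contain both a good plus-block and a good minus-block'' is not usable with your notion of \emph{good} (block magnetization close to $\pm m_\beta$). A block whose average magnetization is within $\varepsilon$ of $-m_\beta$ still carries roughly a $(1-m_\beta)/2$ fraction of $+1$-spins; an $\omega$-open path from $\partial^+ \widehat{\tilde{\mathcal{R}}^N}$ to $\partial^- \widehat{\tilde{\mathcal{R}}^N}$ can run entirely on $+1$-sites, threading through an unbounded number of ``good minus-blocks'' without any contradiction with $\sigma\prec\omega$. Closing the edges around the $O(\delta^{1/4})$-fraction of blocks with wrong \emph{average} magnetization does nothing to sever such a filament, so the implication $\omega \in \mathcal{D}_{\tilde{\mathcal{R}}^N}$ fails. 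Working at the site level instead does not help either: the number of wrong-sign sites in a good block is a positive fraction of its volume, hence volume-order overall, and closing their incident edges is not a surface-order operation.

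What the actual minimal section argument requires is the stronger notion of good block supplied by the coarse graining of \cite{M01} (Theorem~5.10 there): a good block carries a \emph{crossing cluster} whose spin matches the local phase, and crossing clusters of adjacent good blocks with opposite phase are $\omega$-disconnected; this is what manufactures a genuine separating surface once the (few) non-good blocks are closed off. That stochastic domination, however, is stated under the expectation $\mathbbm{E}$ over the media and fails on a set of realisations $J$ of $\mathbbm{P}$-probability $\exp(-cN^d)$. This is exactly why the proposition is formulated as a bound on $\mathbbm{P}$ with rate $N^d$ rather than a deterministic inequality, and why your attempt to make the argument deterministic in $J$ cannot succeed.
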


We do not detail here the proof of Proposition \ref{prop-minsect-arg} as it is
easily adapted from {\cite{N06}}. Then, the argument of {\cite{N12}} let us
quantify the influence of the boundary condition on the value of surface
tension:

\begin{proposition}
  \label{prop-comp-tauJf-tauJ}Assume $\beta > \hat{\beta}_c$ and $\beta \notin
  \mathcal{N}$. Let $\mathcal{R}$ be a rectangular parallelepiped
  $\mathcal{R}$ as in Definition \ref{def-d-adapted-u} (i), with $\delta \in
  (0, 1)$. Then,
  \begin{equation}
    \limsup_N \frac{1}{N^d} \log \mathbbm{P} \left(
    \tilde{\tau}^J_{\mathcal{R}^N} \leqslant \tau^J_{\mathcal{R}^N} - c_d
    \delta \right) < 0
  \end{equation}
  where $c_d < \infty$ depends on $d$ only.
\end{proposition}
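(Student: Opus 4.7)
The plan is to establish the \emph{deterministic} inequality
\begin{equation}
\Phi^{J,f}_{\mathcal{R}^N}\!\left(\mathcal{D}_{\tilde{\mathcal{R}}^N}\right)\;\leq\;\exp\!\left(c_d\,\delta\,L^{d-1}\right)\,\Phi^{J,w}_{\mathcal{R}^N}\!\left(\mathcal{D}_{\mathcal{R}^N}\right),\qquad L=Nh,
\label{eq-plan-det-cmp}
\end{equation}
for every realization of the couplings $J$. Taking $-\log/L^{d-1}$ on both sides yields $\tilde{\tau}^J_{\mathcal{R}^N}\geq \tau^J_{\mathcal{R}^N}-c_d\delta$ pointwise in $J$, so the $\mathbb{P}$-probability in the statement vanishes identically, which is strictly smaller than any $e^{-cN^d}$. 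The natural scale in \eqref{eq-plan-det-cmp} is the lateral area of $\mathcal{R}^N$: with $L=Nh$ and total height $2H=2\delta Nh$, the lateral boundary contains at most $c_d L^{d-2}H=c_d\delta L^{d-1}$ edges, which is exactly what must absorb the two independent passages below.

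First, I would extend $\mathcal{D}_{\tilde{\mathcal{R}}^N}$ to $\mathcal{D}_{\mathcal{R}^N}$ under \emph{free} boundary condition. Apply the DLR equation to condition on the configuration outside a tubular neighbourhood $\mathcal{N}^\pm$ of the faces $\partial^\pm\hat{\tilde{\mathcal{R}}^N}$ inside the buffer region $\hat{\mathcal{R}^N}\setminus\hat{\tilde{\mathcal{R}}^N}$; then close every edge joining $\partial^\pm\hat{\mathcal{R}^N}$ to the interior of that buffer along a single layer, together with a lateral ``skirt'' of edges wrapping around the buffer on the lateral faces of $\mathcal{R}^N$. The skirt has at most $c_d\delta L^{d-1}$ edges, so that the event $\{\omega_e=0,\forall e\in\text{skirt}\}$ occurs with conditional $\Phi^{J,f}$-probability at least $\exp(-\beta c_d\delta L^{d-1})$ uniformly in $J\in[0,1]^{E(\mathbb{Z}^d)}$, and together with $\mathcal{D}_{\tilde{\mathcal{R}}^N}$ it forces $\mathcal{D}_{\mathcal{R}^N}$. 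This gives $\Phi^{J,f}_{\mathcal{R}^N}(\mathcal{D}_{\tilde{\mathcal{R}}^N})\leq e^{\beta c_d\delta L^{d-1}}\Phi^{J,f}_{\mathcal{R}^N}(\mathcal{D}_{\mathcal{R}^N})$.

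Second, I would switch from free to wired boundary condition on $\mathcal{R}^N$. Using (\ref{eq-def-phiw}), the Radon--Nikodym derivative has the form
\[
\frac{d\Phi^{J,w}_{\mathcal{R}^N}}{d\Phi^{J,f}_{\mathcal{R}^N}}(\omega)\;=\;\frac{Z^{J,f}_{\mathcal{R}^N}}{Z^{J,w}_{\mathcal{R}^N}}\,q^{C^w_{E(\hat{\mathcal{R}^N})}(\omega)-C^f_{E(\hat{\mathcal{R}^N})}(\omega)}.
\]
The key observation is that on $\mathcal{D}_{\mathcal{R}^N}$, the only $\omega$-clusters that are ``newly merged'' when going from the free to the wired counting are those touching the lateral part of $\partial\hat{\mathcal{R}^N}$: clusters touching $\partial^+\hat{\mathcal{R}^N}$ are wired together in both countings (they form a single component of the trace of $w$ on $\partial\hat{\mathcal{R}^N}$, already merged with $\partial^-\hat{\mathcal{R}^N}$ only via the lateral faces), and similarly for $\partial^-$. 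A geometric bookkeeping then gives $|C^w_{E(\hat{\mathcal{R}^N})}(\omega)-C^f_{E(\hat{\mathcal{R}^N})}(\omega)|\leq c_d\delta L^{d-1}$ on $\mathcal{D}_{\mathcal{R}^N}$, and a matching bound for the ratio $Z^{J,f}_{\mathcal{R}^N}/Z^{J,w}_{\mathcal{R}^N}$, so that $\Phi^{J,f}_{\mathcal{R}^N}(\mathcal{D}_{\mathcal{R}^N})\leq q^{c_d\delta L^{d-1}}\Phi^{J,w}_{\mathcal{R}^N}(\mathcal{D}_{\mathcal{R}^N})$. Combining with the first step yields \eqref{eq-plan-det-cmp} with a new constant $c_d$ absorbing $\beta$ and $\log q$.

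The hard part, which is where the argument from \cite{N12} really earns its keep, is the refined cluster counting in the second step: the naive bound $|C^w-C^f|\leq|\partial\hat{\mathcal{R}^N}|\sim L^{d-1}$ is far too weak, and one must systematically use the disconnection event together with the thinness $H=\delta L$ of $\mathcal{R}^N$ to restrict attention to lateral boundary clusters. A clean way to organize this is to fix the lateral trace of $\omega$ on $\partial\hat{\mathcal{R}^N}$ first (by DLR, this is a configuration on a set of $c_d\delta L^{d-1}$ edges) and then observe that $C^w-C^f$ is measurable with respect to this trace alone --- a statement whose verification, via a connectivity argument on $\omega$ restricted to $\hat{\mathcal{R}^N}$ using $\mathcal{D}_{\mathcal{R}^N}$, is the core technical step. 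The assumption $\beta>\hat{\beta}_c$ with $\beta\notin\mathcal{N}$ is carried over from Proposition \ref{prop-minsect-arg} to make the surgery of the interface along the lateral boundary legitimate in the sense of the $L^1$-surface tension $\tilde{\tau}^{J,\delta,K}_{N\mathcal{R}}$, but plays no role in \eqref{eq-plan-det-cmp} itself.
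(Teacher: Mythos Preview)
Your plan has a genuine gap in the second step, and the deterministic inequality you are aiming for is false.

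The ``key observation'' --- that on $\mathcal{D}_{\mathcal{R}^N}$ only clusters touching the \emph{lateral} boundary contribute to $C^f-C^w$ --- does not hold. Under the free boundary condition every external edge is closed, so there is no external wiring at all: distinct $\omega$-clusters touching $\partial^+\hat{\mathcal{R}^N}$ are \emph{not} merged in the free count. Take $\omega\equiv 0$, which certainly lies in $\mathcal{D}_{\mathcal{R}^N}$: then $C^f(\omega)-C^w(\omega)=|\partial\hat{\mathcal{R}^N}|-1$, which is of order $L^{d-1}$ and dominated by the top and bottom faces, not by the lateral area $\delta L^{d-1}$. More generally, nothing in the disconnection event prevents $\partial^+\hat{\mathcal{R}^N}$ from being touched by order $L^{d-1}$ distinct small clusters, and the same obstruction spoils any useful bound on the ratio $Z^{J,f}_{\mathcal{R}^N}/Z^{J,w}_{\mathcal{R}^N}$. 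Your Radon--Nikodym comparison therefore only produces an error of order $L^{d-1}\log q$, which swallows the surface tension entirely.

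This is not a matter of sharper bookkeeping. The paper defers to the construction of \cite{N12} precisely because no direct comparison is available: one needs the coarse graining of \cite{M01} (hence the hypotheses $\beta>\hat\beta_c$ and $\beta\notin\mathcal{N}$) to show that, except on a $J$-event of $\mathbb{P}$-probability $\exp(-cN^d)$, the free measure on $\mathcal{R}^N$ itself produces a well-connected crust of good blocks near $\partial^\pm\hat{\mathcal{R}^N}$, which effectively wires each of the top and bottom faces into a single cluster. Only after conditioning on such a crust does the residual boundary effect reduce to the lateral faces, of area $c_d\delta L^{d-1}$. So the assumption $\beta>\hat\beta_c$ is not merely inherited from Proposition~\ref{prop-minsect-arg} as you suggest --- it is the engine of the proof --- and the volume-order decay in the statement is the cost of the coarse graining failing, not a bound on an empty event.
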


We cannot afford to give here the proof of Proposition
\ref{prop-comp-tauJf-tauJ} as the generalization to the random case of the
argument of {\cite{N12}} makes it far too long. However, no new ingredient
needs to be introduced with respect to the original construction {\cite{N12}},
and the interested reader can consult the PhD thesis {\cite{M00}} for a
complete development of the proofs of both Propositions \ref{prop-minsect-arg}
and \ref{prop-comp-tauJf-tauJ}.

The consequence of the three last Propositions, together with Varadhan's
Lemma, is a lower bound on the probability of phase coexistence along a given
profile under quenched and averaged measures:

\begin{proposition}
  \label{prop-upb-phco-final}For all $\beta > \hat{\beta}_c$ with $\beta
  \notin \mathcal{N}$, for every $u \in \tmop{BV}$ and $\xi, \lambda > 0$,
  there exists $\varepsilon > 0$ such that, for $K \in \mathbbm{N}^{\star}$
  large enough,
  \begin{equation}
    \limsup_N \frac{1}{N^{d - 1}} \log \mu^{J, +}_{\Lambda_N} \left(
    \frac{\mathcal{M}_K}{m_{\beta}} \in \mathcal{V}(u, \varepsilon) \right)
    \leqslant -\mathcal{F}^q (u) + \xi \label{eq-upb-phco-q}
  \end{equation}
  in $\mathbbm{P}$-probability (and $\mathbbm{P}$-almost surely if $\beta
  \notin \mathcal{N}_I$) and
  \begin{equation}
    \limsup_N \frac{1}{N^{d - 1}} \log \mathbbm{E} \left[ \mu^{J,
    +}_{\Lambda_N} \left( \frac{\mathcal{M}_K}{m_{\beta}} \in \mathcal{V}(u,
    \varepsilon) \right) \right]^{\lambda} \leqslant -\mathcal{F}^{\lambda}
    (u) + \xi .
  \end{equation}
\end{proposition}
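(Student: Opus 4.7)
The plan is to combine the three preparatory Propositions \ref{prop-upb-phco}, \ref{prop-minsect-arg} and \ref{prop-comp-tauJf-tauJ} with the covering Theorem \ref{thm-d-cover} and the convergence results for $\tau^J_{\mathcal{R}^N}$ (Theorem \ref{thm-conv-tauq} for the quenched bound, Proposition \ref{prop-conv-taul} for the averaged bound). First I fix a small parameter $\delta>0$ to be specified at the end. Theorem \ref{thm-d-cover} applied with $u$ and the continuous function $\tau^q$ (resp.\ $\tau^{\lambda}$) produces a $\delta$-covering $(\mathcal{R}_i)_{i=1}^n$ of $\partial^{\star} u$, with side-lengths $h_i$ and normals $\tmmathbf{n}_i$. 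For $\varepsilon>0$ small enough (depending on $\delta$ and the $\mathcal{R}_i$), Proposition \ref{prop-upb-phco} yields
\[
\mu^{J,+}_{\Lambda_N}\!\left(\frac{\mathcal{M}_K}{m_{\beta}}\in\mathcal{V}(u,\varepsilon)\right)\leqslant\exp\!\left(-N^{d-1}\sum_{i=1}^{n}h_i^{d-1}\tilde\tau^{J,\delta,K}_{N\mathcal{R}_i}\right).
\]

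Next I take $K$ large enough that Proposition \ref{prop-minsect-arg} applies to every $\mathcal{R}_i$, giving $\tilde\tau^{J,\delta,K}_{N\mathcal{R}_i}\geqslant\tilde\tau^J_{\mathcal{R}_i^N}-c_{d,\delta}$, and then Proposition \ref{prop-comp-tauJf-tauJ} gives $\tilde\tau^J_{\mathcal{R}_i^N}\geqslant\tau^J_{\mathcal{R}_i^N}-c_d\delta$, each on a $\mathbbm{P}$-event whose complement has probability at most $\exp(-cN^d)$. Chained together, on a global $\mathbbm{P}$-event of probability $\geqslant 1-n\exp(-cN^d)$, one has $\tilde\tau^{J,\delta,K}_{N\mathcal{R}_i}\geqslant\tau^J_{\mathcal{R}_i^N}-(c_{d,\delta}+c_d\delta)$ for every $i$. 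The crucial structural fact underlying the rest of the argument is that since the $\mathcal{R}_i^N$ are disjoint, the $\tilde\tau^{J,\delta,K}_{N\mathcal{R}_i}$ (and the $\tau^J_{\mathcal{R}_i^N}$) are \emph{independent} under $\mathbbm{P}$.

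For the quenched bound, Theorem \ref{thm-conv-tauq} implies $\tau^J_{\mathcal{R}_i^N}\to\tau^q(\tmmathbf{n}_i)$ in $\mathbbm{P}$-probability for each $i$. Combined with the previous inequality one obtains, for any $\gamma>0$, with $\mathbbm{P}$-probability tending to $1$,
\[
\sum_{i}h_i^{d-1}\tilde\tau^{J,\delta,K}_{N\mathcal{R}_i}\geqslant\sum_i h_i^{d-1}\tau^q(\tmmathbf{n}_i)-\gamma\sum_i h_i^{d-1}-(c_{d,\delta}+c_d\delta)\sum_i h_i^{d-1}.
\]
Property (ii) of Definition \ref{def-d-adapted-u} and the residual bound (\ref{eq-cov-lwb-left-delta}) on the uncovered part of $\partial^{\star} u$ yield $\sum_i h_i^{d-1}\tau^q(\tmmathbf{n}_i)\geqslant\mathcal{F}^q(u)-C\delta$ for some $C<\infty$ depending on $u$ and $\|\tau^q\|_\infty$, and $\sum_i h_i^{d-1}$ is bounded by $\mathcal{H}^{d-1}(\partial^{\star} u)/(1-\delta)$. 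Choosing $\gamma$ and $\delta$ small enough gives (\ref{eq-upb-phco-q}) in probability. The almost sure strengthening when $\beta\notin\mathcal{N}_I$ follows from Borel--Cantelli: by definition of $\mathcal{N}_I$, $I_{\tmmathbf{n}_i}(\tau^q(\tmmathbf{n}_i)-\gamma)>0$ for every $i$, so the lower-deviation probabilities $\mathbbm{P}(\tau^J_{\mathcal{R}_i^N}\leqslant\tau^q(\tmmathbf{n}_i)-\gamma)$ decay as $\exp(-c'N^{d-1})$ (Theorem \ref{thm-rate-I}), hence are summable in $N$ for $d\geqslant 2$.

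For the averaged bound I raise the previous display to the power $\lambda$ and take $\mathbbm{E}$. Using the independence of the $\tilde\tau^{J,\delta,K}_{N\mathcal{R}_i}$, the expectation factorizes, and on the exceptional event of Propositions \ref{prop-minsect-arg}--\ref{prop-comp-tauJf-tauJ} one trivially bounds $\mu^{J,+}_{\Lambda_N}(\cdot)^\lambda\leqslant 1$, contributing at most $\exp(-cN^d)$, which is negligible at surface order. Off the exceptional event,
\[
\mathbbm{E}\!\left[\exp\!\left(-\lambda h_i^{d-1}N^{d-1}\tilde\tau^{J,\delta,K}_{N\mathcal{R}_i}\right)\right]\leqslant e^{\lambda h_i^{d-1}N^{d-1}(c_{d,\delta}+c_d\delta)}\,\mathbbm{E}\!\left[\exp\!\left(-\lambda h_i^{d-1}N^{d-1}\tau^J_{\mathcal{R}_i^N}\right)\right],
\]
and by the definition (\ref{eq-def-taul-R}) together with Proposition \ref{prop-conv-taul}, the rightmost factor equals $\exp(-(h_iN)^{d-1}\tau^{\lambda}_{\mathcal{R}_i^N})=\exp(-(h_iN)^{d-1}(\tau^\lambda(\tmmathbf{n}_i)+o(1)))$. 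Taking the product over $i$ and using the same covering estimate as in the quenched case gives the desired bound $-\mathcal{F}^\lambda(u)+\xi$ after letting $\delta\to 0$. The main obstacle I foresee is bookkeeping the small error terms $c_{d,\delta}$, $c_d\delta$ uniformly over $i$ and ensuring the exceptional events of Propositions \ref{prop-minsect-arg}--\ref{prop-comp-tauJf-tauJ} are truly negligible after taking expectation of the $\lambda$-th power; the remaining work is essentially routine.
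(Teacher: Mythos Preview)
Your proposal is correct and follows essentially the same route as the paper's own proof: fix a $\delta$-covering, apply Proposition \ref{prop-upb-phco}, upgrade $\tilde\tau^{J,\delta,K}_{N\mathcal{R}_i}$ to $\tau^J_{\mathcal{R}_i^N}$ via Propositions \ref{prop-minsect-arg} and \ref{prop-comp-tauJf-tauJ} up to a volume-order exceptional set, then use Theorem \ref{thm-conv-tauq} (resp.\ independence together with Proposition \ref{prop-conv-taul}) for the quenched (resp.\ averaged) conclusion. Your treatment is in fact slightly more explicit than the paper's in two places: you spell out the Borel--Cantelli argument behind the almost-sure statement when $\beta\notin\mathcal{N}_I$ (the paper just points to Corollary \ref{cor-NI-count}), and you write out the covering estimate $\sum_i h_i^{d-1}\tau(\tmmathbf{n}_i)\geqslant\mathcal{F}(u)-C\delta$ that the paper leaves implicit.
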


\begin{proof}
  We fix $\delta \in (0, 1)$ and a $\delta$-covering $(\mathcal{R}_i)_{i = 1
  \ldots n}$ for $u$ as in Definition \ref{def-d-cover}. We examine first the
  quenched convergence: according to Propositions \ref{prop-minsect-arg} and
  \ref{prop-comp-tauJf-tauJ} there is $c > 0$ such that
  \begin{equation}
    \mathbbm{P} \left( \tilde{\tau}^{J, \delta, K}_{N\mathcal{R}_i} \geqslant
    \tau^J_{\mathcal{R}^N} - c_{d, \delta} - c_d \delta \right) \geqslant 1 -
    \exp (- cN^d) \text{, \ \ } \forall i = 1 \ldots n \label{eq-lwb-tauJt}
  \end{equation}
  for $K$ and $N$ large enough. On the other hand, for any $\varepsilon > 0$
  small enough Propositions \ref{prop-upb-phco} yields
  \[ \frac{1}{N^{d - 1}} \log \mu^{J, +}_{\Lambda_N} \left(
     \frac{\mathcal{M}_K}{m_{\beta}} \in \mathcal{V}(u, \varepsilon) \right)
     \leqslant - \sum_{i = 1}^n h_i^{d - 1} \tilde{\tau}^{J, \delta,
     K}_{N\mathcal{R}_i} \]
  and hence, for $K$ and $N$ large enough,
  \[ \frac{1}{N^{d - 1}} \log \mu^{J, +}_{\Lambda_N} \left(
     \frac{\mathcal{M}_K}{m_{\beta}} \in \mathcal{V}(u, \varepsilon) \right)
     \leqslant - \sum_{i = 1}^n h_i^{d - 1}  \left[ \tau^J_{\mathcal{R}^N} -
     c_{d, \delta} - c_d \delta \right] \]
  with $\mathbbm{P}$-probability greater than $1 - n \exp (- cN^d)$. This
  implies (\ref{eq-upb-phco-q}) for $\delta > 0$ small enough in view of the
  convergence $\tau^J_{\mathcal{R}^N_i} \rightarrow \tau^q (\tmmathbf{n}_i)$
  in $\mathbbm{P}$-probability (Theorem \ref{thm-conv-tauq}) or of the
  almost-sure convergence if $\beta \notin \mathcal{N}_I$ (Corollary
  \ref{cor-NI-count}). We examine now the averaged convergence: consider
  $\lambda > 0$ and again, a $\delta$-covering $(\mathcal{R}_i)_{i = 1 \ldots
  n}$ for $u$. For $K, N$ large enough and $\varepsilon > 0$ small enough we
  have
  \[ \mathbbm{E} \left( \left[ \mu^{J, +}_{\Lambda_N} \left(
     \frac{\mathcal{M}_K}{m_{\beta}} \in \mathcal{V}(u, \varepsilon) \right)
     \right]^{\lambda} \right) \hspace{6cm} \]
  \begin{eqnarray*}
    & \leqslant & \mathbbm{E} \exp \left( - \sum_{i = 1}^n \lambda (h_i N)^{d
    - 1} \tilde{\tau}^{J, \delta, K}_{N\mathcal{R}_i} \right)\\
    & \leqslant & n \exp (- cN^d) +\mathbbm{E} \exp \left( - \sum_{i = 1}^n
    \lambda (h_i N)^{d - 1} \tau^J_{\mathcal{R}^N} \right)\\
    &  & \times \exp \left( \lambda \sum_{i = 1}^n h_i^{d - 1} N^{d - 1} 
    \left( c_{d, \delta} + c_d \delta \right) \right)
  \end{eqnarray*}
  in view of (\ref{eq-lwb-tauJt}). Varadhan's Lemma (Proposition
  \ref{prop-conv-taul}) yields: for any $\varepsilon > 0$ small enough, any
  $K$ large enough,
  \[ \limsup_N \frac{1}{N^{d - 1}} \log \mathbbm{E} \left( \left[ \mu^{J,
     +}_{\Lambda_N} \left( \frac{\mathcal{M}_K}{m_{\beta}} \in \mathcal{V}(u,
     \varepsilon) \right) \right]^{\lambda} \right) \hspace{4cm} \]
  \[ \leqslant - \sum_{i = 1}^l h_i^{d - 1}  \left[ \tau^{\lambda}
     (\tmmathbf{n}_i) - c_{d, \delta} - c_d \delta \right] \]
  and the conclusion follows for $\delta > 0$ small enough.
\end{proof}

\subsection{Exponential tightness}

The last step towards the proofs of Theorems \ref{thm-cost-phco} and
\ref{thm-shape-phco} is the exponential tightness property. Note that the
compact set $\tmop{BV}_a$ was defined at (\ref{eq-def-BVa}).

\begin{proposition}
  \label{prop-exp-tight}For any $\beta > \hat{\beta}_c$ with $\beta \notin
  \mathcal{N}$, there exists $C > 0$ and for every $\delta > 0$, for any $K
  \in \mathbbm{N}^{\star}$ large enough one has
  \begin{equation}
    \limsup_N \frac{1}{N^{d - 1}} \log \mathbbm{E} \mu^{J, +}_{\Lambda_N}
    \left( \frac{\mathcal{M}_K}{m_{\beta}} \notin \mathcal{V}(\tmop{BV}_a,
    \delta)^c \right) \leqslant - Ca.
  \end{equation}
\end{proposition}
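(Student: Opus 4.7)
The plan is to construct a random piecewise-constant BV approximation $u_N$ of the magnetization profile from the coarse-graining phase labels of \cite{M01}, and then to bound its perimeter at surface order via a Peierls argument on the $K$-block scale. The event that $\mathcal{M}_K/m_\beta$ is far from $\tmop{BV}_a$ will be split into two pieces: either $u_N$ itself has perimeter $> a$, or the density of bad $K$-blocks is too large.

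Fix $\eta = \delta/(2 c_d)$ and apply Theorem 5.10 of \cite{M01} with tolerance $\eta$ to the $(K,K)$-covering $(\Delta_i,\Delta_i')_{i \in I_{\Lambda_N,K}}$, producing phase labels $\phi_i \in \{-1,0,+1\}$. Set
\[
U_N = \bigcup_{i:\phi_i=-1}\frac{K\Delta_i}{N}, \qquad u_N = \chi_{U_N}.
\]
By parts (i)--(ii) of Theorem 5.10, on any block with $\phi_i = \pm 1$ the local magnetization satisfies $|\mathcal{M}_K/m_\beta \mp 1| \leq c\eta$. Writing $\mathcal{B} = \{i:\phi_i = 0\}$, an elementary computation gives
\[
\left\|\frac{\mathcal{M}_K}{m_\beta} - u_N\right\|_{L^1} \leq c_d \eta + c_d \frac{K^d}{N^d}|\mathcal{B}|,
\]
so this is $\leq \delta$ as soon as $|\mathcal{B}| \leq \eta (N/K)^d$. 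Moreover $U_N$ is a union of cubes of side $K/N$, so $\mathcal{P}(U_N)$ equals $(K/N)^{d-1}$ times the number of facets between $\ast$-adjacent blocks of opposite sign; since by construction two such blocks cannot be $\ast$-adjacent without a block of $\mathcal{B}$ in between, the event $\{\mathcal{P}(U_N) \geq a\}$ forces the existence of a $\ast$-connected subset of $\mathcal{B}$ of cardinality at least $c_d^{-1} a (N/K)^{d-1}$.

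The Peierls step uses part (iv) of Theorem 5.10 in \cite{M01}: under $\mathbb{E}\mu^{J,+}_{\Lambda_N}$, the field $(|\phi_i|)$ stochastically dominates Bernoulli site percolation on $I_{\Lambda_N,K}$ of parameter $\rho(K)$ with $1-\rho(K) \to 0$ as $K\to\infty$; under $\beta > \hat\beta_c$ the slab renormalization of \cite{M01} yields $1-\rho(K) \leq e^{-\kappa K^{d-1}}$ for some $\kappa > 0$. Combining with the standard bound of $(N/K)^d (c_d)^n$ on the number of $\ast$-connected block surfaces of cardinality $n$, one gets
\[
\mathbb{E}\mu^{J,+}_{\Lambda_N}\bigl(\mathcal{P}(U_N) \geq a\bigr) \;\leq\; \sum_{n \geq c_d^{-1} a(N/K)^{d-1}} \!\!(N/K)^d \bigl(c_d(1-\rho(K))\bigr)^n \;\leq\; e^{-C a N^{d-1}}
\]
with $C > 0$ universal, once $K$ is so large that $c_d (1-\rho(K)) \leq e^{-2\kappa K^{d-1}}$. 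A parallel and strictly stronger estimate handles the event $\{|\mathcal{B}| \geq \eta(N/K)^d\}$ at volume order. Combining both pieces, outside an event of averaged probability $\leq 2 e^{-CaN^{d-1}}$ we have $u_N \in \tmop{BV}_a$ and $\|\mathcal{M}_K/m_\beta - u_N\|_{L^1} \leq \delta$, i.e.\ $\mathcal{M}_K/m_\beta \in \mathcal{V}(\tmop{BV}_a,\delta)$.

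The main obstacle is to extract a constant $C$ that does not depend on $a$ nor on $\delta$ (only $K$ depends on $\delta$). The Peierls sum naturally produces an exponent of the form $-\bigl[\log(1/c_d(1-\rho(K)))/(c_d K^{d-1})\bigr]\, a N^{d-1}$, so for $C$ to be uniform we crucially need the bad-block probability to decay at least as $\exp(-\kappa K^{d-1})$, matching the $K^{d-1}$ loss in converting block-scale interface counts into continuum perimeter. Verifying that the coarse graining of \cite{M01} delivers this super-linear (in $K$) decay under $\beta > \hat\beta_c$ is the delicate technical point; it is precisely what the slab-percolation assumption makes available, via iterated renormalization in the $d-1$ transverse directions.
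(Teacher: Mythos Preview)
The paper gives no proof of this proposition; it simply states that the argument of Bodineau--Ioffe--Velenik \cite{N07} applies verbatim. Your proposal is an attempt to reconstruct that argument, and the overall architecture --- build a block-phase profile $u_N$ from the coarse-graining labels of \cite{M01}, control $\|\mathcal{M}_K/m_\beta - u_N\|_{L^1}$ via the density of bad blocks, and bound the perimeter via a Peierls estimate --- is indeed the BIV strategy. You have also correctly isolated the crucial technical input: the surface-order decay $1-\rho(K)\le e^{-\kappa K^{d-1}}$ of the bad-block probability, without which the factor $K^{d-1}$ lost in passing from block counts to continuum perimeter cannot be compensated, and $C$ would depend on $K$.

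There is, however, a genuine gap in your Peierls step. The claim that $\{\mathcal{P}(U_N)\ge a\}$ forces a \emph{single} $\ast$-connected subset of $\mathcal{B}$ of size at least $c_d^{-1}a(N/K)^{d-1}$ is false. What is true is that every facet of $\partial U_N$ borders a bad block (since $\ast$-adjacent good blocks share their phase), so $\partial U_N$ decomposes into closed surfaces, each carrying a $\ast$-connected shell of bad blocks; but nothing prevents $U_N$ from consisting of many small $(-1)$-clusters, each with its own small shell. In that case the total bad-block count exceeds $c_d^{-1}a(N/K)^{d-1}$ while every $\ast$-connected component of $\mathcal{B}$ is tiny, and your single-contour Peierls sum $\sum_{n\ge n_0}(N/K)^d(c_d(1-\rho))^n$ bounds the wrong event. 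A naive binomial bound on $|\mathcal{B}|\ge n_0$ fails even more badly: the typical number of bad blocks is of order $(1-\rho)(N/K)^d$, which for fixed $K$ eventually dominates $(N/K)^{d-1}$, so the event is not even a deviation.

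The standard repairs are either to sum over \emph{families} of disjoint $\ast$-connected contours whose total size exceeds $n_0$ (a polymer-type estimate, still yielding $e^{-CaN^{d-1}}$ once $c_d(1-\rho(K))<1$), or --- closer to what BIV actually do --- to interpose a second, coarser mesoscopic scale on which the profile is defined, so that the combinatorics live on a grid whose cardinality does not grow with $N$. Either route closes the gap; your outline as written does not.
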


The proof of Bodineau, Ioffe and Velenik given in {\cite{N07}} applies as well
in the present case.

\subsection{Proofs of Theorems \ref{thm-cost-phco} to \ref{thm-Wulff-lambda}}

Theorems \ref{thm-cost-phco} and \ref{thm-shape-phco} are consequences of the
large deviations estimates (Propositions \ref{prop-lwb-phco-final} and
\ref{prop-upb-phco-final}) together with the exponential tightness
(Proposition \ref{prop-exp-tight}) in view of the compactness of
$\tmop{BV}_a$. The case of averaged Gibbs measures (Theorems
\ref{thm-cost-lambda}, \ref{thm-Wulff-lambda1} and \ref{thm-Wulff-lambda})
presents complete similarity with the non-random case and for this reason we
focus here only on the quenched case. Furthermore, the proof of Theorem
\ref{thm-cost-phco} is similar to that of Theorem \ref{thm-shape-phco}, which
is the reason for which we give the proof of (\ref{eq-shape-phco-q}) only.

\begin{proof}
  (First half of Theorem \ref{thm-shape-phco}). First we establish the lower
  bound
  \begin{equation}
    \liminf_N  \frac{1}{N^{d - 1}} \log \mu^{J, +}_{\Lambda_N} \left(
    \frac{m_{\Lambda_N}}{m_{\beta}} \leqslant 1 - 2 \alpha^d \right) \geqslant
    -\mathcal{F}^q (\chi_{\alpha \mathcal{W}^q}) \text{, \ \ \ } \mathbbm{P}
    \text{-almost surely} . \label{eq-lwb-lowmag}
  \end{equation}
  The proof goes as follows: for any $\alpha' > \alpha$, for small enough
  $\varepsilon > 0$ one has
  \[ \frac{\mathcal{M}_K}{m_{\beta}} \in \mathcal{V}(\chi_{z_0 + \alpha'
     \mathcal{W}^q}, \varepsilon) \Rightarrow \frac{m_{\Lambda_N}}{m_{\beta}}
     \leqslant 1 - 2 \alpha^d \]
  hence, Proposition \ref{prop-upb-phco-final} gives: for any $\alpha' >
  \alpha$,
  \[ \liminf_N  \frac{1}{N^{d - 1}} \log \mu^{J, +}_{\Lambda_N} \left(
     \frac{m_{\Lambda_N}}{m_{\beta}} \leqslant 1 - 2 \alpha^d \right)
     \geqslant -\mathcal{F}^q (\chi_{\alpha' \mathcal{W}^q}) \text{, \ \ \ }
     \mathbbm{P} \text{-almost surely} . \]
  The lower bound (\ref{eq-lwb-lowmag}) follows if we let $\alpha' \rightarrow
  \alpha$.
  
  Now we establish the following upper bound: for any $\varepsilon > 0$, there
  is $\delta > 0$ such that
  \[ \limsup_N  \frac{1}{N^{d - 1}} \log \mu^{J, +}_{\Lambda_N} \left(
     \frac{\mathcal{M}_K}{m_{\beta}} \notin \bigcup_{x \in
     \mathcal{T}^q_{\alpha}} \mathcal{V}(\chi_{x + \alpha \mathcal{W}^q},
     \varepsilon) \text{ and } \frac{m_{\Lambda_N}}{m_{\beta}} \leqslant 1 - 2
     \alpha^d \right) \]
  \begin{equation}
    \hspace{6cm} \leqslant -\mathcal{F}^q (\chi_{\alpha \mathcal{W}^q}) -
    \delta \label{eq-upb-less-Fq}
  \end{equation}
  in $\mathbbm{P}$-probability ($\mathbbm{P}$-almost surely if $\beta \notin
  \mathcal{N}_I$). To begin with, we choose $a > 0$ so large that $Ca$ in
  Proposition \ref{prop-exp-tight} is larger than $2\mathcal{F}^q
  (\chi_{\alpha \mathcal{W}^q}) + 2$. Thanks to Markov's inequality, this
  implies that, for any $\gamma > 0$, for large enough $K$,
  \begin{equation}
    \limsup_N  \frac{1}{N^{d - 1}} \log \mu^{J, +}_{\Lambda_N} \left(
    \frac{\mathcal{M}_K}{m_{\beta}} \notin \mathcal{V}(\tmop{BV}_a, \gamma)
    \right) \leqslant -\mathcal{F}^q (\chi_{\alpha \mathcal{W}^q}) - 1,
    \label{eq-aplc-exptight}
  \end{equation}
  $\mathbbm{P}$-almost surely (see (\ref{eq-def-BVa}) for the definition of
  $\tmop{BV}_a$). Consider $\eta > 0$ and let
  \[ F = \left\{ u \in \tmop{BV}_a : \int_{[0, 1]^d} u \leqslant 1 - 2
     \alpha^d + \eta \text{ \ and \ } u \notin \bigcup_{x \in
     \mathcal{T}^q_{\alpha}} \mathcal{V} \left( \chi_{x + \alpha
     \mathcal{W}^q}, \frac{\varepsilon}{2} \right) \right\} . \]
  For $\gamma > 0$ small enough, for large enough $N$ the event
  \[ \frac{\mathcal{M}_K}{m_{\beta}} \notin \bigcup_{x \in
     \mathcal{T}^q_{\alpha}} \mathcal{V}(\chi_{x + \alpha \mathcal{W}^q},
     \varepsilon) \text{ and } \frac{m_{\Lambda_N}}{m_{\beta}} \leqslant 1 - 2
     \alpha^d \]
  implies that
  \[ \frac{\mathcal{M}_K}{m_{\beta}} \notin \mathcal{V}(\tmop{BV}_a, \gamma)
     \text{ or } \frac{\mathcal{M}_K}{m_{\beta}} \in \mathcal{V}(F, \gamma) .
  \]
  The probability of the first event is under control (\ref{eq-aplc-exptight})
  for any $\gamma > 0$ (and large enough $K$), hence we focus on the
  probability of the second one. Given $\xi > 0$, applying Proposition
  \ref{prop-upb-phco-final} we obtain $\varepsilon : u \in \tmop{BV} \mapsto
  \varepsilon (u) \in (0, \xi)$ \ such that, for any $u \in \tmop{BV}$ and any
  $K$ large enough:
  \begin{equation}
    \limsup_{N \rightarrow \infty} \frac{1}{N^{d - 1}} \log \mu^{J,
    +}_{\Lambda_N} \left( \frac{\mathcal{M}_K}{m_{\beta}} \in \mathcal{V}(u,
    \varepsilon (u)) \right) \leqslant -\mathcal{F}^q (u) + \xi
    \label{eq-upb-phco-eu}
  \end{equation}
  in $\mathbbm{P}$-probability ($\mathbbm{P}$-almost surely if $\beta \notin
  \mathcal{N}_I$). The set $\tmop{BV}_a$ is compact for the $L^1$-norm, thus
  it can be covered by a finite union $\tmop{BV}_a \subset \bigcup_{i = 1}^n
  \mathcal{V}(u_i, \varepsilon (u_i))$ with $u_i \in \tmop{BV}_a$, $i = 1
  \ldots n$. Since the right-hand side term is open, for $\gamma > 0$ small
  enough we still have
  \[ \mathcal{V}(\tmop{BV}_a, \gamma) \subset \bigcup_{i = 1}^n
     \mathcal{V}(u_i, \varepsilon (u_i)) . \]
  We consider $(u_i')_{i = 1 \ldots l}$ the subsequence of the $u_i$ such that
  $\mathcal{V}(u_i, \varepsilon (u_i))$ intersects $\mathcal{V}(F, \gamma)$.
  Thanks to the inclusion
  \[ \mathcal{V}(F, \gamma) \subset \bigcup_{i = 1}^l \mathcal{V}(u_i',
     \varepsilon (u_i')) \]
  and to (\ref{eq-upb-phco-eu}), we have: for small enough $\gamma$, for large
  enough $K$:
  \[ \limsup_{N \rightarrow \infty} \frac{1}{N^{d - 1}} \log \mu^{J,
     +}_{\Lambda_N} \left( \frac{\mathcal{M}_K}{m_{\beta}} \in \mathcal{V}(F,
     \gamma) \right) \leqslant - \inf_{u \in \tmop{BV} : u \in \mathcal{V}(F,
     2 \xi)} \mathcal{F}^q (u) + \xi \]
  in $\mathbbm{P}$-probability ($\mathbbm{P}$-almost surely if $\beta \notin
  \mathcal{N}_I$). Yet, the limit as $\xi \rightarrow 0$ of the right-hand
  side is bounded from above by $- \inf_{u \in F'} \mathcal{F}^q (u)$ where
  \[ F' = \left\{ u \in \tmop{BV}_a : \int_{[0, 1]^d} u \leqslant 1 - 2
     \alpha^d + 2 \eta \text{ \ and \ } u \notin \bigcup_{x \in
     \mathcal{T}^q_{\alpha}} \mathcal{V} \left( \chi_{x + \alpha
     \mathcal{W}^q}, \frac{\varepsilon}{4} \right) \right\}, \]
  for any $\eta > 0$. Yet, $- \inf_{u \in F'} \mathcal{F}^q (u)$ is strictly
  smaller, in the limit $\eta \rightarrow 0$, than $-\mathcal{F}^q
  (\chi_{\alpha \mathcal{W}^q})$ since the solutions to the isoperimetric
  problem (\ref{eq-uBV-opt}) are excluded. Together with
  (\ref{eq-aplc-exptight}), this implies (\ref{eq-upb-less-Fq}) and the
  conclusion (\ref{eq-shape-phco-q}) follows from (\ref{eq-lwb-lowmag}) and
  (\ref{eq-upb-less-Fq}).
\end{proof}

\subsection{Localization of the Wulff crystal under averaged measures}

One consequence of the introduction of the random media is the
{\tmem{localization}} of the Wulff crystal if the volume constraint acts on
the media as well: the surface tension appears to be reduced on the contour of
the crystal. Here we give the proof of Theorem \ref{thm-phco-tauJ} after a we
state the following immediate consequence of the lower large deviations
described in Theorem \ref{thm-rate-I}:

\begin{lemma}
  \label{lem-tauhat}Let $\mathcal{R}^N =\mathcal{R}_{0, N, \delta N}
  (\mathcal{S}, \tmmathbf{n})$ and $\gamma > 0$, $\mathcal{A}= [
  \hat{\tau}^{\lambda, -} (\tmmathbf{n}) - \gamma, \hat{\tau}^{\lambda, +}
  (\tmmathbf{n}) + \gamma]$. Then,
  \[ \limsup_N \frac{1}{N^{d - 1}} \log \mathbbm{E} \left[ 1_{\left\{
     \tau^J_{\mathcal{R}^N} \in \mathcal{A}^c \right\}} \times \exp \left( -
     \lambda N^{d - 1} \tau^J_{\mathcal{R}^N} \right) \right] < \tau^{\lambda}
     (\tmmathbf{n}) . \]
\end{lemma}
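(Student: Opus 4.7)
The plan is to run a discrete Varadhan-type argument: split the range of $\tau^J_{\mathcal{R}^N}$ into finitely many thin slices, apply the lower large deviations upper bound from Theorem~\ref{thm-rate-I} on each slice, and then use convexity of $\tau\mapsto\lambda\tau+I_{\tmmathbf{n}}(\tau)$ to extract a strict separation between the value on $\mathcal{A}^c$ and the global infimum $\tau^{\lambda}(\tmmathbf{n})$. The restriction to $\mathcal{A}^c$ is what forces a strict improvement upon the convergence $-\tfrac{1}{N^{d-1}}\log \mathbbm{E}[\exp(-\lambda N^{d-1}\tau^J_{\mathcal{R}^N})]\to\tau^{\lambda}(\tmmathbf{n})$ of Proposition~\ref{prop-conv-taul}.

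First, by Proposition~\ref{prop-tauJ-mon} we have the deterministic bound $\tau^J_{\mathcal{R}^N}\leqslant M:=c_d\beta J^{\max}$, so that the whole mass is carried by $[0,M]$. Fix an arbitrary $\varepsilon>0$ and partition $[0,M]\cap \mathcal{A}^c$ into finitely many (say $n_\varepsilon$) consecutive intervals $[\tau_i,\tau_i+\varepsilon]$. On each slice I would use the monotone bound
\[
 \mathbbm{E}\!\left[1_{\{\tau^J_{\mathcal{R}^N}\in[\tau_i,\tau_i+\varepsilon]\}}\exp(-\lambda N^{d-1}\tau^J_{\mathcal{R}^N})\right]\leqslant e^{-\lambda N^{d-1}\tau_i}\,\mathbbm{P}(\tau^J_{\mathcal{R}^N}\leqslant \tau_i+\varepsilon).
\]
Theorem~\ref{thm-rate-I} together with the continuity of $I_{\tmmathbf{n}}$ on the interior of its domain of finiteness gives, for $N$ large enough (uniformly over the finitely many $i$),
$\mathbbm{P}(\tau^J_{\mathcal{R}^N}\leqslant \tau_i+\varepsilon)\leqslant \exp(-N^{d-1}(I_{\tmmathbf{n}}(\tau_i+\varepsilon)-\varepsilon))$; when $\tau_i+\varepsilon<\tau^{\min}(\tmmathbf{n})$ the probability is in fact zero, and when $I_{\tmmathbf{n}}(\tau_i+\varepsilon)=0$ we just use the trivial bound by $1$. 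Summing the $n_\varepsilon$ slices and taking $\tfrac{1}{N^{d-1}}\log$ absorbs the $\log n_\varepsilon$ factor in the limsup, yielding
\[
 \limsup_N \frac{1}{N^{d-1}}\log \mathbbm{E}\!\left[1_{\{\tau^J_{\mathcal{R}^N}\in\mathcal{A}^c\}}e^{-\lambda N^{d-1}\tau^J_{\mathcal{R}^N}}\right]\leqslant -\min_i\bigl(\lambda\tau_i+I_{\tmmathbf{n}}(\tau_i+\varepsilon)\bigr)+\varepsilon.
\]

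The key point is to show that the right-hand side is strictly below $-\tau^{\lambda}(\tmmathbf{n})$. By convexity of $I_{\tmmathbf{n}}$ from Theorem~\ref{thm-rate-I}, the map $\varphi(\tau):=\lambda\tau+I_{\tmmathbf{n}}(\tau)$ is convex and coercive, so its set of minimizers is exactly the closed interval $[\hat{\tau}^{\lambda,-}(\tmmathbf{n}),\hat{\tau}^{\lambda,+}(\tmmathbf{n})]$, on which $\varphi=\tau^{\lambda}(\tmmathbf{n})$. Since $\mathcal{A}$ contains an open $\gamma$-neighborhood of this interval, convexity together with continuity of $\varphi$ on the interior of its domain of finiteness implies the strict bound
\[
 \eta:=\inf_{\tau\in\mathcal{A}^c\cap[0,M]}\varphi(\tau)-\tau^{\lambda}(\tmmathbf{n})>0,
\]
where one checks the two endpoints $\varphi(\hat{\tau}^{\lambda,-}-\gamma)$ and $\varphi(\hat{\tau}^{\lambda,+}+\gamma)$ and uses convexity to propagate strict positivity to the entire set. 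Combining with the previous display, and noting that $\min_i\varphi(\tau_i+\varepsilon)\geqslant \inf_{\mathcal{A}^c\cap[0,M]}\varphi-(\lambda+1)\varepsilon$ up to harmless shifts, gives
\[
 \limsup_N \frac{1}{N^{d-1}}\log \mathbbm{E}[\cdots] \leqslant -\tau^{\lambda}(\tmmathbf{n})-\eta+C\varepsilon,
\]
and letting $\varepsilon\to 0^+$ closes the argument.

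The main obstacle will be the extraction of the strict gap $\eta>0$: one needs to verify that the minimizer set of $\varphi$ is indeed the closed interval $[\hat{\tau}^{\lambda,-},\hat{\tau}^{\lambda,+}]$ (which is where the definitions of $\hat{\tau}^{\lambda,\pm}$ and the convexity of $I_{\tmmathbf{n}}$ enter crucially) and that $\varphi$ is continuous at those endpoints. The latter is clear in the interior of $(\tau^{\min}(\tmmathbf{n}),\infty)$ by convexity; the only potentially delicate case is $\hat{\tau}^{\lambda,-}=\tau^{\min}(\tmmathbf{n})$, which is handled by replacing $I_{\tmmathbf{n}}$ by its right-continuous extension --- exactly the convention already adopted after Theorem~\ref{thm-rate-I}. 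Aside from this, all the machinery needed (the $\mathbbm{P}$-a.s. upper bound on $\tau^J_{\mathcal{R}^N}$, convexity of $I_{\tmmathbf{n}}$, and the single-sided LDP upper bound) is available, and the rest of the argument is a routine Laplace-principle computation.
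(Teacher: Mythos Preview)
Your argument is correct and is precisely the expanded Varadhan--Laplace computation that the paper alludes to when it declares the lemma an ``immediate consequence of the lower large deviations described in Theorem~\ref{thm-rate-I}''; the paper gives no further proof. One cosmetic remark: the displayed inequality in the lemma statement appears to carry a sign typo (the right-hand side should be $-\tau^{\lambda}(\tmmathbf{n})$, consistent with Proposition~\ref{prop-conv-taul} and with how the lemma is used in~\eqref{tauJinA}), and you have correctly proved that intended version.
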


\begin{proof}
  (Theorem \ref{thm-phco-tauJ}). According to Theorems \ref{thm-cost-lambda}
  and \ref{thm-Wulff-lambda}, it is enough to prove that
  \[ \limsup_{N \rightarrow \infty} \frac{1}{N^{d - 1}} \log \mathbbm{E}
     \left[ \left( \mu^{J, +}_{\Lambda_N} \left( \tau^J_{\mathcal{R}^N} \in
     \mathcal{A}^c \text{ and } \left\| \frac{\mathcal{M}_K}{m_{\beta}} -
     \chi_{z + \alpha \mathcal{W}^{\lambda}} \right\|_{L^1} \leqslant
     \varepsilon \right) \right)^{\lambda} \right] \]
  \begin{equation}
    \hspace{8cm} < -\mathcal{F}^{\lambda} (\alpha \mathcal{W}^{\lambda}) .
    \label{eq-cond-tauJ}
  \end{equation}
  In the case that the parallelepiped $\mathcal{R}$ does not intersect the
  crystal $z + \alpha \partial \mathcal{W}^{\lambda}$, for $\delta > 0$ small
  enough any $\delta$-covering $(\mathcal{R}_i)_{i = 1 \ldots n}$ for $z +
  \alpha \mathcal{W}^{\lambda}$ and $\tau^q$ does not intersect $\mathcal{R}$.
  For $\varepsilon > 0$ small enough and $K$ large enough, Propositions
  \ref{prop-upb-phco}, \ref{prop-minsect-arg} and \ref{prop-comp-tauJf-tauJ},
  the definition of the $\delta$-covering and the independence of
  $\tau^J_{\mathcal{R}}$ from the $\tilde{\tau}^{J, \delta,
  K}_{N\mathcal{R}_i}$ under the product measure $\mathbbm{P}$ imply that the
  right-hand side of (\ref{eq-cond-tauJ}) is bounded from above by
  \[ -\mathcal{F}^{\lambda} (\alpha \mathcal{W}^{\lambda}) + \underset{\delta
     \rightarrow 0}{o} (1) + \limsup_N \frac{1}{N^{d - 1}} \log \mathbbm{P}
     \left( \tau^J_{\mathcal{R}^N} \in \mathcal{A}^c \right) \]
  which is strictly smaller than $-\mathcal{F}^{\lambda} (\alpha
  \mathcal{W}^{\lambda})$ for small enough $\delta$, as the last term is
  strictly negative.
  
  Now we consider the case when the parallelepiped $\mathcal{R}$ is tangent to
  the crystal. For $h > 0$ small enough, for $\varepsilon > 0$ small enough,
  the strict inequality
  \[ \limsup_N \frac{1}{N^{d - 1}} \log \mathbbm{E} \left[ \left(
     \sup_{\bar{\sigma} \in \Sigma^+_{\widehat{N\mathcal{R}}}} \mu^{J,
     \bar{\sigma}}_{\widehat{N\mathcal{R}}} \left( \tau^J_{\mathcal{R}^N} \in
     \mathcal{A}^c \text{ \ and \ } \left\| \frac{\mathcal{M}_K}{m_{\beta}} -
     \chi_{z + \alpha \mathcal{W}^{\lambda}} \right\|_{L^1 (\mathcal{R})}
     \leqslant \varepsilon \right) \right)^{\lambda} \right] \]
  
  \begin{equation}
    \hspace{6cm} < - \int_{\partial (z + \alpha \mathcal{W}^{\lambda}) \cap
    \mathcal{R}} \tau^{\lambda} (\tmmathbf{n}_.) d\mathcal{H} \label{tauJinA}
  \end{equation}
  holds according to Propositions \ref{prop-minsect-arg} and
  \ref{prop-comp-tauJf-tauJ}, and Lemma \ref{lem-tauhat}. Let
  $(\mathcal{R}_i)_{i = 1 \ldots n}$ be a $\eta$-covering for $z + \alpha
  \mathcal{W}^{\lambda}$ and $\tau^{\lambda}$. Propositions
  \ref{prop-upb-phco}, \ref{prop-minsect-arg} and \ref{prop-comp-tauJf-tauJ}
  and the properties of the $\eta$-covering imply that for $\varepsilon > 0$
  small enough (depending on $\eta$) and large enough $K$, the cost of phase
  coexistence outside of $\mathcal{R}$ is bounded above by
  \[ \limsup_N \frac{1}{N^{d - 1}} \log \mathbbm{E} \left[ \left( \prod_{i :
     \mathcal{R}_i \cap \mathcal{R}= \emptyset} \sup_{\bar{\sigma} \in
     \Sigma^+_{\widehat{N\mathcal{R}}_i}} \mu^{J,
     \bar{\sigma}}_{\widehat{N\mathcal{R}_i}} \left( \left\|
     \frac{\mathcal{M}_K}{m_{\beta}} - \chi_{z + \alpha \mathcal{W}^{\lambda}}
     \right\|_{L^1 (\mathcal{R})} \leqslant \varepsilon \right)
     \right)^{\lambda} \right] \]
  \begin{eqnarray*}
    & \hspace{4cm} \leqslant & - \int_{\partial (z + \alpha
    \mathcal{W}^{\lambda}) \setminus \mathcal{R}} \tau^{\lambda}
    (\tmmathbf{n}_.) d\mathcal{H}+ \underset{\eta \rightarrow 0}{o} (1) .
  \end{eqnarray*}
  Thus, choosing $h > 0$ small enough then $\varepsilon > 0$ small enough and
  $K$ large enough, the strict inequality holds in (\ref{eq-cond-tauJ}) and
  the claim follows.
\end{proof}

\bibliographystyle{abbrv}
\bibliography{biblio}

\end{document}